\newcommand{\PU}{{\rm Pres}(\mathcal{U})}
\newcommand{\fp}{{\rm fp}}
\DeclareMathOperator{\Mor}{Mor}
\newcommand{\End}{\operatorname{End}}
\newcommand{\Ext}{\operatorname{Ext}}
\newcommand{\Cogen}{\operatorname{Cogen}}
\newcommand{\Gen}{\operatorname{Gen}}
\newcommand{\Ker}{\operatorname{Ker}}
\newcommand{\tr}{\operatorname{tr}}
\newcommand{\rej}{\operatorname{rej}}
\newcommand{\Img}{\operatorname{Im}}
\newcommand{\Coker}{\operatorname{Coker}}
\newcommand{\cone}{\operatorname{cone}}
\DeclareMathOperator{\Pres}{Pres}
\DeclareMathOperator{\Copres}{Copres}
\newcommand{\D}{\mathcal{D}}
\newcommand{\X}{\mathcal{X}}
\newcommand{\Modr}[1]{\mathrm{Mod}\textrm{-}{#1}}
\newcommand{\Inj}{\mathrm{Inj}}
\newcommand{\Proj}{\mathrm{Proj}}
\newcommand{\Ab}{\mathrm{Ab}}
\newcommand{\Prod}{\mathrm{Prod}}
\DeclareMathOperator{\Sum}{Sum}
\theoremstyle{plain}
\newtheorem{theorem}{Theorem}[section]
\newtheorem{lemma}[theorem]{Lemma}
\newtheorem{proposition}[theorem]{Proposition}
\newtheorem{corollary}[theorem]{Corollary}
\newtheorem{question}{Question}
\newtheorem{definition}[theorem]{Definition}
\newtheorem{remark}[theorem]{Remark}
\newtheorem{remarks}[theorem]{Remarks}
\newtheorem{example}[theorem]{Example}
\title{REFLECTIVE AND COREFLECTIVE SUBCATEGORIES}
\author{Manuel Cort\'es-Izurdiaga}
\address{Departamento de Matemática Aplicada, Universidad de Málaga, 29071, Málaga, SPAIN}
\thanks{The first named author is partially supported by the Spanish Government under grant PID2020-113552GB-I00 and by Junta de Andalucia under grant $P20_00770$}
\email{mizurdiaga@uma.es}
\author{Septimiu Crivei}
\address{Department of Mathematics, Babe\c s-Bolyai University, Str. Mihail Kog\u alniceanu 1, 400084, Cluj-Napoca, ROMANIA}
\thanks{The second named author would like to thank the Department of Mathematics of the University of Murcia for support and kind hospitality during his visit from July 2019, when part of this work was begun.}
\email{crivei@math.ubbcluj.ro}
\author{Manuel Saor\'{\i}n}
\address{Departamento de Matemáticas. Universidad de Murcia. Aptdo. 4021. 301000, Espinardo, MU (SPAIN)}
\thanks{The third named author is partially supported by the grant PID2020-113206GB-I00, funded by MCIN/AEI/10.13039/501100011033}
\email{msaorinc@um.es}
\thanks{The authors thank the referee for the careful reading of our manuscript and the comments that improved the presentation and readability of the paper.}
\begin{document}

\begin{abstract}
Given any additive category $\mathcal{C}$ with split idempotents, pseudokernels and pseudocokernels, we show that a subcategory $\mathcal{B}$ is coreflective if, and only if, it is precovering, closed under direct summands and each morphism in $\mathcal{B}$ has a pseudocokernel in $\mathcal{C}$ that belongs to $\mathcal{B}$. We  apply this result and its dual to, among others, preabelian and pretriangulated categories. As a consequence, we show that a subcategory of a preabelian category is coreflective if, and only it, it is precovering and closed under taking cokernels. On the other hand, if $\mathcal{C}$ is pretriangulated with split idempotents, then a subcategory $\mathcal{B}$ is coreflective and invariant under the suspension functor if, and only if, it is precovering and closed under taking direct summands and cones. These are extensions of well-known results for AB3 abelian and triangulated categories, respectively. 

By-side applications of these results allow us: a) To characterize the coreflective subcategories of a given AB3 abelian category which have a set of generators and are themselves abelian, abelian exact or  module categories; b) to extend to module categories over arbitrary small preadditive categories a result of Gabriel and De la Pe\~na stating that all fully exact subcategories are bireflective; c) to show that, in any Grothendieck category, the direct limit closure of its subcategory of finitely presented objects is a coreflective subcategory.

\end{abstract}

\maketitle


\subjclass{\textit{2020 AMS Subject Classification:} 18A40, 18E05, 18E10, 18C35, 18G80.}

\keywords{\textit{Keywords:} Additive category, (co)reflective subcategory, (pre)abelian category, (pre)triangulated category, exactly definable category, finitely accessible category, fully exact subcategory.}

\section{Introduction}
In all fields of Mathematics where Category Theory plays a significative role, it is a classical problem to determine when a given functor $F:\mathcal{B}\longrightarrow\mathcal{C}$ has a left or a right adjoint. When $\mathcal{B}$ is a (co)complete category, the most important tool is Freyd Adjoint Theorem (see \cite[Chapter 3]{Freyd} or \cite[Theorem 3.3.3]{Borceux}). This theorem states that, when $\mathcal{B}$ is complete, $F$ has a left adjoint if, and only if, it preserves all limits (equivalently, it preserves products and kernels) and satisfies a so-called solution-set condition.   The conjunction of product-preservation plus the solution-set condition is very much related to the  condition that  the essential image $\text{Im}(F)$ be a preenveloping subcategory of $\mathcal{C}$ (we refer the reader to Section \ref{sec.preliminares} for the unexplained terminology). We point out that avoiding the solution-set condition can be achieved sometimes by additional Set Theory axioms, something that  sheds light on the complexity of deciding when a functor has adjoints. For instance, assuming Vopenka's principle, every full subcategory of a locally presentable category which is closed
under (co)limits is (co)reflective, i.e. the inclusion functor has a (right) left adjoint \cite[Theorems 6.22 and 6.28]{AR} (notice that
every locally presentable category is bicomplete by \cite[Definition 1.17 and Corollary 1.28]{AR}).

In this paper, we are mainly interested in the case  when $\mathcal{C}$ is an arbitrary additive category and $F:\mathcal{B}\hookrightarrow\mathcal{C}$ is the inclusion functor from an (always full) subcategory, so that our goal is to give necessary and sufficient conditions for a subcategory to be (co)reflective. It is well-known that such a subcategory has to be preenveloping (resp. precovering) in $\mathcal{C}$. The natural question that arises asks whether this condition is also sufficient. Alternatively, one may ask what are the minimal conditions to add to the preenveloping (resp. precovering) condition of $\mathcal{B}$ for it to be a reflective (resp. coreflective) subcategory of $\mathcal{C}$. 

Our interest in this question stems from a result of Neeman (see \cite[Proposition 1.4]{Neeman10}) in the context of triangulated categories. It states that if $\mathcal{C}$ is a triangulated category with split idempotents, then a full triangulated subcategory $\mathcal{B}$ is coreflective if, and only if, it is precovering and closed under direct summands. Neeman's result was slightly generalized in \cite[Proposition 3.11]{SaorinStovicek}, where the same result is obtained, assuming only that $\mathcal{B}$ is a suspended subcategory, i.e. closed under extensions, direct summands and taking cones.  On the other hand, it is an immediate consequence of the dual of \cite[Theorem 3.1]{RSdV} that if $\mathcal{C}$ is  cocomplete abelian, a subcategory $\mathcal{B}$ is coreflective if, and only if, it is precovering and closed under taking cokernels.  

The common flavour of these results,  coming  from  the apparently distant triangulated and abelian worlds,  led us to study the above mentioned question in a  general context that includes both the triangulated and the abelian ones. Our basic result, Theorem \ref{t:CharacterizationCoreflective}, implies that if $\mathcal{C}$ is any additive category with split idempotents that has pseudokernels and pseudocokernels then a subcategory $\mathcal{B}$ is coreflective if, and only if, it is precovering, closed under direct summands and every morphism in $\mathcal{B}$ has a pseudocokernel in $\mathcal{C}$ that belongs to $\mathcal{B}$.  This result together with its dual allow not only to recover, in a generalized form, the results mentioned in the previous paragraph. They apply to the case when the category $\mathcal{C}$ is one-sided triangulated or pretriangulated, including the case of  the stable category of an abelian category modulo an appropriate subcategory, and also to the case when $\mathcal{C}$ is   exactly definable  or   preabelian. 

We pass to describe the organization of the paper and, in the process, we point out some of the main results of the paper. In Section 2 we introduce most of the concepts needed in the rest of the paper. In Section 3 we give the basic Theorem  \ref{t:CharacterizationCoreflective} mentioned above and its dual. 

In Section 4 we apply these theorems to one-sided triangulated categories and pretriangulated categories, in the sense of \cite{BeligiannisReiten} (see also \cite{BeligiannisMarmaridis}). The first half-dual part of Corollary \ref{c:ReflectiveAndCoreflectiveInPretriangulated} states that if $\mathcal{D}$ is a right triangulated category with split idempotents and pseudokernels, then a subcategory $\mathcal{Z}$ is coreflective and invariant under the suspension functor if, and only if, it is precovering, closed under direct summands and under taking cones. This strictly generalizes  \cite[Proposition 1.4]{Neeman10} and  \cite[Proposition 3.11]{SaorinStovicek}. 

In Section 5 we consider any abelian category $\mathcal{A}$ and any preenveloping (resp. precovering) subcategory $\mathcal{P}$, in which case the stable category $\underline{\mathcal{A}}=\mathcal{A}/\mathcal{P}$ is a right (resp. left) triangulated category. A natural extension of the concept of (co)resolving subcategory of $\mathcal{A}$, that we call weakly $\mathcal{P}$-(co)resolving, is defined. We show in Corollary \ref{cor.bijection-Presolving-reflectives} that, when $\underline{\mathcal{A}}$ has split idempotents, a situation that is very frequent in practice (see Proposition \ref{ex.stablecategs-splitidempotents}), and   $\mathcal{P}$ is both precovering and preenveloping in $\mathcal{A}$,  the assignment $\mathcal{X}\rightsquigarrow\underline{\mathcal{X}}$ gives a bijection between the preenveloping weakly $\mathcal{P}$-resolving subcategories of $\mathcal{A}$ closed under direct summands and the reflective subcategories of $\underline{\mathcal{A}}$ invariant under the loop functor. This bijection and its dual match very well in case $\mathcal{A}$ is a Frobenius abelian category and $\mathcal{P}$ is the subcategory of projective (=injective) objects. In this case, Corollary \ref{cor.bijections in Frobenius abelian} gives a bijection between: i) the preenveloping resolving subcategories of $\mathcal{A}$ closed under direct summands; ii) the precovering coresolving subcategories of $\mathcal{A}$ closed under direct summands; iii) the t-structures in the triangulated category $\underline{\mathcal{A}}$.  

In Section 6 we show that  exactly definable additive categories have split idempotents, pseudokernels and pseudocokernels (Proposition \ref{prop:sipp}), so that Theorem  \ref{t:CharacterizationCoreflective} applies to them. 

In Section 7, the first half-dual of Corollary \ref{c:AbelianCategories} shows that if $\mathcal{A}$ is a preabelian category, then a subcategory $\mathcal{B}$ is coreflective if, and only if, it is precovering and closed under taking cokernels. In the particular case when $\mathcal{A}$ is abelian, we give in Theorem \ref{t:CoreflectiveAndAbelian} a handy criterion for such a coreflective subcategory to be also abelian. 

Motivated by the fact that, when $\mathcal{A}$ is abelian, any coreflective abelian subcategory with a set of generators must be the subcategory $\text{Pres}(\mathcal{U})$ of objects presented by  some set $\mathcal{U}$ of objects, we study in Section 8 when the subcategory $\text{Pres}(\mathcal{U})$ is coreflective, coreflective abelian and coreflective abelian exact, respectively. Theorems \ref{t:PresCoreflective}, \ref{t.coreflective-abelian} and \ref{t:abexact} give  necessary and sufficient conditions for that to happen. Their duals are gathered in Section 8*.

In the final Section 9 we show several applications of the previous results. For instance, for a given AB3 abelian category $\mathcal{A}$, we determine the coreflective abelian and coreflective abelian exact subcategories of $\mathcal{A}$ that are module categories over a preadditive category or over a ring (Corollaries \ref{cor.coreflective-modulecategory} and \ref{cor.abexact-with-set-projgenerators}). In Theorem \ref{t:Gabriel-De la Pena} we prove that fully exact subcategories of module categories over small preadditive categories are always bireflective and they are exactly the ones induced by an epimorphism of preadditive categories. This extends the famous theorem of Gabriel and De la Pe\~na (see \cite{GabrieldelaPena}), who proved the result for module categories over rings. We end the section by showing that, given a Grothendieck category $\mathcal{G}$, the subcategory $\varinjlim(\text{fp}(\mathcal{G}))$ of the objects which are direct limits of finitely presented objects is a coreflective subcategory (Proposition \ref{prop.lim-fpG}).

\section{Preliminaries} \label{sec.preliminares}

\subsection{Additive, (pre)abelian and Grothendieck categories} 

Unless otherwise explicitly stated, all categories in this paper are additive and all subcategories are full. For any objects $M$ and $N$ of an additive category $\mathcal{C}$ we denote by $\mathcal{C}(M,N)$ the set of morphisms from $M$ to $N$. An additive subcategory of $\mathcal C$ is a subcategory $\mathcal B$ of $\mathcal C$ that contains the zero object and is closed under finite (co)products. All subcategories are closed under taking isomorphisms. A \emph{pseudokernel} of a morphism $f:A \rightarrow B$ is a morphism $k:K \rightarrow A$ such that $fk=0$ and any morphism $k':K' \rightarrow A$ with $fk'=0$ factors through $k$ (in a not necessarily unique way, unlike the case for kernels). 
In a dual fashion one  defines the concept of \emph{pseudocokernel}.

An additive category $\mathcal{C}$ \emph{has split idempotents} (or \emph{is idempotent complete}) if, for every object $A$ of $\mathcal{C}$ and every idempotent $e\in \mathcal{C}(A,A)$, there is an object $B$ and morphisms $p:A\to B$ and $i:B\to A$ in $\mathcal{C}$ such that $pi=1_B$ and $ip=e$ (e.g., see \cite[p.~6]{Prest}). Recall that the \emph{idempotent completion} $\hat{\mathcal{C}}$ of an additive category $\mathcal{C}$ is an additive category with split idempotents that contains $\mathcal{C}$ as a subcategory and such that, for any  additive functor $F:\mathcal{C} \rightarrow \mathcal{C}'$ in which $\mathcal{C}'$ is an additive category with split idempotents, there exists  an additive functor $\hat F: \hat{\mathcal{C}} \rightarrow \mathcal{C}'$, unique up to natural isomorphism, such that $\hat F i = F$, where $i:\mathcal{C} \rightarrow \hat{\mathcal{C}}$ is the inclusion. The idempotent completion $\hat{\mathcal{C}}$ of $\mathcal{C}$ always exists and has as objects the pairs $(A,e)$ with $A$ an object of $\mathcal{C}$ and $e$ an idempotent endomorphism of $A$. A morphism $\alpha:(A,e) \rightarrow (B,e')$ in $\hat{\mathcal{C}}$ is just a morphism $\alpha:A \rightarrow B$ in $\mathcal{C}$ satisfying $\alpha e = e' \alpha = \alpha$ (we use the same notation for the morphism in $\hat{\mathcal{C}}$ and the corresponding morphism in $\mathcal{C}$). We point out that many naturally arising additive categories are idempotent complete. A classical example of non-idempotent complete category is that of free modules over a ring which has a non-free projective module. Note that the idempotent completion of the category of free modules over any ring is equivalent to the category of projective modules. Other examples of categories with and without split idempotents will be given in the forthcoming Remark  \ref{4.6} and Proposition \ref{ex.stablecategs-splitidempotents}.
An additive category $\mathcal{C}$ is called \emph{Krull–Schmidt} if it has split idempotents and each object of $\mathcal{C}$ decomposes into a finite coproduct of (indecomposable) objects having local endomorphism rings.

A category $\mathcal{A}$ is called \emph{preabelian} if it is an additive category with kernels and cokernels. Note that an additive category is abelian if and only if it is preabelian, every monomorphism is a kernel, and every epimorphism is a cokernel. A typical example of preabelian category which is not abelian is the category of torsionfree abelian groups. Let $A$ and $B$  be objects of $\mathcal{A}$. If $B$ is a subobject of $A$ we write $B \leq A$ and we call any representing monomorphism of $B$ the inclusion. If $B'$ is another subobject of $A$, we write $B' \subseteq B$ if there exists a morphism $f:B' \rightarrow B$ such that $kf=k'$, where $k:B \rightarrow A$ and $k':B' \rightarrow A$ are the corresponding inclusions.

A subcategory $\mathcal{B}$ of an abelian category $\mathcal{A}$ is called \textit{abelian exact} if $\mathcal{B}$ is abelian and the inclusion functor is exact. This is equivalent to $\mathcal{B}$ being closed under taking finite coproducts, kernels and cokernels. When $\mathcal{A}$ is also bicomplete, a subcategory $\mathcal{B}$  is called \emph{fully exact} when it is closed under taking kernels, cokernels, coproducts and products. Equivalently, when $\mathcal{B}$ is an abelian exact subcategory closed under taking arbitrary products and coproducts.

An abelian category $\mathcal{A}$ is said to be \emph{AB3} if it is cocomplete, \emph{AB4} if it is AB3 and has exact coproducts, and \emph{AB5} if it is AB3 and has exact directed colimits. Dually, one has \emph{AB3$^*$}, \emph{AB4$^*$} and \emph{AB5$^*$} abelian categories. An abelian category is called \emph{Grothendieck} if it is AB5 and has a generator or, equivalently, a set of generators.

An object $F$ of an  additive category $\mathcal{A}$ with direct limits is \textit{finitely presented} if the functor $\mathcal{A}(F,-)$ preserves direct limits. In such case, we denote by $\fp(\mathcal{A})$ the  subcategory of all finitely presented objects of $\mathcal{A}$. The category $\mathcal{A}$ is called \emph{finitely accessible} (or \emph{locally finitely presented} in the terminology of \cite{CB}) when $\fp (\mathcal{A})$ is skeletally small and each object of $\mathcal{A}$ is a direct limit of objects in $\fp (\mathcal{A})$. When such a category is also abelian, it is automatically a Grothendieck category (see \cite[Section 2.4]{CB}). A finitely accessible Grothendieck category $\mathcal{A}$ on which $\fp (\mathcal{A})$ is an abelian exact subcategory (equivalently, closed under taking kernels) is called \emph{locally coherent}.

Recall that if $\mathcal{C}$ is any  category and $I$ is any small category, then a \emph{diagram of shape $I$} in $\mathcal{C}$  is just a  functor $I\rightarrow\mathcal{C}$. The category of diagrams of shape $I$ in $\mathcal{C}$ will be denoted by $[I,\mathcal{C}]$. It is well-known  that if $\mathcal{C}$ is AB3 abelian (resp. AB3$^*$ abelian), then so is $[I,\mathcal{C}]$ and colimits (resp. limits) in it are calculated pointwise.  

\begin{definition} \label{def.exactness-colimits in preabelian}
Let $\mathcal{C}$ be a cocomplete  category and $I$ be a small category. We will say that \emph{$I$-colimits preserve monomorphisms} in $\mathcal{C}$ when, given any morphism $f:D\rightarrow D'$ in $[I,\mathcal{C}]$ such that $f_i:D(i)\rightarrow D'(i)$ is a monomorphism, for all $i\in I$, the induced morphism ${\rm colim}\,f:{\rm colim}\,  D\rightarrow{\rm colim}\, D'$ is a monomorphism in $\mathcal{C}$.
\end{definition}

The notion that \emph{$I$-limits preserve epimorphisms} in a complete  category $\mathcal{A}$ is defined dually. The following is well known.

\begin{example} \label{ex.preservation of monos - AB conditions}
If $\mathcal{A}$ is an AB3 abelian category and $I$ is any small category, then $[I,\mathcal{A}]$ is also an AB3 abelian category. In this case, a morphism $f:D\rightarrow D'$ satisfies that $f_i:D(i)\rightarrow D'(i)$ is a monomorphism, for all $i\in I$, if and only if $f$ is a monomorphism in $[I,\mathcal{A}]$, if and only if it is a kernel in $[I,\mathcal{A}]$. Bearing in mind that the functor ${\rm colim}:[I,\mathcal{A}]\rightarrow\mathcal{A}$ is right exact, we conclude that $I$-colimits preserve monomorphisms if, and only if, the functor  ${\rm colim}:[I,\mathcal{A}]\rightarrow\mathcal{A}$ is exact. In particular:

\begin{enumerate}
\item $\mathcal{A}$ is AB4 if, and only if, for each set (=discrete small category) $I$,  the coproduct functor $\coprod_I:[I,\mathcal{A}]\rightarrow\mathcal{A}$ preserves monomorphisms.
\item $\mathcal{A}$ is AB5 if, and only if, for each directed set $I$,  the direct limit (=directed colimit) functor $\varinjlim: [I,\mathcal{A}]\rightarrow\mathcal{A}$ preserves monomorphisms.
\end{enumerate}
\end{example}

Of course, there is a dual result about AB3$^*$ abelian categories.

\subsection{Reflective and coreflective subcategories} \label{ss:reflective-coreflective}
A subcategory $\mathcal{B}$ of a category $\mathcal{C}$ is called \emph{coreflective} if the inclusion functor $i:\mathcal{B}\to \mathcal{C}$ has a right adjoint. Denote by $\nu$ and $\eta$ the counit and the unit of the adjunction respectively. 
Then $\eta$ is a natural isomorphism  (see \cite[Proposition II.7.5]{HiltonStambach}). Dually, a subcategory $\mathcal{B}$ is called \emph{reflective} if $i$ has a left adjoint. Also, $\mathcal{B}$ is called \emph{bireflective} if it is both coreflective and reflective.

It is well-known that a (co)reflective subcategory $\mathcal{B}$ is closed under all (co)limits that exist in $\mathcal{C}$ (see the proof of \cite[Proposition 3.5.3]{Borceux} for the limit case). In particular, when $\mathcal{C}$ is additive, $\mathcal{B}$ is an additive subcategory and  $i$ is an additive functor, which in turns implies that its right adjoint is also additive.

Every (co)reflective subcategory $\mathcal{B}$ of a preabelian category $\mathcal{A}$ is also preabelian. Indeed, by the previous paragraph, we know that $\mathcal{B}$ is  additive and closed under (co)kernels, which are then calculated in $\mathcal{B}$ as in $\mathcal{A}$. When $\mathcal{B}$ is coreflective and $f:B\to B'$ is a morphism in $\mathcal{B}$,  the kernel of $f$ in $\mathcal{B}$ is the composite morphism $q(\Ker f)\stackrel{\nu_{\Ker f}}\to \Ker f\to B$, where the latter arrow is the inclusion. A dual argument can be used to compute the cokernel of $f$ when $\mathcal{B}$ is reflective.

\subsection{Precovers and preenvelopes} Let $\mathcal{X}$ be a class of objects of a category $\mathcal{C}$. A morphism $f:A \rightarrow A'$ is called an \textit{$\mathcal{X}$-epimorphism} if $\mathcal{C}(X,f)$ is an epimorphism in the category of abelian groups for each $X \in \mathcal{X}$. An \textit{$\mathcal{X}$-precover} of an object $A$ of $\mathcal{C}$ is an $\mathcal{X}$-epimorphism $f:X \rightarrow A$ with $X \in \mathcal{X}$. An \textit{$\mathcal{X}$-coreflection} of $A$ is an $\mathcal{X}$-precover $f$ such that $\mathcal{C}(X,f)$ is an isomorphism for each $X \in \mathcal{X}$. Finally, an \textit{$\mathcal{X}$-cover} of $A$ is an $\mathcal{X}$-precover $f:X \rightarrow A$ such that any $g:X \rightarrow X$ satisfying $fg=f$ is an isomorphism.

The morphism $f$ is called an \textit{$\mathcal X$-monomorphism} if $\mathcal C(f,X)$ is an epimorphism in the category of abelian groups for each $X \in \mathcal X$. Using this definition, one can define the notions of $\mathcal{X}$-preenvelope, $\mathcal{X}$-reflection and $\mathcal{X}$-envelope in a dual fashion.

The class $\mathcal{X}$ is called \textit{precovering} (resp. \textit{preenveloping}) if every object in $\mathcal{C}$ has an $\mathcal{X}$-precover (resp. $\mathcal{X}$-preenvelope). It is well known that every precovering subcategory closed under taking direct summands is closed under taking any direct sum of objects that exists in $\mathcal{C}$. Moreover, $\mathcal X$ is (co)reflective if and only if every object of $\mathcal C$ has a (co)reflection.

We will need the following standard known fact for (pre)covers (e.g., see \cite[Theorem 1.2.7]{Xu} in module categories):

\begin{lemma} \label{lem.cover-directsummand-of-precover}
Let $\mathcal{A}$ be an abelian category, and let $\X$ be a class of objects of $\mathcal{A}$. Let $p_A:X_A\longrightarrow A$ and $q_A:\hat{X}_A\longrightarrow A$ be an $\X$-precover and an $\X$-cover, respectively, of an object $A$. Then, any morphism $f:X_A\longrightarrow\hat{X}_A$ such that $q_A f=p_A$ is a retraction. Moreover, there exists a subobject $X'_A$ of $X_A$ and an isomorphism $\varphi:X'_A \rightarrow \hat X_A$ such that $X_A=X'_A \coprod \Ker f$ and $p_A=q_A \varphi \pi$, where $\pi:X_A \rightarrow X_A'$ is the projection associated to this decomposition.
\end{lemma}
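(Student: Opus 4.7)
The plan is to exploit the defining uniqueness property of the cover $q_A$ to produce a section of $f$, and then use standard splitting arguments in the abelian (really, additive with split idempotents) setting to extract the claimed direct sum decomposition.

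First I would invoke the $\X$-precover property of $p_A$ applied to the morphism $q_A:\hat{X}_A\to A$ (using that $\hat{X}_A\in\X$) to obtain a morphism $g:\hat{X}_A\to X_A$ with $p_A g=q_A$. Composing with the assumed identity $q_Af=p_A$ gives $q_A(fg)=p_Ag=q_A$, so by the defining property of the cover $q_A$ the endomorphism $fg\in\End(\hat{X}_A)$ is an isomorphism. Setting $s:=g(fg)^{-1}$ yields $fs=1_{\hat{X}_A}$, which shows that $f$ is a (split) retraction; this handles the first claim.

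Next I would produce the decomposition. Since $fs=1_{\hat{X}_A}$, the endomorphism $e:=sf:X_A\to X_A$ is idempotent with $1-e$ an idempotent landing in $\Ker f$. A direct check gives $\Img(e)=\Img(s)$ and $X_A=\Img(s)\coprod\Ker f$. Put $X'_A:=\Img(s)$, let $\pi:X_A\to X'_A$ be the projection associated to this decomposition, and define $\varphi:=f_{|X'_A}:X'_A\to\hat{X}_A$. The map $s$ corestricts to a morphism $\bar s:\hat{X}_A\to X'_A$; from $fs=1$ and $s\bar s=s$ one reads off $\varphi\bar s=1_{\hat{X}_A}$ and $\bar s\varphi=1_{X'_A}$, so $\varphi$ is an isomorphism. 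Finally, since $f$ vanishes on $\Ker f$, one has $f=\varphi\pi$, and therefore $p_A=q_Af=q_A\varphi\pi$, as required.

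There is no real obstacle here: the only slightly delicate point is checking that $s$ actually corestricts to an isomorphism $X'_A\cong\hat{X}_A$ so that $\varphi$ is its inverse; this is immediate from $fs=1_{\hat{X}_A}$ together with the definition $X'_A=\Img(s)$. Everything else is a direct computation with the split idempotent $sf$.
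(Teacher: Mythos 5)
Your proof is correct, and it is exactly the standard argument that the paper implicitly relies on (the lemma is stated there as a known fact without proof): use the precover property of $p_A$ to get $g$ with $p_Ag=q_A$, use the cover property of $q_A$ to see that $fg$ is an isomorphism, and then split the idempotent $e=sf$ with $s=g(fg)^{-1}$ to obtain $X_A=\Img(s)\coprod\Ker f$ and the factorization $p_A=q_A\varphi\pi$. All the auxiliary verifications (that $\Ker(sf)=\Ker f$ because $fsf=f$, that $\Img(sf)=\Img(s)$, and that $f=\varphi\pi$) go through as you indicate.
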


\subsection{Complete cotorsion pairs}

Let $\mathcal A$ be an abelian category. Given a class $\mathcal X$ of objects of $\mathcal A$, its right and left perpendicular class is
\begin{displaymath}
\mathcal X^{\perp}=\{Y \in \mathcal A \mid \Ext^1(X,Y)=0, \, \forall X \in \mathcal X\} \textrm{ and } {^\perp}{\mathcal X}=\{Y \in \mathcal A \mid \Ext^1(Y,X)=0, \, \forall X \in \mathcal X\},
\end{displaymath}
respectively. A \textit{cotorsion pair} in $\mathcal A$ is a pair of classes of objects of $\mathcal A$, $(\mathcal X,\mathcal Y)$, such that $\mathcal X^\perp = \mathcal Y$ and ${^\perp}{\mathcal Y}=\mathcal X$. The cotorsion pair is called \textit{complete} if for any object $A$ of $\mathcal A$, there exist short exact sequences
\begin{displaymath}
0 \rightarrow Y \rightarrow X \xrightarrow{f} A \rightarrow 0 \textrm{ and } 0 \rightarrow A \xrightarrow{g} Y' \rightarrow X' \rightarrow 0
\end{displaymath}
with $X,X' \in \mathcal X$ and $Y,Y' \in \mathcal Y$. Notice that $f$ is an $\mathcal X$-precover and $g$ is a $\mathcal Y$-preenvelope, so that a complete cotorsion pair gives a precovering and a preenveloping class.

\subsection{Trace and reject} \label{sec.trace-reject}

Let $\mathcal{A}$ be an AB3 abelian category and $\mathcal{X}$ a class of objects of $\mathcal{A}$. We use the following notation: 
${\rm Sum}(\X)$ (${\rm sum}(\X)$), $\Prod(\mathcal X)$, ${\rm Gen}(\mathcal{X})$ and ${\rm Pres}(\X)$ for the subcategories of $\mathcal{A}$ consisting, respectively, of (finite) coproducts of objects from $\X$, products of objects from $\mathcal X$, of  $\mathcal{X}$-generated objects, i.e. objects that are epimorphic image of coproducts of objects in $\X$,  and of $\X$-presented objects, i.e. objects that are isomorphic to cokernels of morphisms in ${\rm Sum}(\X)$. 

\begin{definition}\label{d:Trace}
Let $\mathcal{A}$ be an AB3 abelian category, $\mathcal{X}$
be a class of objects and $A$ be any object of $\mathcal{A}$. The \emph{trace} of
$\mathcal{X}$ in $A$, denoted $\tr_\mathcal{X}(A)$, is a subobject of $A$
satisfying the following two properties:

\begin{enumerate}
\item $\tr_\mathcal{X}(A)$ is $\mathcal{X}$-generated;

\item If $B$ is any $\mathcal{X}$-generated subobject of $A$, then
$B\subseteq \tr_\mathcal{X}(A)$.
\end{enumerate}
\end{definition}

In general $\tr_\mathcal{X}(A)$ may not exist. However, we have:

\begin{lemma}\label{l:Existence_trace}
In the situation of the last definition, if one of the following
conditions holds, then $\tr_\mathcal{X}(A)$ exists for all
objects $A$ in $\mathcal{A}$:

\begin{enumerate}
\item $\mathcal{X}$ is precovering. Then $\tr_\mathcal{X}(A)$ is the image of any $\mathcal{X}$-precover of $A$.

\item There is a set $\mathcal{X}_0\subseteq\mathcal{X}$ such that
$\mathcal{X}\subseteq{\rm Gen}(\mathcal{X}_0)$.

\item $\mathcal{A}$ is locally small, i.e. the subobjects of any object form a set (e.g., when $\mathcal{A}$ has a set of generators, see \cite[Proposition IV.6.6]{Stenstrom}).
\end{enumerate}
\end{lemma}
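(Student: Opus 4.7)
The plan is to produce, in each of the three cases, an explicit morphism $p$ in $\mathcal{A}$ whose image realizes $\tr_{\mathcal{X}}(A)$. The two properties of Definition~\ref{d:Trace} then reduce to showing that $\Img(p)$ is $\mathcal{X}$-generated (transparent by construction in each case) and that every $\mathcal{X}$-generated subobject of $A$ factors through $p$; this factorization will come from the universal property of a precover, of a coproduct, or both.

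For case~(1), I would take any $\mathcal{X}$-precover $p\colon X\to A$ and set $\tr_{\mathcal{X}}(A):=\Img(p)$. This is $\mathcal{X}$-generated since it is an epimorphic image of $X\in\mathcal{X}$. Given an $\mathcal{X}$-generated subobject $B\leq A$, realised as the image of some $g\colon\bigoplus_i X_i\to A$ with $X_i\in\mathcal{X}$, each restriction $g|_{X_i}$ factors through $p$ by the precover property, hence so does $g$ by the universal property of the coproduct; thus $B=\Img(g)\subseteq\Img(p)$.

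For case~(2), I would set $\tr_{\mathcal{X}}(A):=\Img(p)$, where
\[
p\colon \bigoplus_{X\in\mathcal{X}_0}\,\bigoplus_{f\in\mathcal{A}(X,A)} X \longrightarrow A
\]
is the canonical evaluation map; this coproduct exists because $\mathcal{X}_0$ is a set and $\mathcal{A}$ is AB3. By construction $\Img(p)$ is $\mathcal{X}_0$-generated, hence $\mathcal{X}$-generated. Given an $\mathcal{X}$-generated subobject $B=\Img(g)$ for $g\colon\bigoplus_i X_i\to A$ with $X_i\in\mathcal{X}$, use the hypothesis $\mathcal{X}\subseteq\Gen(\mathcal{X}_0)$ to choose epimorphisms $\pi_i\colon\bigoplus_j Y_{ij}\twoheadrightarrow X_i$ with $Y_{ij}\in\mathcal{X}_0$. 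Then the composite $g\circ(\bigoplus_i\pi_i)$ has image still equal to $B$ (precomposition with an epimorphism preserves the image), and each of its $Y_{ij}$-components is one of the summand maps of $p$, so the composite factors through $p$; this gives $B\subseteq\Img(p)$.

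For case~(3), let $S$ be the set of $\mathcal{X}$-generated subobjects of $A$ (a set by local smallness) and put $\tr_{\mathcal{X}}(A):=\sum_{B\in S}B$, which exists in $\mathcal{A}$ since $\mathcal{A}$ is AB3. A sum of $\mathcal{X}$-generated subobjects is again $\mathcal{X}$-generated, being the image of the coproduct of the presenting morphisms, and the sum clearly contains every element of $S$, so both properties hold. The only step that genuinely needs care is in case~(2), where one must verify that replacing each $X_i$ by an $\mathcal{X}_0$-generating epimorphism does not shrink the image in $A$; that is precisely the point where the hypothesis $\mathcal{X}\subseteq\Gen(\mathcal{X}_0)$ is actually used, and everything else reduces to routine manipulation of coproducts, images, and precovers in an AB3 abelian category.
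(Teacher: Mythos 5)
Your proof is correct and takes essentially the same route as the paper's, which merely records the three constructions (image of an $\mathcal{X}$-precover in (1), image of the canonical evaluation morphism $\coprod_{X\in\mathcal{X}_0}X^{(\mathcal{A}(X,A))}\to A$ in (2), and the sum of all $\mathcal{X}$-generated subobjects in (3)) without further detail. You simply supply the routine verifications --- factoring componentwise through the precover, replacing each $X_i$ by an $\mathcal{X}_0$-presentation without changing the image, and realizing the sum as an image of a coproduct --- that the paper leaves to the reader.
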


\begin{proof}
Let $A$ be an object of $\mathcal{A}$. Assertion (1) is trivial. If (2) holds, then the trace of $\mathcal{X}_0$ in $A$  is the image of the canonical morphism $\varepsilon_A:\coprod_{X\in\mathcal{X}_0}X^{(\mathcal{A}(X,A))}\to A$, and we have 
 $\tr_{\mathcal{X}}(A)=\tr_{\mathcal{X}_0}(A)$. Finally, if (3) holds, then $\tr_{\mathcal{X}}(A)$ is the sum of all $\mathcal{X}$-generated subobjects of $A$.
\end{proof}

Assuming that the trace $\tr_\mathcal{X}(A)$ exists, for all objects $A\in\mathcal{A}$, one inductively defines an ascending sequence $\tr_\mathcal{X}=:\tr^1_\mathcal{X}\subseteq...\subseteq \tr^n_\mathcal{X}\subseteq...$ of subfunctors of the identity  by the rule 
$\frac{\tr^n_{\mathcal{X}}(A)}{\tr^{n-1}_{\mathcal{X}(A)}}={\rm tr}_{\mathcal{X}}(\frac{A}{{\rm tr}^{n-1}_{\mathcal{X}}(A)})$, for all $A\in\mathcal{A}$,  whenever $n>1$. In this way, if we denote by $(1:{\rm tr}_\mathcal{X}^n)$ the endofunctor of $\mathcal{A}$ that takes $A\rightarrow\frac{A}{{\rm tr}^n_{\mathcal{X}}(A)}$, then we have a natural isomorphism $(1:{\rm tr}_\mathcal{X}^n)\cong (1:{\rm tr}_\mathcal{X})^n:= (1:{\rm tr}_\mathcal{X})\circ\stackrel{n}{.....}\circ (1:{\rm tr}_\mathcal{X})$, for all $n>0$.

We can make the dual considerations with an AB3$^*$ abelian category $\mathcal{A}$. In this case, we use the following notation: ${\rm Prod}(\X)$ (${\rm prod}(\X)$), ${\rm Cogen}(\mathcal{X})$ and ${\rm Copres}(\X)$ are the subcategories of $\mathcal{A}$ consisting of (finite) products of objects from $\X$, $\mathcal{X}$-cogenerated objects and $\X$-copresented objects respectively. One then defines the \emph{reject} of 
$\mathcal{X}$ in an object $A$, denoted ${\rm rej}_{\X}(A)$, as the smallest of the subobjects $B$ of $A$ such that $A/B$ is $\mathcal{X}$-cogenerated. Note that $(1:{\rm rej}_{\X}))(A):=\frac{A}{{\rm rej}_{\X}(A)}$ is precisely the trace of $\mathcal{X}^{op}$ in $A$ within $\mathcal{A}^{op}$. Hence one has a dual of Lemma \ref{l:Existence_trace}, whose statement is left to the reader. Furthermore, when ${\rm rej}_{\X}(A)$ exists for all objects $A\in\mathcal{A}$, one has a descending chain of subfunctors of the identity ${\rm rej}_{\X}=:{\rm rej}_{\X}^1\supseteq ...\supseteq {\rm rej}_{\X}^n\supseteq...$, where ${\rm rej}_{\X}^n={\rm rej}_{\X}\circ\stackrel{n}{....}{\rm rej}_{\X}$, for all $n>0$. Then $ {\rm rej}_{\X}^n$ and $((1:{\rm tr}_\mathcal{X}^n))$ are dual concepts, for all $n>0$.

\section{A general criterion for reflectivity and coreflectivity} \label{sec.main theorem}


Let $\mathcal{C}$ be a category and $\mathcal{B}$ a coreflective subcategory of $\mathcal{C}$. It is well-known that $\mathcal{B}$ is precovering since, if $q$ is the right adjoint of the inclusion functor $i$ and $\nu$ is the counit, then $\nu_M:q(M) \rightarrow M$ is a  $\mathcal{B}$-coreflection (see \cite[Theorem IV.2]{MacLane98}, where the author uses the terminology \textit{universal arrow}), whence a $\mathcal{B}$-precover, for any object $M$ of $\mathcal{C}$. Hence every (co)reflective subcategory of an additive category $\mathcal{C}$ is preenveloping (precovering) in $\mathcal{C}$, but the converse is not true in general. For example, in \cite[Theorem A]{Holm}, it is shown that over a local Cohen-Macaulay ring $R$ with a dualizing module, the class $MCM(R)$ of finitely generated maximal Cohen-Macaulay modules is preenveloping in the abelian category ${\rm mod}(R)$ (which is neither complete, nor cocomplete). However, by \cite[Theorem B]{Holm}, the subcategory $MCM(R)\subseteq {\rm mod}(R)$ is reflective if and only if the Krull dimension of $R$ is less or equal than 2. On the other hand, the classes of injective, fp-injective and finite injective-dimensional right $R$-modules over a ring $R$ are always preenveloping (see \cite[Theorems 4.1.6. and 4.1.7]{GobelTrlifaj}), but they are reflective if and only if the ring is semisimple, von Neumann regular and has finite right global dimension, respectively \cite[Proposition 4.1]{RSdV}. Dually, the classes of projective, flat and finite projective-dimensional right $R$-modules are precovering for any ring $R$ (see \cite[Theorems 4.1.1 and 4.1.12]{GobelTrlifaj}), but they are coreflective if and only if $R$ is semisimple, von Neumann regular and has finite right global dimension, respectively \cite[Proposition 4.1]{RSdV}.

In the following results, we find the conditions to be added to a precovering (preenveloping) subcategory to be coreflective (reflective).

\begin{theorem}\label{t:CharacterizationCoreflective}
Let $\mathcal{C}$ be an additive category such that every morphism has a pseudokernel and a pseudocokernel. Let $\mathcal{B}$ be a subcategory of $\mathcal{C}$. Consider the following sentences:
\begin{enumerate}
\item $\mathcal{B}$ is a coreflective subcategory of $\mathcal{C}$.

\item $\mathcal{B}$ is precovering, closed under taking direct summands and every morphism in $\mathcal{B}$ has a pseudocokernel in $\mathcal{C}$ which belongs to $\mathcal{B}$.
\end{enumerate}
Then $(1)\Longrightarrow (2)$. If $\mathcal{C}$ has split idempotents, then $(2)\Longrightarrow (1)$ as well.
\end{theorem}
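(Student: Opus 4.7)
The plan is to handle the two directions separately; the forward direction $(1)\Rightarrow(2)$ is essentially formal, while the converse is where split idempotents must be used.

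For $(1)\Rightarrow(2)$, assume the inclusion $i\colon\mathcal{B}\hookrightarrow\mathcal{C}$ has right adjoint $q$ with counit $\nu$. The component $\nu_M\colon q(M)\to M$ is known to be a $\mathcal{B}$-coreflection, hence in particular a $\mathcal{B}$-precover, so $\mathcal{B}$ is precovering. For closure under direct summands, recall from Subsection~\ref{ss:reflective-coreflective} that coreflective subcategories are closed under every colimit that exists in $\mathcal{C}$; since a direct summand $X$ of $B\in\mathcal{B}$ in $\mathcal{C}$ can be realized as the coequalizer of the identity $1_B$ with the idempotent $e\colon B\to B$ corresponding to the summand, we conclude $X\in\mathcal{B}$. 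For the pseudocokernel condition, given $f\colon B\to B'$ in $\mathcal{B}$, pick any pseudocokernel $c\colon B'\to C$ in $\mathcal{C}$ and use the coreflection $\nu_C\colon q(C)\to C$ to lift $c$ to $\tilde c\colon B'\to q(C)$ with $\nu_C\tilde c=c$. The relation $\tilde c f=0$ then follows by injectivity of the bijection $\mathcal{B}(B,q(C))\cong\mathcal{C}(B,C)$, and the factorization property of $\tilde c$ is inherited from that of $c$ by post-composing each factorization with $\nu_C$; this exhibits $\tilde c$ as a pseudocokernel of $f$ in $\mathcal{C}$ whose target lies in $\mathcal{B}$.

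For $(2)\Rightarrow(1)$, assuming additionally that $\mathcal{C}$ has split idempotents, the plan is to build a $\mathcal{B}$-coreflection of each $M\in\mathcal{C}$ through a specific construction. Start with a $\mathcal{B}$-precover $p\colon X\to M$, choose a pseudokernel $k\colon K\to X$ of $p$ in $\mathcal{C}$, then a $\mathcal{B}$-precover $p_K\colon X_K\to K$. Set $f=kp_K\colon X_K\to X$, a morphism in $\mathcal{B}$, and by (2) take a pseudocokernel $\pi\colon X\to Y$ of $f$ in $\mathcal{C}$ with $Y\in\mathcal{B}$. Since $pf=0$, there is $\bar p\colon Y\to M$ with $\bar p\pi=p$. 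The claim is that $\bar p$ is a $\mathcal{B}$-coreflection of $M$, i.e.\ $\bar p_*\colon\mathcal{B}(B,Y)\to\mathcal{C}(B,M)$ is bijective for every $B\in\mathcal{B}$. Surjectivity is easy: if $g\colon B\to M$, lift through $p$ to $h\colon B\to X$, then $\pi h$ hits $g$ under $\bar p_*$.

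The decisive step is injectivity: assuming $g\colon B\to Y$ with $\bar p g=0$, I must show $g=0$. The plan is to form a pseudokernel $\ell\colon L\to Y$ of $\bar p$ in $\mathcal{C}$, write $g=\ell g'$, take a $\mathcal{B}$-precover $p_L\colon X_L\to L$, and factor $g'=p_L g''$, reducing everything to proving $\ell p_L=0$. Here is where split idempotents enter: one packages $\pi$ and $\ell p_L$ into the matrix morphism $(\pi,\ell p_L)\colon X\oplus X_L\to Y$, whose composition with $\bar p$ equals $(p,0)=p\circ\mathrm{pr}_X$, and then feeds this data back into the pseudocokernel construction applied to the enlarged precover $(p,0)\colon X\oplus X_L\to M$. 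Combining the universal property of $\pi$ with a second application of (2) produces an endomorphism of $Y$ which, after an idempotent correction and invoking split idempotents in $\mathcal{C}$, splits off a direct summand $Y''$ of $Y$ (belonging to $\mathcal{B}$ by closure under summands) on which $\ell p_L$ lands and is killed, forcing $\ell p_L=0$.

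The main obstacle I anticipate is precisely this injectivity step. A pseudocokernel is only a weak cokernel, so $\pi$ is not in general an epimorphism in $\mathcal{B}$; the factorization through $\pi$ coming from the pseudocokernel property is not unique, and without the split idempotent hypothesis one cannot bridge the gap between morphisms landing in $Y$ and morphisms landing in $X$. The art of the proof is exactly in using split idempotents to manufacture the right idempotent endomorphism of $Y$ so that Yoneda-style considerations force $\ell p_L=0$; the remainder of the argument is routine bookkeeping with the universal properties of pseudokernels, precovers and pseudocokernels.
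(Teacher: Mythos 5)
Your argument for $(1)\Rightarrow(2)$ is correct and close to the paper's: the only real difference is that you obtain closure under direct summands by realizing a summand as the coequalizer of $1_B$ and the corresponding idempotent (a colimit that exists in $\mathcal{C}$ and is formed from a diagram in $\mathcal{B}$), whereas the paper observes directly that $\mathcal{B}$ consists of the objects at which the counit is invertible; both work. The problem is in $(2)\Rightarrow(1)$: the morphism $\bar p\colon Y\to M$ you construct is \emph{not} a $\mathcal{B}$-coreflection in general, and the statement you reduce injectivity to, namely $\ell p_L=0$, is false. A pseudocokernel is only determined up to superfluous direct summands: if $\pi_0\colon X\to Y_0$ is a pseudocokernel of $f=kp_K$ with $Y_0\in\mathcal{B}$, then for any nonzero $Z\in\mathcal{B}$ the composite of $\pi_0$ with the inclusion $Y_0\to Y_0\oplus Z$ is again a pseudocokernel of $f$ with target in $\mathcal{B}$ (recall that $\mathcal{B}$, being precovering and closed under summands, is closed under finite coproducts). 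Hypothesis (2) only guarantees that \emph{some} pseudocokernel lies in $\mathcal{B}$, so your proof must work for this choice. Choosing $\bar p$ to vanish on $Z$, the inclusion $\iota_Z\colon Z\to Y_0\oplus Z$ is a nonzero morphism from an object of $\mathcal{B}$ with $\bar p\,\iota_Z=0$; it factors through the pseudokernel $\ell$ of $\bar p$ and then through the precover $p_L$, so $\ell p_L\neq 0$ and $\bar p_*$ fails to be injective on $\mathcal{B}(Z,Y)$. Since coreflections are unique up to isomorphism, no bookkeeping can make both $Y_0$ and $Y_0\oplus Z$ serve as the coreflection domain: the object must be cut down, and your sketch never does this.

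That cutting-down is precisely where split idempotents enter in the paper, but the idempotent lives on the domain $X$ of the original precover, not on $Y$. Since $p$ is a $\mathcal{B}$-precover and $Y\in\mathcal{B}$, one writes $\bar p=p\alpha$ for some $\alpha\colon Y\to X$ and sets $e=\alpha\pi\colon X\to X$. The key fact is: any $t\colon D\to X$ with $D\in\mathcal{B}$ and $pt=0$ satisfies $et=0$, because $t$ factors through the pseudokernel $k$, the resulting map factors through the precover $p_K$, and $\pi k p_K=\pi f=0$. Applied to $t=1_X-e$ (legitimate since $p(1_X-e)=p-\bar p\pi=0$), this shows $e$ is idempotent; splitting $e=vu$ with $uv=1_E$ yields a summand $E\in\mathcal{B}$ of $X$, and $pv\colon E\to M$ is the coreflection --- existence of factorizations comes from $pe=p$, and uniqueness from the key fact applied to the difference of two lifts composed with $v$, together with $v$ being a split monomorphism. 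If you insist on cutting down $Y$ instead, the idempotent you need is $(\pi\alpha)^2$ (one checks $(\pi\alpha)^3=(\pi\alpha)^2$), which again requires the key fact above; it is not produced by the pair $(\pi,\ell p_L)$ and the enlarged precover $X\oplus X_L\to M$ as you propose.
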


\begin{proof}
$(1)\Longrightarrow (2)$ We follow the notation of the preceding paragraph: $q$ is the right adjoint of the inclusion $i:\mathcal{B} \rightarrow \mathcal{C}$, $\nu$ is the counit and $\eta$ the unit, and recall that $\eta$ is a natural isomorphism. Then, $\mathcal{B}$ consists precisely of the objects $B\in\mathcal{C}$ such that $\nu_B:q(B)\rightarrow B$ is an isomorphism. From this and the fact that $i$ and $q$ are additive functors (see Subsection \ref{ss:reflective-coreflective}), it is clear that $\mathcal{B}$ is closed under taking direct summands. 

On the other hand, as we have seen before, $\mathcal{B}$ is precovering. Let now $f:B \rightarrow B'$ be a morphism in $\mathcal{B}$ and let us prove that $f$ has a pseudocokernel in $\mathcal{C}$ belonging to $\mathcal{B}$. Take $c:B' \rightarrow C$ a pseudocokernel of $f$ in $\mathcal{C}$, which exists by assumption. We claim that $q(c)\nu_{B'}^{-1}:B'\rightarrow q(C)$ is also a pseudocokernel of $f$ in $\mathcal{C}$, which will end the proof of the implication. Indeed,  we have that $q(c)\nu_{B'}^{-1} f=q(c) q(f)\nu_B^{-1}=q(c f)\nu_B^{-1}=0$, due to the naturality of $\nu$ and the previous paragraph.  Let $g:B'\rightarrow A$ be a morphism in $\mathcal{C}$ such that $g f=0$ and, by the pseudocokernel condition of $c$, choose $h:C \rightarrow A$ with $h c=g$. We then have that $g\nu_{B'}=\nu_A q(g)$, which implies that $g=\nu_A q(g)\nu_{B'}^{-1}=\nu_A q(h) q(c) \nu_{B'}^{-1},$ so that $g$ factors through $q(c) \nu_{B'}^{-1}$.

$(2)\Longrightarrow (1)$ By \cite[IV.Theorem 2]{MacLane98} we only have to check that each $A \in \mathcal{C}$ has a $\mathcal{B}$-coreflection.

Take, using the assumption, a $\mathcal{B}$-precover $f:B \rightarrow A$ of $A$. Let $k:K \rightarrow B$ be a pseudokernel of $f$ in $\mathcal{C}$, which exists by hypothesis, and fix a $\mathcal{B}$-precover of $K$, $g:B' \rightarrow K$. Moreover, let $c:B \rightarrow C$ be a pseudocokernel of $kg$ in $\mathcal{C}$ such that $C$ belongs to $\mathcal{B}$. We can construct the following commutative diagram:
\begin{displaymath}
\begin{tikzcd}
 & B' \arrow{r}{kg} \arrow{d}{g} & B \arrow[equal]{d} \arrow{r}{c} & C \arrow{r} \arrow{d}{h} & 0\\
 0 \arrow{r} & K \arrow{r}{k} & B \arrow{r}{f} & A & 
\end{tikzcd}
\end{displaymath}
Since $f$ is a $\mathcal{B}$-precover, there exists $a:C \rightarrow B$ with $fa=h$. Then $fac = f$, hence, $f(1_A-ac)=0$. Now we prove:
\begin{equation}\label{e:Fact}\tag{*}
D \in \mathcal{B}, t:D \rightarrow B, ft=0 \Rightarrow act=0.
\end{equation}
Let $t:D \rightarrow B$ be a morphism with $D \in \mathcal{B}$ and $ft=0$. Then there exists $b:D \rightarrow K$ with $kb=t$. Since $g$ is a $\mathcal{C}$-precover, there exists $d:D \rightarrow B'$ with $gd=b$. Then $act=ackb=ackgd=0$.

As a consequence of (\ref{e:Fact}), $ac(1-ac)=0$, that is, $e=ac$ is an idempotent. Now use that idempotents split in $\mathcal{C}$ to get a factorization of $e$, 
\begin{displaymath}
\begin{tikzcd}
B \arrow{r}{u} & E \arrow{r}{v} & B
\end{tikzcd}
\end{displaymath}
with $uv=1_E$. Since $\mathcal{B}$ is closed under taking direct summands, $E \in \mathcal{B}$. 

We claim that $fv$ is a $\mathcal B$-coreflection of $A$. First, notice that $fv$ is a $\mathcal{B}$-precover: given $t:D \rightarrow A$ with $D \in \mathcal{B}$, there exists $t':D \rightarrow B$ with $ft'=t$. Since $fe=f$, we actually have $fvut'=t$, which says that $fv$ is a $\mathcal{B}$-precover. Now, we have to see that $ut'$ is the unique morphism satisfying $fvut'=t$. Take any other morphism $s:D \rightarrow A$ with $fvs=t$. Then $fv(ut'-s)=0$ and, by (\ref{e:Fact}), $ev(ut'-s)=0$. This means that $v(ut'-s)=0$, so that $ut'-s=0$ since $v$ is a monomorphism.
\end{proof}

We have the dual result too:

\begin{theorem} \label{t:CharacterizationReflective}
Let $\mathcal{C}$ be an additive category such that every morphism has a pseudokernel and a pseudocokernel. Let $\mathcal{B}$ be a subcategory of $\mathcal{C}$. Consider the following sentences:
\begin{enumerate}
\item $\mathcal{B}$ is a reflective subcategory of $\mathcal{C}$.

\item $\mathcal{B}$ is preenveloping, closed under taking direct summands and every morphism in $\mathcal{B}$ has a pseudokernel in $\mathcal{C}$ which belongs to $\mathcal{B}$.
\end{enumerate}
Then $(1)\Longrightarrow (2)$. If $\mathcal{C}$ has split idempotents, then $(2)\Longrightarrow (1)$ as well.
\end{theorem}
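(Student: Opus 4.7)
The plan is to mirror the proof of Theorem \ref{t:CharacterizationCoreflective} step by step, reversing the direction of all morphisms and swapping the roles of pseudokernels and pseudocokernels, of precovers and preenvelopes, and of the right adjoint $q$ with its dualized analogue, a left adjoint $p:\mathcal{C}\to\mathcal{B}$.

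For $(1)\Rightarrow(2)$, let $p$ be the left adjoint of the inclusion, with unit $\eta$ and counit $\epsilon$. Then $\epsilon$ is a natural isomorphism and $\mathcal{B}$ consists precisely of those objects $B$ for which $\eta_B:B\to p(B)$ is an isomorphism; additivity of $p$ plus this description give closure under direct summands, while $\eta_A$ is a $\mathcal{B}$-preenvelope of every $A\in\mathcal{C}$. For the pseudokernel condition, fix $f:B\to B'$ in $\mathcal{B}$ and a pseudokernel $k:K\to B$ of $f$ in $\mathcal{C}$. I claim $\eta_B^{-1}p(k):p(K)\to B$ is a pseudokernel of $f$ in $\mathcal{C}$ whose source lies in $\mathcal{B}$. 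By naturality $f\eta_B^{-1}=\eta_{B'}^{-1}p(f)$, so $f\cdot\eta_B^{-1}p(k)=\eta_{B'}^{-1}p(fk)=0$; and if $g:A\to B$ satisfies $fg=0$, then $g=kh$ for some $h:A\to K$ by the pseudokernel property of $k$, whence $g=\eta_B^{-1}p(g)\eta_A=\bigl[\eta_B^{-1}p(k)\bigr]\cdot\bigl[p(h)\eta_A\bigr]$.

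For $(2)\Rightarrow(1)$, assume split idempotents and fix $A\in\mathcal{C}$. I construct the $\mathcal{B}$-reflection by the dual zigzag: choose a $\mathcal{B}$-preenvelope $f:A\to B$, a pseudocokernel $c:B\to C$ of $f$ in $\mathcal{C}$, a $\mathcal{B}$-preenvelope $g:C\to B'$ of $C$, and, using the hypothesis on the morphism $gc$ of $\mathcal{B}$, a pseudokernel $k:K\to B$ of $gc$ in $\mathcal{C}$ with $K\in\mathcal{B}$. Since $gcf=g(cf)=0$, the pseudokernel property yields $h:A\to K$ with $kh=f$; since $f$ is a $\mathcal{B}$-preenvelope and $K\in\mathcal{B}$, there exists $a:B\to K$ with $af=h$, so that $e:=ka$ satisfies $ef=f$. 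The key step, dual to the fact (\ref{e:Fact}), is
\begin{equation*}
D\in\mathcal{B},\ t:B\to D,\ tf=0 \quad\Longrightarrow\quad tka=0,
\end{equation*}
which is proved by factoring $t=t'c$ through the pseudocokernel of $f$ and then $t'=t''g$ through the $\mathcal{B}$-preenvelope $g$, so that $tka=t''gcka=0$. Applied to $t=1_B-e$ (which satisfies $(1_B-e)f=0$), this forces $e^2=e$.

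Now split $e=vu$ with $uv=1_E$; then $E\in\mathcal{B}$ by closure under direct summands, and I claim $uf:A\to E$ is a $\mathcal{B}$-reflection. For the preenvelope part, any $t:A\to D$ with $D\in\mathcal{B}$ factors as $t=t'f=t'ef=(t'v)(uf)$. For uniqueness, if $s:E\to D$ also satisfies $s(uf)=t$, then $r:=(s-t'v)u:B\to D$ satisfies $rf=0$, so the boxed implication gives $rka=re=0$; but $ue=uvu=u$, hence $r=re=0$, and since $u$ is a split epimorphism we conclude $s=t'v$. The only nontrivial point of the argument is orienting the initial two-step zigzag $f,c,g,k$ correctly so that the idempotent $e=ka$ lands on $B$ (rather than on $A$), and so that the key implication reads ``$tf=0\Rightarrow te=0$'' with $t$ going \emph{out} of $B$; once this is set up properly, every remaining step is a mechanical dualization of the corresponding step in the proof of Theorem \ref{t:CharacterizationCoreflective}.
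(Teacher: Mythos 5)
Your proof is correct and is exactly what the paper intends: the paper states this theorem with no proof, declaring it the dual of Theorem \ref{t:CharacterizationCoreflective}, and your argument is a faithful, step-by-step dualization of that proof (the zigzag $f,c,g,k$, the idempotent $e=ka$ on $B$, the key implication ``$tf=0\Rightarrow te=0$'' for $t$ out of $B$, and the splitting of $e$ all check out). No gaps.
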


A crucial hypothesis for the equivalences in our last two theorems is that the category $\mathcal{C}$ has split idempotents. For categories $\mathcal{C}$ not having this property, we can consider their idempotent completion $\hat{\mathcal{C}}$. We state now slight generalizations of Theorems \ref{t:CharacterizationCoreflective} and \ref{t:CharacterizationReflective} to the case in which $\mathcal{C}$ does not have split idempotents using this idempotent completion. But first we need a couple of lemmas.

\begin{lemma}\label{l:PseudokernelsIdempotentCompletion}
Let $\mathcal{C}$ be an additive category and $f:(A,e) \rightarrow (B,e')$ be a morphism in $\hat{\mathcal{C}}$. Then:
\begin{enumerate}
\item If $c:K \rightarrow A$ is a pseudokernel of $f$ in $\mathcal{C}$, then $ec:(K,1_K) \rightarrow (A,e)$ is a pseudokernel of $f$ in $\hat{\mathcal{C}}$.  Moreover, $\mathcal{C}$ has pseudokernels if and only if  so does $\hat{\mathcal{C}}$.

\item If $c:B \rightarrow K'$ is a pseudocokernel of $f$ in $\mathcal{C}$, then $ce':(B,e') \rightarrow (K',1_{K'})$ is a pseudocokernel of $f$ in $\hat{\mathcal{C}}$. Moreover, $\mathcal{C}$ has pseudocokernels if and only if so does $\hat{\mathcal{C}}$.
\end{enumerate}
\end{lemma}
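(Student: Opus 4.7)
The plan is to prove assertion (1) by direct bookkeeping with the idempotents and to deduce (2) from (1) applied to $\mathcal{C}^{\mathrm{op}}$, using that $\widehat{\mathcal{C}^{\mathrm{op}}}=(\hat{\mathcal{C}})^{\mathrm{op}}$ and that pseudocokernels in $\mathcal{C}$ are pseudokernels in $\mathcal{C}^{\mathrm{op}}$. The entire argument is a tracking exercise; the one discipline to observe is that for a morphism $\alpha:(A,e)\to(B,e')$ in $\hat{\mathcal{C}}$, the source idempotent absorbs from the right, $\alpha e=\alpha$, and the target idempotent absorbs from the left, $e'\alpha=\alpha$.

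For (1), I would first verify that $ec$ is a genuine morphism $(K,1_K)\to(A,e)$ in $\hat{\mathcal{C}}$, which reduces to the equalities $e^2c=ec$ and $(ec)\cdot 1_K=ec$, and that $f\cdot(ec)=(fe)c=fc=0$ in $\hat{\mathcal{C}}$. Hence $ec$ is at least a candidate pseudokernel of $f$. For the universal property, let $g:(K',e'')\to(A,e)$ be a morphism in $\hat{\mathcal{C}}$ with $fg=0$; applying the pseudokernel property of $c$ in $\mathcal{C}$ yields $h:K'\to K$ with $ch=g$. The proposed factorization in $\hat{\mathcal{C}}$ is $he'':(K',e'')\to(K,1_K)$. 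That $he''$ is a morphism in $\hat{\mathcal{C}}$ with the stated endpoints is immediate from $(e'')^2=e''$, and
\[
(ec)(he'')=e(ch)e''=ege''=g,
\]
where the last two equalities use the absorption identities $eg=g$ (since $g$ ends in $(A,e)$) and $ge''=g$ (since $g$ starts at $(K',e'')$).

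For the ``moreover'' in (1), one implication is already contained in the construction above, applied to arbitrary morphisms in $\hat{\mathcal{C}}$. For the converse, given $f:A\to B$ in $\mathcal{C}$ I would view it as a morphism $(A,1_A)\to(B,1_B)$ in $\hat{\mathcal{C}}$, take a pseudokernel $c:(K,e)\to(A,1_A)$ there, and argue that the underlying morphism $c:K\to A$ is already a pseudokernel of $f$ in $\mathcal{C}$: any test morphism $g:K'\to A$ in $\mathcal{C}$ with $fg=0$ defines a morphism $(K',1_{K'})\to(A,1_A)$ in $\hat{\mathcal{C}}$ annihilating $f$, and its factorization through $c$ in $\hat{\mathcal{C}}$ is just a morphism $K'\to K$ in $\mathcal{C}$ with the desired property. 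I do not anticipate any real obstacle; the only place that requires care is the asymmetric role of the source and target idempotents when composing in $\hat{\mathcal{C}}$.
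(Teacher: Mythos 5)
Your proposal is correct and follows essentially the same route as the paper: direct verification of the universal property of $ec$ via the factorization $he''$ (using the same absorption identities $eg=g=ge''$), the same two-step argument for the ``moreover'' clause, and deduction of (2) from (1) by duality.
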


\begin{proof}
We just need to prove (1), and (2) will follow by duality. Since $e$ is idempotent $ec$  is a morphism in $\hat{\mathcal{C}}$. Moreover, $fec=fc=0$. Now, given $g:(K',e'') \rightarrow (A,e)$ with $fg=0$, there exists $h:K' \rightarrow K$ with $ch=g$. Then $he'':(K',e'')\rightarrow (K,1_K)$ is a morphism in $\hat{\mathcal{C}}$ and $eche''=ege'' = g$ since $eg=g=ge''$. Then, $ec:(K,1) \rightarrow (A,e)$ is a pseudokernel of $f$ in $\hat{\mathcal{C}}$.  
 
The last paragraph also proves the `only if' part of the last statement. Conversely, if $\hat{\mathcal{C}}$ has pseudokernels and $f:A\rightarrow B$ is any morphism in $\mathcal{C}$, and we take a pseudokernel $c:(K,e)\rightarrow (A,1_A)$ of $f:(A,1_A)\rightarrow (B,1_B)$ in $\hat{\mathcal{C}}$, then $c:K\rightarrow A$ is a pseudokernel of $f$ in $\mathcal{C}$.
\end{proof}

In the rest of the section we allow a certain abuse of notation. Given a (not necessarily additive) subcategory $\mathcal{B}$ of $\mathcal{C}$, we shall denote by $\hat{\mathcal{B}}$ the subcategory of $\hat{\mathcal{C}}$ that consists of the objects $(B,e)$ such that $B\in\mathcal{B}$. We call it the idempotent completion of $\mathcal{B}$.

\begin{lemma}\label{l:PrecoveringIdempotentCompletion}
Let $\mathcal{C}$ be an additive category and $\mathcal{B}$ be a subcategory. The following hold:
\begin{enumerate}
\item  $\mathcal{B}$ is precovering in $\mathcal{C}$ if and only if so is $\hat{\mathcal{B}}$  in $\hat{\mathcal{C}}$.

\item $\mathcal{B}$ is preenveloping in $\mathcal{C}$ if and only if so is $\hat{\mathcal{B}}$ in $\hat{\mathcal{C}}$.
\end{enumerate}
\end{lemma}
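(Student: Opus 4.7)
The plan is to prove part (1) directly and obtain part (2) by dualizing the argument verbatim (the notions of precovering and preenveloping are formally dual, and the idempotent completion $\hat{\mathcal{C}}$ is self-dual in the obvious sense, so no extra work is needed once (1) is settled).

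For the forward direction of (1), the plan is as follows: given any object $(A,e)\in\hat{\mathcal{C}}$, start from a $\mathcal{B}$-precover $p\colon B\to A$ in $\mathcal{C}$, which exists by hypothesis. I would then consider the composite $ep\colon B\to A$, which defines a morphism $(B,1_B)\to(A,e)$ in $\hat{\mathcal{C}}$ because $e(ep)=ep$ and $(ep)\cdot 1_B=ep$. To check this is a $\hat{\mathcal{B}}$-precover, take any morphism $f\colon(B',e')\to(A,e)$ with $(B',e')\in\hat{\mathcal{B}}$, i.e.\ $B'\in\mathcal{B}$ and $ef=f=fe'$. The precovering property of $p$ in $\mathcal{C}$ produces $g\colon B'\to B$ with $pg=f$, and I would propose $ge'\colon(B',e')\to(B,1_B)$ as the lift: it is a morphism in $\hat{\mathcal{C}}$ (both composability identities being immediate), and $(ep)(ge')=epge'=efe'=fe'=f$, using $ef=f=fe'$.

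For the converse direction of (1), given $A\in\mathcal{C}$, I would apply the hypothesis to the object $(A,1_A)\in\hat{\mathcal{C}}$ to obtain a $\hat{\mathcal{B}}$-precover $f\colon(B,e)\to(A,1_A)$. By definition of $\hat{\mathcal{B}}$, the first coordinate $B$ already lies in $\mathcal{B}$, and $f$ is a genuine morphism $B\to A$ in $\mathcal{C}$. To see $f$ is a $\mathcal{B}$-precover in $\mathcal{C}$, take any $g\colon B'\to A$ with $B'\in\mathcal{B}$; viewing $g$ as a morphism $(B',1_{B'})\to(A,1_A)$ in $\hat{\mathcal{C}}$ and using that $(B',1_{B'})\in\hat{\mathcal{B}}$, the precovering property of $f$ in $\hat{\mathcal{C}}$ yields $h\colon(B',1_{B'})\to(B,e)$ with $fh=g$, and $h$ is nothing but a morphism $B'\to B$ in $\mathcal{C}$, so $g$ factors through $f$ as required.

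The main obstacle is merely bookkeeping: at each step one must make sure that the morphisms constructed on the underlying category $\mathcal{C}$ actually respect the idempotents on the source and target so as to define honest morphisms in $\hat{\mathcal{C}}$, and conversely that morphisms produced in $\hat{\mathcal{C}}$ do give the desired factorizations back in $\mathcal{C}$. The symmetry $ef=f=fe'$ for morphisms in $\hat{\mathcal{C}}$ is precisely what makes the choices $ep$ and $ge'$ above work, and the dual bookkeeping (using $e'f=f=fe$ on the target side) handles part (2).
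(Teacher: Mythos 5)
Your proof is correct and follows essentially the same route as the paper: the key construction in the forward direction is the same morphism $ep\colon (B,1_B)\to (A,e)$ built from a $\mathcal{B}$-precover $p$ of $A$, and the converse and part (2) are handled exactly as in the paper (specialization to $(A,1_A)$ and duality, respectively). You have merely written out the idempotent bookkeeping that the paper dismisses as "easy to see", and your verification is accurate.
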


\begin{proof} Assertion (2) follows from (1) by duality, so we just prove the latter. The `if' part of the statement is clear. For the `only if' part, take an object $(A,e)$ of $\hat{\mathcal{C}}$ and let $f:B \rightarrow A$ be a $\mathcal{B}$-precover of $A$ in $\mathcal{C}$. It is easy to see that $ef:(B,1_B)\rightarrow (A,e)$ is an $\hat{\mathcal{B}}$-precover in $\hat{\mathcal{C}}$. 
\end{proof}

Now we prove the aforementioned extensions of Theorems \ref{t:CharacterizationCoreflective} and \ref{t:CharacterizationReflective} to additive categories without split idempotents.

\begin{corollary}\label{c:CoreflectiveIdempotentCompletion}
Let $\mathcal{C}$ be an additive category with pseudokernels and pseudocokernels, and $\mathcal{B}$ be a subcategory of $\mathcal{C}$. Then, the following are equivalent:
\begin{enumerate}
\item The idempotent completion $\hat{\mathcal{B}}$ of $\mathcal{B}$ is a coreflective subcategory of $\hat{\mathcal{C}}$.

\item $\mathcal{B}$ is precovering and every morphism in $\mathcal{B}$ has a pseudocokernel in $\mathcal{C}$ that belongs to $\mathcal{B}$.
\end{enumerate}
\end{corollary}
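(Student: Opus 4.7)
The plan is to deduce the corollary from Theorem \ref{t:CharacterizationCoreflective} applied to the pair $(\hat{\mathcal{C}},\hat{\mathcal{B}})$. First I would verify the hypotheses: $\hat{\mathcal{C}}$ has pseudokernels and pseudocokernels by both parts of Lemma \ref{l:PseudokernelsIdempotentCompletion}, and it has split idempotents by construction. Theorem \ref{t:CharacterizationCoreflective} then yields that (1) is equivalent to the conjunction of the following three conditions: $\hat{\mathcal{B}}$ is precovering in $\hat{\mathcal{C}}$, $\hat{\mathcal{B}}$ is closed under direct summands in $\hat{\mathcal{C}}$, and every morphism in $\hat{\mathcal{B}}$ admits a pseudocokernel in $\hat{\mathcal{C}}$ lying in $\hat{\mathcal{B}}$. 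The remaining work is to show that these three conditions together are equivalent to (2).

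The first condition transfers directly by Lemma \ref{l:PrecoveringIdempotentCompletion}(1). The closure of $\hat{\mathcal{B}}$ under direct summands in $\hat{\mathcal{C}}$ is automatic for any subcategory of the form $\hat{\mathcal{B}}$: any direct summand of $(B,e)\in\hat{\mathcal{B}}$ arises from a splitting of an idempotent endomorphism in $\hat{\mathcal{C}}$, hence is isomorphic to some $(B,e'')$ with $B\in\mathcal{B}$, which by definition belongs to $\hat{\mathcal{B}}$. So this condition contributes nothing and can be dropped, matching the absence of such a requirement in (2).

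It remains to match the pseudocokernel conditions. For one direction, starting from (2), given a morphism $f\colon (A,e)\to (B,e')$ in $\hat{\mathcal{B}}$, I would view $f$ as a morphism $f\colon A\to B$ in $\mathcal{B}$, take a pseudocokernel $c\colon B\to K$ of it in $\mathcal{C}$ with $K\in\mathcal{B}$, and use the recipe of Lemma \ref{l:PseudokernelsIdempotentCompletion}(2): the composite $ce'\colon (B,e')\to (K,1_K)$ is then readily checked to be a pseudocokernel of $f$ in $\hat{\mathcal{C}}$, and $(K,1_K)\in\hat{\mathcal{B}}$. Conversely, given the $\hat{\mathcal{C}}$-pseudocokernel condition, for an arbitrary morphism $f\colon A\to B$ in $\mathcal{B}$ I would view it as a morphism $(A,1_A)\to (B,1_B)$ in $\hat{\mathcal{B}}$, take the provided $\hat c\colon (B,1_B)\to (K,\epsilon)$ in $\hat{\mathcal{C}}$ with $(K,\epsilon)\in\hat{\mathcal{B}}$, so that $K\in\mathcal{B}$, and check that forgetting the idempotent data gives a pseudocokernel $\hat c\colon B\to K$ of $f$ in $\mathcal{C}$ with $K\in\mathcal{B}$ (the universal factorization from $\hat{\mathcal{C}}$ with target $(L,1_L)$ produces the required factorization in $\mathcal{C}$).

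The main obstacle is the first half of the pseudocokernel translation: turning a pseudocokernel constructed in $\mathcal{C}$ for the plain morphism $f$ into one in $\hat{\mathcal{C}}$ that respects the arbitrary idempotent source and target structures on $(A,e)$ and $(B,e')$. This is a manipulation of idempotents following the pattern of Lemma \ref{l:PseudokernelsIdempotentCompletion}(2) — one inserts $e'$ on the right of $c$ to obtain a $\hat{\mathcal{C}}$-morphism out of $(B,e')$, and shifts universal factorizations by composing with the target idempotent $\delta$ of any test morphism $g\colon (B,e')\to (L,\delta)$ — but it needs to be carried out carefully enough to ensure the resulting morphism is indeed a pseudocokernel in $\hat{\mathcal{C}}$ of $f\colon (A,e)\to (B,e')$, not merely of the simpler morphism $f\colon (A,1_A)\to (B,1_B)$.
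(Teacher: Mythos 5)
Your proposal is correct and follows essentially the same route as the paper: apply Theorem \ref{t:CharacterizationCoreflective} to $(\hat{\mathcal{C}},\hat{\mathcal{B}})$, observe that $\hat{\mathcal{B}}$ is automatically closed under direct summands, and translate the precovering and pseudocokernel conditions via Lemmas \ref{l:PrecoveringIdempotentCompletion} and \ref{l:PseudokernelsIdempotentCompletion}. The ``main obstacle'' you flag at the end is not actually a gap, since Lemma \ref{l:PseudokernelsIdempotentCompletion}(2) is already stated for an arbitrary morphism $f\colon (A,e)\to (B,e')$ of $\hat{\mathcal{C}}$ and asserts precisely that $ce'$ is a pseudocokernel of that morphism (not merely of $f\colon (A,1_A)\to (B,1_B)$), so the idempotent bookkeeping you describe is exactly what that lemma has already carried out.
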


\begin{proof}
First, notice that $\hat{\mathcal{C}}$ has pseudokernels and pseudocokernels by Lemma \ref{l:PseudokernelsIdempotentCompletion}, and has split idempotents by construction. By Theorem \ref{t:CharacterizationCoreflective}, $\hat{\mathcal{B}}$ is coreflective if and only if it is precovering, closed under taking direct summands and every morphism in $\hat{\mathcal{B}}$ has a pseudocokernel in $\hat{\mathcal{C}}$ which belongs to $\hat{\mathcal{B}}$. Since $\hat{\mathcal{B}}$ is always closed under taking direct summands, this is equivalent to (2) by Lemmas \ref{l:PseudokernelsIdempotentCompletion} and \ref{l:PrecoveringIdempotentCompletion}.
\end{proof}

\begin{corollary}\label{c:ReflectiveIdempotentCompletion}
Let $\mathcal{C}$ be an additive category with pseudokernels and pseudocokernels, and $\mathcal{B}$ be a subcategory of $\mathcal{C}$. Then, the following are equivalent:
\begin{enumerate}
\item The idempotent completion $\hat{\mathcal{B}}$ of $\mathcal{B}$ is a reflective subcategory of $\hat{\mathcal{C}}$.

\item $\mathcal{B}$ is preenveloping and every morphism in $\mathcal{B}$ has a pseudokernel in $\mathcal{C}$ that belongs to $\mathcal{B}$.
\end{enumerate}
\end{corollary}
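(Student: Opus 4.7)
The plan is to prove this corollary by exactly the dual argument used for Corollary~\ref{c:CoreflectiveIdempotentCompletion}, passing from $\mathcal{C}$ to its idempotent completion $\hat{\mathcal{C}}$ and applying Theorem~\ref{t:CharacterizationReflective} there. First, I would observe that $\hat{\mathcal{C}}$ has split idempotents by construction, and that Lemma~\ref{l:PseudokernelsIdempotentCompletion} transfers the existence of pseudokernels and pseudocokernels from $\mathcal{C}$ to $\hat{\mathcal{C}}$. Hence Theorem~\ref{t:CharacterizationReflective} applies to the pair $(\hat{\mathcal{C}},\hat{\mathcal{B}})$: $\hat{\mathcal{B}}$ is reflective in $\hat{\mathcal{C}}$ if and only if it is preenveloping, closed under direct summands, and every morphism in $\hat{\mathcal{B}}$ has a pseudokernel in $\hat{\mathcal{C}}$ belonging to $\hat{\mathcal{B}}$.

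Next I would dispose of each of these three conditions. The closure of $\hat{\mathcal{B}}$ under direct summands in $\hat{\mathcal{C}}$ is automatic from the definition of $\hat{\mathcal{B}}$: any direct summand of $(B,e)$ in $\hat{\mathcal{C}}$ corresponds to a pair $(B,e'')$ with $e''$ an idempotent in $\mathcal{C}(B,B)$, hence lies in $\hat{\mathcal{B}}$. The preenveloping condition transfers back and forth thanks to Lemma~\ref{l:PrecoveringIdempotentCompletion}(2): $\hat{\mathcal{B}}$ is preenveloping in $\hat{\mathcal{C}}$ if and only if $\mathcal{B}$ is preenveloping in $\mathcal{C}$.

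Finally, the pseudokernel condition on morphisms is the step that most needs care, but it follows directly from Lemma~\ref{l:PseudokernelsIdempotentCompletion}(1). In one direction, given $f\colon A\to B$ in $\mathcal{B}$, a pseudokernel $c\colon K\to A$ in $\mathcal{C}$ with $K\in\mathcal{B}$ yields the pseudokernel $c\colon(K,1_K)\to(A,1_A)$ of $f$ in $\hat{\mathcal{C}}$ with $(K,1_K)\in\hat{\mathcal{B}}$; by the closure of $\hat{\mathcal{B}}$ under direct summands, arbitrary morphisms in $\hat{\mathcal{B}}$ (viewed as morphisms in $\mathcal{C}$) then have pseudokernels in $\hat{\mathcal{C}}$ belonging to $\hat{\mathcal{B}}$. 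Conversely, given $f\colon A\to B$ in $\mathcal{B}$, interpret it as $f\colon(A,1_A)\to(B,1_B)$ in $\hat{\mathcal{B}}$, take a pseudokernel $c\colon(K,e)\to(A,1_A)$ in $\hat{\mathcal{C}}$ with $(K,e)\in\hat{\mathcal{B}}$, and verify (exactly as in the second half of the proof of Lemma~\ref{l:PseudokernelsIdempotentCompletion}(1)) that $c\colon K\to A$ is a pseudokernel in $\mathcal{C}$ with $K\in\mathcal{B}$.

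There is no genuinely hard step here; the only potential snag is making sure the pseudokernel-in-$\hat{\mathcal{C}}$ condition on \emph{all} morphisms of $\hat{\mathcal{B}}$ (not just those of the form $f\colon(A,1_A)\to(B,1_B)$) is equivalent to the condition on $\mathcal{B}$. This is precisely handled by the summand closure of $\hat{\mathcal{B}}$ together with the explicit formula $ec$ from Lemma~\ref{l:PseudokernelsIdempotentCompletion}(1), so no additional work beyond assembling these inputs is needed.
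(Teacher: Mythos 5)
Your proposal is correct and follows exactly the route the paper intends: the statement is the formal dual of Corollary~\ref{c:CoreflectiveIdempotentCompletion}, and you assemble it from Theorem~\ref{t:CharacterizationReflective} applied to $(\hat{\mathcal{C}},\hat{\mathcal{B}})$ together with Lemmas~\ref{l:PseudokernelsIdempotentCompletion} and~\ref{l:PrecoveringIdempotentCompletion}, just as the paper does for the coreflective case. The only cosmetic remark is that in the forward direction of the pseudokernel transfer no appeal to summand closure is needed: for an arbitrary $f\colon(A,e)\to(B,e')$ in $\hat{\mathcal{B}}$ the underlying morphism already lies in $\mathcal{B}$, so the formula $ec$ of Lemma~\ref{l:PseudokernelsIdempotentCompletion}(1) directly produces a pseudokernel $(K,1_K)\in\hat{\mathcal{B}}$.
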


\section{Applications to triangulated categories}

In this section, we apply our results to left and right triangulated categories. Recall that in a left triangulated category $\mathcal{D}$ we have a left triangulation $(\Omega,\Delta)$ satisfying the axioms \textbf{(LT1)}, \textbf{(LT2)}, \textbf{(LT3)} and \textbf{(LT4)} of, for instance, \cite[Definition 3.1]{Zhi-Wei}. Dually, in a right triangulated category $\mathcal{D}'$ there exists a right triangulation $(\Sigma,\nabla)$ satisfying the dual axioms \textbf{(RT1)}, \textbf{(RT2)}, \textbf{(RT3)} and \textbf{(RT4)}. A \textit{pretriangulated category}
is a left and right triangulated category in which both triangulations are compatible in the sense of \cite[Definition II.1.1]{BeligiannisReiten}. The typical examples of left/right triangulated categories are the stable categories of abelian categories with respect to a precovering/preenveloping subcategory, see Section 5. Notice that the notion of pretriangulated category given here is different to the one considered in \cite[Definition 1.1.1]{Neeman01}, since the left/right translation (or loop/suspension) functor $\Omega/\Sigma$ need not be invertible. 

Recall that if $f:X \rightarrow Y$ is a morphism in a right triangulated category $\mathcal{D}$, then a \textit{cone} of $f$ is an object $Z$ appearing in a right triangle
\begin{displaymath}
\begin{tikzcd}
X \arrow{r}{f} & Y \arrow{r} & Z \arrow{r} & \Sigma(X).
\end{tikzcd}
\end{displaymath}
Since cones are unique up to isomorphism, we will denote by $\cone(f)$ the cone of the morphism $f$.

\textit{Cocones} in left triangulated categories are defined dually.

The following fact is well known:

\begin{lemma}\label{l:ConePseudocokernel}
Let $\mathcal{D}$ be an additive category.
\begin{enumerate}
\item Suppose that $\mathcal{D}$ is left triangulated and let
\begin{displaymath}
\begin{tikzcd}
\Omega(Z) \arrow{r}{f} & X \arrow{r}{g} & Y \arrow{r}{h} & Z
\end{tikzcd}
\end{displaymath}
be a left triangle. Then $g$ is a pseudokernel of $h$ and $f$ is a pseudokernel of $g$.

\item Suppose that $\mathcal{D}$ is right triangulated and let
\begin{displaymath}
\begin{tikzcd}
X \arrow{r}{f} & Y \arrow{r}{g} & Z \arrow{r}{h} & \Sigma(X)
\end{tikzcd}
\end{displaymath}
be a right triangle. Then $g$ is a pseudocokernel of $f$ and $h$ is a pseudocokernel of $g$.
\end{enumerate}
\end{lemma}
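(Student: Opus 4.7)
The plan is to prove part (2); part (1) follows by duality, since reversing the direction of all arrows exchanges left and right triangulated categories, and exchanges pseudokernels with pseudocokernels. For (2), both claims assert that a pair of consecutive maps in a right triangle form a pseudocokernel sequence. The required zero-compositions (namely $gf = 0$, and $hg = 0$ after a single rotation via \textbf{(RT2)}) are built into the definition of a right triangulation.

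The essential content is the universal factorization property for $g$: given any $u \colon Y \to A$ with $uf = 0$, produce $v \colon Z \to A$ with $vg = u$. The plan is to invoke the morphism-of-triangles axiom \textbf{(RT3)}. One sets up a morphism of right triangles whose source is the given triangle $X \xrightarrow{f} Y \xrightarrow{g} Z \xrightarrow{h} \Sigma(X)$ and whose target is a suitable auxiliary right triangle built from $A$ by \textbf{(RT1)}; a convenient choice is the right triangle on the morphism $u \colon Y \to A$ (after an appropriate rotation), or, equivalently, the right triangle on the zero morphism $X \xrightarrow{0} A$, obtained as a direct sum of trivial triangles coming from \textbf{(RT1)} and \textbf{(RT2)}. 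The hypothesis $uf = 0$ provides precisely the commutative square required as input for \textbf{(RT3)}, and the morphism at the third position produced by \textbf{(RT3)} is the desired $v$. Packaged abstractly, this argument shows that the contravariant Hom functor $\mathcal{D}(-, A) \colon \mathcal{D} \to \mathrm{Ab}$ is cohomological on right triangles, a standard feature of right triangulated categories.

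For the second claim, that $h$ is a pseudocokernel of $g$, first apply \textbf{(RT2)} to rotate the given triangle into the right triangle $Y \xrightarrow{g} Z \xrightarrow{h} \Sigma(X) \xrightarrow{-\Sigma(f)} \Sigma(Y)$, and then apply the argument just given to this rotated triangle with $h$ in the role of $g$ and $g$ in the role of $f$.

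The main technical obstacle is arranging the morphism of triangles so that the third component produced by \textbf{(RT3)} lands directly in $A$ rather than in some cone or suspension thereof. This is delicate because in a right triangulated category one may only rotate forward, so all auxiliary triangles and all rotations must be produced using \textbf{(RT1)} and \textbf{(RT2)} without any appeal to inverse rotations.
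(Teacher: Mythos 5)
The paper offers no proof of this lemma (it is stated as ``well known''), so I can only judge your outline on its own terms. Your overall strategy --- reduce everything to the statement that $g$ is a pseudocokernel of $f$, get the second claim by a forward rotation via \textbf{(RT2)}, and obtain the factorization from \textbf{(RT3)} applied to a morphism of right triangles --- is the standard and correct one. But the step you yourself flag as ``the main technical obstacle,'' namely choosing the auxiliary triangle so that the fill-in lands in $A$, is the entire content of the proof, and neither of your two candidate choices resolves it. If you take the right triangle on $u\colon Y\to A$, say $Y\xrightarrow{u}A\xrightarrow{a}C_u\to\Sigma Y$, as the target of \textbf{(RT3)} (with vertical maps $0\colon X\to Y$ and $u\colon Y\to A$, the square commuting because $uf=0$), the fill-in is a morphism $Z\to C_u$ satisfying $(\text{fill-in})\circ g=a\circ u$; this factors $au$ through $g$, not $u$, and there is no retraction $C_u\to A$ available at this stage. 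If instead you build the triangle on $X\xrightarrow{0}A$ as a direct sum of trivial triangles, you are implicitly using that a direct sum of right triangles is a right triangle --- a closure property that is not an axiom and whose usual proof already invokes the pseudocokernel property you are trying to establish, so this route is circular (or at least requires a separate argument you have not supplied).

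The fix is much simpler than either suggestion: in the axiomatizations of \cite[Definition 3.1]{Zhi-Wei} and \cite{BeligiannisMarmaridis}, axiom \textbf{(RT1)} already declares $0\to A\xrightarrow{1_A}A\to \Sigma(0)=0$ to be a right triangle for every object $A$ (this is also how one proves $gf=0$ in the first place, so it cannot be dispensed with). Given $u\colon Y\to A$ with $uf=0$, apply \textbf{(RT3)} with source $X\xrightarrow{f}Y\xrightarrow{g}Z\xrightarrow{h}\Sigma X$, target $0\to A\xrightarrow{1_A}A\to 0$, and vertical maps $0\colon X\to 0$ and $u\colon Y\to A$; the required square commutes precisely because $uf=0$, and the fill-in $v\colon Z\to A$ satisfies $vg=1_A\circ u=u$. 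No backward rotation and no direct-sum construction is needed. With this one correction your outline becomes a complete proof; as written, it names the gap without closing it.
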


We next give the relation of any pseudokernel (resp. pseudocokernel) of a morphism with its cocone (resp. cone). 

\begin{lemma} \label{lem.cones-versus-pseudocokernels}
Let $\mathcal{D}$ be an additive category and $f:X\rightarrow Y$ be a morphism in $\mathcal{D}$. The following assertions hold:

 \begin{enumerate}
 \item If $(\mathcal{D},\Sigma)$ is right triangulated and $c:Y\rightarrow C$ is a pseudocokernel of $f$, then $\text{cone}(f)$ is a  retract of $C\oplus\Sigma (X)$.
 \item  If $(\mathcal{D},\Omega)$ is left triangulated and $k:K\rightarrow X$ is a pseudokernel of $f$, then $\text{cocone}(f)$ is a  retract of $K\oplus\Omega (Y)$.
 \end{enumerate} 
\end{lemma}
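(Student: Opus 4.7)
The plan is to prove (1) directly and to obtain (2) by passing to the opposite category. Fix a right triangle $X \xrightarrow{f} Y \xrightarrow{g} \cone(f) \xrightarrow{h} \Sigma(X)$; by Lemma~\ref{l:ConePseudocokernel}(2), $g$ is a pseudocokernel of $f$ \emph{and} $h$ is a pseudocokernel of $g$. The strategy is to compare the two pseudocokernels $g$ and $c$ of $f$, showing that the resulting comparison maps are almost mutually inverse up to an ambiguity that is killed by $h$, and then to package this data into an explicit retraction onto $C \oplus \Sigma(X)$.

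First, using $gf = 0$ together with the pseudocokernel property of $c$, I would produce $\beta \colon C \to \cone(f)$ with $\beta c = g$; symmetrically, using $cf = 0$ and the pseudocokernel property of $g$, I would produce $\alpha \colon \cone(f) \to C$ with $\alpha g = c$. These two equalities combine to give $\beta\alpha g = \beta c = g$, so the endomorphism $\beta\alpha - 1_{\cone(f)}$ of $\cone(f)$ is killed on the right by $g$. Because $h$ is a pseudocokernel of $g$, this endomorphism then factors as $\gamma h$ for some $\gamma \colon \Sigma(X) \to \cone(f)$.

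Setting
\[
\psi := \binom{\alpha}{h} \colon \cone(f) \to C \oplus \Sigma(X) \quad \text{and} \quad \varphi := (\beta,\, -\gamma) \colon C \oplus \Sigma(X) \to \cone(f),
\]
the computation $\varphi\psi = \beta\alpha - \gamma h = 1_{\cone(f)}$ displays $\cone(f)$ as a retract of $C \oplus \Sigma(X)$, which is assertion (1). For (2), I would apply (1) in $\mathcal{D}\op$: the latter is right triangulated with suspension $\Omega$, the cocone of $f$ in $\mathcal{D}$ becomes the cone of the dualised morphism in $\mathcal{D}\op$, and a pseudokernel in $\mathcal{D}$ becomes a pseudocokernel in $\mathcal{D}\op$, so the dual reading of (1) yields the stated retraction onto $K \oplus \Omega(Y)$.

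I do not expect a serious obstacle; the proof is entirely formal once Lemma~\ref{l:ConePseudocokernel} is available. The only step that requires a moment of care is recognizing that the factorisation of $\beta\alpha - 1_{\cone(f)}$ through $h$ is exactly the content of ``$h$ is a pseudocokernel of $g$'' in Lemma~\ref{l:ConePseudocokernel}(2), so that no appeal to the rotation axiom, or to a Hom-exactness property of $\Sigma$, beyond what is already packaged into that lemma is needed.
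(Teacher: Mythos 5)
Your proposal is correct and follows essentially the same route as the paper: both arguments compare the two pseudocokernels $c$ and $g$ of $f$ to get mutually comparing maps, observe that the defect of their composite from $1_{\cone(f)}$ is killed on the right by $g$ and hence factors through $h$ via Lemma~\ref{l:ConePseudocokernel}, and assemble the resulting matrix retraction from $C\oplus\Sigma(X)$; part (2) is likewise obtained by duality. Your sign bookkeeping ($\beta\alpha-1=\gamma h$ versus the paper's $1-vu=wh$) is an immaterial difference.
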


\begin{proof}
We just prove (1), since assertion (2) follows from it by duality.  We put $C(f):=\text{cone}(f)$ and consider the associated right triangle $X\stackrel{f}{\rightarrow}Y\stackrel{g}{\rightarrow}C(f)\stackrel{h}{\rightarrow}\Sigma (X)$. Since both $c$ and $g$ are pseudocokernels of $f$, we have morphisms $u:C(f)\rightarrow C$ and $v:C\rightarrow C(f)$ such that $ug=c$ and $vc=g$. It then follows that $vug=g$, which, by Lemma \ref{l:ConePseudocokernel},  implies that $1_{C(f)}-vu$ factors through $h$. That is, there is a morphism $w:\Sigma (X)\rightarrow C(f)$ such that $1_{C(f)}-vu=wh$. Then we have $1_{C(f)}=\begin{pmatrix}v & w \end{pmatrix} \begin{pmatrix} u\\ h\end{pmatrix}$, which implies that $\begin{pmatrix} v & w\end{pmatrix}:C\oplus\Sigma (X)\rightarrow C(f)$ is a retraction. 
\end{proof}

\begin{corollary}\label{c:ReflectiveAndCoreflectiveInPretriangulated0}
Let $\mathcal{D}$ be an additive category and $\mathcal{Z}$ be a subcategory containing the zero object.
\begin{enumerate}
\item Suppose that $(\mathcal{D},\Sigma )$ is right triangulated and has pseudokernels. The following assertions are equivalent:

\begin{enumerate}
\item   $\hat{\mathcal{Z}}$ is coreflective in $\hat{\mathcal{D}}$, where $\hat{(?)}$ denotes the idempotent completion, and $\Sigma (Z)$ is a retract of an object of $\mathcal{Z}$, for all $Z\in\mathcal{Z}$.
\item $\mathcal{Z}$ is precovering in $\mathcal{D}$ and the cone of any morphism in $\mathcal{Z}$ is a retract in $\mathcal{D}$ of an object of $\mathcal{Z}$.
\end{enumerate}
 
\item Suppose that $(\mathcal{D},\Omega )$ is left triangulated and has pseudocokernels. The following assertions are equivalent:

\begin{enumerate}
\item   $\hat{\mathcal{Z}}$ is reflective in $\hat{\mathcal{D}}$ and $\Omega (Z)$ is a retract of an object of $\mathcal{Z}$, for all $Z\in\mathcal{Z}$.
\item $\mathcal{Z}$ is preenveloping in $\mathcal{D}$ and the cocone of any morphism in $\mathcal{Z}$ is a retract in $\mathcal{D}$ of an object of $\mathcal{Z}$.
 \end{enumerate}
\end{enumerate}
\end{corollary}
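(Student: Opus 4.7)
The plan is as follows. Since (2) is the formal dual of (1) (swap $\Sigma\leftrightarrow\Omega$, $\cone\leftrightarrow\text{cocone}$, precovering $\leftrightarrow$ preenveloping, coreflective $\leftrightarrow$ reflective, pseudokernel $\leftrightarrow$ pseudocokernel), I will only treat (1). The starting observation is that the right triangulated structure automatically supplies $\mathcal{D}$ with pseudocokernels: every morphism $f\colon X\to Y$ embeds into a right triangle $X\xrightarrow{f}Y\xrightarrow{g}\cone(f)\to \Sigma(X)$, and Lemma \ref{l:ConePseudocokernel}(2) tells us $g$ is a pseudocokernel of $f$. Together with the standing assumption that $\mathcal{D}$ has pseudokernels, this places us in the setting where Corollary \ref{c:CoreflectiveIdempotentCompletion} applies to $\mathcal{D}$, and it is what I will reach for in both directions.

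For (a)$\Rightarrow$(b), Corollary \ref{c:CoreflectiveIdempotentCompletion} yields that $\mathcal{Z}$ is precovering and that each morphism $f\colon X\to Y$ in $\mathcal{Z}$ admits a pseudocokernel $c\colon Y\to C$ with $C\in\mathcal{Z}$. By Lemma \ref{lem.cones-versus-pseudocokernels}(1), $\cone(f)$ is then a retract of $C\oplus\Sigma(X)$. The $\Sigma$-retract hypothesis lets me write $\Sigma(X)$ as a retract of some $Z'\in\mathcal{Z}$, so $\cone(f)$ becomes a retract of $C\oplus Z'$. The subtle point I expect to be the main obstacle is that I need $C\oplus Z'$ to lie in $\mathcal{Z}$ itself (not merely in $\hat{\mathcal{Z}}$): I will deduce this from the general fact recalled in the preliminaries that every (co)reflective subcategory of an additive category is additive. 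Hence $\hat{\mathcal{Z}}$ is closed under finite coproducts in $\hat{\mathcal{D}}$, and inspection of the objects $(C,1_C)$ and $(Z',1_{Z'})$ forces $C\oplus Z'\in\mathcal{Z}$, giving the conclusion.

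For (b)$\Rightarrow$(a), I first take care of the $\Sigma$-retract condition using that $0\in\mathcal{Z}$: the morphism $Z\to 0$ lies in $\mathcal{Z}$ and its cone is $\Sigma(Z)$ (obtained by rotating the trivial right triangle on $Z$), which is therefore a retract of a $\mathcal{Z}$-object by hypothesis. Next, for any $f\colon X\to Y$ in $\mathcal{Z}$, I will promote the pseudocokernel $g\colon Y\to\cone(f)$ coming from the right triangle to a pseudocokernel landing in $\mathcal{Z}$: if $p\colon Z''\to\cone(f)$ is a retraction with section $i\colon \cone(f)\to Z''$ and $Z''\in\mathcal{Z}$, then $ig\colon Y\to Z''$ is easily checked to be a pseudocokernel of $f$ in $\mathcal{D}$, since $(ig)f=0$ and any factorization $h=h'g$ rewrites as $h=(h'p)(ig)$. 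An application of Corollary \ref{c:CoreflectiveIdempotentCompletion} then delivers that $\hat{\mathcal{Z}}$ is coreflective in $\hat{\mathcal{D}}$, completing the proof of (1); (2) follows by the aforementioned duality.
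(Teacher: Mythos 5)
Your proof is correct and follows essentially the same route as the paper: both directions reduce to Corollary \ref{c:CoreflectiveIdempotentCompletion} after observing that right triangles supply pseudocokernels, with Lemma \ref{lem.cones-versus-pseudocokernels} handling (a)$\Rightarrow$(b), and the ``section composed with $g$'' trick together with the cone of $Z\to 0$ handling (b)$\Rightarrow$(a). Your extra care in justifying that $C\oplus Z'$ actually lies in $\mathcal{Z}$ (via additivity of the coreflective subcategory $\hat{\mathcal{Z}}$) fills in a point the paper's proof leaves implicit.
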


\begin{proof}
We just prove (1), since assertion (2) follows from it by duality. 

$(1.a)\Longrightarrow (1.b)$ By Corollary \ref{c:CoreflectiveIdempotentCompletion}, we know  that $\mathcal Z$ is precovering and each morphism $f:Z\rightarrow Z'$ has a pseudocokernel in $\mathcal{D}$ that belongs to $\mathcal{Z}$. Then, by Lemma \ref{lem.cones-versus-pseudocokernels}, we conclude that $\cone(f)$ is a retract of an object of $\mathcal{Z}$. 

$(1.b)\Longrightarrow (1.a)$ Using again  Corollary \ref{c:CoreflectiveIdempotentCompletion}, in order to prove that $\hat{\mathcal{Z}}$ is coreflective, it is enough to prove that each morphism $f:Z\rightarrow Z'$ in $\mathcal{Z}$ has a pseudocokernel in $\mathcal{D}$ that belongs to $\mathcal{Z}$. But, by assumption we have an object $Z''\in\mathcal{Z}$ together with morphisms $s:C(f)\rightarrow Z''$ and $t:Z''\rightarrow C(f)$ such that $ts=1_{C(f)}$, where $C(f)=\cone(f)$. It immediately follows that if $Z\stackrel{f}{\rightarrow}Z'\stackrel{g}{\rightarrow}C(f)\stackrel{h}{\rightarrow}\Sigma (Z)$ is the associated right triangle, then $sg:Z'\rightarrow Z''$ is a pseudocokernel of $f$ as desired.

On the other hand, for each $Z\in\mathcal{Z}$, we have the trivial right triangle $Z\stackrel{1_Z}{\rightarrow}Z\rightarrow 0\rightarrow\Sigma (Z)$, and the morphism $Z\rightarrow 0$ is a morphism in $\mathcal{Z}$ whose cone is $\Sigma (Z)$. 
\end{proof}

We then get:

\begin{corollary}\label{c:ReflectiveAndCoreflectiveInPretriangulated}
Let $\mathcal{D}$ be a category with split idempotents and $\mathcal{Z}$ be a subcategory of $\mathcal{D}$.
\begin{enumerate}
\item If $\mathcal{D}$ is right triangulated and has pseudokernels, then $\mathcal{Z}$ is coreflective and $\Sigma(\mathcal{Z}) \subseteq \mathcal{Z}$ if and only if $\mathcal{Z}$ is precovering, closed under taking direct summands and cones.

\item If $\mathcal{D}$ is left triangulated and has pseudocokernels, then $\mathcal{Z}$ is reflective and $\Omega(\mathcal{Z}) \subseteq \mathcal{Z}$ if and only if $\mathcal{Z}$ is preenveloping, closed under taking direct summands and cocones.
\end{enumerate}
\end{corollary}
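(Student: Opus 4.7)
The plan is to reduce the statement directly to Corollary \ref{c:ReflectiveAndCoreflectiveInPretriangulated0}, exploiting the hypothesis that $\mathcal{D}$ already has split idempotents. Under this hypothesis the canonical functor $\mathcal{D}\rightarrow\hat{\mathcal{D}}$ is an equivalence, and whenever $\mathcal{Z}$ is closed under direct summands the induced equivalence identifies $\mathcal{Z}$ with $\hat{\mathcal{Z}}$; in particular, $\mathcal{Z}$ is (co)reflective in $\mathcal{D}$ if and only if $\hat{\mathcal{Z}}$ is (co)reflective in $\hat{\mathcal{D}}$. Moreover, closure under direct summands turns every ``retract of an object of $\mathcal{Z}$'' into an actual object of $\mathcal{Z}$. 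Only assertion (1) needs proof, since (2) follows by duality.

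For the implication $(\Rightarrow)$, assume $\mathcal{Z}$ is coreflective and $\Sigma(\mathcal{Z})\subseteq\mathcal{Z}$. The implication $(1)\Rightarrow(2)$ of Theorem \ref{t:CharacterizationCoreflective}, which requires no split-idempotents hypothesis on $\mathcal{Z}$, yields that $\mathcal{Z}$ is precovering and closed under direct summands. Condition (1.a) of Corollary \ref{c:ReflectiveAndCoreflectiveInPretriangulated0} is then met (each $\Sigma(Z)$ lies in $\mathcal{Z}$ and so is trivially a retract of an object of $\mathcal{Z}$, and $\hat{\mathcal{Z}}$ is coreflective in $\hat{\mathcal{D}}$ by the previous paragraph), so condition (1.b) holds: every cone of a morphism in $\mathcal{Z}$ is a retract of an object of $\mathcal{Z}$, and by direct-summand closure $\mathcal{Z}$ is thereby closed under cones.

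Conversely, assume $\mathcal{Z}$ is precovering, closed under direct summands and closed under cones. A nonempty subcategory closed under direct summands contains the zero object, so $0\in\mathcal{Z}$; for each $Z\in\mathcal{Z}$ the morphism $Z\rightarrow 0$ is a morphism in $\mathcal{Z}$ whose cone $\Sigma(Z)$ belongs to $\mathcal{Z}$ by hypothesis, yielding $\Sigma(\mathcal{Z})\subseteq\mathcal{Z}$. Every cone of a morphism in $\mathcal{Z}$ now lies in $\mathcal{Z}$ and so is tautologically a retract of an object of $\mathcal{Z}$, and similarly for the objects $\Sigma(Z)$. Condition (1.b) of Corollary \ref{c:ReflectiveAndCoreflectiveInPretriangulated0} is therefore satisfied, so $\hat{\mathcal{Z}}$ is coreflective in $\hat{\mathcal{D}}$, and the equivalence $\mathcal{D}\simeq\hat{\mathcal{D}}$ transfers this to coreflectivity of $\mathcal{Z}$ in $\mathcal{D}$. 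The only delicate point—rather than a real obstacle—is the bookkeeping that identifies coreflectivity in $\mathcal{D}$ with coreflectivity in $\hat{\mathcal{D}}$ for direct-summand-closed subcategories; everything else is a direct consequence of Corollary \ref{c:ReflectiveAndCoreflectiveInPretriangulated0} once one observes that split idempotents convert retract conditions into membership conditions.
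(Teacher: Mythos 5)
Your proposal is correct and follows essentially the same route as the paper: the paper's own proof simply observes that $\mathcal{Z}$ contains the zero object in either set of hypotheses and then invokes Corollary \ref{c:ReflectiveAndCoreflectiveInPretriangulated0}, with the identification $\mathcal{Z}\simeq\hat{\mathcal{Z}}$ under $\mathcal{D}\simeq\hat{\mathcal{D}}$ and the conversion of retract conditions into membership left implicit. You have merely made those implicit steps explicit.
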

\begin{proof}
Just note that, in any of the equivalent conditions of each assertion, the subcategory $\mathcal{Z}$ contains the zero object. The result immediately follows from Corollary \ref{c:ReflectiveAndCoreflectiveInPretriangulated0}.
\end{proof}

This result is, in particular, true for pretriangulated categories. We establish explicitly the corresponding result for triangulated categories:

\begin{corollary}\label{c:ReflectiveAndCoreflectiveInTriangulated}
Let $\mathcal{D}$ be a triangulated category with split idempotents and $\mathcal{Z}$ be a subcategory. Then:
\begin{enumerate}
\item $\mathcal{Z}$ is coreflective and $\Sigma(\mathcal{Z}) \subseteq \mathcal{Z}$ if and only if $\mathcal{Z}$ is precovering and closed under taking direct summands and cones.

\item $\mathcal{Z}$ is reflective and $\Sigma^{-1}(\mathcal{Z}) \subseteq \mathcal{Z}$ if and only if $\mathcal{Z}$ is preenveloping and closed under taking direct summands and cocones.
\end{enumerate}
\end{corollary}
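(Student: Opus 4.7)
The plan is to deduce this corollary as an immediate specialization of Corollary \ref{c:ReflectiveAndCoreflectiveInPretriangulated}. The key observation is that a triangulated category $(\mathcal{D},\Sigma)$ is automatically pretriangulated: it is right triangulated with suspension $\Sigma$, and simultaneously left triangulated with loop functor $\Omega = \Sigma^{-1}$, the distinguished triangles being the same in both structures. Moreover, since every morphism $f\colon X\to Y$ sits in a distinguished triangle
\[
\Omega(Y)\longrightarrow\cone(f)[-1] \text{-style rotation} \qquad X\stackrel{f}{\longrightarrow} Y\longrightarrow\cone(f)\longrightarrow\Sigma(X),
\]
Lemma \ref{l:ConePseudocokernel} gives both a pseudokernel and a pseudocokernel of $f$ simultaneously. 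Hence the hypotheses of Corollary \ref{c:ReflectiveAndCoreflectiveInPretriangulated} (split idempotents together with pseudokernels, resp. pseudocokernels) are satisfied in either of its two forms.

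For assertion (1), I would simply invoke Corollary \ref{c:ReflectiveAndCoreflectiveInPretriangulated}(1) applied to the right triangulated structure $(\mathcal{D},\Sigma)$: the equivalence between ``$\mathcal{Z}$ is coreflective and $\Sigma(\mathcal{Z})\subseteq\mathcal{Z}$'' and ``$\mathcal{Z}$ is precovering and closed under direct summands and cones'' is then just a restatement, since the notions of pretriangulated cone and triangulated cone coincide.

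For assertion (2), I would apply Corollary \ref{c:ReflectiveAndCoreflectiveInPretriangulated}(2) to the left triangulated structure $(\mathcal{D},\Omega)=(\mathcal{D},\Sigma^{-1})$. The condition $\Omega(\mathcal{Z})\subseteq\mathcal{Z}$ then reads $\Sigma^{-1}(\mathcal{Z})\subseteq\mathcal{Z}$, and cocones in the left triangulated sense coincide with the usual (triangulated) cocones, yielding the stated equivalence.

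There is no real obstacle here: the only content of the argument is recognizing that a triangulated category supplies simultaneously both one-sided structures with the needed pseudo(co)kernels, so that both halves of Corollary \ref{c:ReflectiveAndCoreflectiveInPretriangulated} can be applied without any extra verification. If a fuller justification is wanted, I would add a single sentence explicitly pointing out that the invertibility of $\Sigma$ makes $\Sigma(\mathcal{Z})\subseteq\mathcal{Z}$ (respectively $\Sigma^{-1}(\mathcal{Z})\subseteq\mathcal{Z}$) the correct translation of the condition ``$\mathcal{Z}$ is invariant under the suspension (respectively loop) functor'' appearing in the pretriangulated statement.
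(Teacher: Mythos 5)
Your proposal is correct and is essentially the argument the paper intends (the paper states this corollary without proof, as an immediate specialization of Corollary \ref{c:ReflectiveAndCoreflectiveInPretriangulated}): a triangulated category is simultaneously right and left triangulated with $\Omega=\Sigma^{-1}$, has pseudokernels and pseudocokernels by Lemma \ref{l:ConePseudocokernel} applied to rotated triangles, and has split idempotents by hypothesis, so both halves of that corollary apply directly. The only cosmetic issue is the garbled display in your write-up, which should just read that every morphism $f\colon X\to Y$ fits into a triangle $X\stackrel{f}{\to}Y\to\cone(f)\to\Sigma(X)$ whose rotations supply both the pseudokernel and the pseudocokernel.
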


\begin{remarks} \label{4.6}
\begin{enumerate}
\item Notice that Corollary \ref{c:ReflectiveAndCoreflectiveInTriangulated} is  more general than \cite[Proposition 1.4]{Neeman10} and \cite[Proposition 3.11]{SaorinStovicek}, since the subcategory $\mathcal{Z}$ need be neither triangulated nor suspended.

\item There are lots of triangulated categories without split idempotents. For instance, if $\mathcal{A}$ is any additive category without split idempotents then so is its bounded homotopy category $\mathcal K^b(\mathcal{A})$. Indeed, if $e\in\mathcal{A}(A,A)$  is a non-split idempotent, then the same is true for $e[0]\in\mathcal K^b(\mathcal{A})(A[0],A[0])$. 

On the other hand, any triangulated category in which every countable family of objects has a coproduct, has split idempotents (see \cite[Proposition 1.6.8]{Neeman01}).

\item The idempotent completion of any triangulated category $\D$ has a canonical structure of triangulated category (see \cite{BalmerSchlichting}).
\end{enumerate}
\end{remarks}

\section{Applications to stable categories}

Let $\mathcal C$ be an additive category and $\mathcal P$ an additive subcategory of $\mathcal C$. \emph{The stable category of $\mathcal{C}$ modulo $\mathcal{P}$}, denoted $\mathcal{C}/\mathcal{P}$ or simply $\underline{\mathcal{C}}$ when $\mathcal P$ is understood, has as objects those of $\mathcal{C}$ and,  for every pair of objects $A,B$ in $\mathcal{C}$, $\underline{\mathcal{C}}(A,B)$ is the quotient group of $\mathcal{C}(A,B)$ by the subgroup of all morphisms factorizing through an object of $\mathcal{P}$. For any object $A$ and morphism $f$ in $\mathcal{C}$, we denote by $\underline A$ and $\underline f$ the corresponding object and morphism in $\underline{\mathcal{C}}$.

All throughout this section $\mathcal{A}$ will be an abelian category and $\mathcal{P}$ will be an additive subcategory of $\mathcal{A}$. When $\mathcal{P}$ is precovering (resp. preenveloping) the category $\underline{\mathcal{A}}$ has a structure of left (resp. right) triangulated category (see \cite[p. 28]{BeligiannisReiten}). We refer the reader to this latter monograph for the details. For our purposes here, we need to know that the loop (resp. suspension) functor $\Omega:\underline{\mathcal{A}}\rightarrow\underline{\mathcal{A}}$ (resp. $\Sigma: \underline{\mathcal{A}}\rightarrow\underline{\mathcal{A}}$) acts on objects by taking any $\underline A$ to $\underline{\Omega(A)}$, where $\Omega (A)$ is the kernel (resp. cokernel $\Sigma (A)$) of a $\mathcal{P}$-precover $\xi_A:P_A\rightarrow A$ (resp. a $\mathcal{P}$-preenvelope $\lambda^A:A\rightarrow P^A$). Up to isomorphism in $\underline{\mathcal{A}}$, this definition does not depend on the $\mathcal{P}$-precover (resp. $\mathcal{P}$-preenvelope) that we choose.  Note that any morphism $\underline{f}:\underline{A}\rightarrow\underline{B}$ is also represented by the morphism $\begin{pmatrix} f & \xi_B \end{pmatrix}:A\oplus P_B\rightarrow B$ (resp. $\begin{pmatrix} f \\ \lambda_A \end{pmatrix}:A\rightarrow B\oplus P^A$), which is clearly a $\mathcal{P}$-epimorphism (resp. $\mathcal{P}$-monomorphism). Then, in order to identify the cocone (resp. cone) of $\underline{f}$, we can assume that $f$ is a $\mathcal{P}$-epimorphism (resp. $\mathcal{P}$-monomorphism), in which case we have that $\text{cocone}(\underline{f})=\underline{\Ker f}$ (resp. $\text{cone}(\underline{f})=\underline{\Coker f}$) and the associated left (resp. right) triangle is $\Omega (\underline{B})\rightarrow\underline{\Ker f}\stackrel{\underline{i}}{\rightarrow}\underline{A}\stackrel{\underline{f}}{\rightarrow}\underline{B}$ (resp. $\underline{A}\stackrel{\underline{f}}{\rightarrow}\underline{B}\stackrel{\underline{p}}{\rightarrow}\underline{\Coker f}\rightarrow\Sigma (\underline{A})$), where $i:\Ker f\rightarrowtail A$ is the inclusion (resp. $p:B\twoheadrightarrow\Coker f$ is the projection).

We need to extend the notions of resolving and coresolving subcategories, introduced in \cite{AuslanderBridger}.

\begin{definition} \label{def.weakly (co)resolving}
Let $\mathcal{A}$ be an abelian category and let $\mathcal{P}$ and $\mathcal{X}$ be full additive subcategories. We say that $\mathcal{X}$ is \emph{weakly $\mathcal{P}$-resolving} when $\mathcal{P}\subseteq\mathcal{X}$,  $\mathcal{P}$ is precovering in $\mathcal{X}$ and $\mathcal{X}$ is closed under taking kernels of $\mathcal{P}$-epimorphisms. We will say that  $\mathcal{X}$ is $\mathcal{P}$-resolving when, in addition, for any exact sequence \begin{displaymath}
\begin{tikzcd}
0 \arrow{r} & A \arrow{r}{f} & B \arrow{r}{g} & C
\end{tikzcd}
\end{displaymath}
where $g$ is a $\mathcal{P}$-epimorphism, if $A,C\in\mathcal{X}$ then $B\in\mathcal{X}$.
\end{definition}

Note that, when $\mathcal{A}$ has enough projectives and $\mathcal{P}$ is the subcategory of projective objects, `$\mathcal{P}$-epimorphism' is synonymous of `epimorphism'. In such case a  weakly $\mathcal{P}$-resolving class will be simply called \emph{weakly resolving}, and it is just an $\X$ that contains $\mathcal{P}$ and is closed under taking kernels of epimorphisms. The $\mathcal{P}$-resolving subcategories are exactly the resolving subcategories, as defined in \cite{AuslanderBridger} (see also \cite{GobelTrlifaj}). 

Of course, we can define the dual notions of \emph{weakly ($\mathcal{P}$)-coresolving} and  \emph{($\mathcal{P}$)-coresolving} subcategories. The precise statements are left to the reader.

\begin{lemma}\label{l:PrecoversInStable}
Let $\mathcal{A}$ be an abelian category and $\mathcal{P}$ be a full additive subcategory of $\mathcal{A}$. Moreover, let $\mathcal{X}$ be an additive subcategory of $\mathcal{A}$ containing $\mathcal{P}$.
\begin{enumerate}
\item If $\mathcal{X}$ is precovering (resp. preenveloping) in $\mathcal{A}$, then $\underline{\mathcal{X}}:=\mathcal{X}/\mathcal{P}$ is precovering (resp. preenveloping) in $\underline{\mathcal{A}}:=\mathcal{A}/\mathcal{P}$.

\item If $\mathcal{P}$ is precovering (resp. preenveloping) in $\mathcal{A}$ and $\underline{\mathcal{X}}$ is precovering (resp. preenveloping) in $\underline{\mathcal{A}}$, then $\mathcal{X}$ is precovering (resp. preenveloping) in $\mathcal{A}$.
\end{enumerate}
\end{lemma}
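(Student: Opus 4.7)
The plan is to treat only the precovering halves of (1) and (2); the preenveloping statements follow by duality upon passing to $\mathcal{A}\op$, since the construction $\underline{\mathcal{A}}=\mathcal{A}/\mathcal{P}$ and all the hypotheses are self-dual under this operation.

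For (1), the argument is essentially formal. Given $A\in\mathcal{A}$, I would take an $\mathcal{X}$-precover $f\dd X\to A$ in $\mathcal{A}$. To see that $\underline{f}\dd\underline{X}\to\underline{A}$ is an $\underline{\mathcal{X}}$-precover, start with any $\underline{g}\dd\underline{Y}\to\underline{A}$ with $Y\in\mathcal{X}$, choose a representative $g\dd Y\to A$, apply the precover property of $f$ to obtain $h\dd Y\to X$ with $fh=g$, and project back to the stable quotient.

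For part (2), the substantive construction is as follows. Given $A\in\mathcal{A}$, pick an $\underline{\mathcal{X}}$-precover $\underline{f}\dd\underline{X}\to\underline{A}$ with $X\in\mathcal{X}$ and, using the hypothesis on $\mathcal{P}$, a $\mathcal{P}$-precover $\xi_A\dd P_A\to A$ in $\mathcal{A}$. Since $\mathcal{P}\subseteq\mathcal{X}$ and $\mathcal{X}$ is additive, $X\oplus P_A\in\mathcal{X}$, and the candidate $\mathcal{X}$-precover of $A$ is
\[
\varphi \;=\; \begin{pmatrix} f & \xi_A \end{pmatrix}\dd X\oplus P_A \longrightarrow A.
\]
To verify this, let $g\dd Y\to A$ with $Y\in\mathcal{X}$. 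The precover property of $\underline{f}$ applied to $\underline{g}$ yields $\underline{h}\dd\underline{Y}\to\underline{X}$ with $\underline{f}\,\underline{h}=\underline{g}$. Any representative $h\dd Y\to X$ of $\underline{h}$ satisfies $fh-g=\beta\alpha$ for some $P\in\mathcal{P}$, $\alpha\dd Y\to P$, $\beta\dd P\to A$. The $\mathcal{P}$-precover $\xi_A$ then absorbs $\beta$: there exists $\gamma\dd P\to P_A$ with $\xi_A\gamma=\beta$, whence $g=\varphi\begin{pmatrix} h \\ -\gamma\alpha\end{pmatrix}$.

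The only non-formal point in the whole argument is this last step: a lift that exists in $\underline{\mathcal{A}}$ need not lift to $\mathcal{A}$, and the $\mathcal{P}$-valued obstruction must be reabsorbed by enlarging $f$ with a $\mathcal{P}$-precover of $A$. Both the hypothesis $\mathcal{P}\subseteq\mathcal{X}$ (so that $X\oplus P_A$ still lies in $\mathcal{X}$) and the existence of $\xi_A$ are used in exactly this spot, and this is what I expect to be the main conceptual content of the proof.
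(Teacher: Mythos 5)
Your proof is correct and follows essentially the same route as the paper's: part (1) is the formal projection argument, and for part (2) the paper likewise forms the morphism $\begin{pmatrix} f & \xi_A \end{pmatrix}\colon X\oplus P_A\to A$ and absorbs the $\mathcal{P}$-valued obstruction through the $\mathcal{P}$-precover $\xi_A$. Nothing to add.
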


\begin{proof}
We just do the precovering case, the other one following by duality. 

(1) is trivial.

(2) Suppose that $\underline{\mathcal{X}}$ is precovering in $\underline{\mathcal{A}}$ and let $A$ be any object of $\mathcal{A}$. Take an $\underline{\mathcal{X}}$-precover $\underline f_A:\underline X \rightarrow \underline A$ in $\underline{\mathcal{A}}$ and fix a $\mathcal{P}$-precover $\xi_A:P_A\rightarrow A$. Given any morphism in $\mathcal{A}$, $g:Y \rightarrow A$ with $Y \in \mathcal{X}$, there exists a morphism $h:Y \rightarrow X$ such that $g-f_Ah$ factors through a $P\in\mathcal{P}$, which in turn implies that it factors through $\xi_A$. It then follows that $g$ factors through $\begin{pmatrix} f_A & \xi_A\end{pmatrix}:X_A\oplus P_A\rightarrow A$, so that this latter morphism is an $\mathcal{X}$-precover. 
\end{proof}

The easy proof of the following lemma is left to the reader. 

\begin{lemma} \label{lem.retract-stable-versus-directsummand}
Let $\mathcal{A}$ be an abelian subcategory, $\mathcal{P}$ be an additive subcategory and $A,B\in\mathcal{A}$ be any two objects. Then $\underline{A}$ is a retract of $\underline{B}$ in $\underline{\mathcal{A}}:=\mathcal{A}/\mathcal{P}$ if, and only if, $A$ is a direct summand of $B\oplus P$ in $\mathcal{A}$, for some $P\in\mathcal{P}$.  
\end{lemma}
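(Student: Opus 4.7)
My plan is to handle the two implications symmetrically, with each direction built by a short split-idempotent argument using the matrix notation for maps into/out of $B\oplus P$.

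For the easy direction ($\Leftarrow$), I would start by writing the given section-retraction pair for $A$ as a direct summand of $B\oplus P$ as column and row matrices $\bigl(\begin{smallmatrix} i_1\\ i_2\end{smallmatrix}\bigr)\colon A\to B\oplus P$ and $\bigl(\begin{smallmatrix} \pi_1 & \pi_2\end{smallmatrix}\bigr)\colon B\oplus P\to A$ with composite $\pi_1 i_1+\pi_2 i_2=1_A$. Since $\pi_2 i_2$ factors through $P\in\mathcal{P}$, its class in $\underline{\mathcal{A}}$ vanishes, so $\underline{\pi_1}\,\underline{i_1}=\underline{1_A}$, which exhibits $\underline{A}$ as a retract of $\underline{B}$.

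For the converse ($\Rightarrow$), I would lift a retraction $\underline{g}\,\underline{f}=\underline{1_A}$ (with representatives $f\colon A\to B$ and $g\colon B\to A$) to $\mathcal{A}$ by using the definition of the stable category: the hypothesis says $1_A-gf$ factors through some $P\in\mathcal{P}$, say $1_A-gf=\beta\alpha$ with $\alpha\colon A\to P$ and $\beta\colon P\to A$. Then the matrices $\bigl(\begin{smallmatrix} f\\ \alpha\end{smallmatrix}\bigr)\colon A\to B\oplus P$ and $\bigl(\begin{smallmatrix} g & \beta\end{smallmatrix}\bigr)\colon B\oplus P\to A$ compose to $gf+\beta\alpha=1_A$, so $A$ is a direct summand of $B\oplus P$.

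There is no serious obstacle here; this really is a direct unwinding of the definitions. The only subtlety worth noting in the write-up is ensuring that the factorization $1_A-gf=\beta\alpha$ is available as stated (it is, since an element of $\underline{\mathcal{A}}(A,A)$ is zero exactly when any representative factors through some object of the \emph{additive} subcategory $\mathcal{P}$, and a factorization through a finite direct sum of objects of $\mathcal{P}$ can always be arranged through a single object of $\mathcal{P}$). Everything else is a one-line matrix computation, so the whole argument should fit in a short paragraph per direction.
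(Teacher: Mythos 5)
Your proof is correct and is exactly the intended argument: the paper explicitly leaves this lemma to the reader, and the two matrix computations you give, together with the observation that $1_A-gf$ factors through a single object of the additive subcategory $\mathcal{P}$, are the standard way to do it. The only point worth making explicit in a write-up is that in the forward direction you produce $A$ as a retract of $B\oplus P$ and then invoke split idempotents in the abelian category $\mathcal{A}$ to upgrade this to a direct summand.
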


We are now ready for the two (dual) main results of the section.

\begin{theorem} \label{t:ReflectiveStable}
Let $\mathcal{A}$ be an abelian category and let $\mathcal{P}\subseteq\mathcal{X}$ be two full additive subcategories, such that $\mathcal{P}$ is precovering and preenveloping and $\mathcal{X}$ is closed under taking direct summands. The following assertions are equivalent:

\begin{enumerate}
\item The idempotent completion of $\underline{\mathcal{X}}:=\mathcal{X}/\mathcal{P}$ is a reflective subcategory of the idempotent completion of $\underline{\mathcal{A}}:=\mathcal{A}/\mathcal{P}$,  and each $X\in\mathcal{X}$ admits a $\mathcal{P}$-precover with kernel in $\mathcal{X}$.
\item $\mathcal{X}$ is a preenveloping weakly $\mathcal{P}$-resolving subcategory of $\mathcal{A}$.
\item $\mathcal{X}$ is preenveloping and closed under taking kernels of $\mathcal{P}$-epimorphisms in $\mathcal{A}$.
\end{enumerate}
\end{theorem}

\begin{proof}
$(2)\Longleftrightarrow (3)$ is clear since $\mathcal{P}\subseteq\mathcal{X}$ and $\mathcal{P}$ is precovering. 

$(1)\Longleftrightarrow (3)$ Since, up to isomorphism in $\underline{\mathcal{X}}$, the definition of $\Omega (\underline{X})$ does not depend on the $\mathcal{P}$-precover of $X$ that we choose to define it, the condition that each $X\in\mathcal{X}$ admits a $\mathcal{P}$-precover with kernel in $\mathcal{X}$ is equivalent to saying that $\Omega (\underline{\mathcal{X}})\subseteq\underline{\mathcal{X}}$.   Bearing in mind that $\mathcal{X}$ is closed under taking direct summands, by Lemma \ref{lem.retract-stable-versus-directsummand} we have that  $\underline{\mathcal{X}}$ is closed under taking retracts in $\underline{\mathcal{A}}$. 
Then, by applying Corollary \ref{c:ReflectiveAndCoreflectiveInPretriangulated0}(2) with $\mathcal{D}=\underline{\mathcal{A}}$,  we conclude that assertion (1) holds  if, and only if, $\underline{\mathcal{X}}$ is preenveloping and closed under taking cocones in $\underline{\mathcal{A}}$. 

By Lemma \ref{l:PrecoversInStable}, we get that $\underline{\mathcal{X}}$ is preenveloping in $\underline{\mathcal{A}}$ if, and only if, so is $\mathcal{X}$ in $\mathcal{A}$. On the other hand, by the two initial paragraphs of this section, saying that $\underline{\mathcal{X}}$ is closed under taking cocones in $\underline{\mathcal{A}}$ is equivalent to saying that $\mathcal{X}$ is closed under taking kernels of $\mathcal{P}$-epimorphisms in $\mathcal{A}$. 
Therefore assertions (1) and (3) are equivalent. 
\end{proof}

\begin{theorem} \label{t:CoreflectiveStable}
Let $\mathcal{A}$ be an abelian category and let $\mathcal{P}\subseteq\mathcal{X}$ be two full additive subcategories of $\mathcal A$, such that $\mathcal{P}$ is precovering and preenveloping and $\mathcal{X}$ is closed under taking direct summands. The following assertions are equivalent:

\begin{enumerate}
\item The idempotent completion of $\underline{\mathcal{X}}:=\mathcal{X}/\mathcal{P}$ is a coreflective subcategory of the idempotent completion of $\underline{\mathcal{A}}:=\mathcal{A}/\mathcal{P}$,  and each $X\in\mathcal{X}$ admits a $\mathcal{P}$-preenvelope with cokernel in $\mathcal{X}$.
\item $\mathcal{X}$ is a precovering weakly $\mathcal{P}$-coresolving subcategory of $\mathcal{A}$.
\item $\mathcal{X}$ is precovering and closed under taking cokernels of $\mathcal{P}$-monomorphisms in $\mathcal{A}$. 
\end{enumerate}
\end{theorem}

As an immediate consequence, we  get:

\begin{corollary} \label{cor.bijection-Presolving-reflectives}
Let $\mathcal{A}$ be an abelian category and $\mathcal{P}$ be a precovering and preenveloping subcategory such that $\underline{\mathcal{A}}=\mathcal{A}/\mathcal{P}$ has split idempotents (see Proposition \ref{ex.stablecategs-splitidempotents} below). The assignment $\mathcal{X}\rightsquigarrow\underline{\mathcal{X}}$ defines:

\begin{enumerate}
\item A bijection between the preenveloping weakly $\mathcal{P}$-resolving subcategories $\mathcal{X}$ of $\mathcal{A}$ closed under  direct summands and the  reflective subcategories  $\underline{\mathcal{X}}$ of $\underline{\mathcal{A}}$ such that $\Omega (\underline{\mathcal{X}})\subseteq\underline{\mathcal{X}}$.
\item A bijection between the precovering weakly $\mathcal{P}$-coresolving subcategories $\mathcal{X}$ of $\mathcal{A}$ closed under  direct summands and the  coreflective subcategories  $\underline{\mathcal{X}}$ of $\underline{\mathcal{A}}$ such that $\Sigma (\underline{\mathcal{X}})\subseteq\underline{\mathcal{X}}$.
\end{enumerate}
\end{corollary}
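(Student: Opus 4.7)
The plan is to deduce the two bijections directly from Theorems \ref{t:ReflectiveStable} and \ref{t:CoreflectiveStable}; it suffices to prove (1), since (2) follows by duality. The key observation is that when $\underline{\mathcal{A}}$ has split idempotents, the idempotent completions appearing in Theorem \ref{t:ReflectiveStable} can be removed: on the one hand, $\widehat{\underline{\mathcal{A}}}=\underline{\mathcal{A}}$; on the other hand, Lemma \ref{lem.retract-stable-versus-directsummand} implies that if $\mathcal{X}$ is closed under direct summands in $\mathcal{A}$ and contains $\mathcal{P}$, then $\underline{\mathcal{X}}$ is closed under retracts in $\underline{\mathcal{A}}$, so $\widehat{\underline{\mathcal{X}}}=\underline{\mathcal{X}}$.

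First I would check that the assignment $\mathcal{X}\rightsquigarrow\underline{\mathcal{X}}$ is well-defined on the indicated collection. Starting from a preenveloping weakly $\mathcal{P}$-resolving subcategory $\mathcal{X}$ closed under direct summands, the equivalence (1)$\Leftrightarrow$(2) of Theorem \ref{t:ReflectiveStable} together with the preceding observation gives that $\underline{\mathcal{X}}$ is reflective in $\underline{\mathcal{A}}$, and the fact that each $X\in\mathcal{X}$ admits a $\mathcal{P}$-precover with kernel in $\mathcal{X}$ translates, via the description of the loop functor given at the start of the section, precisely into $\Omega(\underline{\mathcal{X}})\subseteq\underline{\mathcal{X}}$.

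To obtain injectivity and surjectivity, I would produce an explicit inverse: given a reflective subcategory $\mathcal{Y}$ of $\underline{\mathcal{A}}$ with $\Omega(\mathcal{Y})\subseteq\mathcal{Y}$, set $\mathcal{X}:=\{X\in\mathcal{A}\mid\underline{X}\in\mathcal{Y}\}$. Since $\mathcal{Y}$ contains the zero object, $\mathcal{P}\subseteq\mathcal{X}$; since $\mathcal{Y}$ is closed under direct summands in $\underline{\mathcal{A}}$ (as a reflective subcategory of an additive category with split idempotents), $\mathcal{X}$ is closed under direct summands in $\mathcal{A}$. One checks $\underline{\mathcal{X}}=\mathcal{Y}$ directly from the definition. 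Then the hypothesis $\Omega(\mathcal{Y})\subseteq\mathcal{Y}$ yields a $\mathcal{P}$-precover with kernel in $\mathcal{X}$ for each $X\in\mathcal{X}$, so Theorem \ref{t:ReflectiveStable} applies in the direction (1)$\Rightarrow$(2) and shows that $\mathcal{X}$ is preenveloping weakly $\mathcal{P}$-resolving.

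Finally I must verify that the two compositions are identities. The composition $\mathcal{Y}\mapsto\mathcal{X}\mapsto\underline{\mathcal{X}}$ is handled in the previous paragraph. The composition $\mathcal{X}\mapsto\underline{\mathcal{X}}\mapsto\{X\in\mathcal{A}\mid\underline{X}\in\underline{\mathcal{X}}\}$ is the step I expect to be the main subtlety: the inclusion $\subseteq$ is tautological, while the reverse requires that if $\underline{X}\cong\underline{X_0}$ for some $X_0\in\mathcal{X}$, then $X\in\mathcal{X}$. This is where Lemma \ref{lem.retract-stable-versus-directsummand} does the real work: such an isomorphism in $\underline{\mathcal{A}}$ forces $X$ to be a direct summand of $X_0\oplus P$ for some $P\in\mathcal{P}\subseteq\mathcal{X}$, and closure of $\mathcal{X}$ under finite coproducts and direct summands then yields $X\in\mathcal{X}$. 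This completes the bijection in (1); the bijection in (2) is obtained by applying the same argument to $\mathcal{A}\op$ with $\mathcal{P}\op$, using Theorem \ref{t:CoreflectiveStable} in place of Theorem \ref{t:ReflectiveStable}.
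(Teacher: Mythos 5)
Your proof is correct and takes essentially the same route as the paper, which derives this corollary as an immediate consequence of Theorems \ref{t:ReflectiveStable} and \ref{t:CoreflectiveStable}; you have simply made explicit the routine details the paper leaves implicit (stripping the idempotent completions via the split-idempotents hypothesis and Lemma \ref{lem.retract-stable-versus-directsummand}, and exhibiting the inverse assignment $\mathcal{Y}\mapsto\{X\mid\underline{X}\in\mathcal{Y}\}$). No gaps.
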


\begin{remark}
If in the bijection stated in (1) of the last corollary we assume that $\mathcal{A}$ has enough projectives and that  $\mathcal{P}:=\Proj(\mathcal{A})$ is preenveloping, then we get a  bijection between the preenveloping weakly resolving subcategories closed under direct summands  of $\mathcal{A}$ and the coreflective subcategories $\mathcal{Z}$ of $\underline{\mathcal{A}}:=\mathcal{A}/\Proj(\mathcal{A})$ which are closed under taking syzygies.  See Examples \ref{ex.Chase-situation} and \ref{ex.locally noetherian} for  instances of these situations. 

The dual result involving injective objects is true as well.
\end{remark}

This corollary can be pushed a little bit further when $\mathcal{A}$ is Frobenius and $\mathcal{P}$ is its subcategory of projective objects, since, in this case, the stable category is triangulated \cite[I \S2]{Happel}. Recall that a \textit{t-structure} in a triangulated category $\mathcal D$ \cite[p. 17]{BeligiannisReiten} is a pair $(\mathcal D^{\leq 0},\mathcal D^{\geq 0})$ of full subcategories of $\mathcal D$ such that, setting $\mathcal D^{\leq n}=\Sigma^{-n}(\mathcal D^{\leq 0})$ and $\mathcal D^{\geq n}=\Sigma^{-n}(\mathcal D^{\geq 0})$, for all $n \in \mathbb Z$, satisfies:
\begin{enumerate}
\item $\mathcal D(D,D')=0$ for any $D \in \mathcal D^{\leq 0}$ and $D' \in \mathcal D^{\geq 1}$.

\item $\mathcal D^{\leq 0} \subseteq \mathcal D^{\leq 1}$ and $\mathcal D^{\geq 1} \subseteq \mathcal D^{\geq 0}$.

\item For any object $D \in \mathcal D$, there exists a triangle
$D^{\leq 0} \rightarrow D \rightarrow D^{\geq 1} \rightarrow \Sigma(D^{\leq 0})$ with $D^{\leq 0} \in \mathcal D^{\leq 0}$ and $D^{\geq 1} \in \mathcal D^{\geq 1}$.
\end{enumerate}

\begin{corollary} \label{cor.bijections in Frobenius abelian}
Let $\mathcal{A}$ be a Frobenius abelian category and $\mathcal{P}=\Proj(\mathcal{A})$ be its subcategory of projective (=injective) objects. Suppose that $\underline{\mathcal{A}}=\mathcal{A}/\mathcal{P}$ has split idempotents (see Proposition \ref{ex.stablecategs-splitidempotents} below).
 For a subcategory $\mathcal{X}\subseteq\mathcal{A}$, the following assertions hold:

\begin{enumerate}
\item $\mathcal{X}$ is a preenveloping resolving subcategory of $\mathcal{A}$ closed under direct summands if, and only if, there is a complete cotorsion pair $(\mathcal{Z},\mathcal{X})$ in $\mathcal{A}$ such that $\Omega (\underline{\mathcal{X}})\subseteq\underline{\mathcal{X}}$.
\item $\mathcal{X}$ is a precovering coresolving subcategory of $\mathcal{A}$ closed under direct summands if, and only if, there is a complete cotorsion pair $(\mathcal{X},\mathcal{Y})$ in $\mathcal{A}$ such that $\Omega^{-1}(\underline{\mathcal{X}})\subseteq\underline{\mathcal{X}}$.
\end{enumerate}
In particular we have a bijection between:

\begin{enumerate}
\item[(a)] Precovering coresolving subcategories of $\mathcal{A}$ closed under direct summands;
\item[(b)] t-structures $(\mathcal{U},\Sigma (\mathcal{V}))$ in $\underline{\mathcal{A}}$;
\item[(c)] Preenveloping resolving subcategories of $\mathcal{A}$ closed under direct summands;
\item[(d)] Complete cotorsion pairs $(\mathcal{Z},\mathcal{W})$ such that  $\Omega^{-1}(\underline{\mathcal{Z}})\subseteq\underline{\mathcal{Z}}$ (or, equivalently, such that  $\Omega (\underline{\mathcal{W}})\subseteq\underline{\mathcal{W}}$).
\end{enumerate}
\end{corollary}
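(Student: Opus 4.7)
The corollary is deduced from Corollary \ref{cor.bijection-Presolving-reflectives} (i.e., Theorems \ref{t:ReflectiveStable} and \ref{t:CoreflectiveStable}) together with standard features of the stable category of a Frobenius abelian category: $\underline{\mathcal{A}}$ is triangulated with $\Sigma$ and $\Omega$ mutually inverse autoequivalences; every triangle in $\underline{\mathcal{A}}$ is isomorphic to the standard one associated to a short exact sequence in $\mathcal{A}$; and there are natural isomorphisms $\Ext_{\mathcal{A}}^{1}(A,B)\cong\underline{\Hom}(A,\Sigma B)\cong\underline{\Hom}(\Omega A,B)$. Moreover, by Lemma \ref{lem.retract-stable-versus-directsummand}, whenever $\mathcal{X}\supseteq\mathcal{P}$ is closed under direct summands, $\underline{\mathcal{X}}$ is automatically closed under retracts in $\underline{\mathcal{A}}$.

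For part (1), the direction $(\Leftarrow)$ is mostly formal: a complete cotorsion pair $(\mathcal{Z},\mathcal{X})$ makes $\mathcal{X}$ preenveloping, closed under direct summands and extensions, and containing $\Inj(\mathcal{A})=\Proj(\mathcal{A})=\mathcal{P}$. The non-formal point is closure of $\mathcal{X}$ under kernels of epimorphisms: given $0\to K\to X'\to X\to 0$ with $X,X'\in\mathcal{X}$, I would pick $0\to L\to P\to X\to 0$ with $P$ projective and $L\in\mathcal{X}$ (possible by $\Omega\underline{\mathcal{X}}\subseteq\underline{\mathcal{X}}$, $\mathcal{P}\subseteq\mathcal{X}$, and summand-closure) and form the pullback over $X$ to obtain an exact sequence $0\to L\to K\oplus P\to X'\to 0$; then extension-closure of $\mathcal{X}$ yields $K\oplus P\in\mathcal{X}$ and hence $K\in\mathcal{X}$. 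For $(\Rightarrow)$, given $\mathcal{X}$ preenveloping resolving closed under summands, Corollary \ref{cor.bijection-Presolving-reflectives}(1) gives that $\underline{\mathcal{X}}$ is reflective in $\underline{\mathcal{A}}$ and $\Omega$-closed; the extension-closure of $\mathcal{X}$ in $\mathcal{A}$ transfers (via lifting triangles to short exact sequences) to extension-closure of $\underline{\mathcal{X}}$ in the triangulated sense, so that $\underline{\mathcal{X}}$ is the coaisle of a unique t-structure in $\underline{\mathcal{A}}$. Setting $\mathcal{Z}:={}^{\perp}\mathcal{X}$ (which contains $\mathcal{P}$), the decomposition triangle of each $\underline{A}$ lifts to a short exact sequence $0\to Z\to A'\to X\to 0$ in $\mathcal{A}$ with $Z\in\mathcal{Z}$, $X\in\mathcal{X}$, and $A'$ differing from $A$ by a projective summand; absorbing this summand using $\mathcal{P}\subseteq\mathcal{Z}\cap\mathcal{X}$ produces a special $\mathcal{Z}$-precover of $A$, and the rotated triangle similarly yields a special $\mathcal{X}$-preenvelope, so $(\mathcal{Z},\mathcal{X})$ is a complete cotorsion pair. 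Assertion (2) is dual.

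For the four-way bijection, (a)$\leftrightarrow$(d) and (c)$\leftrightarrow$(d) follow from (2) and (1) respectively, once the two closure conditions in (d) are shown equivalent. This is a short computation: assuming $\Sigma\underline{\mathcal{Z}}\subseteq\underline{\mathcal{Z}}$, for $Z\in\mathcal{Z}$ and $W\in\mathcal{W}$ one has
\[
\Ext^{1}(Z,\Omega W)\cong\underline{\Hom}(Z,W)\cong\underline{\Hom}(\Sigma Z,\Sigma W)\cong\Ext^{1}(\Sigma Z,W)=0,
\]
since $\Sigma Z\in\underline{\mathcal{Z}}$ and $(\mathcal{Z},\mathcal{W})$ is a cotorsion pair; hence $\Omega W\in\mathcal{Z}^{\perp}=\mathcal{W}$ up to projective summands, and the converse is symmetric. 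For (a)$\leftrightarrow$(b), Corollary \ref{cor.bijection-Presolving-reflectives}(2) gives $\underline{\mathcal{X}}$ coreflective and $\Sigma$-closed, and the strengthening from weakly coresolving to coresolving transfers to triangulated extension-closure of $\underline{\mathcal{X}}$ via the triangle-to-ses correspondence, so $\underline{\mathcal{X}}$ is an aisle and yields a unique t-structure $(\underline{\mathcal{X}},\Sigma\mathcal{V})$. The inverse map sends an aisle $\mathcal{U}$ to $\pi^{-1}(\mathcal{U})\subseteq\mathcal{A}$, which inherits coresolving plus summand-closure from the closure properties of $\mathcal{U}$ and is precovering by Lemma \ref{l:PrecoversInStable}.

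\textbf{Main obstacle.} I expect the hardest step to be the implication $(\Rightarrow)$ of (1) and its dual, specifically the careful projective-summand bookkeeping required to convert the t-structure decomposition in $\underline{\mathcal{A}}$ into an honest special approximation of $A$ in $\mathcal{A}$ (rather than only of $A$ up to a projective summand); here the fact that both $\mathcal{Z}$ and $\mathcal{X}$ contain $\mathcal{P}$ is essential in order to absorb the auxiliary projective contributions arising from lifting stable-category triangles to short exact sequences.
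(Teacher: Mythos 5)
Your proposal is correct and follows the same overall architecture as the paper: both reduce to Corollary \ref{cor.bijection-Presolving-reflectives} (equivalently Theorems \ref{t:ReflectiveStable} and \ref{t:CoreflectiveStable}) to translate the approximation-theoretic conditions on $\mathcal{X}$ into (co)reflectivity plus $\Omega$- or $\Sigma$-closure of $\underline{\mathcal{X}}$, and both use the bifunctorial isomorphisms $\Ext^1_{\mathcal{A}}(A,B)\cong\underline{\mathcal{A}}(\Omega\underline{A},\underline{B})\cong\underline{\mathcal{A}}(\underline{A},\Omega^{-1}\underline{B})$ to show the two closure conditions in (d) are equivalent. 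The one real difference is that the step you flag as the main obstacle --- converting the t-structure truncation triangle of $\underline{A}$ into an honest special approximation sequence in $\mathcal{A}$, and conversely passing from extension-closure of $\mathcal{X}$ to the aisle/coaisle property of $\underline{\mathcal{X}}$ --- is not carried out by hand in the paper: it is outsourced entirely to \cite[Propositions 3.9 and 3.11]{SaorinStovicek}, which already establish the bijections between aisles, suspended coreflective subcategories, and complete cotorsion pairs in a Frobenius exact category. Your reconstruction of those two propositions (the pullback argument for closure under kernels of epimorphisms, and the projective-summand bookkeeping when lifting triangles to short exact sequences) is sound and makes the proof self-contained, at the cost of redoing work that the paper simply cites; if you want to match the paper's economy, you can replace those two paragraphs by the corresponding references.
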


\begin{proof}
We just prove (2), since assertion (1) will follow by duality. It easily follows from  Theorem \ref{t:CoreflectiveStable} that $\mathcal{X}$ is a  precovering coresolving subcategory of $\mathcal{A}$ if, and only if, $\underline{\mathcal{X}}$ is a coreflective subcategory closed under taking extensions of $\underline{\mathcal{A}}$ such that $\Omega^{-1}(\underline{\mathcal{X}})\subseteq\underline{\mathcal{X}}$. But, bearing in mind that the suspension functor of the triangulated category $\underline{\mathcal{A}}$ is $\Omega^{-1}:\underline{\mathcal{A}}\rightarrow\underline{\mathcal{A}}$, the latter just says that $\underline{\mathcal{X}}$ is the aisle of a t-structure in $\underline{\mathcal{A}}$ (see \cite[Proposition 3.11]{SaorinStovicek}). That is, $\mathcal{X}$ is precovering coresolving in $\mathcal{A}$ if, and only if, $\underline{\mathcal{X}}$ is the aisle of a t-structure in $\underline{\mathcal{A}}$. But, by \cite[Proposition 3.9]{SaorinStovicek} and the fact that $\Omega$ is a self-equivalence of $\underline{\mathcal{A}}$, we get that $\underline{\mathcal{X}}$ is the aisle of a t-structure if, and only if, $\mathcal{X}$ is the first component of a complete cotorsion pair such that  $\Omega^{-1}(\underline{\mathcal{X}})\subseteq\underline{\mathcal{X}}$. 

Since a t-structure $(\mathcal{U},\Sigma (\mathcal{V}))$ is determined either by $\mathcal{U}$ or by $\mathcal{V}$, the bijections between (a) and (b) and between (b) and (c) are restrictions of the bijections in Corollary \ref{cor.bijection-Presolving-reflectives}. The bijection between (b) and (d), when taking the $\Omega^{-1}$-closed version of the latter, follows from \cite[Proposition 3.9]{SaorinStovicek}. But note that we have bifunctorial isomorphisms $\underline{\mathcal{A}}(\Omega (\underline{A}),\underline{B})\cong\Ext_{\mathcal{A}}^1(A,B)\cong\underline{\mathcal{A}}(\underline{A},\Omega^{-1}(\underline{B }))$, for all $A,B\in\mathcal{A}$. We then get the following chain of isomorphisms, for all $X,Y\in\mathcal{A}$: $$\Ext_{\mathcal{A}}^1(X,\Omega (Y))\cong\underline{\mathcal{A}}(\Omega (\underline{X}),\Omega (\underline{Y}))\cong\underline{\mathcal{A}}(X,Y)\cong\underline{\mathcal{A}}(\Omega (\Omega^{-1}(\underline{X})),\underline{Y})\cong\Ext_{\mathcal{A}}^1(\Omega^{-1}(X),Y).$$ This implies that a complete cotorsion pair $(\mathcal{X},\mathcal{Y})$ satisfies that $\Omega^{-1}(\mathcal{X})\subseteq\mathcal{X}$ if, and only if, the inclusion  $\Omega (\mathcal{Y})\subseteq\mathcal{Y}$ also holds.
\end{proof}

Now let us see that, in certain natural situations, stable categories have split idempotents.

\begin{proposition} \label{ex.stablecategs-splitidempotents}
Let $\mathcal{A}$ be an additive category and let $\mathcal{P}$ be a full additive subcategory of $\mathcal{A}$ that, without loss of generality, we assume to be closed under taking direct summands. Assume that one of the following conditions holds:
\begin{enumerate}
\item $\mathcal{A}$ is Krull-Schmidt.
\item $\mathcal{A}$ is bicomplete abelian and $\mathcal{P}$ is precovering and preenveloping. 
\end{enumerate}
Then the category $\underline{\mathcal{A}}=\mathcal{A}/\mathcal{P}$ has split idempotents.
\end{proposition}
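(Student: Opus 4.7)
The proof falls into two cases that need quite different tools, so I would handle them separately.

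For Case (1), the plan is to lift idempotents from $\underline{\mathcal{A}}$ back to $\mathcal{A}$ and exploit that $\mathcal{A}$ already has split idempotents (being Krull--Schmidt). Given $A\in\mathcal{A}$, I would first decompose $A=A_P\oplus A_Q$, where $A_P$ is the sum of the indecomposable summands lying in $\mathcal{P}$ and $A_Q$ is the sum of those not in $\mathcal{P}$ (this uses $\mathcal{P}$ closed under summands). Since $\underline{A}\cong\underline{A_Q}$ in $\underline{\mathcal{A}}$, it suffices to split idempotents $\underline{e}:\underline{A_Q}\to\underline{A_Q}$. Put $R=\End(A_Q)$ and let $I\subseteq R$ be the two-sided ideal of endomorphisms that factor through an object of $\mathcal{P}$; the main step is to show $I\subseteq J(R)$. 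For an indecomposable $Q\notin\mathcal{P}$, the claim is immediate: a unit $f$ of the local ring $\End(Q)$ lying in $I$ would, via $1_Q=f^{-1}f$, exhibit $Q$ as a summand of some $P\in\mathcal{P}$ and force $Q\in\mathcal{P}$. The general statement is obtained by an induction on the length of the Krull--Schmidt decomposition $A_Q=A_1\oplus\cdots\oplus A_n$, using a block Schur-complement / Gauss-elimination argument on $1-h$ for $h\in I$: the diagonal blocks are handled by the indecomposable case, while the ideal property of $I$ guarantees that the Schur corrections remain in $I$.

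Once $I\subseteq J(R)$ is in place, I would invoke the classical idempotent-lifting machinery: $R$ is semi-perfect (as the endomorphism ring of a Krull--Schmidt object), so any idempotent in $R/J(R)$ lifts to $R$; moreover two idempotents of the semi-perfect ring $R/I$ with identical image in $(R/I)/J(R/I)=R/J(R)$ are conjugate by a unit. Lifting that conjugating unit to $R$ (permissible since units lift from $R/J(R)$) then gives an idempotent $e'\in R$ whose image in $R/I$ is exactly the originally given $\underline{e}$. As $\mathcal{A}$ has split idempotents, $e'$ splits in $\mathcal{A}$ and the splitting descends to $\underline{\mathcal{A}}$.

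For Case (2), I would use that $\underline{\mathcal{A}}$ is pretriangulated in the sense of Beligiannis--Reiten, because $\mathcal{P}$ is simultaneously precovering and preenveloping, and that $\underline{\mathcal{A}}$ inherits all small coproducts from the cocomplete $\mathcal{A}$. The idea is then to run the argument of \cite[Proposition 1.6.8]{Neeman01} inside the right triangulation: given idempotent $\underline{e}:\underline{A}\to\underline{A}$, form $X=\bigoplus_{n\in\bbN}\underline{A}$, take the cone of $1-s:X\to X$ where $s$ sends the $n$-th summand into the $(n+1)$-st via $\underline{e}$, and show (by the standard octahedron argument, now cast via the right triangulation) that this homotopy colimit of the telescope $\underline{A}\stackrel{\underline{e}}{\to}\underline{A}\stackrel{\underline{e}}{\to}\cdots$ gives a retract splitting $\underline{e}$.

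The main obstacle, I expect, lies in Case (1) at the Schur-complement step: although $I_Q\subseteq J(\End(Q))$ is elementary for a single indecomposable, the block version requires that after eliminating a unit diagonal block, the correction term $h_{jj}+h_{ji}(1-h_{ii})^{-1}h_{ij}$ still factor through $\mathcal{P}$; this is precisely where the ideal property of $I$ inside $R$ is essential, since one needs products of the form $h_{ji}\cdot g\cdot h_{ij}$ to factor through $\mathcal{P}$ for every $g\in\End(A_i)$. In Case (2), the subtler point is to verify that Neeman's argument, usually written for fully triangulated categories, runs in the right (pretriangulated) setting; the octahedral-type axiom from Beligiannis--Reiten together with the existence of pseudocokernels is what is needed.
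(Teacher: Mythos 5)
Your proposal follows essentially the same route as the paper in both cases: reduce to an object with no direct summands in $\mathcal{P}$, show that the ideal $I$ of morphisms factoring through $\mathcal{P}$ lies in the Jacobson radical of the (semiperfect) endomorphism ring and lift the idempotent, respectively adapt Neeman's homotopy-colimit argument to the right triangulated structure of $\underline{\mathcal{A}}$. The differences are only in the details: the paper gets $I\subseteq J(R)$ directly from the matrix description of the radical in a Krull--Schmidt category and lifts the idempotent via Kasch's theorem on lifting direct-sum decompositions along the projective cover $R\to R/I$ (rather than your lift-mod-$J$-then-conjugate argument), and in Case (2) it explicitly carries out the two verifications you flag, namely that coproducts of right triangles are right triangles and that $1-\mathrm{shift}$ on $\underline{A}^{(\mathbb{N})}$ is a split monomorphism in $\mathcal{A}$, hence a $\mathcal{P}$-monomorphism whose cone is $\underline{A}$.
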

 
\begin{proof} 
(1) If $\mathcal{A}$ is Krull-Schmidt, $A=A_1\oplus \cdots \oplus A_n$ is the decomposition of an object as a direct sum of indecomposable objects and we view the endomorphisms of $A$ as matrices $f=(f_{ij})$, where $f_{ij}:A_j\rightarrow A_i$ is a morphism in $\mathcal{A}$, for all $i,j \in \{1, \ldots, n\}$, then the Jacobson radical of $\End_{\mathcal{A}}(A)$ consists of those $f=(f_{ij})$ such that none of the $f_{ij}$ is an isomorphism. Moreover, $\End_{\mathcal{A}}(A)$ is a semiperfect ring, for each $A\in\mathcal{A}$ (see \cite{Krause}). 
 
On the other hand,  each object $A$ of $\mathcal A$ admits a decomposition $A=P\oplus A'$, where $P\in\mathcal{P}$ and $A'$ has no nonzero direct summand in $\mathcal{P}$. We then have that $\underline{A}\cong\underline{A'}$ in $\underline{\mathcal{A}}$ so that, in order to check the idempotent completion, we can assume that $A$ has no nonzero direct summand 
in $\mathcal{P}$, something that we assume from now on in this proof. We then consider the canonical ring homomorphism $\pi:R_A:=\End_{\mathcal{A}}(A)\rightarrow\End_{\underline{\mathcal{A}}}(A):=\underline{R}_A$, that takes $f\rightsquigarrow\underline{f}$. If $\pi (f)=0$, then $f$ belongs to the Jacobson radical $J(R_A)$ of $R_A$, due to the previous paragraph. In particular, $\pi$ is the projective cover of $\underline{R}_A$ in $\text{Mod-}R_A$.  Consider now an $f\in R_A$ such that $\underline{f}=\underline{f}^2$ in $\underline{R}_A$. Then we have a decomposition $\underline{R}_A=\underline{f}\underline{R}_A\oplus (\underline{1}_A-\underline{f})\underline{R}_A$, both in the category $\text{Mod-}\underline{R}_A$ and in the category $\text{Mod-}R_A$. Since $R_A$ is semiperfect we can apply \cite[Theorem 11.2.2]{Kasch} to get an idempotent $e\in R_A$, such that $\underline{e}\underline{R}_A=\pi (e R_A)=\underline{f}\underline{R}_A$ and $(\underline{1}_A-\underline{e})\underline{R}_A=\pi ((1_A-e)R_A)=(\underline{1}_A-\underline{f})\underline{R}_A$. It immediately follows that $\underline{e}=\underline{f}$. Since $\mathcal A$ has split idempotents, we get an object $X$ and morphisms $s:X\rightarrow A$ and $p:A\rightarrow X$ such that $e=sp$ and $1_X=ps$. By applying $\pi$ to these last equalities, we conclude that the idempotent $\underline{f}=\underline{e}$ splits in $\underline{\mathcal{A}}$. 

(2) Suppose that $\mathcal{A}$ is bicomplete abelian and $\mathcal{P}$ is precovering and preenveloping. Then $\mathcal{P}$ is closed under taking arbitrary products and coproducts, since it is preenveloping and precovering and closed under taking direct summands. It is well-known that the projection $\Pi :\mathcal{A}\rightarrow\underline{\mathcal{A}}$ preserves products and coproducts, so that $\underline{\mathcal{A}}$ has products and coproducts. Our goal is to adapt the proof of \cite[Proposition 1.6.8]{Neeman01} in our particular case. Neeman's proof is ultimately based on two facts: i) Coproducts of triangles are triangles; ii) The homotopy colimit of the sequence 
\begin{equation}\label{e:homseq}\tag{\#}
\begin{tikzcd}
X \arrow{r}{1_X} & X \arrow{r}{1_X} & \cdots
\end{tikzcd}
\end{equation}
is isomorphic to $X$.  We will consider the right triangulated structure of $\underline{\mathcal{A}}$. That a coproduct  of right triangles is a triangle is a direct consequence of the definition of the right triangulated structure in $\underline{\mathcal{A}}$ (see the initial paragraphs of this section), bearing in mind that  the coproduct is  right exact and that a  coproduct of $\mathcal{P}$-monomorphisms is obviously a $\mathcal{P}$-monomorphism. 

On the other hand, if $X\in\mathcal{A}$ and we consider the map $1-shift:X^{(\mathbb{N})}\rightarrow X^{(\mathbb{N})}$, but in the abelian category $\mathcal{A}$ for the moment, we claim that this map is a section, for which we just need to check that the map $\mathcal{A}(1-shift,A): \mathcal{A}(X^{(\mathbb N)},A)\rightarrow\mathcal{A}(X^{(\mathbb N)},A)$ is surjective. Indeed, if $A\in\mathcal A$ and we apply $\mathcal{A}(?,A)$ to $1-shift$, we get a morphism of abelian groups $\varphi:\mathcal{A}(X,A)^\mathbb{N}\cong\mathcal{A}(X^{(\mathbb N)},A)\rightarrow\mathcal{A}(X^{(\mathbb N)},A)\cong\mathcal{A}(X,A)^\mathbb{N}$ that it is easily seen to take $(f_n)_{n\in\mathbb{N}}$ to $(f_0-f_1,f_1-f_2,...,f_n-f_{n+1},...)$. This morphism is surjective since, given $(g_n)_{n\in\mathbb{N}}\in\mathcal{A}(X,A)^\mathbb{N}$, we can choose $(f'_n)_{n\in\mathbb{N}}$ by defining $f'_0=0$ and $f'_n=-\Sigma_{i=0}^{n-1}g_i$, for all $n>0$, to get an element $(f'_n)_{n \in \mathbb N}$ such that $\varphi ((f'_n)_{n \in \mathbb N})=(g_n)_{n \in \mathbb N}$. 

Now, using this claim, $1-shift$ is in particular a $\mathcal{P}$-monomorphism, so that it induces the following right triangle in $\underline{\mathcal{A}}$: $$\underline{X}^{(\mathbb{N})}\stackrel{1-shift}{\rightarrow}\underline{X}^{(\mathbb{N})}\rightarrow\underline{\Coker(1-shift)}\rightarrow\Sigma (\underline{X}^{(\mathbb{N})}).$$ But it is quite easy to see that the summation or codiagonal map $\nabla :X^{(\mathbb{N})}\rightarrow X$  is a cokernel of $1-shift$ in $\mathcal{A}$. This implies that the homotopy colimit of (\ref{e:homseq}) is $X$.

After having checked that the obvious adaptations of conditions i) and ii) of Neeman's proof are also true in our case, one can copy it mutatis mutandis to conclude that idempotents split in $\underline{\mathcal{A}}$.
\end{proof}


Now we give some examples involving stable categories to which the last results apply. 

\begin{example}
Let $K$ be a commutative ring and $\mathcal{A}$ a $\mathrm{Hom}$-finite abelian $K$-category (that is, an additive category such that $\mathcal{A}(A,B)$ is a finitely generated $K$-module, for all $A,B\in\mathcal{A}$, and the composition of morphisms is $K$-bilinear). For any object $P$ in $\mathcal{A}$, the subcategory $\mathcal{P}:=\mathrm{sum}(P)$ is both precovering and preenveloping in $\mathcal{A}$. Therefore, Theorems \ref{t:ReflectiveStable} and \ref{t:CoreflectiveStable} apply, and also Corollary \ref{cor.bijection-Presolving-reflectives} when $\mathcal{A}$ is, in addition, Krull-Schmidt. 

If $K$ is also noetherian, for any $K$-algebra $A$ that is finitely generated as a $K$-module (these algebras are called \emph{Noether algebras}), the category 
 $\mathcal{A}=\mathrm{mod}\textrm{-}A$ of finitely generated $A$-modules is $\mathrm{Hom}$-finite.  In the particular cases when $K$ is artinian (and hence $A$ is an \emph{Artin algebra}) or $K$ is a complete noetherian local domain (see \cite{Swan}) $\text{mod-}A$ is also a Krull-Schmidt category.

Another example of $\mathrm{Hom}$-finite Krull-Schmidt category appears when $K$ is a field and we take $\mathcal{A}:=\mathrm{coh}(\mathbb{X})$ to be the category of coherent sheaves over a smooth projective variety $\mathbb{X}$ (see \cite[Section 3]{BondalKapranov}). 
\end{example}

\begin{example} \label{ex.locally noetherian}
Let $\mathcal{G}$ be a locally noetherian Grothendieck category, i.e. a Grothendieck category with a set of noetherian generators. Such a category is characterized by the fact that the subcategory $\Inj(\mathcal{G})$ of injective objects is closed under taking arbitrary coproducts or, equivalently, by the fact that each injective object is a coproduct of indecomposable objects (see \cite[Theorems 1 and 2]{Roos}). In addition, in this case the class $\Inj(\mathcal{G})$ is precovering (actually covering). It is precovering because 
$\Inj(\mathcal{G})=\mathrm{Sum}(\mathcal{E})$, where $\mathcal{E}$ is a set of representatives of  isomorphism classes of indecomposable injective objects, and covering by \cite[Theorem 1.2]{ElB}. 
By Proposition \ref{ex.stablecategs-splitidempotents} above, we know that for $\mathcal{A}=\mathcal{G}$ and $\mathcal{P}=\Inj(\mathcal{G})$, the corresponding stable category $\underline{\mathcal{A}}$ has split idempotents, so that  Corollary \ref{cor.bijection-Presolving-reflectives} applies.

Examples of this situation are the module category $\mathcal{G}=\mathrm{Mod}\textrm{-}R$, where $R$ is a right noetherian ring, or the category $\mathrm{Qcoh}(\mathbb{X})$ of quasi-coherent sheaves over a noetherian scheme $\mathbb{X}$.  
\end{example}

\begin{example} \label{ex.Chase-situation}
By \cite[Corollary 3.5]{RadaSaorin}, given a ring $R$,  the class $\Proj(R)$ of projective right $R$-modules is preenveloping (and precovering) if, and only if, $R$ is a ring which is right perfect and left coherent. By Proposition \ref{ex.stablecategs-splitidempotents} again, we  get that Corollary \ref{cor.bijection-Presolving-reflectives} applies in this case to $\mathcal{A}=\mathrm{Mod}\textrm{-}R$ and $\mathcal{P}=\Proj(R)$. 
\end{example}

Recall that in any Grothendieck category $\mathcal{G}$, an object  $Y$ is called \emph{Gorenstein injective} when there is an acyclic (=exact) complex $E^\bullet$ of injective objects such that $Y=Z^0(E^\bullet)$ is the object of $0$-cocycles and the complex $\mathcal{G}(E,E^\bullet )$ is acyclic in $\text{Ab}$, for all $E\in\Inj(\mathcal{G})$. We denote by $\text{GInj}(\mathcal{G})$ the subcategory of Gorenstein injective objects, and we obviously have $\Inj(\mathcal{G})\subseteq\text{GInj}(\mathcal{G})$. The following is now a consequence of our results:

\begin{corollary} \label{c:Gorenstein-injectives}
Let $\mathcal{G}$ be a locally noetherian Grothendieck category an let us consider stable categories modulo $\mathcal{P}=\Inj(\mathcal{G})$. The following assertions are equivalent:

\begin{enumerate}
\item $\text{\rm GInj}(\mathcal{G})$ is a preenveloping subcategory of $\mathcal{G}$;
\item $\underline{\text{\rm GInj}(\mathcal{G})}$ is a reflective subcategory of $\underline{\mathcal{G}}$.
\end{enumerate}

When the derived category $\mathbf{D}(\mathcal{G})$ is compactly generated (e.g. when $\mathcal{G}=\text{\rm Mod-}R$ for a right noetherian ring $R$), these equivalent conditions hold.
\end{corollary}
\begin{proof}
$(2)\Longrightarrow (1)$ is a consequence of Lemma \ref{l:PrecoversInStable} and Theorem \ref{t:CharacterizationReflective}.

$(1)\Longrightarrow (2)$ It is well-known that   $\text{GInj}(\mathcal{G})$ is closed under taking extensions and direct summands (see \cite[Theorem 2.6]{Holm} for a proof in module categories, which works in any Grothendieck category). We will prove that it is also closed under taking kernels of $\mathcal{P}$-epimorphisms, so that it will be weakly $\mathcal{P}$-resolving and, by Corollary \ref{cor.bijection-Presolving-reflectives}, the proof will be completed.

Let $f:X\rightarrow Y$ be a $\mathcal{P}$-epimorphism, where $X,Y\in\text{GInj}(\mathcal{G})$. Consider an acyclic complex of injectives $E^\bullet$ such that $Z^0(E^\bullet)\cong Y$ and $\mathcal{G}(E,E^\bullet )$ is acyclic, for all $E\in\Inj(\mathcal{G})$. The projection $\pi:E_Y:=E^{-1}\twoheadrightarrow Y$ factors through $f$, which implies that $f$ is an epimorphism. Taking the pullback of $f$ and $\pi$ and using the properties of pullbacks, we get two exact sequences (*):
$$0\rightarrow\text{Ker}(f)\rightarrow\hat{X}\rightarrow E_Y\rightarrow 0$$
$$0\rightarrow\text{Ker}(\pi)\rightarrow\hat{X}\rightarrow X\rightarrow 0,$$
where $\hat{X}$ is the upper left corner of the corresponding cartesian square. Since $\text{Ker}(\pi)=Z^{-1}(E^\bullet )$ is also Gorenstein injective, the second sequence gives that $\hat{X}\in\text{GInj}(\mathcal{G})$. On the other hand, since $\pi$ factors through $f$ the first of the two sequences (*) splits. Then $\text{Ker}(f)$ is in  $\text{GInj}(\mathcal{G})$, as it is a direct summand of $\hat{X}$.

The last statement follows from \cite{Krause05} (see also \cite[Proposition 4.2]{EnochsIacob} for the module case).
\end{proof}

Notice that $\textrm{GInj}(\Modr R)$ is preenveloping for every ring $R$ by \cite[Theorem 5.6]{SarochStovicek}. It is not known if this result is true for any Grothendieck category.

\section{Applications to exactly definable categories}

For our purposes in this section, we will need the following elementary result.

\begin{lemma} \label{lem.passing of p(co)kernels to subcategories}
Let $F:\mathcal{C}\to\mathcal{D}$ be a fully faithful additive functor between additive categories and denote by ${\rm Im}(F)$ its essential image. The following assertions hold:

\begin{enumerate}
\item If $\mathcal{D}$ has split idempotents and $\mathrm{Im}(F)$ is closed under taking direct summands in $\mathcal{D}$, then $\mathcal{C}$ has split idempotents.
\item If $\mathcal{D}$ has pseudocokernels and $\mathrm{Im}(F)$ is preenveloping in $\mathcal{D}$, then $\mathcal{C}$ has pseudocokernels.
\item If $\mathcal{D}$ has pseudokernels and $\mathrm{Im}(F)$ is precovering in $\mathcal{D}$, then $\mathcal{C}$ has pseudokernels.
\end{enumerate}
\end{lemma}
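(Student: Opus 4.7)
The plan is to use that the fully faithful additive functor $F$ induces an equivalence between $\mathcal{C}$ and its essential image $\mathcal{B}:=\mathrm{Im}(F)$, so that each assertion about $\mathcal{C}$ is equivalent to the corresponding assertion about $\mathcal{B}$ viewed as a full subcategory of $\mathcal{D}$. All three statements then become closure properties of $\mathcal{B}$ inside $\mathcal{D}$.

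For assertion (1), given $B\in\mathcal{B}$ and an idempotent $e\in\mathcal{B}(B,B)=\mathcal{D}(B,B)$, I would split $e$ in $\mathcal{D}$ to get $B'\in\mathcal{D}$ with maps $p\colon B\to B'$ and $i\colon B'\to B$ satisfying $pi=1_{B'}$ and $ip=e$. Since $B'$ is a direct summand of $B\in\mathcal{B}$, the hypothesis gives $B'\in\mathcal{B}$; then $p,i$ automatically lie in $\mathcal{B}$ by fullness.

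For assertion (2), given $f\colon B_1\to B_2$ in $\mathcal{B}$, I would first take a pseudocokernel $c\colon B_2\to C$ of $f$ in $\mathcal{D}$, and then compose with a $\mathcal{B}$-preenvelope $\lambda\colon C\to D$. The composite $\lambda c\colon B_2\to D$ lies in $\mathcal{B}$, and the claim is that it is a pseudocokernel of $f$ in $\mathcal{B}$. The equality $(\lambda c)f=\lambda(cf)=0$ is immediate. If $g\colon B_2\to E$ is a morphism in $\mathcal{B}$ with $gf=0$, the pseudocokernel property of $c$ in $\mathcal{D}$ yields $h\colon C\to E$ with $hc=g$, and since $E\in\mathcal{B}$, the preenveloping property of $\lambda$ furnishes $h'\colon D\to E$ in $\mathcal{B}$ with $h'\lambda=h$; then $g=hc=h'(\lambda c)$, as required. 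Assertion (3) is the formal dual of (2), obtained by replacing $F$ with $F^{\mathrm{op}}\colon\mathcal{C}^{\mathrm{op}}\to\mathcal{D}^{\mathrm{op}}$ and interchanging preenvelopes with precovers.

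There is no real obstacle: the only delicate point is the two-step construction in (2) and (3), where the pseudocokernel produced in $\mathcal{D}$ need not lie in $\mathcal{B}$, and it is precisely the preenveloping (respectively precovering) hypothesis that allows one to push it into $\mathcal{B}$ while preserving the universal factorization property.
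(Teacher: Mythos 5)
Your proof is correct and follows essentially the same route as the paper: for (2) one takes a pseudocokernel of $F(f)$ in $\mathcal{D}$, composes with an $\mathrm{Im}(F)$-preenvelope, and checks the factorization property, with (1) handled by splitting the idempotent in $\mathcal{D}$ and using closure under summands, and (3) by duality. The only cosmetic difference is that you work inside the essential image and transport back along the equivalence, whereas the paper uses full faithfulness of $F$ directly to produce the morphism in $\mathcal{C}$; the content is identical.
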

\begin{proof}
Assertion (1) is clear, and assertion (3) follows from assertion (2) by duality.  We then prove (2). Let $f:C\to C'$ be a morphism in $\mathcal{C}$ and let $c:F(C')\to D$ be a pseudocokernel of $F(f)$ in $\mathcal{D}$. We then fix an $\text{Im}(F)$-preenvelope $u:D\to F(C'')$. 
By the fully faithful condition of $F$, we get a unique morphism $g:C\to C''$ such that $F(g)=uc$. It immediately follows that $g$ is a pseudocokernel of $f$ in $\mathcal{C}$. 
\end{proof}

Recall that an object $Y$ of a locally coherent Grothendieck category $\mathcal{G}$ is \emph{FP-injective} when $\Ext_\mathcal{G}^1(?,Y)$ vanishes on the subcategory $\text{fp}(\mathcal{G})$ of finitely presented objects. Recall also that  the additive categories satisfying any of the equivalent conditions of the next result (see \cite[Introduction and Proposition 2.2]{K98}) are called \emph{exactly definable}. In particular, every finitely accessible additive category with products is exactly definable (e.g., see \cite[Proposition 11.1]{Prest}). 

\begin{proposition} \label{prop.Krause}
Let $\mathcal{C}$ be an additive category. The following assertions are equivalent:

\begin{enumerate}
\item $\mathcal{C}$ is equivalent to the category $\mathrm{Ex}(\mathcal{A}^{\rm op},\Ab)$ of exact  functors $\mathcal{A}^{\rm op}\to\Ab$, for some (skeletally) small abelian category $\mathcal{A}$;
\item $\mathcal{C}$ is equivalent to the subcategory of FP-injective objects of a locally coherent Grothendieck category $\mathcal{G}$. 
\end{enumerate}
\end{proposition}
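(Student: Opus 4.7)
This proposition is a restatement of a classical result of Krause \cite[Proposition 2.2]{K98}, and the plan is to use the standard correspondence between small abelian categories and locally coherent Grothendieck categories.

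For the direction $(1)\Longrightarrow (2)$, I would take $\mathcal{G}$ to be the Grothendieck category $\mathrm{Ind}(\mathcal{A})$, described concretely as the category of left exact additive functors $\mathcal{A}\op\to\Ab$. The Yoneda embedding $h\colon\mathcal{A}\to\mathcal{G}$ lands in $\mathcal{G}$ because $\mathcal{A}$ is abelian, and one verifies that $\fp(\mathcal{G})\simeq\mathcal{A}$ via $h$; since $\mathcal{A}$ is abelian, $\fp(\mathcal{G})$ is abelian and $\mathcal{G}$ is locally coherent. An object $F\in\mathcal{G}$ is then FP-injective exactly when $\Ext^1_\mathcal{G}(h_A,F)=0$ for every $A\in\mathcal{A}$. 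Because $F$ is automatically left exact by membership in $\mathcal{G}$, this Ext-vanishing translates, via the short exact sequences of $\mathcal{A}$ and the long exact sequence of Ext, into the statement that $F$ sends short exact sequences of $\mathcal{A}$ to short exact sequences of $\Ab$, i.e. $F\in\Ex(\mathcal{A}\op,\Ab)$.

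For the converse $(2)\Longrightarrow (1)$, I would set $\mathcal{A}:=\fp(\mathcal{G})$, which is a small abelian category by local coherence, and consider the restriction functor $\Psi\colon\mathcal{G}\to[\mathcal{A}\op,\Ab]$ defined by $\Psi(G):=\mathcal{G}(-,G)\rest_{\mathcal{A}}$. Standard Ind-completion arguments (together with the fact that every object of $\mathcal{G}$ is a directed colimit of finitely presented ones) show that $\Psi$ induces an equivalence $\mathcal{G}\simeq\mathrm{Ind}(\mathcal{A})$. It then remains to identify the FP-injective objects of $\mathcal{G}$ with $\Ex(\mathcal{A}\op,\Ab)$ under this equivalence. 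For any $G\in\mathcal{G}$, the functor $\Psi(G)$ is always left exact on a short exact sequence $0\to A'\to A\to A''\to 0$ in $\mathcal{A}$, and right exactness there is equivalent to surjectivity of $\mathcal{G}(A,G)\to\mathcal{G}(A',G)$, which in turn is equivalent to the vanishing of $\Ext^1_\mathcal{G}(A'',G)$. Letting $A''$ range over $\fp(\mathcal{G})$, this is precisely the condition that $G$ be FP-injective.

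The main obstacle will be the careful justification of the equivalence $\mathcal{G}\simeq\mathrm{Ind}(\fp(\mathcal{G}))$ in the converse direction, and the verification that FP-injectivity is faithfully detected by the restriction to $\fp(\mathcal{G})$. The crucial technical input is that in a locally coherent Grothendieck category, $\Ext^1_\mathcal{G}(-,G)$ commutes with directed colimits of objects taken from $\fp(\mathcal{G})$, so that FP-injectivity is a condition that can be tested purely on the small abelian subcategory $\mathcal{A}=\fp(\mathcal{G})$; this is exactly what makes the correspondence between the two descriptions work.
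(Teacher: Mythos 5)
The paper does not actually prove this proposition: it is stated as a recollection and attributed directly to Krause (\cite[Introduction and Proposition 2.2]{K98}), so there is no internal proof to compare against. Your sketch is the standard Gabriel--Roos--Krause argument underlying that citation, and it is essentially correct: $(1)\Rightarrow(2)$ via $\mathcal{G}=\mathrm{Lex}(\mathcal{A}^{\rm op},\Ab)$ with $\fp(\mathcal{G})\simeq\mathcal{A}$ through the (exact, fully faithful) Yoneda embedding, and $(2)\Rightarrow(1)$ via $\mathcal{A}=\fp(\mathcal{G})$ and restricted Yoneda. One step deserves more care than you give it: the long exact sequence of $\Ext$ applied to $0\to h_{A'}\to h_A\to h_{A''}\to 0$ immediately yields that FP-injectivity of $F$ forces exactness of $F$ as a functor, but the converse does not follow from the long exact sequence alone. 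To see that an exact functor $F$ satisfies $\Ext^1_{\mathcal{G}}(h_{A''},F)=0$, you should argue that \emph{every} extension $0\to F\to E\to h_{A''}\to 0$ in $\mathcal{G}$ is a pushout of a Yoneda-embedded sequence $0\to h_{A'}\to h_B\to h_{A''}\to 0$ along some $h_{A'}\to F$ (using that the representables generate, that $h_{A''}$ is finitely generated, and that $\mathcal{A}$, resp.\ $\fp(\mathcal{G})$, is closed under kernels), whence splitness follows from surjectivity of $F(B)\to F(A')$. The same remark applies to the corresponding step in your $(2)\Rightarrow(1)$ direction. Your closing appeal to $\Ext^1_{\mathcal{G}}(-,G)$ commuting with directed colimits is not actually needed for the stated equivalence.
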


We then get:

\begin{proposition} \label{prop:sipp} Every exactly definable additive category  has split idempotents, pseudocokernels and pseudokernels. 
\end{proposition}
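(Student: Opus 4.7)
The plan is to invoke Proposition \ref{prop.Krause} to identify $\mathcal{C}$, up to equivalence, with the subcategory $\FpInj(\mathcal{G})$ of FP-injective objects inside some locally coherent Grothendieck category $\mathcal{G}$, and then apply Lemma \ref{lem.passing of p(co)kernels to subcategories} to the fully faithful inclusion $F\colon\mathcal{C}\simeq\FpInj(\mathcal{G})\hookrightarrow\mathcal{G}$. Since $\mathcal{G}$ is Grothendieck abelian, it certainly has split idempotents and (genuine, hence a fortiori pseudo) kernels and cokernels. By the three clauses of that lemma it will therefore suffice to verify that $\FpInj(\mathcal{G})$ is closed under direct summands in $\mathcal{G}$ and is both a preenveloping and a precovering subcategory there.

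Closure under direct summands is immediate from $\FpInj(\mathcal{G})=\fp(\mathcal{G})^{\perp}$ and the additivity of each functor $\Ext^1(X,-)$. For the preenveloping property I would use that, $\fp(\mathcal{G})$ being skeletally small, the cotorsion pair $({}^{\perp}\FpInj(\mathcal{G}),\FpInj(\mathcal{G}))$ cogenerated by $\fp(\mathcal{G})$ is complete; this is the Grothendieck-category version of the Eklof--Trlifaj theorem, available, for instance, in work of \Sto. The right half of a complete cotorsion pair is automatically preenveloping, which takes care of the existence of $\FpInj(\mathcal{G})$-preenvelopes.

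The last, and most delicate, step is to show that $\FpInj(\mathcal{G})$ is also precovering. Here the local coherence of $\mathcal{G}$ is crucial: since $\fp(\mathcal{G})$ is closed under kernels, every $X\in\fp(\mathcal{G})$ admits a resolution by finitely presented objects, so each functor $\Ext^1(X,-)$ with $X\in\fp(\mathcal{G})$ commutes with direct limits, and consequently $\FpInj(\mathcal{G})$ is closed under direct limits in $\mathcal{G}$. One can then invoke the theorem of Crawley-Boevey --- or, more generally, the deconstructibility-based results of Saor\'{\i}n and \Sto --- guaranteeing that any deconstructible class in a Grothendieck category closed under direct limits is covering, and in particular precovering. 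Translating closure under direct limits into the existence of actual precovers via this covering-class existence theorem is the principal technical obstacle, beyond the otherwise essentially formal manipulations above.
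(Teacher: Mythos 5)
Your overall strategy coincides with the paper's: identify $\mathcal{C}$ with $\mathrm{FPInj}(\mathcal{G})$ for a locally coherent Grothendieck category $\mathcal{G}$ via Proposition \ref{prop.Krause}, and then feed closure under direct summands together with the preenveloping and precovering properties of $\mathrm{FPInj}(\mathcal{G})$ into Lemma \ref{lem.passing of p(co)kernels to subcategories}. The paper simply cites \cite{CPT} for the (pre)covering and preenveloping properties, whereas you sketch proofs of them. Your arguments for closure under direct summands and for the preenveloping property (completeness of the cotorsion pair cogenerated by the skeletally small class $\fp(\mathcal{G})$, which contains a generating set, via the Eklof--Trlifaj theorem in Grothendieck categories) are sound.

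The precovering step, however, has a genuine gap as written. You want to conclude from ``any deconstructible class closed under direct limits is covering,'' but you never establish that $\mathrm{FPInj}(\mathcal{G})=\fp(\mathcal{G})^{\perp}$ is deconstructible, and this is not automatic: the Eklof-type filtration arguments make the \emph{left} class ${}^{\perp}\bigl(\fp(\mathcal{G})^{\perp}\bigr)$ of your cotorsion pair deconstructible, not the right class. The right class is definable (closed under products, direct limits and pure subobjects), and definable classes need not be deconstructible. The tool that actually closes the argument is El Bashir's theorem \cite{ElB}, or its finitely accessible version proved in \cite{CPT}: a class closed under directed colimits in which every object is a directed colimit of objects from a fixed set is covering. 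For $\mathrm{FPInj}(\mathcal{G})$ this set-generation condition follows from closure under pure subobjects together with the fact that every object of a locally finitely presented category is the directed union of a set of pure subobjects of bounded size. Your observation that local coherence forces $\mathrm{FPInj}(\mathcal{G})$ to be closed under direct limits is the correct first ingredient, but it must be paired with this purity argument rather than with deconstructibility.
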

\begin{proof}
Let $\mathcal{C}$ be an exactly definable category. By Proposition \ref{prop.Krause}, we have a fully faithful functor $F:\mathcal{C}\to\mathcal{G}$, where $\mathcal{G}$ is a locally coherent Grothendieck category and $\text{Im}(F)=\mathrm{FPInj}(\mathcal{G})$ is the subcategory of FP-injective objects of $\mathcal{G}$. By  \cite[Corollaries ~3.5 and 4.2]{CPT}, $\mathrm{FPInj}(\mathcal{G})$ is  (pre)covering and preenveloping in $\mathcal{G}$, and it is also closed under taking direct summands. The result then follows from Lemma \ref{lem.passing of p(co)kernels to subcategories}.
\end{proof}

As a consequence, our Theorems \ref{t:CharacterizationCoreflective} and \ref{t:CharacterizationReflective} directly apply now to subcategories of exactly definable subcategories. 

\begin{corollary} \label{c:pureepi} Let $\mathcal{C}$ be an exactly definable additive category and let $\mathcal{B}$ be a subcategory of $\mathcal{C}$. The following assertions are equivalent:
\begin{enumerate}
\item $\mathcal{B}$ is a coreflective subcategory of $\mathcal{C}$.
\item $\mathcal{B}$ is precovering, closed under taking direct summands and every morphism in $\mathcal{B}$ has a pseudocokernel in $\mathcal{C}$ which belongs to $\mathcal{B}$.
\end{enumerate} 
\end{corollary}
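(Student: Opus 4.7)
The plan is essentially to combine Proposition \ref{prop:sipp} with the basic criterion Theorem \ref{t:CharacterizationCoreflective}.

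First I would invoke Proposition \ref{prop:sipp} to secure that the exactly definable category $\mathcal{C}$ is an additive category with split idempotents, pseudokernels, and pseudocokernels. These are precisely the three structural hypotheses required to enter Theorem \ref{t:CharacterizationCoreflective} in its full biconditional form (recall that the implication $(2)\Longrightarrow (1)$ there needs the split idempotents hypothesis, while $(1)\Longrightarrow (2)$ only needs pseudokernels and pseudocokernels).

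Having checked the hypotheses, both directions of the corollary are immediate transcriptions of the two implications in Theorem \ref{t:CharacterizationCoreflective}, applied to the subcategory $\mathcal{B}$ of $\mathcal{C}$. There is genuinely no further work to do: the content of the statement lies entirely in Proposition \ref{prop:sipp}, which in turn rests on Krause's description of exactly definable categories as subcategories of FP-injectives in locally coherent Grothendieck categories (Proposition \ref{prop.Krause}) together with the passage of split idempotents, pseudokernels, and pseudocokernels to preenveloping/precovering subcategories via Lemma \ref{lem.passing of p(co)kernels to subcategories}.

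Since the corollary is a direct specialization, I do not anticipate any obstacle. The only thing worth flagging explicitly in the proof text is to point the reader to Proposition \ref{prop:sipp} as the bridge that lets Theorem \ref{t:CharacterizationCoreflective} apply verbatim.
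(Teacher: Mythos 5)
Your proposal is correct and coincides with the paper's own argument: the corollary is stated there as an immediate consequence of Proposition \ref{prop:sipp} (which supplies split idempotents, pseudokernels and pseudocokernels for exactly definable categories) followed by a verbatim application of Theorem \ref{t:CharacterizationCoreflective}. Nothing further is needed.
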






\begin{corollary} \label{c:puremono} Let $\mathcal{C}$ be an exactly definable additive category and let $\mathcal{B}$ be a subcategory of $\mathcal{C}$. The following assertions are equivalent:
\begin{enumerate}
\item $\mathcal{B}$ is a reflective subcategory of $\mathcal{C}$.
\item $\mathcal{B}$ is preenveloping, closed under taking direct summands and every morphism in $\mathcal{B}$ has a pseudokernel in $\mathcal{C}$ which belongs to $\mathcal{B}$.
\end{enumerate} 
\end{corollary}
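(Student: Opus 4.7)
The plan is to deduce this corollary as an immediate application of the dual basic theorem, Theorem~\ref{t:CharacterizationReflective}, combined with the structural result Proposition~\ref{prop:sipp}. Since Corollary~\ref{c:pureepi} was obtained as the analogous consequence of Theorem~\ref{t:CharacterizationCoreflective}, the present statement should be treated in exactly the same fashion, invoking duality only through the already-stated dual theorem rather than dualizing afresh.

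First, I would recall from Proposition~\ref{prop:sipp} that every exactly definable additive category $\mathcal{C}$ has split idempotents, pseudokernels and pseudocokernels. Thus $\mathcal{C}$ satisfies the three hypotheses required by Theorem~\ref{t:CharacterizationReflective}: the existence of pseudokernels and pseudocokernels for every morphism, plus the splitting of idempotents that is needed for the implication $(2)\Rightarrow(1)$ in that theorem.

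Next, I would directly apply Theorem~\ref{t:CharacterizationReflective} to the subcategory $\mathcal{B}$ of $\mathcal{C}$. Both implications of the equivalence $(1)\Leftrightarrow(2)$ in the corollary are immediately read off from the corresponding implications in that theorem, since the hypotheses on $\mathcal{C}$ are all in force. This yields the stated equivalence with no further argument.

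There is essentially no obstacle to overcome: the work has already been done, on the one hand in proving Theorem~\ref{t:CharacterizationReflective} (where split idempotents in $\mathcal{C}$ are used to perform the splitting of the idempotent $e=ac$ in the dual construction of a reflection), and on the other hand in Proposition~\ref{prop:sipp} (where the split idempotents and pseudo(co)kernels of $\mathcal{C}$ are inherited from those of a locally coherent Grothendieck category via the preenveloping/precovering and summand-closed embedding $\mathcal{C}\simeq\mathrm{FPInj}(\mathcal{G})\hookrightarrow\mathcal{G}$, through Lemma~\ref{lem.passing of p(co)kernels to subcategories}). The role of this corollary is purely to package the application of these two results for subsequent reference, mirroring Corollary~\ref{c:pureepi} in the reflective setting.
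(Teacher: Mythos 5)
Your proposal is correct and is exactly the paper's (implicit) argument: the corollary is stated as a direct consequence of Theorem~\ref{t:CharacterizationReflective} once Proposition~\ref{prop:sipp} guarantees that an exactly definable category has split idempotents, pseudokernels and pseudocokernels. Nothing further is needed.
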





\section{Applications to preabelian and abelian categories}

In this section we apply the results of  Section \ref{sec.main theorem} to preabelian and abelian categories. The following lemma is well-known (see { \cite[Proposition 6.5.4]{Borceux}).
 
\begin{lemma} \label{lem.cokernels-implies-splitidempotents}
Let $\mathcal{C}$ be an additive category with cokernels (resp. kernels). Then $\mathcal{C}$ has split idempotents.
\end{lemma}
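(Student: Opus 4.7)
The plan is to use the standard trick with the complementary idempotent. Given an additive category $\mathcal{C}$ with cokernels and an idempotent $e\in\mathcal{C}(A,A)$, I would first observe that $1_A-e$ is also an idempotent (a quick computation: $(1_A-e)^2=1_A-2e+e^2=1_A-e$), and moreover $e(1_A-e)=0=(1_A-e)e$.

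Next I would take $p\dd A\to B$ to be the cokernel of $1_A-e$. From $p(1_A-e)=0$ one gets $p=pe$. On the other hand, since $e(1_A-e)=0$, the universal property of the cokernel provides a unique morphism $i\dd B\to A$ with $ip=e$. These two identities give precisely the factorization $e=ip$ we need.

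The only remaining point is to verify $pi=1_B$. This is where one uses that $p$ is an epimorphism (being a cokernel): compute $(pi)p=p(ip)=pe=p=1_B\cdot p$, and cancel $p$ on the right.

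For the dual statement, I would run the symmetric argument: take $i\dd B\to A$ to be the kernel of $1_A-e$, obtain $p\dd A\to B$ from the universal property of the kernel applied to $(1_A-e)e=0$, and then cancel the monomorphism $i$ on the left in $i(pi)=ei=i=i\cdot 1_B$. There is no real obstacle here — the argument is entirely formal — but one should be careful to verify the factorizations go through the correct universal property and to invoke the (co)kernel (epi/mono) cancellation only where legitimate.
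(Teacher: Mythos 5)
Your proof is correct and is exactly the standard argument the paper has in mind (it does not prove the lemma itself but cites \cite[Proposition 6.5.4]{Borceux}, where this same construction — splitting $e$ via the cokernel of $1_A-e$ and cancelling the epimorphism $p$ — is carried out). All the key verifications ($ip=e$ from the universal property applied to $e(1_A-e)=0$, and $pi=1_B$ by right-cancellation of the cokernel map) are present and legitimate.
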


\begin{corollary}\label{c:AbelianCategories}
Let $\mathcal{A}$ be a preabelian category and $\mathcal{B}$ be an additive subcategory of $\mathcal{A}$. Then:
\begin{enumerate}
\item $\mathcal{B}$ is coreflective if and only if it is precovering and closed under taking cokernels. 

\item $\mathcal{B}$ is reflective if and only if it is preenveloping and closed under taking kernels.
\end{enumerate}
\end{corollary}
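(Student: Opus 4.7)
The plan is to reduce both statements directly to Theorem~\ref{t:CharacterizationCoreflective} and Theorem~\ref{t:CharacterizationReflective}, using that a preabelian category satisfies all the ambient hypotheses of those results. Since $\mathcal{A}$ has kernels and cokernels, every morphism admits a pseudokernel and a pseudocokernel (kernels are pseudokernels and cokernels are pseudocokernels), and by Lemma~\ref{lem.cokernels-implies-splitidempotents} the category $\mathcal{A}$ has split idempotents. Thus both characterization theorems apply to $\mathcal{A}$, and I only need to match the conditions. I will write out (1); assertion (2) follows by duality.

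For the implication $(\Rightarrow)$, if $\mathcal{B}$ is coreflective, then the counit of the adjunction provides a $\mathcal{B}$-precover for every object of $\mathcal{A}$, so $\mathcal{B}$ is precovering; and a coreflective subcategory is closed under all colimits existing in $\mathcal{A}$ (as recalled in Subsection~\ref{ss:reflective-coreflective}), hence in particular under cokernels.

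For the implication $(\Leftarrow)$, I will check the three conditions of Theorem~\ref{t:CharacterizationCoreflective}(2). Precovering is given. If $f:B\to B'$ is a morphism in $\mathcal{B}$, then its cokernel in $\mathcal{A}$ is a pseudocokernel of $f$ in $\mathcal{A}$, and the closure of $\mathcal{B}$ under cokernels gives that the codomain of this pseudocokernel belongs to $\mathcal{B}$. The main (and only mildly subtle) step is to derive closure under direct summands from closure under cokernels: given an idempotent $e:B\to B$ with $B\in\mathcal{B}$, the morphism $1_B-e$ lies in $\mathcal{B}$ (since $\mathcal{B}$ is additive), and because $\mathcal{A}$ has split idempotents one has that the cokernel of $1_B-e$ in $\mathcal{A}$ realizes the direct summand $\mathrm{Im}(e)$ of $B$; this object then lies in $\mathcal{B}$ by hypothesis. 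Applying this to both $e$ and $1_B-e$ shows that any direct summand of an object of $\mathcal{B}$ lies in $\mathcal{B}$.

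With these three conditions established, Theorem~\ref{t:CharacterizationCoreflective} yields that $\mathcal{B}$ is coreflective, completing (1). The reflective case (2) is obtained by the word-for-word dualization, replacing cokernels by kernels and applying Theorem~\ref{t:CharacterizationReflective} together with the dual half of Lemma~\ref{lem.cokernels-implies-splitidempotents}. The only place where care is needed is the direct-summand argument in the converse; everything else is a direct translation of hypotheses.
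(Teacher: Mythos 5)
Your proof is correct and follows the paper's overall strategy: use Lemma~\ref{lem.cokernels-implies-splitidempotents} to get split idempotents, observe that (co)kernels are pseudo(co)kernels, and then match the hypotheses of Theorems~\ref{t:CharacterizationCoreflective} and~\ref{t:CharacterizationReflective}. The only genuine divergence is in the ``only if'' direction of (1): you invoke the general fact (recalled in Subsection~\ref{ss:reflective-coreflective}) that a coreflective subcategory is closed under all colimits existing in the ambient category, hence under cokernels; the paper instead takes the pseudocokernel $c:B'\to B''$ with $B''\in\mathcal{B}$ supplied by Theorem~\ref{t:CharacterizationCoreflective} and shows, by playing the universal property of the cokernel map $p:B'\to\Coker f$ against the pseudocokernel property of $c$, that $\Coker f$ is a retract, hence a direct summand, of $B''$, so it lies in $\mathcal{B}$ by closure under direct summands. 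Both arguments are sound; yours is shorter but leans on the external colimit-closure fact, while the paper's stays entirely inside the data provided by the characterization theorem. In the converse direction, your realization of a direct summand $\mathrm{Im}(e)$ as $\Coker(1_B-e)$ of a morphism of $\mathcal{B}$ correctly fills in what the paper dispatches with ``direct summands are clearly cokernels,'' and the remaining verifications coincide with the paper's.
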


\begin{proof}
We just prove assertion (1), since (2) follows by duality. By Lemma \ref{lem.cokernels-implies-splitidempotents}, $\mathcal{A}$ has split idempotents. Moreover, direct summands are clearly cokernels. According to Theorem \ref{t:CharacterizationCoreflective}, it will be enough to prove that if $f:B\rightarrow B'$ is a morphism in $\mathcal{B}$ and $c:B'\rightarrow B''$ is a pseudocokernel in $\mathcal{A}$, with $B''\in\mathcal{B}$, then $\Coker f$ is a direct summand of $B''$. Indeed, if $p:B'\rightarrow\Coker f$ is the cokernel map of $f$ in $\mathcal{A}$, then there is a unique morphism $g:\Coker f\rightarrow B''$ such that $gp=c$ and, by the pseudocokernel condition of $c$, there is a morphism $h:B''\rightarrow\Coker f$ such that $hc=p$. It then follows that $hgp=hc=p$, which in turn implies that $hg=1_{\Coker f}$. Then $\Coker f$ is a retract of $B''$, and hence a direct summand since $\mathcal{A}$ has split idempotents.
\end{proof}


Now we study when the coreflective subcategory $\mathcal B$ of $\mathcal A$ is abelian and inherits the AB-conditions from $\mathcal A$.

\begin{proposition}\label{prop:exactness of colimits}
Let $\mathcal{A}$ be a cocomplete preabelian category and $\mathcal{B}$ be a coreflective subcategory of $\mathcal{A}$, in which case $\mathcal{B}$ is also  cocomplete preabelian (see Subsection \ref{ss:reflective-coreflective}). If   $I$ is a small category, then the following assertions are equivalent:

\begin{enumerate}
\item $I$-colimits preserve monomorphisms in $\mathcal{B}$.
\item For each morphism $f:D\rightarrow D'$ in $[I,\mathcal{B}]$ such that  $\mathcal{A}(?,\Ker f_i)_{| \mathcal{B}}=0$, for all $i\in I$, one has that $\mathcal{A} (?,\Ker ({\rm colim}(f)))_{| \mathcal{B}}=0$, where ${\rm colim}(f):{\rm colim}\,D\rightarrow{\rm colim}\,D'$ is the image of $f$ by the colimit functor $[I,\mathcal{B}]\rightarrow \mathcal{B}$.
\end{enumerate}
\end{proposition}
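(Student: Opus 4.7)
The plan is to reduce both sides of the claimed equivalence to a single intrinsic criterion for being a monomorphism in $\mathcal{B}$. Concretely, I would first prove the following key translation: for any morphism $h:B\to B'$ in $\mathcal{B}$, $h$ is a monomorphism in $\mathcal{B}$ if, and only if, $\mathcal{A}(?,\Ker h)_{|\mathcal{B}}=0$, where $\Ker h$ denotes the kernel computed in $\mathcal{A}$. To see this, I would invoke the description of kernels in $\mathcal{B}$ recalled in Subsection \ref{ss:reflective-coreflective}: if $q$ is the right adjoint to the inclusion $i:\mathcal{B}\hookrightarrow\mathcal{A}$, then the kernel of $h$ inside $\mathcal{B}$ is the composite $q(\Ker h)\stackrel{\nu_{\Ker h}}{\to}\Ker h\hookrightarrow B$. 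Since a morphism in a preabelian category is monic exactly when its kernel object is zero, $h$ is monic in $\mathcal{B}$ if, and only if, $q(\Ker h)=0$. By Yoneda in $\mathcal{B}$ together with the adjunction $\mathcal{A}(iX,?)\cong\mathcal{B}(X,q(?))$, this is in turn equivalent to $\mathcal{A}(X,\Ker h)=0$ for every $X\in\mathcal{B}$, that is, to the vanishing of $\mathcal{A}(?,\Ker h)_{|\mathcal{B}}$.

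Once this key translation is in place, the rest is formal. Since $\mathcal{B}$ is coreflective in the cocomplete category $\mathcal{A}$, diagrams in $\mathcal{B}$ admit colimits in $\mathcal{B}$, which coincide with their colimits computed in $\mathcal{A}$; in particular, for any $f:D\to D'$ in $[I,\mathcal{B}]$, the morphism ${\rm colim}(f):{\rm colim}\,D\to{\rm colim}\,D'$ is a morphism in $\mathcal{B}$, so its kernel in $\mathcal{A}$ is defined. Applying the key translation to each $f_i$ rewrites the hypothesis ``every $f_i$ is a monomorphism in $\mathcal{B}$'' of Definition \ref{def.exactness-colimits in preabelian} as ``$\mathcal{A}(?,\Ker f_i)_{|\mathcal{B}}=0$ for every $i\in I$'', and applying it to ${\rm colim}(f)$ rewrites ``${\rm colim}(f)$ is a monomorphism in $\mathcal{B}$'' as ``$\mathcal{A}(?,\Ker({\rm colim}(f)))_{|\mathcal{B}}=0$''. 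Consequently, conditions (1) and (2) are term-by-term reformulations of each other, and the equivalence follows.

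The main subtlety -- rather than a genuine obstacle -- is keeping careful track of which ambient category a given kernel is being computed in: the objects $\Ker f_i$ and $\Ker({\rm colim}(f))$ appearing in (2) are kernels in $\mathcal{A}$, whereas the monomorphism conditions of (1) live in the preabelian category $\mathcal{B}$. The bridge between the two is the identity $\Ker_{\mathcal{B}}(h)=q(\Ker_{\mathcal{A}}(h))$ combined with the hom-set adjunction, and once this is made explicit no real calculation remains.
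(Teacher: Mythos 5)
Your proposal is correct and follows essentially the same route as the paper: both proofs reduce the statement to the single key translation that a morphism $h$ in $\mathcal{B}$ is a monomorphism in $\mathcal{B}$ if, and only if, $\mathcal{A}(?,\Ker h)_{|\mathcal{B}}=0$, and then observe that (1) and (2) become term-by-term reformulations of one another. The only (cosmetic) difference is how the translation is justified: the paper reads it off from the exact sequence of functors $0\to\mathcal{A}(?,\Ker h)_{|\mathcal{B}}\to\mathcal{B}(?,B)\to\mathcal{B}(?,B')$, while you pass through the identity $\Ker_{\mathcal{B}}(h)=q(\Ker_{\mathcal{A}}(h))$ together with the adjunction and Yoneda — both are valid.
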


\begin{proof}
Let $\alpha :B\rightarrow B'$ be a morphism in $\mathcal{B}$ and  consider the exact sequence of additive contravariant functors $\mathcal{B}\rightarrow\Ab$  $$0\rightarrow\mathcal{A}(?,\Ker \alpha)_{| \mathcal{B}}\rightarrow\mathcal{B}(?,B)\stackrel{\alpha_*}{\rightarrow}\mathcal{B}(?,B').$$ Then $\alpha$ is a monomorphism in $\mathcal{B}$ if and only if $\mathcal{A}(?,\Ker \alpha)_{| \mathcal{B}}=0$.  

In view of this fact, assertion (2) can be then restated as saying that if $f:D\rightarrow D'$ is a morphism in $[I,\mathcal{B}]$ such that $f_i:D(i)\rightarrow D'(i)$ is a monomorphism in $\mathcal{B}$, for all $i\in I$, then ${\rm colim} f:{\rm colim}\,D\rightarrow{\rm colim}\,D'$ is a monomorphism in $\mathcal{B}$. This is exactly what assertion (1) says.
\end{proof}

As a consequence, we get:

\begin{corollary}\label{cor:exactness of colimits}
Let $\mathcal{A}$ be an  AB3 abelian category and $\mathcal{B}$ be a coreflective subcategory of $\mathcal{A}$. The following assertions hold:

\begin{enumerate}
\item Suppose that $\mathcal{A}$ is also AB4 (e.g., if $\mathcal{A}$ has an injective cogenerator). Then coproducts preserve monomorphisms in $\mathcal{B}$ if, and only if, for each family of morphisms in $\mathcal B$, $(f_i:B_i\rightarrow B'_i\mid i\in I)$, such that $\mathcal{A}(?,\Ker f_i)_{| \mathcal{B}}=0$ for all $i\in I$, one has that $\mathcal{A}(?,\coprod_{i\in I}\Ker f_i)_{| \mathcal{B}}=0$. In particular, when $\mathcal{B}$ is also abelian, it is AB4 if and only if the last mentioned condition holds. 
\item Suppose that $\mathcal{A}$ is also AB5. Then direct limits preserve monomorphisms in $\mathcal{B}$ if, and only if, for each direct system of morphisms in $\mathcal B$, $(f_i:B_i\rightarrow B'_i \mid i\in I)$, such that $\mathcal{A}(?,\Ker f_i)_{| \mathcal{B}}=0$, for all $i\in I$, one has that $\mathcal{A}(?,\varinjlim\Ker f_i)_{| \mathcal{B}}=0$. In particular, when $\mathcal{B}$ is abelian, it is AB5 if and only if the last mentioned condition holds.
\item When $\mathcal{A}$ is AB5 and complete (e.g., when $\mathcal{A}$ is a Grothendieck category), coproducts preserve monomorphisms in $\mathcal{B}$. In particular, when $\mathcal{B}$ is  also abelian, it is an AB4 abelian category.
\end{enumerate}
\end{corollary}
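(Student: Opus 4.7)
The plan is to derive all three parts from Proposition \ref{prop:exactness of colimits}, choosing the shape $I$ appropriately and rewriting the occurring kernels using the AB-hypothesis on $\mathcal{A}$.

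For (1), I would take $I$ to be a discrete small category. Since $\mathcal{A}$ is AB4, the coproduct functor is exact on $[I,\mathcal{A}]$, so for any $f:D\to D'$ in $[I,\mathcal{B}]$ one has $\Ker^{\mathcal{A}}(\coprod_i f_i)\cong \coprod_i \Ker^{\mathcal{A}} f_i$. Substituting this identification into condition (2) of Proposition \ref{prop:exactness of colimits} yields precisely the criterion in (1). Part (2) is completely analogous: take $I$ a directed poset and invoke AB5 to identify $\Ker^{\mathcal{A}}(\varinjlim_i f_i)\cong \varinjlim_i \Ker^{\mathcal{A}} f_i$.

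For (3), the strategy is to apply (1) by verifying its condition directly. Given a family $(K_j)_{j\in J}$ with $\mathcal{A}(?,K_j)|_{\mathcal{B}}=0$ for each $j$, it suffices to show $\mathcal{A}(B,\coprod_j K_j)=0$ for every $B\in\mathcal{B}$. The key observation is that the canonical morphism $\coprod_j K_j\to \prod_j K_j$ is a monomorphism in $\mathcal{A}$: writing $\coprod_j K_j=\varinjlim_F \bigoplus_{j\in F} K_j$ along the directed system of finite subsets $F\subseteq J$, each composite $\bigoplus_{j\in F}K_j\cong \prod_{j\in F}K_j\hookrightarrow \prod_j K_j$ is a split monomorphism, and AB5 ensures that the directed colimit of these monomorphisms into the fixed object $\prod_j K_j$ remains a monomorphism. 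Applying the left-exact functor $\mathcal{A}(B,-)$ then yields an injection $\mathcal{A}(B,\coprod_j K_j)\hookrightarrow \mathcal{A}(B,\prod_j K_j)\cong \prod_j \mathcal{A}(B,K_j)=0$, completing the verification.

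The three ``in particular'' clauses follow immediately by noting that when $\mathcal{B}$ is itself abelian it is automatically AB3, being cocomplete, and Example \ref{ex.preservation of monos - AB conditions} applied to $\mathcal{B}$ translates the AB4 (resp.\ AB5) property into the corresponding statement that coproducts (resp.\ direct limits) preserve monomorphisms. The main obstacle I anticipate is the subtle point in (3): the right adjoint $q\colon\mathcal{A}\to\mathcal{B}$ preserves limits but not coproducts, so one cannot conclude $q(\coprod K_j)=0$ directly from $q(K_j)=0$. The embedding $\coprod\hookrightarrow\prod$ bypasses this by routing through products, which $q$ does preserve, and that is precisely why both AB5 and completeness of $\mathcal{A}$ are needed in this part.
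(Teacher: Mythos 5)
Your argument is correct and follows essentially the same route as the paper: parts (1) and (2) by specializing Proposition \ref{prop:exactness of colimits} after identifying $\Ker(\coprod f_i)$ (resp.\ $\Ker(\varinjlim f_i)$) via AB4 (resp.\ AB5), and part (3) via the AB5-monomorphism $\coprod_j K_j\rightarrowtail\prod_j K_j$ realized as a direct limit of split monomorphisms, followed by left-exactness of $\mathcal{A}(B,-)$. The only cosmetic difference is that you route part (3) through part (1), which tacitly uses that AB5 implies AB4, while the paper invokes Proposition \ref{prop:exactness of colimits} directly; this is harmless.
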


\begin{proof}
By Example \ref{ex.preservation of monos - AB conditions}, the particular cases of assertions (1) and (2) when $\mathcal{B}$ is abelian follow. So we just need to prove the first part of each assertion. 

(1) In this case we have an isomorphism $\coprod_{i\in I}\Ker f_i\cong\Ker (\coprod_{i\in I}f_i)$. The assertion follows by  Proposition \ref{prop:exactness of colimits}, where the index set $I$ is viewed as a discrete small category.

(2) Given any direct system $(f_i:B_i\rightarrow B'_i \mid i\in I)$ of morphisms in $\mathcal{B}$, we have an isomorphism $\varinjlim\Ker f_i\cong\Ker(\varinjlim f_i)$, due to the AB5 condition of $\mathcal{A}$. The result is then a direct consequence of applying Proposition \ref{prop:exactness of colimits}, where the directed set $I$ is viewed as a small category in the obvious way. 

(3) By the complete AB5 condition of $\mathcal{A}$, for each family $(A_i)_{i\in I}$ of objects of $\mathcal{A}$ the canonical morphism $\coprod_{i\in I}A_i\rightarrow\prod_{i\in I}A_i$ is a monomorphism, since it is a direct limit of sections. Let then $(f_i:B_i\rightarrow B'_i\mid i\in I)$ be a family of morphisms in $\mathcal{B}$ such that  $\mathcal{A}(?,\Ker f_i)_{| \mathcal{B}}=0$, for all $i\in I$. Again by the AB5 condition of $\mathcal{A}$, we have that $\Ker(\coprod_{i\in I}f_i)\cong\coprod_{i\in I}\Ker f_i$, where all kernels are considered in $\mathcal{A}$. We then have a monomorphism $$\mathcal{A}(?,\Ker(\coprod_{i\in I}f_i))_{| \mathcal{B}}\cong\mathcal{A}(?,\coprod_{i\in I}\Ker f_i)_{| \mathcal{B}}\rightarrowtail\mathcal{A}(?,\prod_{i\in I}\Ker f_i)_{| \mathcal{B}}\cong\prod_{i\in I}\mathcal{A}(?,\Ker f_i)_{| \mathcal{B}}=0.$$ By Proposition \ref{prop:exactness of colimits}, we conclude that coproducts preserve monomorphisms in $\mathcal{B}$.
\end{proof}

\begin{remark} \label{rem.reduction of exactness of colimits}
In the last two results the condition that $\mathcal{A}(?,A)_{| \mathcal{B}}=0$, for some object $A\in \mathcal{A}$ and coreflective subcategory $\mathcal{B}$, frequently appears. Note that when $\mathcal{B}$ has a set $\mathcal{U}$ of generators, i.e. a set $\mathcal{U}\subset\mathcal{B}$ such that each object of $\mathcal{B}$ is an epimorphic image of a coproduct of objects of $\mathcal{U}$, one has that  $\mathcal{A}(?,A)_{| \mathcal{B}}=0$ if and only if  $\mathcal{A}(?,A)_{| \mathcal{U}}=0$. We will face this situation in Section 8.
\end{remark}

Of course, the dual of the last two results are true as well.

All these results naturally raise the question of when a coreflective subcategory of an AB3 abelian category is itself abelian. We can give the following characterization of such categories. We refer to Definition \ref{d:Trace} for the concept of trace, and we note that every coreflective subcategory of an AB3 abelian category $\mathcal{A}$ does have a trace in any object of $\mathcal{A}$ by Lemma \ref{l:Existence_trace}.

\begin{theorem}\label{t:CoreflectiveAndAbelian}
Let $\mathcal{A}$ be an AB3 abelian category and $\mathcal{B}$ be a full additive coreflective subcategory. The following assertions are equivalent:
\begin{enumerate}
\item $\mathcal{B}$ is abelian.

\item For every morphism $f:B \rightarrow B'$ in $\mathcal{B}$, the canonical morphism $\overline{f}:
\frac{B}{\tr_{\mathcal{B}}(\Ker f)} \to \Img f$ is a $\mathcal{B}$-coreflection.
\end{enumerate}
\end{theorem}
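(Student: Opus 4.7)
The plan is based on the preabelian structure of $\mathcal{B}$. Since $\mathcal{B}$ is coreflective in the AB3 abelian category $\mathcal{A}$, it is preabelian (Subsection \ref{ss:reflective-coreflective}), so $\mathcal{B}$ is abelian if and only if, for every morphism $f:B\to B'$ in $\mathcal{B}$, the canonical morphism $\theta_f$ from the coimage to the image of $f$, computed inside $\mathcal{B}$, is an isomorphism. My goal is to describe both objects and $\theta_f$ explicitly, and then to compare $\theta_f$ being an isomorphism with condition (2).

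Let $q$ denote the right adjoint of the inclusion $\mathcal{B}\hookrightarrow\mathcal{A}$ and $\nu$ its counit. By Corollary \ref{c:AbelianCategories}, $\mathcal{B}$ is closed under cokernels in $\mathcal{A}$, so cokernels in $\mathcal{B}$ agree with those in $\mathcal{A}$; while, by Subsection \ref{ss:reflective-coreflective}, the kernel in $\mathcal{B}$ of $f$ is the composite $q(\Ker f)\xrightarrow{\nu_{\Ker f}}\Ker f\hookrightarrow B$. Since the cokernel of a morphism equals the cokernel of its image, and the image in $\mathcal{A}$ of the $\mathcal{B}$-precover $\nu_{\Ker f}$ is $\tr_{\mathcal{B}}(\Ker f)$ by Lemma \ref{l:Existence_trace}(1), the coimage of $f$ in $\mathcal{B}$ is $B/\tr_{\mathcal{B}}(\Ker f)$. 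Dually, the cokernel of $f$ in $\mathcal{B}$ is the projection $B'\twoheadrightarrow B'/\Img f$, whose kernel in $\mathcal{A}$ is $\Img f\hookrightarrow B'$, so the image of $f$ in $\mathcal{B}$ is $q(\Img f)$, with structure monomorphism $q(\Img f)\xrightarrow{\nu_{\Img f}}\Img f\hookrightarrow B'$.

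The third step is to identify $\theta_f:B/\tr_{\mathcal{B}}(\Ker f)\to q(\Img f)$. Its defining universal property asserts that $f$ factors as
\[
B\twoheadrightarrow B/\tr_{\mathcal{B}}(\Ker f)\xrightarrow{\theta_f} q(\Img f)\xrightarrow{\nu_{\Img f}}\Img f\hookrightarrow B'.
\]
Cancelling the monomorphism on the right and the epimorphism on the left, this is equivalent to the equality $\nu_{\Img f}\circ\theta_f=\bar{f}$. Hence, under the adjunction bijection $\mathcal{B}(B/\tr_{\mathcal{B}}(\Ker f),q(\Img f))\cong\mathcal{A}(B/\tr_{\mathcal{B}}(\Ker f),\Img f)$ given by post-composing with $\nu_{\Img f}$, the morphism $\theta_f$ corresponds precisely to $\bar{f}$. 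Since any $\mathcal{B}$-coreflection of $\Img f$ factors uniquely as $\nu_{\Img f}$ composed with an isomorphism, $\theta_f$ is an isomorphism in $\mathcal{B}$ if and only if $\bar{f}=\nu_{\Img f}\circ\theta_f$ is a $\mathcal{B}$-coreflection of $\Img f$. Quantifying over all $f$ then yields the equivalence (1)$\Leftrightarrow$(2).

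The main subtlety lies in the third step, where I need to trace through the adjunction to identify $\theta_f$ with the adjoint of $\bar{f}$; the remaining steps are direct consequences of the description of (co)kernels in coreflective subcategories together with the trace identification from Lemma \ref{l:Existence_trace}(1).
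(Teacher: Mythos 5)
Your proposal is correct and follows essentially the same route as the paper's proof: both identify the coimage of $f$ in $\mathcal{B}$ as $B/\tr_{\mathcal{B}}(\Ker f)$ (via the closure under cokernels from Corollary \ref{c:AbelianCategories} and the trace identification of Lemma \ref{l:Existence_trace}(1)), identify the image as $q(\Img f)$ with structure map $\nu_{\Img f}$, and observe that the canonical coimage-to-image map composed with $\nu_{\Img f}$ equals $\overline{f}$, so that it is an isomorphism exactly when $\overline{f}$ is a $\mathcal{B}$-coreflection. The only difference is presentational: the paper records these identifications in an explicit commutative diagram, while you phrase the last step through the adjunction bijection.
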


\begin{proof}
Denote by $q$ the right adjoint of the inclusion functor and by $\nu$ the counit of the adjunction. Let $f:B \rightarrow B'$ be a morphism in $\mathcal{B}$ and denote by $j:\Img f \rightarrow B'$ the inclusion. We can construct the following commutative diagram in $\mathcal{A}$:
\begin{displaymath}\label{e:1}
\begin{tikzcd}
q(\Ker f) \arrow{r}{\nu_{\Ker f}} & \Ker f \arrow{r}{i} & B \arrow{d}{p} \arrow{r}{f} & B' \arrow{r}{q} & \Coker f\\
 & & \frac{B}{\tr_{\mathcal{B}}(\Ker f)}\arrow{r}{h} \arrow{d}{\tilde{f}} & q(\Img f) \arrow{u}{j\nu_{\Img f}} \arrow{d}{\nu_{\Img f}} & \\
 & & \frac{B}{\Ker f} \arrow{r}{g} & \Img f & \\
\end{tikzcd}
\end{displaymath}
where $g$ is the canonical isomorphism. By  Subsection \ref{ss:reflective-coreflective}, we know that $i\nu_{\Ker f}$ is the kernel of $f$ in $\mathcal{B}$. By Lemma \ref{l:Existence_trace} (1), we know that the image of $i \nu_{\Ker f}$ is $\tr_{\mathcal{B}}(\Ker f)$. Now, since $\mathcal{B}$ is closed under taking cokernels by Corollary \ref{c:AbelianCategories}, $p$ is the cokernel of $i\nu_{\Ker f}$ in $\mathcal{B}$. Reasoning similarly, we get that $j\nu_{\Img f}$ is the image of $f$ in $\mathcal{B}$. The conclusion is that $h$ is the canonical map from the coimage of $f$ to the image of $f$ in $\mathcal{B}$. 

Note that, since $\mathcal A$ is abelian, $g$ is an isomorphism. By the previous paragraph, $\mathcal{B}$ is abelian if and only if $h$ is an isomorphism. If $h$ is an isomorphism, then $\nu_{\Img f}h=g\tilde{f}=\overline f$ is a $\mathcal{B}$-coreflection. Conversely, if $\overline f=\nu_{\Img f}h$ is a $\mathcal{B}$-coreflection, then $h$ is an isomorphism,  since $\nu_{\Img f}$ is also a $\mathcal{B}$-coreflection and $\mathcal{B}$-coreflections are unique, up to unique isomorphism. 
\end{proof}

\section{The category $\PU$}

In this section we consider the special case of the full additive subcategory $\PU$ of an AB3 abelian category $\mathcal{A}$ formed starting from a set of objects $\mathcal{U}$. The motivation of this explicit study is the following.

\begin{proposition} \label{prop.abelian-implies-PU}
Let $\mathcal{A}$ be an AB3 abelian category and let $\mathcal{B}$ be any coreflective subcategory of $\mathcal{A}$ that is abelian. Then $\mathcal{B}$ has a set of generators if and only if  $\mathcal{B}=\PU$ for some set $\mathcal{U}$ of objects.
\end{proposition}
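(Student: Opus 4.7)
The plan is to prove the two implications separately, with the forward direction requiring a delicate comparison of kernels and cokernels computed in $\B$ versus $\A$.

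The backward implication is quick. If $\B=\PU$, then any $B\in\B$ admits a presentation $V\to U\to B\to 0$ with $U,V\in\Sum(\U)$, so the canonical map $U\to B$ is an epimorphism in $\A$ (and hence in $\B$, since cokernels in $\B$ agree with those in $\A$, as I explain below). This exhibits $\U$ as a set of generators of $\B$.

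For the forward implication, I first record the key behaviour of (co)limits. Since $\B$ is coreflective in $\A$, the inclusion preserves all colimits that exist in $\A$; in particular $\B$ is closed under arbitrary coproducts and, by Corollary~\ref{c:AbelianCategories}, under cokernels. Consequently, for any morphism $f$ in $\B$, $\Coker_\B(f)=\Coker_\A(f)$, so $f$ is epi in $\B$ if and only if it is epi in $\A$. Now fix $B\in\B$ and, using a set $\U$ of generators of $\B$, pick an epi $\pi\colon U\to B$ in $\B$ (equivalently, in $\A$) with $U\in\Sum(\U)$. Let $K=\Ker_\A(\pi)$ and denote by $k\colon q(K)\to U$ the kernel of $\pi$ in $\B$, which by Subsection~\ref{ss:reflective-coreflective} is the composite of the counit $\nu_K\colon q(K)\to K$ with the inclusion $K\hookrightarrow U$. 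Applying the generating property once more, pick an epi $\rho\colon V\to q(K)$ in $\B$ (hence in $\A$) with $V\in\Sum(\U)$. Since $\B$ is abelian and $\pi$ is epi in $\B$, one has $B=\Coker_\B(k)=\Coker_\A(k)$; and since $\rho$ is epi in $\A$, $\Img_\A(k\rho)=\Img_\A(k)$, whence $\Coker_\A(k\rho)=\Coker_\A(k)=B$. Thus $B$ is the cokernel of a morphism in $\Sum(\U)$, so $B\in\PU$. The reverse inclusion $\PU\subseteq\B$ follows at once from the closure of $\B$ under coproducts and cokernels.

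The main obstacle I anticipate is the following subtlety: the kernel of $\pi$ taken in $\A$ is $K$, but $K$ need not lie in $\B$, and the $\B$-kernel $q(K)\to U$ typically does not have image $K$ in $\A$. So any epimorphism $\rho\colon V\to q(K)$ supplied by the generation hypothesis in $\B$ certainly does not, after composition with the inclusion $K\hookrightarrow U$, have image $K$ in $\A$. The trick that makes the argument work is to avoid kernels entirely: one operates only with cokernels, which are shared by $\B$ and $\A$, and exploits the identity $\Coker_\A(k\rho)=\Coker_\A(k)$ valid whenever $\rho$ is epi in $\A$. This reroutes the verification past the $q(K)\ne K$ issue without having to understand the discrepancy.
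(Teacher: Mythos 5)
Your proof is correct and follows essentially the same route as the paper's: both directions are handled identically, with the forward implication obtained by taking the kernel of $\pi$ \emph{in} $\mathcal{B}$, covering it by a coproduct of generators, and using that cokernels in $\mathcal{B}$ and $\mathcal{A}$ coincide so that $\Coker_{\mathcal{A}}(k\rho)=\Coker_{\mathcal{A}}(k)=B$. The "subtlety" you flag (the $\mathcal{B}$-kernel versus the $\mathcal{A}$-kernel) is exactly the point the paper's proof handles implicitly by never invoking kernels computed in $\mathcal{A}$.
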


\begin{proof}
If $\mathcal{B}=\PU$, then $\mathcal{U}$ is a set of generators of $\mathcal{B}$. 

Conversely, suppose that $\mathcal{B}$ has a set of generators, $\mathcal{S}$, and let us see that $\Pres(\mathcal{S})=\mathcal{B}$. The inclusion $\textrm{Pres}(\mathcal{S}) \subseteq \mathcal{B}$ follows since every object in $\textrm{Pres}(\mathcal{S})$ is a cokernel in $\mathcal A$ of a morphism between objects in $\mathcal{B}$, and $\mathcal{B}$ is closed under taking cokernels by Corollary \ref{c:AbelianCategories}. In order to prove the inclusion $\mathcal B \subseteq \textrm{Pres}(\mathcal S)$, let $B \in \mathcal{B}$. Since $\mathcal{S}$ is a set of generators, there exists an epimorphism, equivalently a cokernel map, $f:\coprod_{i \in I}S_i \rightarrow B$ in $\mathcal{B}$, where all $S_i$ are in $\mathcal{S}$. Then $f$ is also an epimorphism in $\mathcal{A}$ by Corollary \ref{c:AbelianCategories}. Let $k:K \rightarrow \coprod_{i \in I}S_i$ be the kernel of $f$ in $\mathcal{B}$. Since $\mathcal{B}$ is abelian, $f$ is the cokernel of $k$ in $\mathcal{B}$ and, using that $\mathcal{B}$ is closed under taking cokernels, we conclude that $f$ is the cokernel of $k$ in $\mathcal{A}$. Now, since $K \in \mathcal{B}$, we can take an epimorphism $\coprod_{j \in J}S'_j \rightarrow K$, which gives the following exact sequence in $\mathcal{A}$:
\begin{displaymath}
\begin{tikzcd}
\coprod_{j \in J}S'_j \arrow{r} & \coprod_{i \in I}S_i \arrow{r} & B \arrow{r} & 0
\end{tikzcd}
\end{displaymath}
Thus, $B \in \textrm{Pres}(\mathcal{S})$.
\end{proof}

Let us recall some facts about $\PU$. First, there is a canonical way to induce $\Sum(\mathcal{U})$-precovers. For any object $A\in \mathcal{A}$, we denote $\hat{U}_A=\coprod_{U\in \mathcal{U}}U^{(\mathcal{A}(U,A))}$
and by $\varepsilon_A:\hat{U}_A\to A$ the canonical morphism. It is very easy to see that, for every object $A$ of $\mathcal{A}$, 
$\varepsilon_A:\hat{U}_A\to A$ is a ${\rm Sum}(\mathcal{U})$-precover.

\begin{lemma} \label{l:U-functor} Let $\mathcal{A}$ be an AB3 abelian category and let $\mathcal{U}$ be a set of objects of $\mathcal{A}$. 
Then the assignment $A\mapsto \hat{U}_A$ ($A\in \mathcal{A}$) is functorial.
\end{lemma}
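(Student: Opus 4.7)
The plan is to define the action of $\hat{U}_{(-)}$ on morphisms by invoking the universal property of coproducts, and then to verify functoriality by chasing through canonical injections. Given $f\dd A\to A'$ in $\mathcal{A}$, post-composition gives, for each $U\in\mathcal{U}$, a map $f_*\dd\mathcal{A}(U,A)\to\mathcal{A}(U,A')$, $\alpha\mapsto f\alpha$. For each pair $(U,\alpha)$ with $U\in\mathcal{U}$ and $\alpha\in\mathcal{A}(U,A)$, let $\iota^A_{(U,\alpha)}\dd U\to\hat{U}_A$ denote the canonical injection into the summand of $\hat{U}_A=\coprod_{U\in\mathcal{U}}U^{(\mathcal{A}(U,A))}$ indexed by $\alpha$, and similarly $\iota^{A'}_{(U,\beta)}\dd U\to\hat{U}_{A'}$ for $\beta\in\mathcal{A}(U,A')$. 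The family $\{\iota^{A'}_{(U,f\alpha)}\}_{(U,\alpha)}$ determines, by the universal property of the coproduct in $\mathcal{A}$, a unique morphism $\hat{U}_f\dd\hat{U}_A\to\hat{U}_{A'}$ characterized by the equalities $\hat{U}_f\circ\iota^A_{(U,\alpha)}=\iota^{A'}_{(U,f\alpha)}$ for every $U\in\mathcal{U}$ and every $\alpha\in\mathcal{A}(U,A)$. This is the definition of $\hat{U}_f$.

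Next I would verify the two functoriality axioms. For the identity, observe that $(1_A)_*=1_{\mathcal{A}(U,A)}$, so $\hat{U}_{1_A}\circ\iota^A_{(U,\alpha)}=\iota^A_{(U,\alpha)}$ for every $(U,\alpha)$; by the uniqueness clause of the universal property, $\hat{U}_{1_A}=1_{\hat{U}_A}$. For composition, let $A\mapr{f}A'\mapr{g}A''$. Then
\[
(\hat{U}_g\circ\hat{U}_f)\circ\iota^A_{(U,\alpha)}=\hat{U}_g\circ\iota^{A'}_{(U,f\alpha)}=\iota^{A''}_{(U,g(f\alpha))}=\iota^{A''}_{(U,(gf)\alpha)}=\hat{U}_{gf}\circ\iota^A_{(U,\alpha)},
\]
using associativity of composition in $\mathcal{A}$. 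Again by uniqueness, $\hat{U}_{gf}=\hat{U}_g\circ\hat{U}_f$.

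As an immediate by-product of the construction, the family $(\varepsilon_A)_{A\in\mathcal{A}}$ is a natural transformation $\varepsilon\dd\hat{U}_{(-)}\Longrightarrow 1_\mathcal{A}$. Indeed, $\varepsilon_{A'}\circ\iota^{A'}_{(U,f\alpha)}=f\alpha=f\circ\varepsilon_A\circ\iota^A_{(U,\alpha)}$ for every $(U,\alpha)$, so $\varepsilon_{A'}\circ\hat{U}_f=f\circ\varepsilon_A$ by yet another appeal to the universal property. There is no serious obstacle here; the only point requiring care is to phrase the construction in terms of the universal property of the coproduct rather than trying to define $\hat{U}_f$ componentwise, since the indexing sets $\mathcal{A}(U,A)$ and $\mathcal{A}(U,A')$ differ and the ``reindexing'' has to be encoded correctly. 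Once this is done, everything else is formal.
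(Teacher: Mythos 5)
Your proposal is correct and follows essentially the same route as the paper: the paper defines, for a map of sets $\alpha$, the morphism $X^{(\alpha)}$ by the very same condition $X^{(\alpha)}\iota_\mu=\iota'_{\alpha(\mu)}$ and then packages the assignment as the composite $\coprod_{\mathcal{U}}\circ(F_U)_{U\in\mathcal{U}}$, which makes functoriality automatic, whereas you verify the identity and composition axioms directly on canonical injections. The substance is identical.
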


\begin{proof}
This is mainly well-known, and straightforward. 
%
\end{proof}

Using these standard $\Sum(\mathcal{U})$-precovers, it is easy to show that $\PU$ is precovering:

\begin{lemma} \label{l:PU-precover} Let $\mathcal{A}$ be an AB3 abelian category and let $\mathcal{U}$ be a set of objects of $\mathcal{A}$. 
Then for every object $A\in \mathcal{A}$ the composed morphism 
$$\hat{\varepsilon}_A:\frac{\hat{U}_A}{{\rm tr}_{\mathcal{U}}(\Ker \varepsilon_A)}\twoheadrightarrow 
\frac{\hat{U}_A}{\Ker \varepsilon_A}\hookrightarrow A$$ is a functorial ${\rm Pres}(\mathcal{U})$-precover of $A$, whose kernel is isomorphic to $\frac{\Ker \varepsilon_A}{{\rm tr}_{\mathcal{U}}(\Ker \varepsilon_A)}$.
\end{lemma}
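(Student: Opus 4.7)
My plan is to verify the two conditions defining a $\Pres(\mathcal{U})$-precover, namely that (a) the source lies in $\Pres(\mathcal{U})$ and (b) every morphism from an object of $\Pres(\mathcal{U})$ to $A$ factors through $\hat{\varepsilon}_A$. The standard $\Sum(\mathcal{U})$-precover $\varepsilon_X$ together with the trace description from Lemma \ref{l:Existence_trace}(1) will be the workhorses.

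For (a), I would observe that $\Sum(\mathcal{U})$ is precovering, so by Lemma \ref{l:Existence_trace}(1) the trace $\tr_{\mathcal{U}}(\Ker \varepsilon_A)$ coincides with the image of the canonical map $\varepsilon_{\Ker \varepsilon_A}\colon \hat{U}_{\Ker \varepsilon_A}\to \Ker \varepsilon_A$ (and the latter equals $\tr_{\Sum(\mathcal U)}(\Ker \varepsilon_A)$ since they agree). Composing with the inclusion $\Ker \varepsilon_A \hookrightarrow \hat{U}_A$ gives a morphism $\alpha\colon \hat{U}_{\Ker \varepsilon_A}\to \hat{U}_A$ whose image is exactly $\tr_{\mathcal{U}}(\Ker \varepsilon_A)$. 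Hence $\hat{U}_A/\tr_{\mathcal{U}}(\Ker \varepsilon_A)\cong \Coker\alpha$, displayed as a cokernel of a morphism between objects of $\Sum(\mathcal{U})$, so it belongs to $\Pres(\mathcal{U})$.

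For (b), let $g\colon P\to A$ be a morphism with $P\in \Pres(\mathcal{U})$; fix a presentation $S_1\xrightarrow{\sigma} S_0\xrightarrow{\pi} P\to 0$ in $\mathcal{A}$ with $S_0,S_1\in \Sum(\mathcal{U})$. Since $\varepsilon_A$ is a $\Sum(\mathcal{U})$-precover, there exists $\tilde{g}\colon S_0\to \hat{U}_A$ with $\varepsilon_A\tilde{g}=g\pi$. Then $\varepsilon_A\tilde{g}\sigma=g\pi\sigma=0$, so $\tilde{g}\sigma$ factors through the inclusion $\Ker \varepsilon_A\hookrightarrow \hat{U}_A$ via some $h'\colon S_1\to \Ker \varepsilon_A$; because $S_1\in \Sum(\mathcal{U})$, the image of $h'$ is $\mathcal{U}$-generated and thus contained in $\tr_{\mathcal{U}}(\Ker \varepsilon_A)$ (by Definition \ref{d:Trace}(2)). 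Denoting by $p\colon \hat{U}_A\to \hat{U}_A/\tr_{\mathcal{U}}(\Ker \varepsilon_A)$ the projection, it follows that $p\tilde{g}\sigma=0$, so the universal property of $\pi=\Coker \sigma$ yields a unique $\bar g\colon P\to \hat{U}_A/\tr_{\mathcal{U}}(\Ker \varepsilon_A)$ with $\bar g\pi=p\tilde g$. Then $\hat{\varepsilon}_A\bar g\pi=\hat{\varepsilon}_A p\tilde g=\varepsilon_A\tilde g=g\pi$, and since $\pi$ is an epimorphism we conclude $\hat{\varepsilon}_A\bar g=g$.

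The argument is essentially a routine diagram chase; the only substantive point to keep in mind is the identification of $\tr_{\mathcal{U}}(\Ker \varepsilon_A)$ with $\tr_{\Sum(\mathcal{U})}(\Ker \varepsilon_A)$, which is clear because any $\mathcal{U}$-generated subobject is $\Sum(\mathcal{U})$-generated and conversely. Consequently no obstacle of substance arises; the lemma follows directly once (a) is recorded and (b) is handled by the factorization described above.
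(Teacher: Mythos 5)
Your proof is correct and follows essentially the same route as the paper: lift a presentation $S_1\to S_0\to P\to 0$ through the $\Sum(\mathcal{U})$-precover $\varepsilon_A$, note that the induced map on $S_1$ lands in $\tr_{\mathcal{U}}(\Ker\varepsilon_A)$ because its image is $\mathcal{U}$-generated, and pass to the quotient. Your explicit verification that $\hat{U}_A/\tr_{\mathcal{U}}(\Ker\varepsilon_A)$ itself lies in $\Pres(\mathcal{U})$ (via the cokernel presentation coming from $\hat{U}_{\Ker\varepsilon_A}\to\hat{U}_A$) is a point the paper leaves implicit, and is a welcome addition.
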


\begin{proof} Denote $\mathcal{B}={\rm Pres}(\mathcal{U})$. Let $f:B\to A$ be a morphism in $\mathcal{A}$ with $B\in \mathcal{B}$. Then there is an exact sequence 
$$\coprod_{U\in \mathcal{U}}U^{(J_U)}\longrightarrow \coprod_{U\in \mathcal{U}}U^{(I_U)}\longrightarrow B\longrightarrow 0.$$ 
Since $\hat{U}_A$ is a ${\rm Sum}(\mathcal{U})$-precover of $A$, we have the following induced commutative diagram with exact rows:
\[
\xymatrix{
 & \coprod_{U\in \mathcal{U}}U^{(J_U)} \ar[r] \ar[d]_h & \coprod_{U\in \mathcal{U}}U^{(I_U)} \ar[r] \ar[d]^g & B \ar[r] \ar[d]^f & 0 \\ 
0 \ar[r] & \Ker \varepsilon_A \ar[r]_u & \hat{U}_A \ar[r]_{\varepsilon_A} & A & 
}\]
Since $\Img h\subseteq {\rm tr}_{\mathcal{U}}(\Ker \varepsilon_A)$, we have the following induced commutative diagram with exact rows:
\[
\xymatrix{
 & \coprod_{U\in \mathcal{U}}U^{(J_U)} \ar[r] \ar[d]_{\tilde{h}} & \coprod_{U\in \mathcal{U}}U^{(I_U)} \ar[r] \ar[d]^g & 
 B \ar[r] \ar[d]^{\tilde{f}} & 0 \\ 
0 \ar[r] & {\rm tr}_{\mathcal{U}}(\Ker \varepsilon_A) \ar[r]_-{u'} & \hat{U}_A \ar[r] & 
\frac{\hat{U}_A}{{\rm tr}_{\mathcal{U}}(\Ker \varepsilon_A)} \ar[r] & 0
}\]
where $u'$ is the restriction of $u$ to ${{\rm tr}_{\mathcal{U}}(\Ker \varepsilon_A)}$. Then we clearly have the factorization 
$f=\hat{\varepsilon}_A\tilde{f}$. Hence $\hat{\varepsilon}_A:\frac{\hat{U}_A}{{\rm tr}_{\mathcal{U}}(\Ker \varepsilon_A)}\to A$ 
is a ${\rm Pres}(\mathcal{U})$-precover of $A$.

In order to see the functoriality of the precover, fix $f:A \rightarrow B$ in $\mathcal A$, and notice that the induced morphism $f_1:\hat U_A\rightarrow \hat U_B$ is functorial by the previous lemma. Then, $f_1$ induces (functorially) a morphism $f_2:\Ker \varepsilon_A \rightarrow \Ker \varepsilon_B$, as $\varepsilon$ is a natural transformation. Using that $\tr_{\mathcal U}$ is a functor, this $f_2$ induces (functorially, again) a morphism $f_3:\tr_{\mathcal U}(\Ker \varepsilon_A) \rightarrow \tr_{\mathcal U}(\Ker \varepsilon_B)$. Finally, since this $f_3$ is, essentially, the restriction of $f_1$, we obtain a morphism $f_4:\hat U_A/\tr_{\mathcal U}(\Ker \varepsilon _A) \rightarrow \hat U_B/\tr_{\mathcal U}(\Ker \varepsilon _B)$, which is functorial as it is obtained from the universal property of the cokernel.
\end{proof}

The following criterion is useful to check when a morphism is a $\PU$-coreflection, even without assuming that $\PU$ is coreflective:

\begin{lemma} \label{lem.coreflector}
Let $\mathcal{U}$ be a set of objects in the AB3 abelian category $\mathcal{A}$ and put $\mathcal{B}=\PU$. A morphism $p:B_A\longrightarrow A$ with $B_A \in \mathcal{B}$ is a $\mathcal{B}$-coreflection if and only if $\mathcal{A}(U,p)$ is an isomorphism for each $U \in \mathcal{U}$. In particular, a morphism $f:B\longrightarrow B'$ in $\mathcal{B}$ is an isomorphism if, and only if, $\mathcal{A}(U,f)$ is an isomorphism for each $U \in \mathcal{U}$.
\end{lemma}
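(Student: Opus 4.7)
The plan is to unpack the definition and reduce the problem to a standard diagram-chase involving the presentation of an arbitrary object of $\PU$.

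First I would recall that, by definition, a $\mathcal{B}$-coreflection of $A$ is a morphism $p\colon B_A\to A$ with $B_A\in\mathcal{B}$ such that $\mathcal{A}(X,p)\colon\mathcal{A}(X,B_A)\to\mathcal{A}(X,A)$ is an isomorphism for every $X\in\mathcal{B}=\PU$. Since obviously $\mathcal{U}\subseteq\PU$ (each $U\in\mathcal{U}$ is its own cokernel of $0\to U$), the implication from left to right is immediate.

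For the converse, assume that $\mathcal{A}(U,p)$ is an isomorphism for every $U\in\mathcal{U}$ and let $X\in\PU$ be arbitrary. Pick a $\mathrm{Sum}(\mathcal{U})$-presentation
\[
\coprod_{j\in J}U_j\xrightarrow{\;d\;}\coprod_{i\in I}U_i\longrightarrow X\longrightarrow 0
\]
with each $U_i,U_j\in\mathcal{U}$. Applying the left exact functors $\mathcal{A}(-,B_A)$ and $\mathcal{A}(-,A)$ and using the natural isomorphism $\mathcal{A}(\coprod_\alpha U_\alpha,-)\cong\prod_\alpha\mathcal{A}(U_\alpha,-)$, we obtain a commutative diagram with exact rows
\[
\begin{tikzcd}
0\ar[r]&\mathcal{A}(X,B_A)\ar[r]\ar[d,"p_\ast"]&\prod_i\mathcal{A}(U_i,B_A)\ar[r]\ar[d,"\prod p_\ast"]&\prod_j\mathcal{A}(U_j,B_A)\ar[d,"\prod p_\ast"]\\
0\ar[r]&\mathcal{A}(X,A)\ar[r]&\prod_i\mathcal{A}(U_i,A)\ar[r]&\prod_j\mathcal{A}(U_j,A)
\end{tikzcd}
\]
The two right-hand vertical arrows are isomorphisms because a product of isomorphisms is an isomorphism. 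A routine application of the five lemma (or a direct diagram chase on the left square) then yields that $\mathcal{A}(X,p)$ is an isomorphism. Hence $p$ is a $\mathcal{B}$-precover and, moreover, $\mathcal{A}(X,p)$ is an isomorphism for every $X\in\mathcal{B}$, so $p$ is a $\mathcal{B}$-coreflection.

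For the ``in particular'' statement, suppose that $f\colon B\to B'$ is a morphism in $\mathcal{B}$ with $\mathcal{A}(U,f)$ an isomorphism for every $U\in\mathcal{U}$. By the first part, $f$ is a $\mathcal{B}$-coreflection of $B'$. But $1_{B'}\colon B'\to B'$ is trivially a $\mathcal{B}$-coreflection of $B'$ since $B'\in\mathcal{B}$. The uniqueness of coreflections up to (unique) isomorphism forces $f$ to be an isomorphism. No real obstacle is expected here; the only subtle point is the left-exactness ingredient needed to reduce the isomorphism statement on $\PU$ to the hypothesis on $\mathcal{U}$, which is handled by the five-lemma argument above.
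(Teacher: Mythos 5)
Your proof is correct and is essentially the paper's argument: the paper packages your diagram chase as the observation that the class of objects $A'$ with $\mathcal{A}(A',p)$ an isomorphism is closed under coproducts and cokernels (by left exactness of contravariant Hom), hence contains $\PU$ once it contains $\mathcal{U}$. Your explicit presentation-plus-five-lemma computation is just the concrete instantiation of that closure argument, and your treatment of the ``in particular'' statement via uniqueness of coreflections matches the paper's.
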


\begin{proof}
We only need to prove the `if' part. Consider the subcategory $\mathcal{A}_p$ of $\mathcal{A}$ consisting of all those $A'\in\mathcal{A}$ such that $\mathcal{A}(A',p)$ is an isomorphism. It is clear that $\mathcal{A}_p$ is closed under taking coproducts. Moreover,  due to the left exactness of the contravariant Hom functor, $\mathcal{A}_p$ is also closed under taking cokernels. Since, by hypothesis, we have that $\mathcal{U}\subset\mathcal{A}_p$, we immediately get that $\mathcal{B}\subseteq\mathcal{A}_p$, and hence $p$ is a $\mathcal{B}$-coreflection. 

The last statement is clear since the morphism $f$ is a $\mathcal{B}$-coreflection if, and only if, it is an isomorphism. 
\end{proof}

Now we can characterize when $\PU$ is coreflective:

\begin{theorem} \label{t:PresCoreflective}
Let $\mathcal{A}$ be an AB3 abelian category and let $\mathcal{U}$ be a set of objects of $\mathcal{A}$. Denote by $\mathcal{B}=\PU$. Then the following are equivalent:
\begin{enumerate}
\item $\mathcal{B}$ is a coreflective subcategory of $\mathcal{A}$.
\item $\mathcal{B}$ is closed under taking cokernels in $\mathcal{A}$.
\item
\begin{enumerate} 
\item $\mathcal{B}$ is closed under taking direct summands in $\mathcal{A}$.
\item For every object $A\in \mathcal{A}$, 
$ \frac{{\rm tr}^2_{\mathcal{U}}(\Ker \varepsilon_A)}{{\rm tr}_{\mathcal{U}}(\Ker \varepsilon_A)}={\rm tr}_{\mathcal{U}}\left(\frac{\Ker \varepsilon_A}{{\rm tr}_{\mathcal{U}}(\Ker \varepsilon_A)}\right)$ 
is a direct summand of $\frac{\hat{U}_A}{{\rm tr}_{\mathcal{U}}(\Ker \varepsilon_A)}$.
\end{enumerate}
\item For every  object $A\in \mathcal{A}$ , the following conditions hold:
\begin{enumerate}
\item $\frac{\hat{U}_A}{{\rm tr}_{\mathcal{U}}^2(\Ker\varepsilon_A)}\in \mathcal{B}$;
 \item $\mathcal{A}\left(U,\frac{\Ker \varepsilon_A}{{\rm tr}^2_{\mathcal{U}}(\Ker \varepsilon_A)}\right)=0$, for all $U\in\mathcal{U}$.
\end{enumerate}
\end{enumerate}

In this  case, if  $\iota :\mathcal{B}\hookrightarrow\mathcal{A}$ is the inclusion functor, then, up to natural isomorphism,  its right adjoint  $q:\mathcal{A}\longrightarrow\mathcal{B}$ acts on objects as $q(A)=\frac{\hat{U}_A}{{\rm tr}_{\mathcal{U}}^2(\Ker \varepsilon_A)}$. 
\end{theorem}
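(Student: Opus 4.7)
My plan is to establish the equivalence $(1)\Leftrightarrow(2)$ from Corollary \ref{c:AbelianCategories} and then run the cycle $(1)\Rightarrow(3)\Rightarrow(4)\Rightarrow(1)$, with the explicit formula for the right adjoint $q$ emerging from the verification that $(4)\Rightarrow(1)$. Throughout, I will abbreviate $K=\Ker\varepsilon_A$, $T=\tr_{\mathcal{U}}(K)$, and $T^2=\tr^2_{\mathcal{U}}(K)$, so that $T^2/T=\tr_{\mathcal{U}}(K/T)$. The equivalence $(1)\Leftrightarrow(2)$ is immediate from Corollary \ref{c:AbelianCategories}(1), because $\PU$ is always precovering by Lemma \ref{l:PU-precover}.

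For $(1)\Rightarrow(3)$, closure of $\PU$ under direct summands is automatic for a coreflective subcategory, giving (3a) for free. To prove (3b), denote by $q$ the right adjoint and by $\nu$ its counit. The canonical $\PU$-precover $\hat{\varepsilon}_A\colon \hat{U}_A/T\to A$ and the coreflection $\nu_A\colon q(A)\to A$ produce, via the precover property and the universal property of the coreflection respectively, morphisms $s\colon q(A)\to \hat{U}_A/T$ and $t\colon \hat{U}_A/T\to q(A)$ with $\nu_A\,t=\hat{\varepsilon}_A$, $\hat{\varepsilon}_A\,s=\nu_A$, and $ts=\mathrm{id}_{q(A)}$ by the uniqueness in the coreflection property; this yields a decomposition $\hat{U}_A/T=q(A)\oplus\Ker t$. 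I will then identify $\Ker t$ with $T^2/T$: first, $\Ker t\subseteq \Ker\hat{\varepsilon}_A=K/T$, and since $\Ker t$ is a summand of $\hat{U}_A/T\in\PU$ it lies in $\PU$ by (3a), hence is $\mathcal{U}$-generated, forcing $\Ker t\subseteq T^2/T$; conversely, intersecting $K/T$ with the decomposition yields $K/T=\Ker\nu_A\oplus\Ker t$, and Lemma \ref{lem.coreflector} gives $\mathcal{A}(U,\Ker\nu_A)=0$ for every $U\in\mathcal{U}$, so any morphism $U\to T^2/T\subseteq K/T$ must land in $\Ker t$, giving the reverse inclusion.

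For $(3)\Rightarrow(4)$, fix a direct sum decomposition $\hat{U}_A/T=T^2/T\oplus C$ from (3b). Then $C\cong \hat{U}_A/T^2$, and $C$ lies in $\PU$ by (3a), which is (4a). Intersecting $K/T$ with this decomposition yields $K/T^2\cong K/T\cap C$ realized as a subobject of $C$, and any morphism $U\to K/T\cap C$ has $\mathcal{U}$-generated image contained in $\tr_{\mathcal{U}}(K/T)\cap C=T^2/T\cap C=0$, giving (4b). For $(4)\Rightarrow(1)$, the inclusion $T^2\subseteq K$ provides a canonical factorization $\hat{\varepsilon}'_A\colon \hat{U}_A/T^2\to A$ of $\varepsilon_A$ with $\Ker\hat{\varepsilon}'_A=K/T^2$; by (4a) the source is in $\PU$, and $\hat{\varepsilon}'_A$ is a $\PU$-precover because it is the composite of the precover $\hat{\varepsilon}_A$ with the surjection $\hat{U}_A/T\twoheadrightarrow\hat{U}_A/T^2$. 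Uniqueness of factorization reduces to $\mathcal{A}(B,K/T^2)=0$ for $B\in\PU$, which I will deduce from (4b) by applying the left-exact functor $\mathcal{A}(-,K/T^2)$ to a $\Sum(\mathcal{U})$-presentation of $B$. Thus $\hat{\varepsilon}'_A$ is a $\PU$-coreflection, establishing (1) together with the formula $q(A)\cong \hat{U}_A/T^2$; functoriality of this assignment is automatic from the right-adjoint nature of $q$ (or traceable back to Lemma \ref{l:U-functor}).

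The hard part will be the identification $\Ker t=T^2/T$ inside $(1)\Rightarrow(3b)$, since it forces the iterated trace $\tr^2_{\mathcal{U}}$ to match a kernel coming from adjunction data rather than from a purely generation-theoretic construction. The key idea is to read $T^2$ via two orthogonal extremal properties: maximal $\mathcal{U}$-generation of a subobject of $K/T$ (supplying one inclusion) and the vanishing $\mathcal{A}(U,\Ker\nu_A)=0$ furnished by Lemma \ref{lem.coreflector} (supplying the reverse); their combination captures precisely the inductive definition of $\tr^2_{\mathcal{U}}$.
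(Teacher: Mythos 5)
Your proposal is correct and follows essentially the same route as the paper: the same cycle $(1)\Leftrightarrow(2)$, $(1)\Rightarrow(3)\Rightarrow(4)\Rightarrow(1)$, built on Lemma \ref{l:PU-precover}, Lemma \ref{lem.coreflector} and the comparison of the precover $\hat{U}_A/{\rm tr}_{\mathcal{U}}(\Ker\varepsilon_A)\to A$ with the coreflection $\nu_A$. The only (harmless) variations are that you obtain the splitting $ts=\mathrm{id}_{q(A)}$ directly from the uniqueness clause of the coreflection instead of invoking Lemma \ref{lem.cover-directsummand-of-precover}, and you prove the inclusion ${\rm tr}_{\mathcal{U}}(\Ker\varepsilon_A/{\rm tr}_{\mathcal{U}}(\Ker\varepsilon_A))\subseteq\Ker t$ via the vanishing $\mathcal{A}(U,\Ker\nu_A)=0$ rather than via the factorization through the projection onto $\hat{U}_A/{\rm tr}^2_{\mathcal{U}}(\Ker\varepsilon_A)$.
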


\begin{proof}
$(1)\Longleftrightarrow (2)$ This follows from Corollary \ref{c:AbelianCategories} and Lemma \ref{l:PU-precover}.

$(1)\Longrightarrow (3)$ We just need to prove (3.b).  By Lemma \ref{l:PU-precover} the canonical morphism 
$\hat{\varepsilon}_A:\frac{\hat{U}_A}{{\rm tr}_{\mathcal{U}}(\Ker \varepsilon_A)}\to A$ is a $\mathcal{B}$-precover of $A$.  Note that it factors as the composition $\frac{\hat{U}_A}{{\rm tr}_{\mathcal{U}}(\Ker \varepsilon_A)}\stackrel{\pi}{\twoheadrightarrow}\frac{\hat{U}_A}{{\rm tr}^2_{\mathcal{U}}(\Ker \varepsilon_A)}\stackrel{\tilde{\varepsilon}_A}{\longrightarrow}A$, where $\pi$ is the canonical projection. Bearing in mind that $\Ker(\pi )=\frac{{\rm tr}^2_{\mathcal{U}}(\Ker \varepsilon_A)}{{\rm tr}_{\mathcal{U}}(\Ker \varepsilon_A)}$ is in $\Gen(\mathcal{U})$ and that $\mathcal{B}$ is closed under taking cokernels, we get that $\frac{\hat{U}_A}{{\rm tr}^2_{\mathcal{U}}(\Ker \varepsilon_A)}\in\PU$ and so $\tilde{\varepsilon}_A$ is also a $\mathcal{B}$-precover. 

Let now $q:\mathcal{A}\longrightarrow\mathcal{B}$ be the right adjoint to the inclusion functor $\iota :\mathcal{B}\hookrightarrow\mathcal{A}$, and we put $\nu_A:q(A)\longrightarrow A$ for the counit morphism, that is the $\mathcal{B}$-coreflection, and hence the $\mathcal{B}$-cover,  of $A$. We can then choose a morphism $g:\frac{\hat{U}_A}{{\rm tr}^2_{\mathcal{U}}(\Ker \varepsilon_A)}\longrightarrow q(A)$ such that $\nu_A g=\tilde{\varepsilon}_A$, and so $\hat{\varepsilon}_A=\nu_A g\pi$. By Lemma \ref{lem.cover-directsummand-of-precover}, we get that $g\pi$ is retraction and that there exists a subobject $X'$ of $\frac{\hat U_A}{\tr_{\mathcal{U}}(\Ker \varepsilon_A)}$ and an isomorphism $\varphi:X' \rightarrow q(A)$, such that $\frac{\hat U_A}{\tr_{\mathcal{U}}(\Ker \varepsilon_A)} = \Ker (g\pi) \coprod X'$ and $\hat \varepsilon_A = \nu_A\varphi p$, where $p$ is the projection onto $X'$. Now notice that $\Ker (g\pi)\subseteq\Ker (\nu_Ag\pi)=\Ker (\tilde{\varepsilon}_A\pi)=\Ker (\hat{\varepsilon}_A)=\frac{\Ker (\varepsilon_A)}{\tr_{\mathcal{U}}(\Ker \varepsilon_A)}$. Moreover, $\Ker (g\pi)$ is $\mathcal{U}$-generated since it is a direct summand of $\frac{\hat U_A}{\tr_{\mathcal{U}}(\Ker \varepsilon_A)}$. It then follows that
$\Ker g\pi \subseteq \tr_{\mathcal U}\left(\frac{\Ker \varepsilon_A}{\tr_{\mathcal{U}}(\Ker \varepsilon_A)}\right)$. But we also have that $\tr_{\mathcal{U}}\left(\frac{\Ker \varepsilon_A}{\tr_{\mathcal{U}}(\Ker \varepsilon_A)}\right) = \Ker \pi \subseteq \Ker(g\pi)$. This proves (3.b) and the composition $X'\stackrel{\varphi}{\cong}q(A)\stackrel{\nu_A}{\rightarrow} A$ gets canonically identified with 
$$\frac{\hat{U}_A/\tr_{\mathcal{U}}(\Ker \varepsilon_A)}{\Ker (g\pi)}=\frac{\hat{U}_A/\tr_{\mathcal{U}}(\Ker \varepsilon_A)}{\tr^2_{\mathcal{U}}(\Ker \varepsilon_A)/\tr_{\mathcal{U}}(\Ker \varepsilon_A)}\cong\frac{\hat{U}_A}{\tr^2_{\mathcal{U}}(\Ker \varepsilon_A)}\stackrel{\tilde{\varepsilon}_A}{\rightarrow} A.$$

This also proves the last statement of the theorem.

$(3)\Longrightarrow (4)$ By condition (3.b), we have a decomposition $\frac{\hat{U}_A}{{\rm tr}_{\mathcal{U}}(\Ker \varepsilon_A)}=\frac{{\rm tr}_{\mathcal{U}}^2(\Ker \varepsilon_A)}{{\rm tr}_{\mathcal{U}}(\Ker \varepsilon_A)}\coprod W$. By condition (3.a) we then get that $\frac{\hat{U}_A}{{\rm tr}_{\mathcal{U}}^2(\Ker \varepsilon_A)}\cong W$ is in $\mathcal{B}$, so that condition (4.a) holds.
  
But condition (3.b) also  implies that ${\rm tr}_{\mathcal{U}}\left(\frac{\Ker\varepsilon_A}{{\rm tr}_{\mathcal{U}}(\Ker\varepsilon_A)}\right)=\frac{{\rm tr}_{\mathcal{U}}^2(\Ker\varepsilon_A)}{{\rm tr}_{\mathcal{U}}(\Ker\varepsilon_A)}$ is  a direct summand of $\frac{\Ker\varepsilon_A}{{\rm tr}_{\mathcal{U}}(\Ker\varepsilon_A)}$. If we fix a decomposition $\frac{\Ker\varepsilon_A}{{\rm tr}_{\mathcal{U}}(\Ker\varepsilon_A)}=\frac{{\rm tr}_{\mathcal{U}}^2(\Ker\varepsilon_A)}{{\rm tr}_{\mathcal{U}}(\Ker\varepsilon_A)}\coprod V$, we conclude that any morphism $f:U \rightarrow V$ with $U \in \mathcal{U}$ satisfies that the composition $U\stackrel{f}{\longrightarrow}V\stackrel{inclusion}{\hookrightarrow}\frac{\Ker\varepsilon_A}{{\rm tr}_{\mathcal{U}}(\Ker\varepsilon_A)}$ has its image contained in  ${\rm tr}_{\mathcal{U}}\left(\frac{\Ker\varepsilon_A}{{\rm tr}_{\mathcal{U}}(\Ker\varepsilon_A)}\right)=\frac{{\rm tr}_{\mathcal{U}}^2(\Ker\varepsilon_A)}{{\rm tr}_{\mathcal{U}}(\Ker\varepsilon_A)}$, and hence in $V\cap\frac{{\rm tr}_{\mathcal{U}}^2(\Ker\varepsilon_A)}{{\rm tr}_{\mathcal{U}}(\Ker\varepsilon_A)}=0$. It follows that $f=0$ and hence $\mathcal{A}(U,V)=0$, for all $U\in\mathcal{U}$. Now, condition (4.b) follows from the fact that $V \cong \frac{\Ker(\varepsilon_A)}{\tr_{\mathcal{U}}^2(\Ker \varepsilon_A)}$.
  
$(4)\Longrightarrow (1)$ Let $A\in\mathcal{A}$ be arbitrary. Due to the fact that the canonical morphism $\varepsilon_A:\hat{U}_A\rightarrow A$ is a $\text{Sum}(\mathcal{U})$-precover, conditions (4.a) and (4.b) imply that the canonical morphism
$\tilde{\varepsilon}_A:\frac{\hat{U}_A}{{\rm tr}^2_{\mathcal{U}}(\Ker \varepsilon_A)}\rightarrow A$ satisfies that $\mathcal{A}(U,\tilde\varepsilon_A)$ is an isomorphism for each $U \in \mathcal{U}$. Then it is a $\mathcal{B}$-coreflection, by condition (4.a) and Lemma \ref{lem.coreflector}.
\end{proof}

Now we characterize when $\PU$ is coreflective and abelian.

\begin{theorem} \label{t.coreflective-abelian}
Let $\mathcal{A}$ be an AB3 abelian category, let $\mathcal{U}$ be a set of objects of $\mathcal{A}$ and denote $\mathcal{B}=\PU$. The following assertions are equivalent:
\begin{enumerate}
\item $\mathcal{B}$ is a coreflective subcategory that is abelian.
\item $\mathcal{B}$ is closed under taking cokernels and, for every morphism $f:B\to B'$ in $\mathcal{B}$, the induced morphism 
$\bar{f}:\frac{B}{{\rm tr}_{\mathcal{U}}(\Ker f)}\to \Img f$ satisfies that $\mathcal A(U,\overline f)$ is an isomorphism for each $U \in \mathcal U$.
\item $\mathcal{B}$ is closed under taking cokernels and, for each morphism $f:B\to B'$ in $\mathcal{B}$, the induced morphism 
$\bar{f}:\frac{B}{{\rm tr}_{\mathcal{U}}(\Ker f)}\to \Img f$ is a $\mathcal{B}$-coreflection.
\item The following two conditions hold:
\begin{enumerate}
\item $\mathcal{A}\left(U,\frac{\Ker\,\varepsilon_A}{\tr_{\mathcal{U}}(\Ker\,\varepsilon_A)}\right)=0$, for all $A\in\mathcal{A}$ and $U\in\mathcal{U}$;
\item For every morphism $g:\tilde{U}\longrightarrow B$, where $B\in\mathcal{B}$ and $\tilde{U}\in{\rm Sum}(\mathcal{U})$, the induced morphism $\bar{g}:\frac{\tilde{U}}{{\rm tr}_{\mathcal{U}}(\Ker g)}\longrightarrow \Img g$ satisfies that $\mathcal A(U,\overline g)$ is an isomorphism for each $U \in \mathcal U$. 
\end{enumerate}
\end{enumerate}

In this case, up to natural isomorphism, the right adjoint $q:\mathcal{A}\rightarrow\PU$ of the inclusion functor acts on objects as $q(A)=\frac{\hat{U}_A}{\tr_\mathcal{U}(\Ker \varepsilon_A)}$.
\end{theorem}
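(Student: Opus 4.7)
The plan is to prove $(1) \Leftrightarrow (3) \Leftrightarrow (2)$ first and then $(3) \Leftrightarrow (4)$, with the description of the right adjoint falling out of the proof of $(4) \Rightarrow (3)$. The equivalence $(2) \Leftrightarrow (3)$ reduces, via Lemma~\ref{lem.coreflector}, to the observation that the domain $B/\tr_{\mathcal{U}}(\Ker f)$ of $\bar f$ always lies in $\mathcal{B}$: since $\tr_{\mathcal{U}}(\Ker f)$ is $\mathcal{U}$-generated by an epimorphism $\coprod_i U_i \twoheadrightarrow \tr_{\mathcal{U}}(\Ker f)$, the quotient is the cokernel in $\mathcal{A}$ of the composite $\coprod_i U_i \to B$ between objects of $\mathcal{B}$, hence lies in $\mathcal{B}$ by closure under cokernels. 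For $(1) \Leftrightarrow (3)$, Lemma~\ref{l:PU-precover} together with Corollary~\ref{c:AbelianCategories} gives that $\mathcal{B}$ is coreflective iff it is closed under cokernels, while Theorem~\ref{t:CoreflectiveAndAbelian} handles the abelian half once one observes that $\tr_{\mathcal{B}} = \tr_{\mathcal{U}}$; this identity holds because $\mathcal{U} \subseteq \mathcal{B} \subseteq \Gen(\mathcal{U})$ forces $\Gen(\mathcal{U}) = \Gen(\mathcal{B})$.

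For $(3) \Rightarrow (4)$, condition (4)(b) is a special case of (3), since each $g:\tilde{U} \to B$ is a morphism in $\mathcal{B}$. For (4)(a), I would let $\nu_A:q(A) \to A$ be the $\mathcal{B}$-coreflection, so that $\mathcal{A}(U,\nu_A)$ is an isomorphism and consequently $\mathcal{A}(U,\Ker\nu_A) = 0$, forcing $\tr_{\mathcal{U}}(\Ker\nu_A) = 0$. Applying (3) to $\varepsilon_{q(A)}:\hat{U}_{q(A)} \to q(A)$, a morphism in $\mathcal{B}$ that is surjective (since $q(A) \in \mathcal{B} \subseteq \Gen(\mathcal{U})$), shows that $\bar\varepsilon_{q(A)}$ is a $\mathcal{B}$-coreflection of $q(A) \in \mathcal{B}$, hence an isomorphism; this yields $\Ker\varepsilon_{q(A)} = \tr_{\mathcal{U}}(\Ker\varepsilon_{q(A)})$, i.e.\ $\Ker\varepsilon_{q(A)}$ is $\mathcal{U}$-generated. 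Identifying $\hat{U}_A$ with $\hat{U}_{q(A)}$ via $\nu_A$, the factorization $\varepsilon_A = \nu_A\varepsilon_{q(A)}$ yields a short exact sequence $0 \to \Ker\varepsilon_{q(A)} \to \Ker\varepsilon_A \to \Ker\nu_A \to 0$. The inclusion $\Ker\varepsilon_{q(A)} \subseteq \tr_{\mathcal{U}}(\Ker\varepsilon_A)$ together with the fact that the image of $\tr_{\mathcal{U}}(\Ker\varepsilon_A)$ in $\Ker\nu_A$ is a $\mathcal{U}$-generated subobject of $\Ker\nu_A$ (hence zero) forces $\tr_{\mathcal{U}}(\Ker\varepsilon_A) = \Ker\varepsilon_{q(A)}$, so $\Ker\varepsilon_A/\tr_{\mathcal{U}}(\Ker\varepsilon_A) \cong \Ker\nu_A$, yielding (4)(a).

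For $(4) \Rightarrow (3)$, condition (4)(a) combined with Lemma~\ref{lem.coreflector} shows that $\hat\varepsilon_A:\hat{U}_A/\tr_{\mathcal{U}}(\Ker\varepsilon_A) \to A$ is a $\mathcal{B}$-coreflection for each $A \in \mathcal{A}$: its domain lies in $\mathcal{B}$, $\mathcal{A}(U,\hat\varepsilon_A)$ is surjective (any $U \to A$ lifts through the corresponding coproduct injection into $\hat{U}_A$), and its kernel $\mathcal{A}(U,\Ker\varepsilon_A/\tr_{\mathcal{U}}(\Ker\varepsilon_A))$ vanishes by (4)(a). This establishes the last assertion of the theorem and, via Corollary~\ref{c:AbelianCategories}, forces $\mathcal{B}$ to be closed under cokernels; specializing to $A=B \in \mathcal{B}$ also forces $\Ker\varepsilon_B$ to be $\mathcal{U}$-generated. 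Given $f:B \to B'$ in $\mathcal{B}$, I would set $g = f\varepsilon_B:\hat{U}_B \to B'$; there is a natural factorization $\bar g = \bar f \circ h$, where $h:\hat{U}_B/\tr_{\mathcal{U}}(\Ker g) \to B/\tr_{\mathcal{U}}(\Ker f)$ is induced by $\varepsilon_B$, and by (4)(b) the map $\mathcal{A}(U,\bar g)$ is an isomorphism for each $U \in \mathcal{U}$. The main obstacle will be to show that $h$ is itself an isomorphism. I would do this by introducing the subobject $S \subseteq \hat{U}_B$ given by the sum of the coproduct injections indexed by morphisms $U \to \tr_{\mathcal{U}}(\Ker f)$; then $S$ is $\mathcal{U}$-generated with $\varepsilon_B(S) = \tr_{\mathcal{U}}(\Ker f)$, so $\varepsilon_B^{-1}(\tr_{\mathcal{U}}(\Ker f)) = S + \Ker\varepsilon_B$, which is $\mathcal{U}$-generated (both summands are) and sits inside $\Ker g$. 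Hence $S + \Ker\varepsilon_B$ coincides with $\tr_{\mathcal{U}}(\Ker g)$, whence $\Ker h = 0$ and $h$ is an isomorphism (being also an epimorphism). Therefore $\mathcal{A}(U,\bar f) = \mathcal{A}(U,\bar g)\,\mathcal{A}(U,h)^{-1}$ is an isomorphism, completing the proof.
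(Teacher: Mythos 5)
Your proof is correct. The equivalences $(1)\Leftrightarrow(2)\Leftrightarrow(3)$ follow the same path as the paper (Theorem \ref{t:PresCoreflective} plus Corollary \ref{c:AbelianCategories} for coreflectivity, Theorem \ref{t:CoreflectiveAndAbelian} for the abelian half, Lemma \ref{lem.coreflector} for $(2)\Leftrightarrow(3)$, together with the observation $\tr_{\mathcal{B}}=\tr_{\mathcal{U}}$, which the paper uses tacitly). Your treatment of condition (4), however, diverges genuinely. For $(3)\Rightarrow(4.\mathrm{a})$ the paper invokes the description $q(A)\cong\hat{U}_A/\tr^2_{\mathcal{U}}(\Ker \varepsilon_A)$ from Theorem \ref{t:PresCoreflective} and applies (3) to the projection $\hat{U}_A\twoheadrightarrow\hat{U}_A/\tr^2_{\mathcal{U}}(\Ker \varepsilon_A)$, whereas you factor $\varepsilon_A=\nu_A\varepsilon_{q(A)}$ through the coreflection and read off (4.a) from the short exact sequence $0\to\Ker \varepsilon_{q(A)}\to\Ker \varepsilon_A\to\Ker \nu_A\to 0$; both work, and yours avoids $\tr^2_{\mathcal{U}}$ entirely. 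The substantial difference is in $(4)\Rightarrow(2)$: the paper deduces that the comparison map $\overline{\varepsilon_B}:\hat{U}_B/\tr_{\mathcal{U}}(\Ker(f\varepsilon_B))\to B/\tr_{\mathcal{U}}(\Ker f)$ is an isomorphism via the bicartesian diagram (\ref{e:Diag}), a closure argument on the class of objects where a certain natural transformation of representables is invertible, and a final matrix computation showing $\overline{\varepsilon_B}$ is a section; you instead show its kernel vanishes outright by identifying $\varepsilon_B^{-1}(\tr_{\mathcal{U}}(\Ker f))=S+\Ker \varepsilon_B=\tr_{\mathcal{U}}(\Ker(f\varepsilon_B))$. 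The lever your argument pulls, and the paper's does not, is that (4.a) applied to $A=B\in\mathcal{B}$ forces $\Ker \varepsilon_B\in\Gen(\mathcal{U})$, since $\hat{\varepsilon}_B$ is then a coreflection of an object already in $\mathcal{B}$ and hence an isomorphism. This yields a markedly shorter and more transparent proof of the hardest implication; the only thing the paper's longer route buys in exchange is a cleaner accounting of which half of (4) is used where (there, (4.a) enters only through the initial reduction to coreflectivity and everything else rests on (4.b)).
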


\begin{proof}
Recall that, by Theorem \ref{t:PresCoreflective}, $\mathcal{B}$ is coreflective if, and only if, it is closed under taking cokernels in $\mathcal{A}$. On the other hand, condition (4.a) is equivalent to say that $\frac{{\rm tr}_{\mathcal{U}}^2(\Ker \varepsilon_A)}{{\rm tr}_{\mathcal{U}}(\Ker \varepsilon_A)}=0$, which immediately implies assertion (4) of the mentioned theorem. Therefore we can assume that $\mathcal{B}$ is coreflective all throughout the proof. Let us denote by $q$ the right adjoint of the inclusion functor.

$(1)\Longleftrightarrow (3)$ follows from Theorem \ref{t:CoreflectiveAndAbelian} and $(3)\Longleftrightarrow (2)$ from Lemma \ref{lem.coreflector}.  

$(1),(2),(3)\Longrightarrow (4)$ Condition (4.b) follows from assertion (2). As for condition (4.a), recall that, by  Theorem \ref{t:PresCoreflective} again, $q(A)\cong\frac{\hat{U}_A}{\tr_{\mathcal{U}}^2(\Ker\,\varepsilon_A)}$. Applying assertion (3) with $f=\pi:\hat{U}_A\twoheadrightarrow\frac{\hat{U}_A}{\tr_{\mathcal{U}}^2(\Ker\,\varepsilon_A)}$ the canonical projection and bearing in mind that $\tr_{\mathcal{U}}(\Ker\,\pi)=\tr_{\mathcal{U}}(\tr^2_{\mathcal{U}}(\Ker\,\varepsilon_A))=\tr_{\mathcal{U}}(\Ker\,\varepsilon_A)$, we get that the induced map $\bar{\pi}:\frac{\hat{U}_A}{{\rm tr}_{\mathcal{U}}(\Ker \varepsilon_A)}\longrightarrow\frac{\hat{U}_A}{{\rm tr}_{\mathcal{U}}^2(\Ker \varepsilon_A)}$ is a $\mathcal{B}$-coreflection, and hence an isomorphism by Lemma \ref{lem.coreflector}. But this implies that $\frac{{\rm tr}_{\mathcal{U}}^2(\Ker \varepsilon_A)}{{\rm tr}_{\mathcal{U}}(\Ker \varepsilon_A)}=0$, which is equivalent to condition (4.a).

$(4)\Longrightarrow (2)$ Let $f:B\longrightarrow B'$ be any morphism in $\mathcal{B}$ and consider the canonical (epi) morphism $\varepsilon_B:\hat{U}_B\twoheadrightarrow B$. Note that  we then have an isomorphism $\Img(f\varepsilon_B)\cong \Img f$, that we see as an identity in the sequel. 
We then get the following commutative diagram with exact rows:  
\[\SelectTips{cm}{}
\xymatrix{
0 \ar[r] & {\rm Ker}(f\varepsilon_B) \ar@{^{(}->}[r] \ar[d]_{\varepsilon'} & \hat{U}_B \ar[r]^-{\bar{f}\varepsilon_B} \ar[d]^{\varepsilon_B}& 
\Img f \ar[r] \ar@{=}[d] & 0 \\
0 \ar[r] & {\rm Ker}(f) \ar@{^{(}->}[r] & B \ar[r] & \Img f \ar[r] & 0
}\]
Then $\varepsilon'$ is an epimorphism and we clearly have that $\varepsilon' ({\rm tr}_\mathcal{U}(\Ker(f\varepsilon_B)))\subseteq {\rm tr}_\mathcal{U}(\Ker f) $. Then we get another induced commutative diagram with exact rows
\begin{equation}\label{e:Diag}\tag{$\dagger$}
\SelectTips{cm}{}
\xymatrix{
0 \ar[r] & \frac{{\rm Ker}(f\varepsilon_B)}{{\rm tr}_{\mathcal{U}}({\rm Ker}(f\varepsilon_B))} \ar[r] \ar[d]_{\overline{\varepsilon'}} 
& \frac{\hat{U}_B}{{\rm tr}_{\mathcal{U}}({\rm Ker}(f\varepsilon_B))} \ar[r]^-{\overline{f\varepsilon_B}} \ar[d]^{\overline{\varepsilon_B}}
& \Img f \ar[r] \ar@{=}[d] & 0 \\
0 \ar[r] & \frac{{\rm Ker}(f)}{{\rm tr}_{\mathcal{U}}({\rm Ker}(f))} \ar[r]_-{\bar{j}} & 
\frac{B}{{\rm tr}_{\mathcal{U}}({\rm Ker}(f))} \ar[r]_-{\bar{f}} & \Img f \ar[r] & 0
}
\end{equation}
which implies, in particular, that the left square of the diagram is bicartesian.

Fix $U\in\mathcal{U}$ arbitrary. By assertion (4), we know that $\mathcal{A}(U,\overline{f\varepsilon_B})$ is an isomorphism. This in turn  implies   that $\mathcal{A}(U,\bar{f})$ is an epimorphism and the proof is reduced to check that $\mathcal{A}\left(U,\frac{\Ker f}{{\rm tr}_\mathcal{U}(\Ker f)}\right)=0$, for all $U\in\mathcal{U}$.
 Bearing in mind the left exactness of $\mathcal{A}(U,?):\mathcal{A}\longrightarrow\text{Ab}$, we also get a commutative diagram in $\text{Ab}$ with exact rows: 
\[\SelectTips{cm}{}
\xymatrix{
0 \ar[r] & \mathcal{A}\left(U,\frac{{\rm Ker}(f\varepsilon_B)}{{\rm tr}_{\mathcal{U}}({\rm Ker}(f\varepsilon_B))}\right) 
\ar[r] \ar[d] & \mathcal{A}\left(U,\frac{\hat{U}_B}{{\rm tr}_{\mathcal{U}}({\rm Ker}(f\varepsilon_B))}\right) 
\ar[r]^-{(\overline{f\varepsilon_B})_*} \ar[d]^{(\overline{\varepsilon_B})_*}
& \mathcal{A}\left(U,\Img f\right) \ar[r] \ar@{=}[d] & 0 \\
0 \ar[r] & \mathcal{A}\left(U,\frac{{\rm Ker}(f)}{{\rm tr}_{\mathcal{U}}({\rm Ker}(f))}\right) \ar[r]_-{(\bar{j})_*} & 
\mathcal{A}\left(U,\frac{B}{{\rm tr}_{\mathcal{U}}({\rm Ker}(f))}\right) \ar[r]_-{(\bar{f})_*} & \mathcal{A}\left(U,\Img f\right) \ar[r] & 0
}\]

It follows that the left square of this diagram is bicartesian and, since its upper left corner is the zero object, we conclude that the map $$\begin{pmatrix}(\bar{j})_* & (\bar{\varepsilon}_B)_*\end{pmatrix}:\mathcal{A}\left(U,\frac{\Ker f}{{\rm tr}_\mathcal{U}(\Ker f)}\right)\coprod\mathcal{A}\left(U,\frac{\hat{U}_B}{{\rm tr}_\mathcal{U}(\Ker (f\varepsilon_B))}\right)\longrightarrow\mathcal{A}\left(U,\frac{B}{{\rm tr}_\mathcal{U}(\Ker f)}\right)$$ is an isomorphism, for all $U\in\mathcal{U}$.

We next consider the natural transformation $$\tau=\mathcal{A}(?,\begin{pmatrix}\bar{j} &\bar{\varepsilon}_B\end{pmatrix}):\mathcal{A}\left(?,\frac{\Ker f}{{\rm tr}_\mathcal{U}(\Ker f)}\coprod \frac{\hat{U}_B}{{\rm tr}_\mathcal{U}(\Ker (f\varepsilon_B))}\right)\longrightarrow\mathcal{A}\left(?,\frac{B}{{\rm tr}_\mathcal{U}(\Ker f)}\right)$$ of contravariant representable functors $\mathcal{A}\longrightarrow\text{Ab}$, and take the subcategory $\mathcal{A}_\tau$ of $\mathcal{A}$ consisting of those objects $A$ such that $\tau_A$ is an isomorphism. Arguing as in the proof of Lemma \ref{lem.coreflector}, we get that $\mathcal{A}_\tau$ is closed under taking coproducts and cokernels. It then follows that $\mathcal{B}\subseteq\mathcal{A}_\tau$, since the previous paragraph shows that $\mathcal{U}\subset\mathcal{A}_\tau$. In particular, $\frac{B}{{\rm tr}_\mathcal{U}(\Ker f)}\in\mathcal{A}_\tau$, which means that the morphism $$\begin{pmatrix}\bar{j} &\bar{\varepsilon}_B\end{pmatrix}:\frac{\Ker f}{{\rm tr}_\mathcal{U}(\Ker f)}\coprod \frac{\hat{U}_B}{{\rm tr}_\mathcal{U}(\Ker (f\varepsilon_B))}\longrightarrow \frac{B}{{\rm tr}_\mathcal{U}(\Ker f)} $$ is a retraction in $\mathcal{A}$. 

By the bicartesian condition of (\ref{e:Diag}), we conclude that we have an isomorphism $$\Psi:\frac{\Ker f}{{\rm tr}_\mathcal{U}(\Ker f)}\coprod \frac{\hat{U}_B}{{\rm tr}_\mathcal{U}(\Ker (f\varepsilon_B))}\stackrel{\cong}{\longrightarrow} \frac{\Ker f\varepsilon_B}{{\rm tr}_\mathcal{U}(\Ker f\varepsilon_B)} \coprod \frac{B}{{\rm tr}_\mathcal{U}(\Ker f)}.  $$ But, by assertion (4), we know that $\mathcal{A}(U,?)$ vanishes on  $\frac{\Ker f\varepsilon_B}{{\rm tr}_\mathcal{U}(\Ker f\varepsilon_B)}$, for all $U\in\mathcal{U}$. This implies that we have $\mathcal{A}\left(\frac{\hat{U}_B}{{\rm tr}_\mathcal{U}(\Ker (f\varepsilon_B))}, \frac{\Ker f\varepsilon_B}{{\rm tr}_\mathcal{U}(\Ker f\varepsilon_B)}\right)=0$. Then,  expressing matricially the isomorphism $\Psi$, we have that $\Psi=\begin{pmatrix}\alpha & 0\\ \bar{j} & \bar{\varepsilon}_B \end{pmatrix}$, for some morphism $\alpha :\frac{\Ker f}{{\rm tr}_\mathcal{U}(\Ker f)}\longrightarrow  \frac{\Ker f\varepsilon_B}{{\rm tr}_\mathcal{U}(\Ker f\varepsilon_B)} $. The inverse $\Psi^{-1}$ then has a matricial expression $\begin{pmatrix}\alpha'& \beta'\\ \gamma' & \delta' \end{pmatrix}$. The matricial equality $$\begin{pmatrix}\alpha'& \beta'\\ \gamma' & \delta' \end{pmatrix} \begin{pmatrix}\alpha & 0\\ \bar{j} & \bar{\varepsilon}_B \end{pmatrix}=\begin{pmatrix} 1 & 0\\ 0 & 1 \end{pmatrix}$$ gives then that $\delta'\bar{\varepsilon}_B=1$. Then $\bar{\varepsilon}_B$ is a section, which implies that it is an isomorphism, since it is obviously an epimorphism. It follows that $\bar{\varepsilon}': \frac{\Ker f\varepsilon_B}{{\rm tr}_\mathcal{U}(\Ker f\varepsilon_B)}\longrightarrow\frac{\Ker f}{{\rm tr}_\mathcal{U}(\Ker f)}$ is also an isomorphism, and so $\mathcal{A}\left(U,\frac{\Ker f}{{\rm tr}_\mathcal{U}(\Ker f)}\right)=0$ for all $U\in\mathcal{U}$, as it was desired. 
\end{proof}

Next we want to study when $\PU$ is abelian exact. First, we give some lemmas.



\begin{lemma} \label{l:l2} Let $\mathcal{A}$ be an AB3 abelian category and let $\mathcal{U}$ be a set of objects of $\mathcal{A}$. 
If the canonical morphism $\varepsilon_B:\hat{U}_B\to B$ has its kernel in $\Gen(\mathcal{U})$ 
for every $B\in \PU$, then $\PU$ is closed under taking cokernels.
\end{lemma}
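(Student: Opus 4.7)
My plan is to pick an arbitrary morphism $f\dd B\to B'$ in $\PU$ and exhibit $\Coker f$ as the cokernel in $\mathcal A$ of a single morphism between two objects of $\Sum(\mathcal U)$, which forces $\Coker f\in\PU$. The two ingredients I would feed into the construction are the $\Sum(\mathcal U)$-precover property of $\varepsilon_{B'}\dd\hat U_{B'}\to B'$ (noted just before Lemma \ref{l:U-functor}) and the standing hypothesis that $\Ker\varepsilon_{B'}\in\Gen(\mathcal U)$.

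First I would use the precover property to factor the composite $f\varepsilon_B\dd\hat U_B\to B'$ through $\varepsilon_{B'}$, obtaining a morphism $h\dd\hat U_B\to\hat U_{B'}$ with $\varepsilon_{B'}h=f\varepsilon_B$. Next, invoking the hypothesis, I would fix an epimorphism $\pi\dd V\twoheadrightarrow\Ker\varepsilon_{B'}$ with $V\in\Sum(\mathcal U)$, and let $\iota\dd\Ker\varepsilon_{B'}\hookrightarrow\hat U_{B'}$ denote the inclusion. The candidate presenting morphism would then be $\varphi=\begin{pmatrix}h & \iota\pi\end{pmatrix}\dd\hat U_B\oplus V\to\hat U_{B'}$, which is manifestly a morphism between two objects of $\Sum(\mathcal U)$.

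The remaining work is a routine diagram chase identifying $\Img\varphi$ with $\varepsilon_{B'}^{-1}(\Img f)$. One direction is immediate, because $\varepsilon_{B'}h=f\varepsilon_B$ has image inside $\Img f$, while $\Ker\varepsilon_{B'}\subseteq\varepsilon_{B'}^{-1}(\Img f)$ trivially. For the reverse containment I would use that $\varepsilon_B$ is an epimorphism (since $B\in\PU\subseteq\Gen(\mathcal U)$), so $\Img f=\Img(f\varepsilon_B)=\varepsilon_{B'}(\Img h)$, whence any element of $\varepsilon_{B'}^{-1}(\Img f)$ agrees modulo $\Ker\varepsilon_{B'}$ with an element of $\Img h$. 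Once $\Img\varphi=\varepsilon_{B'}^{-1}(\Img f)$ is established, the standard isomorphism theorem gives $\Coker\varphi\cong\hat U_{B'}/\varepsilon_{B'}^{-1}(\Img f)\cong B'/\Img f=\Coker f$, and therefore $\Coker f\in\PU$.

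I do not anticipate any serious obstacle. The conceptual content is that the precover property provides the lift of $f\varepsilon_B$ to $\hat U_{B'}$ even though $\hat U_B$ is not assumed to be projective, while the hypothesis on kernels supplies the summand $V$ that absorbs $\Ker\varepsilon_{B'}$ inside $\Img\varphi$; together these two inputs assemble the required $\Sum(\mathcal U)$-presentation of $\Coker f$.
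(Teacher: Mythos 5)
Your argument is correct and is essentially the paper's proof in different packaging: both present $\Coker f$ as the cokernel of a morphism into $\hat U$ of the codomain whose image is generated by a lift of $f$ through the $\Sum(\mathcal U)$-precover (your $h$, the paper's $u$) together with a $\Sum(\mathcal U)$-cover of $\Ker\varepsilon$ supplied by the hypothesis. The only difference is organizational — the paper routes the identification $\Img\varphi=\varepsilon_{B'}^{-1}(\Img f)$ through a split pullback square, whereas you compute the image directly via the subobject correspondence under the epimorphism $\varepsilon_{B'}$ — and both are valid.
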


\begin{proof} Let $f:B'\to B$ be a morphism in $\PU$. In order to prove that $\Coker f\in \PU$, we may assume without loss of generality that $B'\in {\rm Sum}(\mathcal{U})$, say $B'=\coprod_{i\in I}U_i$.

Consider the pullback of $f$ and $\varepsilon_{B}$ in order to get the following commutative diagram with exact rows,
\begin{displaymath}
\begin{tikzcd}
V \arrow{r}{v} \arrow{d}{\pi} \arrow[phantom]{dr}{\textrm{BC}} & \hat U_B \arrow{r} \arrow{d}{\varepsilon_B} & \Coker f \arrow{r} \arrow[equal]{d} & 0\\
\coprod_{i \in I}U_i \arrow{r}{f} & B \arrow{r} & \Coker f \arrow{r} & 0
\end{tikzcd}
\end{displaymath}
and notice that the left square is also a pushout. Since $\varepsilon_{B}$ is a ${\rm Sum}(\mathcal{U})$-precover of $B$, 
there is a morphism $u:\coprod_{i\in I}U_i\to \hat{U}_{B}$ such that $\varepsilon_{B}u=f$. 
Then the pullback property yields a unique morphism $w:\coprod_{i\in I}U_i\to V$ such that 
$vw=u$ and $\pi w=1_{\coprod_{i\in I}U_i}$, and so $\pi$ is a retraction. Then we have that
$$V\cong \Ker \varepsilon_{B}\coprod \left(\coprod_{i\in I}U_i\right).$$ for some family of objects $\{U_i \mid i \in I\} \subseteq \mathcal U$. Since $\Ker \varepsilon_{B}\in \Gen(\mathcal{U})$,
there is an epimorphism $\coprod_{j\in J}U'_j\to \Ker \varepsilon_{B}$. 
Then we have an induced exact sequence 
$$\left(\coprod_{j\in J}U'_j\right)\coprod \left(\coprod_{i\in I}U_i\right)\longrightarrow \hat{U}_{B}
\longrightarrow \Coker f\longrightarrow 0,$$
which shows that $\Coker f\in \PU$. 
\end{proof}

\begin{lemma} \label{l:kerf-PU} Let $\mathcal{A}$ be an AB3 abelian category and let $\mathcal{U}$ be a set of objects of $\mathcal{A}$. 
Assume that  every morphism $f:\hat{U}\to \tilde{U}$ in ${\rm Sum}(\mathcal{U})$ has its kernel in
$\Gen(\mathcal{U})$. Then for every morphism $f:\coprod_{i\in I}U_i\to A$ in $\mathcal{A}$, with each $U_i\in \mathcal{U}$, we have that
${\rm tr}_{\mathcal{U}}(\Ker f)\in {\rm Pres}(\mathcal{U})$.
\end{lemma}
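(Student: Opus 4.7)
The plan is to show that $\tr_{\mathcal{U}}(\Ker f)$ is a quotient of a canonical object of $\Sum(\mathcal{U})$ by a $\mathcal{U}$-generated subobject, which is exactly what is needed for $\Pres(\mathcal{U})$-membership. Write $K=\Ker f$ and let $\iota:K\hookrightarrow \coprod_{i\in I}U_i$ denote the inclusion. Since $\mathcal{U}$ is a set with $\mathcal{U}\subseteq\Gen(\mathcal{U})$, Lemma~\ref{l:Existence_trace}(2) guarantees that $\tr_{\mathcal{U}}(K)$ exists and equals the image of the canonical morphism $\varepsilon_K:\hat U_K\longrightarrow K$.

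The key step is to form the composite $g:=\iota\varepsilon_K:\hat U_K\longrightarrow \coprod_{i\in I}U_i$. Both source and target lie in $\Sum(\mathcal{U})$, so by hypothesis $\Ker g\in\Gen(\mathcal{U})$. Because $\iota$ is a monomorphism, one has $\Ker g=\Ker \varepsilon_K$, and hence $\Ker\varepsilon_K$ is $\mathcal{U}$-generated. Choosing any epimorphism $\coprod_{j\in J}V_j\twoheadrightarrow\Ker\varepsilon_K$ with $V_j\in\mathcal{U}$ then yields an exact sequence
\[
\coprod_{j\in J}V_j \longrightarrow \hat U_K \longrightarrow \tr_{\mathcal{U}}(K)\longrightarrow 0,
\]
using that $\tr_{\mathcal{U}}(K)=\Img\varepsilon_K\cong \hat U_K/\Ker\varepsilon_K$. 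This exhibits $\tr_{\mathcal{U}}(K)$ as the cokernel of a morphism in $\Sum(\mathcal{U})$, so $\tr_{\mathcal{U}}(K)\in\Pres(\mathcal{U})$, finishing the proof.

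There is really no serious obstacle here: the content of the hypothesis is precisely that $\Ker$ of any morphism between objects of $\Sum(\mathcal{U})$ is $\mathcal{U}$-generated, and the only mild observation needed is that factoring $\varepsilon_K$ through the monomorphism $K\hookrightarrow\coprod_i U_i$ does not change its kernel, thus allowing the hypothesis to be applied to the composite $g$ rather than to the non-$\Sum(\mathcal{U})$ map $\varepsilon_K$ itself.
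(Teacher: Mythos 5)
Your proof is correct and follows essentially the same route as the paper's: both compose an epimorphism from an object of $\Sum(\mathcal{U})$ onto $\tr_{\mathcal{U}}(\Ker f)$ with the monomorphisms into $\coprod_{i\in I}U_i$, apply the hypothesis to the resulting morphism in $\Sum(\mathcal{U})$, and use that the kernel is unchanged by the monomorphisms. The only cosmetic difference is that you use the canonical morphism $\varepsilon_K$ where the paper takes an arbitrary epimorphism $d:\coprod_{j\in J}U'_j\twoheadrightarrow\tr_{\mathcal{U}}(\Ker f)$.
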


\begin{proof} Let $f:\coprod_{i\in I}U_i\to A$ be a morphism in $\mathcal{A}$ with each $U_i\in \mathcal{U}$. 
Then there is an epimorphism $d:\coprod_{j\in J}U'_j\to {\rm tr}_{\mathcal{U}}(\Ker f)$. 
Denote $v=ltd:\coprod_{j\in J}U'_j\to \coprod_{i\in I}U_i$, where $l:\Ker f\to \coprod_{i\in I}U_i$ 
and $t:{\rm tr}_{\mathcal{U}}(\Ker f)\to \Ker f$ are the inclusions. Then $\Ker d=\Ker v\in \Gen(\mathcal{U})$ 
by hypothesis, and so ${\rm tr}_{\mathcal{U}}(\Ker f)\in {\rm Pres}(\mathcal{U})$. 
\end{proof}



We are now ready to determine when $\PU$ is an abelian exact subcategory of $\mathcal{A}$. 

\begin{theorem} \label{t:abexact} Let $\mathcal{A}$ be an AB3 abelian category and let $\mathcal{U}$ be a set of objects of $\mathcal{A}$. Denote $\mathcal{B}=\PU$. Then, the following are equivalent:
\begin{enumerate}
\item $\mathcal B$ is a coreflective abelian exact  subcategory of $\mathcal{A}$.

\item Each morphism $f:\hat{U}\longrightarrow\tilde{U}$ in $\mathrm{Sum}(\mathcal{U})$ has its kernel in $\Gen(\mathcal{U})$ and one (resp. all) of the following conditions hold:

\begin{enumerate}
\item  For each $\tilde{U}\in\Sum(\mathcal{U})$ and each subobject $K$ of $\tilde{U}$, we have $\mathcal{A}\left(U,\frac{K}{{\rm tr}_\mathcal{U}(K)}\right)=0$, for all $U\in\mathcal{U}$.
\item $\mathcal{A}\left(U,\frac{\Ker \varepsilon_A}{{\rm tr}_\mathcal{U}(\Ker \varepsilon_A)}\right)=0$, for all $A\in\mathcal{A}$;
\item For each $B\in\mathcal{B}$, the canonical morphism $\varepsilon_B:\hat{U}_B\longrightarrow B$ has its kernel in $\Gen(\mathcal{U})$.
\end{enumerate}
\end{enumerate}
\end{theorem}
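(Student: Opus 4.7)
The plan is to recast \emph{coreflective abelian exact} as a closure condition: since an additive subcategory of an abelian category is abelian exact if and only if it is closed under kernels and cokernels, and the cokernel part corresponds via Theorem~\ref{t:PresCoreflective} to coreflectivity of $\mathcal{B}$, assertion~(1) amounts to saying that $\mathcal{B}$ is closed in $\mathcal{A}$ under both kernels and cokernels. With this reformulation, $(1)\Rightarrow(2)$ is straightforward: applying closure under kernels to any morphism $f:\hat{U}\to\tilde{U}$ in $\Sum(\mathcal{U})\subseteq\mathcal{B}$ yields $\Ker f\in\mathcal{B}\subseteq\Gen(\mathcal{U})$, which is the Sum condition, and applied to $\varepsilon_B$ for $B\in\mathcal{B}$ it yields $\Ker\varepsilon_B\in\Gen(\mathcal{U})$, which is~(c).

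For $(2)\Rightarrow(1)$ I take (c) as the representative condition. Closure of $\mathcal{B}$ under cokernels (hence coreflectivity, by Theorem~\ref{t:PresCoreflective}) is immediate from Lemma~\ref{l:l2}, so the substantive point is closure under kernels. Given $f:B\to B'$ in $\mathcal{B}$, using (c) applied to $B'$ I fix an epimorphism $\psi:\hat{U}'\twoheadrightarrow\Ker\varepsilon_{B'}$ with $\hat{U}'\in\Sum(\mathcal{U})$, set $\hat{U}=\hat{U}_B\oplus\hat{U}'$, and consider the epimorphism $\varepsilon=(\varepsilon_B,0):\hat{U}\twoheadrightarrow B$ together with $\tilde{f}:\hat{U}\to\hat{U}_{B'}$ defined by $\tilde{f}(a,b)=\tilde{g}(a)+\iota\psi(b)$, where $\tilde{g}:\hat{U}_B\to\hat{U}_{B'}$ lifts $f\varepsilon_B$ along the $\Sum(\mathcal{U})$-precover $\varepsilon_{B'}$ and $\iota:\Ker\varepsilon_{B'}\hookrightarrow\hat{U}_{B'}$ is the inclusion. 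A direct check gives $\varepsilon_{B'}\tilde{f}=f\varepsilon$ and that $\varepsilon$ restricts to an epimorphism $\Ker\tilde{f}\twoheadrightarrow\Ker f$ (surjectivity relying on the choice of $\psi$); let $K$ denote its kernel. By the Sum condition and Lemma~\ref{l:kerf-PU}, $\Ker\tilde{f}\in\PU$. To see $K\in\PU$, pick an epimorphism $\pi_B:Y\twoheadrightarrow\Ker\varepsilon_B$ with $Y\in\Sum(\mathcal{U})$ via (c), let $\jmath:\Ker\varepsilon_B\hookrightarrow\hat{U}_B$ be the inclusion, and consider $h:Y\oplus\hat{U}'\to\hat{U}_{B'}$, $h(y,b)=\tilde{g}(\jmath\pi_B(y))+\iota\psi(b)$; then $(y,b)\mapsto(\jmath\pi_B(y),b)$ defines a surjection $\Ker h\twoheadrightarrow K$, and since $\Ker h\in\Gen(\mathcal{U})$ by the Sum condition it follows that $K\in\Gen(\mathcal{U})$, whence $K\in\PU$ by Lemma~\ref{l:kerf-PU} again. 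Since $\mathcal{B}$ is closed under cokernels, $\Ker f=\Coker(K\to\Ker\tilde{f})\in\mathcal{B}$.

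For the equivalence of (a), (b), (c) under the Sum condition, $(a)\Rightarrow(b)$ is the specialization $K=\Ker\varepsilon_A$. For $(b)\Rightarrow(c)$, the quotient $\hat{U}_B/\tr_{\mathcal{U}}(\Ker\varepsilon_B)$ lies in $\PU$, and the induced map $\hat{\varepsilon}_B$ onto $B$ is a $\PU$-precover by Lemma~\ref{l:PU-precover}; since $B\in\PU$ this precover is a split epimorphism, so $L:=\Ker\varepsilon_B/\tr_{\mathcal{U}}(\Ker\varepsilon_B)$ is a direct summand of an object in $\PU$ and in particular $L\in\Gen(\mathcal{U})$. Combined with $\mathcal{A}(U,L)=0$ from (b), any epimorphism $\hat{V}\twoheadrightarrow L$ with $\hat{V}\in\Sum(\mathcal{U})$ must be zero, forcing $L=0$, which is (c). Finally, $(c)\Rightarrow(a)$ uses the already-established $(2)\Rightarrow(1)$: for $K\subseteq\tilde{U}\in\Sum(\mathcal{U})$ and a hypothetical nonzero $\alpha:U\to K/\tr_{\mathcal{U}}(K)$, writing $\iota:K/\tr_{\mathcal{U}}(K)\hookrightarrow C:=\tilde{U}/\tr_{\mathcal{U}}(K)\in\PU$, abelian exactness gives $\Img(\iota\alpha),\,C/\Img(\iota\alpha)\in\mathcal{B}$; the composition $g:\tilde{U}\twoheadrightarrow C\twoheadrightarrow C/\Img(\iota\alpha)$ satisfies $\tr_{\mathcal{U}}(\Ker g)=\tr_{\mathcal{U}}(K)$ (because $\tr_{\mathcal{U}}(K)\subseteq\Ker g\subseteq K$), and Theorem~\ref{t.coreflective-abelian}~(4)(b) applied to $g$ forces $\mathcal{A}(U,\Img(\iota\alpha))=0$, contradicting $\alpha\neq 0$.

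The main obstacle lies in verifying that $K\in\PU$ in the proof of closure under kernels. A direct bicartesian-square computation presents $K$ as an extension of $\Ker\varepsilon_B$ by $\Ker\psi$, both of which are in $\Gen(\mathcal{U})$, but since $\Gen(\mathcal{U})$ is not in general closed under extensions this alone is inconclusive; the explicit $\Sum(\mathcal{U})$-morphism $h$ above, whose kernel surjects onto $K$, is the key device that converts this extension problem into a direct application of the Sum condition.
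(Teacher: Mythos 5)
Your proof is correct, and at the two genuinely hard points it takes a different and shorter route than the paper's. For closure under kernels from condition (c), the paper performs a three-step reduction (first to morphisms with domain in $\Sum(\mathcal{U})$, then to epimorphisms onto objects of $\mathcal{B}$, then a pullback against $\varepsilon_B$), whereas you produce in one stroke a morphism $\tilde f$ of $\Sum(\mathcal{U})$ lying over $f$ whose kernel maps onto $\Ker f$, and your auxiliary map $h$, presenting $K$ as an epimorphic image of the kernel of a $\Sum(\mathcal{U})$-morphism, is exactly the right device: as you observe, the naive presentation of $K$ as an extension of objects of $\Gen(\mathcal{U})$ would be inconclusive. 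For the derivation of (a), the paper's argument is long (eliminating repeated components of $\coprod_{U}U^{(I_U)}\twoheadrightarrow A$, embedding $I_U$ into $\mathcal{A}(U,A)$, comparing with $\varepsilon_A$ via a pushout and the Kernel--Cokernel Lemma); your contradiction argument --- pushing a hypothetical nonzero $\alpha:U\to K/\tr_{\mathcal{U}}(K)$ into $C=\tilde U/\tr_{\mathcal{U}}(K)\in\PU$ and applying condition (4)(b) of Theorem \ref{t.coreflective-abelian} to $g:\tilde U\to C/\Img(\iota\alpha)$, where the identity $\tr_{\mathcal{U}}(\Ker g)=\tr_{\mathcal{U}}(K)$ is precisely what makes $\Ker \bar g=\Img(\iota\alpha)$ rather than $0$ --- is a genuine simplification. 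The steps $(1)\Rightarrow(2)$, the use of Lemma \ref{l:l2} and Theorem \ref{t:PresCoreflective} for coreflectivity, and (b)$\Rightarrow$(c) essentially coincide with the paper's. Two minor polish points: the claim ``$K\in\PU$ by Lemma \ref{l:kerf-PU}'' is really the one-line argument of that lemma applied to an epimorphism $W\twoheadrightarrow K$ with $W\in\Sum(\mathcal{U})$ (alternatively, $K\in\Gen(\mathcal{U})$ already suffices, since one may replace $K\to\Ker\tilde f$ by $W\to\Ker\tilde f$ without changing the cokernel); and the element-style verifications that $\Ker\tilde f\to\Ker f$ and $\Ker h\to K$ are epimorphisms should be recorded as pullbacks of epimorphisms along monomorphisms, which is routine in an abelian category.
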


\begin{proof}
We clearly have that  $(2.a)\Longrightarrow (2.b)$. The scheme of the proof will go  as follows. We shall prove that $(1)\Longrightarrow (2.b)\Longrightarrow (2.c)\Longrightarrow (1)$, and we will finally prove that all these equivalent conditions imply (2.a). 

$(1)\Longrightarrow (2.b)$  By Theorem \ref{t.coreflective-abelian} we get condition (2.b). That each morphism in $\text{Sum}(\mathcal{U})$ has its kernel in $\Gen(\mathcal{U})$ is a consequence of the fact that $\mathcal{B}$ is closed under taking kernels in $\mathcal{A}$.

$(2.b)\Longrightarrow (2.c)$  Let $B \in \mathcal B$. By Lemma \ref{l:PU-precover}  we know that the induced map $\hat{\varepsilon}_B:\frac{\hat{U}_B}{{\rm tr}_\mathcal{U}(\Ker \varepsilon_B)}\longrightarrow B$ is a $\mathcal{B}$-precover. Using Lemma \ref{lem.cover-directsummand-of-precover}, we get that $\hat{\varepsilon}_B$ is a retraction, so that we have a decomposition $\frac{\hat{U}_B}{{\rm tr}_\mathcal{U}(\Ker \varepsilon_B)}\cong\frac{\Ker \varepsilon_B}{{\rm tr}_\mathcal{U}(\Ker \varepsilon_B)}\coprod B$.  This implies that $\frac{\Ker \varepsilon_B}{{\rm tr}_\mathcal{U}(\Ker \varepsilon_B)}\in\Gen(\mathcal{U})$, which under the hypothesis (2.b) implies that  $\frac{\Ker \varepsilon_B}{{\rm tr}_\mathcal{U}(\Ker \varepsilon_B)}=0$ and so $\Ker \varepsilon_B\in\Gen(\mathcal{U})$.

$(2.c)\Longrightarrow (1)$   By  Lemma \ref{l:l2} and Theorem \ref{t:PresCoreflective}, we know that $\mathcal{B}$ is coreflective. So we need to prove that $\mathcal{B}$ is closed under taking kernels in $\mathcal{A}$. We then fix a morphism $f:B'\longrightarrow B$ in $\mathcal{B}$ and will prove that $\Ker f\in\mathcal{B}$ in several steps.

{\it Step 1: It is enough to assume that $B'\in\mathrm{Sum}(\mathcal{U})$.}

  Fix  an exact sequence $\coprod_{U\in \mathcal{U}}U^{(J_U)}\longrightarrow 
\coprod_{U\in \mathcal{U}}U^{(I_U)}\stackrel{p}\longrightarrow B'\longrightarrow 0$.
We have the following induced commutative diagram with exact rows and columns: 
\begin{displaymath}
\begin{tikzcd}
 & \coprod_{U \in \mathcal{U}}U^{(J_U)} \arrow[equal]{r} \arrow{d} & \coprod_{U \in \mathcal{U}}U^{(J_U)} \arrow{d} & \\
 0 \arrow{r} & \Ker fp \arrow{r} \arrow{d} \arrow[phantom]{dr}{\textrm{BC}} & \coprod_{U \in \mathcal{U}}U^{(I_U)} \arrow{d}{p} \arrow{r}{fp} & B \arrow[equal]{d} \\
 0 \arrow{r} & \Ker f \arrow{r} \arrow{d} & B' \arrow{r}{f} \arrow{d} & B \\
 & 0 & 0 & \\
\end{tikzcd}
\end{displaymath}
in which the lower left square is bicartesian. If $\Ker fp\in \mathcal{B}$, then $\Ker f\in \mathcal{B}$,
because $\mathcal{B}$ is closed under taking cokernels.

{\it Step 2: It is enough to prove it for epimorphisms $\tilde{U}\twoheadrightarrow B$, where $\tilde{U}\in\mathrm{Sum}(\mathcal{U})$ and $B\in\mathcal{B}$.} 

Let $f:\coprod_{U\in \mathcal{U}}U^{(K_U)}\to B$ be a morphism with $B\in \mathcal{B}$. 
Then there is an exact sequence $\coprod_{U\in \mathcal{U}}U^{(J_U)}\longrightarrow 
\coprod_{U\in \mathcal{U}}U^{(I_U)}\longrightarrow B\longrightarrow 0$. 
We have the following induced commutative diagram with exact rows:

\[\SelectTips{cm}{}
\xymatrix{
 & \coprod_{U\in \mathcal{U}}U^{(J_U)} \ar@{=}[r] \ar[d] & \coprod_{U\in \mathcal{U}}U^{(J_U)} \ar[d] & \\
0 \ar[r] & \Ker \alpha \ar[r] \ar[d] & \coprod_{U\in \mathcal{U}}U^{(I_U)} \ar[r]^-{\alpha} \ar[d] & 
\Coker f \ar@{=}[d] \ar[r] & 0 \\ 
0 \ar[r] & \Img f \ar[r] \ar[d] & B \ar[r] \ar[d] & \Coker f \ar[r] & 0 \\
 & 0 & 0 & 
}\]
in which $\Coker f\in \mathcal{B}$, because $\mathcal{B}$ is closed under taking cokernels. If $\Ker \alpha\in \mathcal{B}$, 
then $\Img f\in \mathcal{B}$, because $\mathcal{B}$ is closed under taking cokernels. 

Using again that the kernel of any epimorphism  $\tilde{U}\twoheadrightarrow B$ with $\tilde U \in  \Sum (\mathcal U)$ belongs to $\Gen (\mathcal{U})$, and that we have an epimorphism $\bar{f}:\coprod_{U\in \mathcal{U}}U^{(K_U)}\longrightarrow\Img f$ whose kernel is precisely $\Ker f$, we conclude that $\Ker f$ belongs to $\mathcal B$ as well. 

 {\it Step 3: We prove the result for epimorphisms $\tilde{U}\twoheadrightarrow B$, where $\tilde{U}\in\mathrm{Sum}(\mathcal{U})$ and $B\in\mathcal{B}$:} 
 
Let $p:\coprod_{U\in \mathcal{U}}U^{(X_U)}\to B$ be an epimorphism with $B\in \mathcal{B}$. 
Then the pullback of $p:\coprod_{U\in \mathcal{U}}U^{(X_U)}\to B$ and $\varepsilon_B:\hat{U}_B\to B$ 
induces the following commutative diagram with exact rows and columns:
\begin{displaymath}
\begin{tikzcd}
 & & 0 \arrow{d} & 0 \arrow{d} & \\
 & & \Ker \varepsilon_B \arrow[equal]{r} \arrow{d} & \Ker \varepsilon_B \arrow{d} & \\
0 \arrow{r} & \Ker p \arrow{r} \arrow[equal]{d} & K \arrow{r} \arrow{d} \arrow[phantom]{dr}{\textrm{PB}} & \hat U_B \arrow{r} \arrow{d}{\varepsilon_B} & 0\\
0 \arrow{r} & \Ker p \arrow{r} & \coprod_{U \in \mathcal{U}}U^{(X_U)} \arrow{d} \arrow{r}{p} & B \arrow{r} \arrow{d} & 0\\
 & & 0 & 0 & \\
\end{tikzcd}
\end{displaymath}
Since $\varepsilon_B$ is a ${\rm Sum}(\mathcal{U})$-precover of $B$, there is a morphism 
$v:\coprod_{U\in \mathcal{U}}U^{(X_U)}\to \hat{U}_B$ such that $\varepsilon_B v=p$. Then the pullback property 
implies that the middle column splits. 

By condition (2.c), we know that 
 $\Ker \varepsilon_B={\rm tr}_{\mathcal{U}}(\Ker \varepsilon_B)\in \Gen(\mathcal{U})$, which in turn implies that  $\Ker \varepsilon_B\in {\rm Pres}(\mathcal{U})$ due to Lemma \ref{l:kerf-PU}. 
Thus we have: $$K\cong \Ker \varepsilon_B\coprod \left(\coprod_{U\in \mathcal{U}}U^{(X_U)}\right)\in {\rm Pres}(\mathcal{U}).$$
 Then there is an exact sequence $\coprod_{j\in J}U'_j\to \coprod_{i\in I}U_i\to K\to 0$.  
We can construct the following commutative diagram with exact rows and columns: 
\begin{displaymath}
\begin{tikzcd}
 & \coprod_{j \in J}U'_j \arrow[equal]{r} \arrow{d} & \coprod_{j \in J}U'_j \arrow{d} & & \\
 0 \arrow{r} & L \arrow{r} \arrow{d} \arrow[phantom]{dr}{\textrm{PB}} & \coprod_{i \in I}U_i \arrow{d} \arrow{r} & \hat U_B \arrow[equal]{d} \arrow{r} & 0 \\
 0 \arrow{r} & \Ker p \arrow{r} \arrow{d} & K \arrow{r} \arrow{d} & \hat U_B \arrow{r} & 0\\
 & 0 & 0 & & \\
\end{tikzcd}
\end{displaymath}
in which the lower left square is a pullback. But $L \in \Gen(\mathcal U)$, by assertion (2), so that $L\in \mathcal{B}$ by Lemma \ref{l:kerf-PU}.
Since $\mathcal{B}$ is closed under taking cokernels, it follows that $\Ker p\in \mathcal{B}$ as desired.

$(1),(2.b),(2.c)\Longrightarrow (2.a)$  Note that proving (2.a) is equivalent to proving that, given any epimorphism $f:\coprod_{U\in\mathcal{U}}U^{(I_U)}\twoheadrightarrow A$, for some family $\{I_U\mid U\in\mathcal{U}\}$ of sets, one has that $\mathcal{A}\left(U,\frac{\Ker f}{{\rm tr}_\mathcal{U}(\Ker f)}\right)=0$.  
Fix such a morphism $f$ and define, in each $I_U$ a relation $\sim$ by the rule: 
$$i\sim j \Leftrightarrow f\iota_{U,i}=f\iota_{U,j},$$
where, for $U\in\mathcal{U}$ and $i\in I_U$,  $\iota_{U,i}$ is the corresponding inclusion of  $U$ into $\coprod_{U\in \mathcal{U}}U^{(I_U)}$.
For every $U\in \mathcal{U}$, take a set $I'_U$ of representatives of the indices in $I_U$ with respect to $\sim$. 
Now take $f':\coprod_{U\in \mathcal{U}}U^{(I'_U)}\to A$ to be the unique morphism satisfying that $f'\iota'_{U,i}=f\iota_{U,i}$,
where $\iota'_{U,i}$ is the inclusion,  for every $U\in \mathcal U$ and $i\in I'_U$, of $U$ into $\coprod_{U\in \mathcal{U}}U^{(I'_U)}$.

Let $\varphi:\coprod_{U\in \mathcal{U}}U^{(I_U)}\to \coprod_{U\in \mathcal{U}}U^{(I'_U)}$ be the morphism given by the property that, 
for every $U\in \mathcal{U}$ and $i\in I_U$, $\varphi \iota_{U,i}=\iota'_{U,j}$, where $j\in I'_U$ is such that $j\sim i$. 
Note that $\varphi$ is a retraction. Indeed, if for each $j\in I'_U$ we denote by $[j]$ the equivalence class of $j$ with respect to $\sim$, then we have an induced summation map $s_j:U^{([j])}\longrightarrow U$, which is clearly a retraction. We then get an induced morphism $\varphi_U:U^{(I_U)}=\coprod_{j\in I'_U}U^{([j])}\longrightarrow\coprod_{j\in I'_U}U=U^{(I'_U)}$ which is the coproduct of all summation maps $s_j$ as $j$ runs through $I'_U$. Therefore $\varphi_U$ is also a retraction, and it is left as an easy exercise to check that $\varphi$ is just the coproduct map $\varphi =\coprod_{U\in\mathcal{U}}\varphi_U:\coprod_{U\in\mathcal{U}}U^{(I_U)}\longrightarrow\coprod_{U\in\mathcal{U}}U^{(I'_U)}$. Therefore $\varphi$ is a retraction.

On the other hand, by definitions of $\varphi$ and $f'$, we get that, for each $U\in\mathcal{U}$ and each $i\in I_U$, we have $f'\varphi\iota_{U,i}=f'\iota'_{U,j}=f\iota_{U,j}$, where $j\in I'_U$ is the only element such that $i\sim j$.  By the definition of this equivalence relation, we have that $f\iota_{U,j}=f\iota_{U,i}$. It follows that $f'\varphi\iota_{U,i}=f\iota_{U,i}$, for all $U\in\mathcal{U}$ and all $i\in I_U$. By the universal property of coproducts, this means that $f'\varphi =f$. Hence $f'$ is an epimorphism since so is $f$.

Consider next the pullback of ${\rm ker}\,f'$ and $\varphi$ in order to get the following commutative diagram with exact rows and columns:
\begin{displaymath}
\begin{tikzcd}
 & 0 \arrow{d} & 0 \arrow{d} & & \\
 & V \arrow[equal]{r} \arrow{d} & V \arrow{d} & & \\
 0 \arrow{r} & \Ker f \arrow{r} \arrow{d} \arrow[phantom]{dr}{\textrm{PB}} & \coprod_{U \in \mathcal{U}}U^{(I_U)} \arrow{d}{\varphi} \arrow{r}{f} & A \arrow[equal]{d} \arrow{r} & 0 \\
 0 \arrow{r} & \Ker f' \arrow{r} \arrow{d} & \coprod_{U \in \mathcal{U}}U^{(I'_U)} \arrow{r}{f'} \arrow{d} & A \arrow{r} & 0\\
 & 0 & 0 & & \\
\end{tikzcd}
\end{displaymath}
Since, by our last comments, the central column of this diagram is a split exact sequence, it follows that so is the left column. In particular, we have that 
 $\Ker f\cong V\coprod \Ker f'$. 
Then we have $$\frac{\Ker f}{{\rm tr}_{\mathcal{U}}(\Ker f)}\cong 
\frac{V}{{\rm tr}_{\mathcal{U}}(V)}\coprod \frac{\Ker f'}{{\rm tr}_{\mathcal{U}}(\Ker f')}.$$
But since morphisms in $\text{Sum}(\mathcal{U})$ have kernels in $\Gen(\mathcal{U})$, we get that  $V={\rm tr}_{\mathcal{U}}(V)$ and so $$\frac{\Ker f}{{\rm tr}_{\mathcal{U}}(\Ker f)}\cong 
\frac{\Ker f'}{{\rm tr}_{\mathcal{U}}(\Ker f')}.$$

Note that $f'$ has no redundancy in the coproduct exponent of its domain, 
since for every $U\in \mathcal{U}$, we have $f'\iota'_{U,i}\neq f'\iota'_{U,j}$ for every $i,j\in I'_U$ with $i\neq j$. 
Replacing $I_U$ by $I'_U$ and $f$ by $f'$ if necessary, we may and shall assume that $f:\coprod_{U\in \mathcal{U}}U^{(I_U)}\to A$ has the property that, for every $U\in \mathcal{U}$, 
$f\iota_{U,i}\neq f\iota_{U,j}$ whenever $i,j\in I_U$ and $i\neq j$. 
This means that the map $I_U\to \mathcal{A}(U,A)$, given by $i\mapsto f\iota_{U,i}$ is injective for all $U\in\mathcal{U}$, something that allows us to assume, without loss of generality, that
 $I_U\subseteq \mathcal{A}(U,A)$ for every $U\in \mathcal{U}$. 

Now, consider the following square:
\[\SelectTips{cm}{}
\xymatrix{
\coprod_{U\in \mathcal{U}}U^{(I_U)} \ar[r]^-f \ar[d]_w & A \ar@{=}[d] \\
\coprod_{U\in \mathcal{U}}U^{(\mathcal{A}(U,A))} \ar[r]_-{\varepsilon_A} \ar[r] & A
}\]
where $w$ is the inclusion, which is a section. Then for every $U\in \mathcal{U}$ and every $\alpha\in I_U\subseteq \mathcal{A}(U,A)$, we have
$\varepsilon_A w\iota_{U,\alpha}=\varepsilon_A\iota_{U,\alpha}=\alpha=f\iota_{U,\alpha},$ 
where $\iota_{U,\alpha}$ is first considered into 
$\coprod_{U\in \mathcal{U}}U^{(I_U)}$ and then into $\coprod_{U\in \mathcal{U}}U^{(\mathcal{A}(U,A))}$. Hence the square is commutative. 

Consider the pullback of ${\rm ker}\,\varepsilon_A$ and $w$ in order to get the following commutative diagram 
with exact rows:
\begin{displaymath}
\begin{tikzcd}
0 \arrow{r} & \Ker f \arrow{r} \arrow{d}{k} \arrow[phantom]{dr}{\textrm{BC}} & \coprod_{U \in \mathcal{U}} U^{(I_U)} \arrow{r}{f} \arrow{d}{w} & A \arrow{r} \arrow[equal]{d} & 0\\
0 \arrow{r} & \Ker \varepsilon_A \arrow{r} & \coprod_{U \in \mathcal{U}} U^{(\mathcal{A}(U,A))} \arrow{r}{\varepsilon_A} & A \arrow{r} & 0
\end{tikzcd}
\end{displaymath}
in which the left square is also a pushout. Then we have a short exact sequence:
$$0\longrightarrow \Ker f\stackrel{k}\longrightarrow \Ker \varepsilon_A\longrightarrow 
\coprod_{U\in \mathcal{U}}U^{(\mathcal{A}(U,A)\setminus I_U)}\longrightarrow 0.$$
Bearing in mind that $\mathcal{B}$ is an abelian exact subcategory of $\mathcal{A}$, we further get the following commutative diagram with exact rows:
\[\SelectTips{cm}{}
\xymatrix{
0 \ar[r] & (iq)(\Ker f) \ar[r]^-{(iq)(k)} \ar[d]_{\nu_{\Ker f}} & (iq)(\Ker \varepsilon_A) 
\ar[r] \ar[d]^{\nu_{\Ker \varepsilon_A}} & (iq)\left(\coprod_{U\in \mathcal{U}}U^{(\mathcal{A}(U,A)\setminus I_U)}\right) \ar@{=}[d] & \\
0 \ar[r] & \Ker f \ar[r] & \Ker \varepsilon_A \ar[r] & \coprod_{U\in \mathcal{U}}U^{(\mathcal{A}(U,A)\setminus I_U)} \ar[r] & 0 
}\]
This induces the following commutative diagram with exact rows:
\[\SelectTips{cm}{}
\xymatrix{
0 \ar[r] & (iq)(\Ker f) \ar[r]^-{(iq)(k)} \ar[d]_{\nu_{\Ker f}} & (iq)(\Ker \varepsilon_A) 
\ar[r] \ar[d]^{\nu_{\Ker \varepsilon_A}} & \Coker((iq)(k)) \ar[d] \ar[r] & 0 \\
0 \ar[r] & \Ker f \ar[r] & \Ker \varepsilon_A \ar[r] & \coprod_{U\in \mathcal{U}}U^{(\mathcal{A}(U,A)\setminus I_U)} \ar[r] & 0 
}\]
in which the last vertical arrow is a monomorphism. 
By the Kernel-Cokernel Lemma, it follows that there is an exact sequence 
$0\longrightarrow \Coker(\nu_{\Ker f})\longrightarrow \Coker(\nu_{\Ker \varepsilon_A})$. But $\Img \nu_B=\tr_\mathcal{U}(B)$, for all objects $B\in\mathcal{A}$ (see Lemma \ref{l:Existence_trace}(1)), so that we get a monomorphism $\frac{\Ker f}{{\rm tr}_\mathcal{U}(\Ker f)}\rightarrow\frac{\Ker \varepsilon_A}{{\rm tr}_\mathcal{U}(\Ker \varepsilon_A)}$. Since we are assuming that assertion 2(b) holds, we get that $\mathcal{A}\left(U,\frac{\Ker f}{{\rm tr}_\mathcal{U}(\Ker f)}\right)=0$, for all $U\in\mathcal{U}$, thus ending the proof. 
\end{proof}

We finish this section by characterizing when a coreflective subcategory of an AB5 abelian category is Grothendieck. 

\begin{proposition} \label{prop.Grothendieck-coreflective}
Let $\mathcal{A}$ be an AB5 abelian category and let $\mathcal{B}$ be an additive subcategory of $\mathcal{A}$. The following assertions are equivalent:
\begin{enumerate}
\item $\mathcal{B}$ is a Grothendieck coreflective subcategory of $\mathcal{A}$.
\item There exists a set $\mathcal{U}$ of objects of $\mathcal{A}$ such that $\mathcal{B}=\Pres(\mathcal{U})$ and:
\begin{enumerate}
\item $\mathcal{B}$ is closed under taking cokernels.

\item For every morphism $f:B \rightarrow B'$ in $\mathcal{B}$, the induced morphism $\overline f:\frac{B}{\tr_{\mathcal{U}}(\Ker f)} \rightarrow \Img f$ satisfies that $\mathcal A(U,\overline f)$ is an isomorphism for each $U \in \mathcal U$.

\item For every direct system $(u_i:B_i \rightarrow B'_i \mid i \in I)$ of morphisms in $\mathcal{B}$ satisfying that $\mathcal{A}(?,\Ker u_i)_{| \mathcal{U}}=0$,  for all $i \in I$, we also have that $\mathcal{A}\left(?,\varinjlim \Ker u_i\right)_{| \mathcal{U}}=0$. 
\end{enumerate}
\end{enumerate}
\end{proposition}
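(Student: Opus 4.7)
My plan is to combine the various characterizations already established in this section with the standard fact that a Grothendieck category is exactly an AB5 abelian category with a set of generators.

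For the direction $(1)\Longrightarrow (2)$, I would assume that $\mathcal{B}$ is Grothendieck and coreflective in $\mathcal{A}$. Since $\mathcal{B}$ is abelian and has a set of generators, Proposition \ref{prop.abelian-implies-PU} provides a set $\mathcal{U}$ of objects with $\mathcal{B}=\Pres(\mathcal{U})$. Condition (2.a) is then immediate from Corollary \ref{c:AbelianCategories}, and condition (2.b) follows from the implication $(1)\Longrightarrow (2)$ of Theorem \ref{t.coreflective-abelian}. For (2.c), I will use that $\mathcal{B}$ is AB5 and apply Example \ref{ex.preservation of monos - AB conditions}(2) to deduce that direct limits preserve monomorphisms in $\mathcal{B}$; Corollary \ref{cor:exactness of colimits}(2) then yields the desired vanishing with test functors restricted to $\mathcal{B}$, and Remark \ref{rem.reduction of exactness of colimits} (together with the fact that $\mathcal{U}$ generates $\mathcal{B}$) allows me to replace $\mathcal{B}$ by $\mathcal{U}$ both in the hypothesis and in the conclusion.

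For the converse $(2)\Longrightarrow (1)$, the implication $(2)\Longrightarrow (1)$ of Theorem \ref{t.coreflective-abelian} gives that $\mathcal{B}=\Pres(\mathcal{U})$ is coreflective and abelian, and $\mathcal{U}$ is tautologically a set of generators of $\mathcal{B}$. Since $\mathcal{B}$ is coreflective in the cocomplete category $\mathcal{A}$, colimits in $\mathcal{B}$ agree with those computed in $\mathcal{A}$, so $\mathcal{B}$ is in particular AB3. Using Remark \ref{rem.reduction of exactness of colimits} in the opposite direction to pass from the $\mathcal{U}$-restriction to the $\mathcal{B}$-restriction, condition (2.c) supplies the hypothesis of Corollary \ref{cor:exactness of colimits}(2), which yields that direct limits preserve monomorphisms in $\mathcal{B}$; by Example \ref{ex.preservation of monos - AB conditions}(2) this is the AB5 condition for the abelian category $\mathcal{B}$, so $\mathcal{B}$ is a Grothendieck category.

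I do not anticipate any serious obstacle: the argument is essentially a bookkeeping exercise through Theorem \ref{t.coreflective-abelian}, Corollary \ref{cor:exactness of colimits} and Remark \ref{rem.reduction of exactness of colimits}. The only step that requires a small sanity check is that condition (2.c), phrased in terms of $\mathcal{U}$, is genuinely equivalent to the $\mathcal{B}$-version appearing in Corollary \ref{cor:exactness of colimits}(2); this is precisely the content of Remark \ref{rem.reduction of exactness of colimits}, available as soon as we know that $\mathcal{U}$ is a generating set of $\mathcal{B}$.
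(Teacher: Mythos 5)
Your proposal is correct and follows essentially the same route as the paper, whose proof is exactly the combination of Proposition \ref{prop.abelian-implies-PU}, Theorem \ref{t.coreflective-abelian} and Corollary \ref{cor:exactness of colimits} (with Remark \ref{rem.reduction of exactness of colimits} handling the passage between the $\mathcal{U}$-restricted and $\mathcal{B}$-restricted vanishing conditions). Your expanded bookkeeping, including the sanity check on condition (2.c), is accurate.
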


\begin{proof}
It is a consequence of Theorem \ref{t.coreflective-abelian}, Proposition \ref{prop.abelian-implies-PU} and Corollary \ref{cor:exactness of colimits}. 
\end{proof}

\begin{corollary} \label{cor.Grothendieck-abelian-exact}
Let $\mathcal{A}$ be an AB5 abelian category and let $\mathcal{B}$ be an abelian exact subcategory closed under taking coproducts. Then $\mathcal{B}$ is  AB5, and it is a Grothendieck category if and only if  $\mathcal{B}=\PU$, for some set $\mathcal{U}$ of objects. In this latter case $\mathcal{B}$ is a coreflective subcategory. 
\end{corollary}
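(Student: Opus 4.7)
The plan is to proceed in three short stages. First, I would show that $\mathcal{B}$ is AB5 without any further hypothesis. Because $\mathcal{B}$ is abelian exact and closed under arbitrary coproducts, kernels, cokernels and coproducts in $\mathcal{B}$ all agree with the corresponding constructions in $\mathcal{A}$. Every directed colimit can be written as a cokernel of a morphism between two coproducts, so directed colimits exist in $\mathcal{B}$ and are computed as in $\mathcal{A}$. A morphism in $\mathcal{B}$ is monic in $\mathcal{B}$ if and only if it is monic in $\mathcal{A}$ (again by exactness of the inclusion), so the AB5 property of $\mathcal{A}$ transfers verbatim to $\mathcal{B}$.

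For the equivalence, the implication $\PU\Rightarrow\mathcal{B}$ Grothendieck is straightforward: if $\mathcal{B}=\Pres(\mathcal{U})$ for a set $\mathcal{U}$, then $\mathcal{U}$ is automatically a set of generators of $\mathcal{B}$, and AB5 together with a set of generators is the definition of Grothendieck. For the converse, I would take a set $\mathcal{U}$ of generators of $\mathcal{B}$ and, for any $B\in\mathcal{B}$, first pick an epimorphism $\coprod_i U_i\twoheadrightarrow B$ in $\mathcal{B}$; its kernel $K$ lies in $\mathcal{B}$ (abelian exact), so it in turn admits an epimorphism $\coprod_j U'_j\twoheadrightarrow K$. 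Splicing gives an exact sequence $\coprod_j U'_j\to\coprod_i U_i\to B\to 0$ in $\mathcal{B}$, which remains exact in $\mathcal{A}$ because the inclusion is exact, so $B\in\Pres(\mathcal{U})$. The reverse inclusion $\Pres(\mathcal{U})\subseteq\mathcal{B}$ is automatic, since $\mathcal{U}\subseteq\mathcal{B}$ and $\mathcal{B}$ is closed under coproducts and cokernels.

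For the final assertion, I would appeal to Corollary \ref{c:AbelianCategories}: a subcategory of the preabelian category $\mathcal{A}$ is coreflective iff it is precovering and closed under taking cokernels. Closure under cokernels is part of the abelian exact hypothesis, while the precovering condition is exactly Lemma \ref{l:PU-precover}, which explicitly produces a $\Pres(\mathcal{U})$-precover of every object of $\mathcal{A}$. No real obstacle arises here; the only subtlety throughout is the bookkeeping needed to confirm that, at each step, the (co)limits computed in $\mathcal{B}$ and in $\mathcal{A}$ coincide, which is guaranteed by the exactness of the inclusion together with closure under coproducts.
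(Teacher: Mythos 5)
Your proof is correct and follows essentially the same route as the paper: the AB5 claim is handled identically (colimits in $\mathcal{B}$ are built from coproducts and cokernels and hence computed as in $\mathcal{A}$), and the remaining assertions are exactly the arguments behind Proposition \ref{prop.abelian-implies-PU} and Theorem \ref{t:PresCoreflective}, which you simply unfold (splicing a presentation from a set of generators, and coreflectivity via Corollary \ref{c:AbelianCategories} together with Lemma \ref{l:PU-precover}). If anything, your direct derivation of $\mathcal{B}\subseteq\Pres(\mathcal{U})$ avoids having to first invoke the coreflectivity hypothesis appearing in Proposition \ref{prop.abelian-implies-PU}, which is a minor tidiness gain but not a different method.
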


\begin{proof}
Since $\mathcal{B}$ is closed under taking cokernels and coproducts it is closed under taking all colimits, in particular under taking direct limits. Then, using the exactness and fully faithful condition of the inclusion functor $\iota :\mathcal{B}\rightarrow\mathcal{A}$, we deduce that $\mathcal{B}$ is AB5.

The rest follows from Proposition \ref{prop.abelian-implies-PU} and   Theorem \ref{t:PresCoreflective}. 
\end{proof}

\setcounter{section}{7}

\renewcommand{\thesection}{\arabic{section}*}

\section{Dualizing the results: the category $\textrm{Copres}(\mathcal U)$}

The main results of Section 8 admit duals, where the reject is involved. But they are not immediately visible since the trace and reject are not dual to each (see Subsection \ref{sec.trace-reject}).  For this reason, in this section we make explicit those duals without proofs.

All throughout the section $\mathcal A$ is an AB3$^*$ abelian category and $\mathcal U$ a set of objects of $\mathcal A$. As usual, for each morphism $f:A\rightarrow B$ in $\mathcal A$, we denote by $f^c:B\rightarrow\Coker f$ its cokernel.  We denote by $\underline U_A$ the object $\prod_{U \in \mathcal U}U^{\mathcal A(A,U)}$, for each object $A \in \mathcal A$, and by $\mu_A$ the canonical morphism from $A$ to $\underline U_A$. Notice that the kernel of $\mu_A$ is precisely $\rej_{\mathcal U}(A)$ and that $\Img \mu_A\subseteq (\mu_A^c)^{-1}(\rej_{\mathcal U}(\Coker \mu_A))$.

We start with the dual of Lemma \ref{l:PU-precover}:

\begin{lemma}
Let $\mathcal A$ be an AB3$^*$ abelian category and let $\mathcal U$ be a set of objects of $\mathcal A$. Then, for every object $A$ of $\mathcal A$, the composed morphism
\begin{displaymath}
\lambda_A:A \rightarrow \Img \mu_A \rightarrow (\mu_A^c)^{-1}(\rej_{\mathcal U}(\Coker \mu_A))
\end{displaymath}
is a functorial $\Copres \mathcal U$-preenvelope, whose cokernel is isomorphic to $\rej_{\mathcal U}(\Coker \mu_A)$.
\end{lemma}

 Now we state the dual of Theorem \ref{t:PresCoreflective}:

\begin{theorem}
Let $\mathcal A$ be an AB3$^*$ abelian category and let $\mathcal U$ be a set of objects of $\mathcal A$. Denote by $\mathcal B=\Copres(\mathcal U)$. Then, the following are equivalent:
\begin{enumerate}
\item $\mathcal B$ is reflective in $\mathcal A$.

\item $\mathcal B$ is closed under taking kernels in $\mathcal A$.

\item
\begin{enumerate}
\item $\mathcal B$ is closed under taking direct summands.

\item For each $A \in \mathcal A$, $(\mu_A^c)^{-1}(\rej_\mathcal{U}^2(\Coker \mu_A))$ is a direct summand of  $(\mu_A^c)^{-1}(\rej_\mathcal{U}(\Coker \mu_A))$.
\end{enumerate}
\item For every $A \in \mathcal A$ the following conditions hold:
\begin{enumerate}
\item 
$(\mu_A^c)^{-1}(\rej_{\mathcal U}^2(\Coker \mu_A))\in\mathcal B$.

\item $\mathcal A(\rej_{\mathcal U}^2(\Coker \mu_A),U)=0$, for all $U \in \mathcal U$.
\end{enumerate}
\end{enumerate}
Moreover, if all these conditions are satisfied, then the left adjoint functor $\sigma$ of the inclusion functor $\iota:\mathcal B \rightarrow \mathcal A$ acts on objects, up to isomorphism, as $\sigma(A)=(\mu_A^c)^{-1}(\rej^2_{\mathcal U}(\Coker \mu_A))$, where $\mu_A^c$ is the cokernel of $\mu_A$.
\end{theorem} 

\begin{remark}
Note that condition (3.b) in the last theorem is equivalent to saying that 
the composition
\begin{displaymath}
\begin{tikzcd}
(\mu_A^c)^{-1}(\rej_{\mathcal U}(\Coker \mu_A))  \arrow{r}{\mu_A^c} & \rej_{\mathcal U}(\Coker \mu_A) \arrow{r}{p} & \frac{\rej_{\mathcal U}{(\Coker \mu_A)}}{\rej^2_{\mathcal U}(\Coker \mu_A)}
\end{tikzcd}
\end{displaymath}
is a retraction, where $\mu_A^c$ is induced by the cokernel of $\mu_A$ and $p$ is the canonical projection.
\end{remark}
We next establish the dual of Theorem \ref{t.coreflective-abelian}:

\begin{theorem}
Let $\mathcal A$ be an AB3$^*$ abelian category and let $\mathcal U$ be a set of objects of $\mathcal A$. Denote by $\mathcal B=\Copres(\mathcal U)$. Then, the following are equivalent:
\begin{enumerate}
\item $\mathcal B$ is a reflective subcategory of $\mathcal A$ which is abelian.

\item $\mathcal B$ is closed under taking kernels and, for every morphism $f:B \rightarrow B'$ in $\mathcal B$, the inclusion $i:\Img f \rightarrow (f^c)^{-1}(\rej_{\mathcal U}(\Coker f))$, where $f^c$ is the cokernel of $f$, satisfies that $\mathcal A(i,U)$ is an isomorphism for each $U \in \mathcal U$.

\item $\mathcal B$ is closed under taking kernels and, for each morphism $f:B \rightarrow B'$ in $\mathcal B$, the inclusion $i:\Img f \rightarrow (f^c)^{-1}(\rej_{\mathcal U}(\Coker f))$ is a $\mathcal B$-reflection.
\item
\begin{enumerate}
\item $\mathcal A(\rej_{\mathcal U}(\Coker \mu_A),U)=0$ for all $A \in \mathcal A$.

\item For every morphism $g:B \rightarrow \widetilde U$, where $B\in\mathcal B$ and $\widetilde U \in \Prod(\mathcal U)$, the inclusion $i:\Img g \rightarrow (g^c)^{-1}(\rej_{\mathcal U}(\Coker g))$ satisfies that $\mathcal A(i,U)$ is an isomorphism for all $U \in \mathcal U$.
\end{enumerate}
\end{enumerate}
Moreover, if all these conditions are satisfied, then the left adjoint functor $\sigma$ of the inclusion functor $\iota:\mathcal B \rightarrow \mathcal A$ acts on objects, up to isomorphisms, as $\sigma(A)=(\mu_A^c)^{-1}(\rej_{\mathcal U}(\Coker \mu_A))$.
\end{theorem}

Finally, we state the dual of Theorem \ref{t:abexact}:

\begin{theorem}
Let $\mathcal A$ be an AB3$^*$ abelian category and let $\mathcal U$ be a set of objects of $\mathcal A$. Denote by $\mathcal B=\Copres(\mathcal U)$. Then, the following are equivalent:
\begin{enumerate}
\item $\mathcal B$ is a reflective abelian exact subcategory of $\mathcal A$.

\item Each morphism $f:\widehat U \rightarrow \widetilde U$ in $\Prod(\mathcal U)$ has its cokernel in $\Cogen(\mathcal U)$ and one (resp. all) of the following conditions hold:
\begin{enumerate}
\item For each $\widetilde V \in \Prod(\mathcal U)$ and each subobject $K$ of $\widetilde V$, $\mathcal A(\rej_{\mathcal U}(\widetilde V/K),-)_{| \mathcal U}=0$.

\item $\mathcal A(\rej_{\mathcal U}(\Coker \mu_A),-)_{| \mathcal U}=0$, for all $A \in \mathcal A$.

\item For each $B \in \mathcal B$, $\mu_B$ has its cokernel in $\Cogen(\mathcal U)$.
\end{enumerate}
\end{enumerate}
\end{theorem}

\renewcommand{\thesection}{\arabic{section}}

\section{Other examples of coreflective subcategories of abelian categories}

\subsection{The abelian condition of $\Gen(\mathcal{U})$}

All throughout this section, $\mathcal{A}$ is an AB3 abelian category. If $\mathcal{U}$ is a set of objects in $\mathcal{A}$, then $\Gen(\mathcal{U})$ is a coreflective subcategory. Indeed, the trace functor $\tr_\mathcal{U} :\mathcal{A}\rightarrow\Gen(\mathcal{U})$, acting on objects as $A\rightsquigarrow \tr_\mathcal{U} (A)$, is a right adjoint of the inclusion $\iota :\Gen(\mathcal{U})\rightarrow\mathcal{A}$. The results of the previous sections allow us to determine when $\Gen(\mathcal{U})$ is abelian or abelian exact. 

\begin{proposition} \label{prop.GenU-abelian}
In the situation of the last paragraph, the following assertions are equivalent:

\begin{enumerate}
\item $\Gen (\mathcal{U})$ is abelian;
\item $\Gen(\mathcal{U})$ is an abelian exact subcategory of $\mathcal{A}$;
\item  $\Gen(\mathcal{U})$ is closed under taking subobjects in $\mathcal{A}$;
\item Each subobject of an object in $\mathrm{Sum} (\mathcal{U})$ is in $\Gen (\mathcal{U})$.
\end{enumerate}
When $\mathcal{A}$ is AB5, these assertions are also equivalent to:
\begin{enumerate}
\item[(5)] For each finite family $\{U_1,\ldots, U_n\}$ of objects of $\mathcal{U}$, all subobjects of $\coprod_{i=1}^nU_i$ are in $\Gen (\mathcal{U})$. 
\end{enumerate}
When $\mathcal{A}$ is AB5 and these equivalent conditions hold, $\Gen(\mathcal{U})$ is a Grothendieck category.
\end{proposition}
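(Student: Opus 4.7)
The plan is to prove the cycle $(2)\Rightarrow (1)\Rightarrow (3)\Leftrightarrow (4)\Rightarrow (2)$ among the first four conditions, and then treat $(5)$ and the Grothendieck claim separately under AB5. Throughout I use that $\Gen(\mathcal{U})$ is coreflective in $\mathcal{A}$ via the trace functor $\tr_{\mathcal{U}}$, so cokernels in $\Gen(\mathcal{U})$ coincide with those in $\mathcal{A}$ (the inclusion is a left adjoint), while the kernel of $f:B\to B'$ in $\Gen(\mathcal{U})$ is the composite $\tr_{\mathcal{U}}(\Ker f)\to \Ker f\to B$, as recalled in Subsection \ref{ss:reflective-coreflective}.

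The implications $(2)\Rightarrow (1)$ and $(3)\Rightarrow (4)$ are obvious. For $(4)\Rightarrow (3)$, take $A\in\Gen(\mathcal{U})$ with a subobject $j:A'\hookrightarrow A$, fix an epimorphism $p:\tilde U\twoheadrightarrow A$ with $\tilde U\in\Sum(\mathcal{U})$, and form the pullback of $j$ along $p$; the pullback embeds in $\tilde U$, so by (4) it belongs to $\Gen(\mathcal{U})$, and it surjects onto $A'$, whence $A'\in\Gen(\mathcal{U})$. For $(3)\Rightarrow (2)$, closure under subobjects makes $\Gen(\mathcal{U})$ closed under kernels in $\mathcal{A}$, and together with closure under cokernels this makes the inclusion exact and $\Gen(\mathcal{U})$ abelian exact.

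The heart of the proof is $(1)\Rightarrow (3)$. Given $A\in\Gen(\mathcal{U})$ and a subobject $A'\leq A$, the quotient $A/A'$ still lies in $\Gen(\mathcal{U})$, so the projection $q:A\twoheadrightarrow A/A'$ is a morphism there. The kernel of $q$ in $\Gen(\mathcal{U})$ is $\tr_{\mathcal{U}}(A')\to A$, so the coimage of $q$ in $\Gen(\mathcal{U})$ is $A/\tr_{\mathcal{U}}(A')$; the cokernel of $q$ in $\Gen(\mathcal{U})$ is zero (since $q$ is epi in $\mathcal{A}$ and the inclusion preserves cokernels), so the image of $q$ in $\Gen(\mathcal{U})$ is $A/A'$. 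The canonical coimage-to-image comparison is then precisely the ambient projection $A/\tr_{\mathcal{U}}(A')\twoheadrightarrow A/A'$. If $\Gen(\mathcal{U})$ is abelian, this map is an isomorphism in $\Gen(\mathcal{U})$ and hence in $\mathcal{A}$ by full faithfulness of the inclusion, so $\tr_{\mathcal{U}}(A')=A'$ and $A'\in\Gen(\mathcal{U})$.

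Under AB5, $(4)\Leftrightarrow (5)$ is straightforward: only $(5)\Rightarrow (4)$ needs argument. Writing $\tilde U=\coprod_{i\in I}U_i$ as the directed colimit over the finite subsets $F\subseteq I$ of $\tilde U_F:=\coprod_{i\in F}U_i$, by AB5 any subobject $K\leq\tilde U$ satisfies $K=\varinjlim_F(K\cap\tilde U_F)$. Each intersection is in $\Gen(\mathcal{U})$ by (5), and $\Gen(\mathcal{U})$ is closed under coproducts and epimorphic images, hence under filtered colimits, giving $K\in\Gen(\mathcal{U})$. For the Grothendieck conclusion: under these conditions $\Gen(\mathcal{U})$ is abelian exact, admits $\mathcal{U}$ as a set of generators, and has colimits computed in $\mathcal{A}$ (inclusion is left adjoint), so AB5 is inherited from $\mathcal{A}$. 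The main obstacle is the $(1)\Rightarrow (3)$ step, which hinges on the careful identification of the intrinsic coimage-to-image map inside $\Gen(\mathcal{U})$ with the ambient projection $A/\tr_{\mathcal{U}}(A')\to A/A'$; once this is in place, everything else reduces to standard manipulations with the trace functor and AB5-colimits.
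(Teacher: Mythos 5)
Your proof is correct and follows essentially the same strategy as the paper: the decisive step in both is to identify the kernel in $\Gen(\mathcal{U})$ of the projection onto a quotient as the trace of the subobject, and to deduce from abelianness that the trace equals the subobject, with the remaining implications ($(4)\Rightarrow(3)$ via pullback, $(5)\Rightarrow(4)$ via AB5 and $K=\varinjlim(K\cap\tilde U_F)$, and the Grothendieck conclusion) handled identically. The only cosmetic difference is that you prove $(1)\Rightarrow(3)$ directly for an arbitrary object via the coimage-to-image comparison, whereas the paper proves $(1)\Rightarrow(4)$ on objects of $\Sum(\mathcal{U})$ using that an epimorphism in an abelian category is the cokernel of its kernel.
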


\begin{proof} 
$(1)\Longrightarrow (4)$ $\mathcal{U}$ is clearly a set of generators of $\Gen (\mathcal{U})$. Then, by Proposition \ref{prop.abelian-implies-PU} and its proof, we know that $\Gen (\mathcal{U})=\PU$ and, even more,  the projection $p:\coprod_{i\in I}U_i\twoheadrightarrow \coprod_{i\in I}U_i/K$ is the cokernel of its kernel in $\mathcal{B}$, where     $\{U_i\mid i \in I\} \subseteq \mathcal{U}$ and $K$ is a subobject of $\coprod_{i\in I}U_i$. But the kernel of $p$ in $\PU$ is the composition $\tr_\mathcal{U}(K)\stackrel{}{\hookrightarrow} K\hookrightarrow\coprod_{i\in I}U_i$. Since $\Gen (\mathcal{U})$ is closed under taking cokernels in $\mathcal{A}$, the cokernel of that kernel is then the projection $\coprod_{i\in I}U_i\twoheadrightarrow\coprod_{i\in I}U_i/ \tr_\mathcal{U}(K)$. Therefore the canonical (epi)morphism $\coprod_{i\in I}U_i/ \tr_\mathcal{U}(K)\rightarrow \coprod_{i\in I}U_i/K$ is an isomorphism, and so $\tr_\mathcal{U} (K)=K\in\Gen (\mathcal{U})$. 

$(4)\Longrightarrow (3)$ Let $\{U_i\mid i \in I\}$ be any family of objects of $\mathcal U$. Then, any subobject of a quotient $\coprod_{i\in I}U_i/K$ is, due to an obvious pullback construction,  an epimorphic image of a subobject of $ \coprod_{i\in I}U_i$ and, hence, it is in $\Gen (\mathcal{U})$. 

$(3)\Longrightarrow (2)\Longrightarrow (1)$  are clear. 

$(4)\Longrightarrow (5)$ is clear, even if $\mathcal{A}$ is not AB5.

$(5)\Longrightarrow (4)$ (when $\mathcal{A}$ is AB5) Let $\{U_i\mid i\in I\}$ be a family of objects of $\mathcal U$ and $K$ be a subobject of $\coprod_{i \in I}U_i$. Due to the AB5 condition,  $K$ is isomorphic to the direct limit of the intersections $K\cap U_F$, where $F$ is a finite subset of $I$ and $U_F:=\coprod_{i\in F}U_i$. By assertion (5), each  $K\cap U_F$ is in $\Gen (\mathcal{U})$ and so $K\in\Gen(\mathcal{U})$. 
 
The last statement of the proposition is a direct consequence of Corollary \ref{cor.Grothendieck-abelian-exact}.
\end{proof}

\subsection{The subcategory $\Gen ({\rm fg}(\mathcal{A}))$}

When $\mathcal{A}$ is an AB5 abelian category, an object $X$ is said to be \emph{finitely generated} if the functor $\mathcal{A}(X,?):\mathcal{A}\rightarrow\Ab$ preserves direct limits of direct systems $(A_i,u_{ij} \mid i < j \in I)$ such that each $u_{ij}$ is a monomorphism for each $i<j$ in $I$.  We denote by ${\rm fg}(\mathcal{A})$ the subcategory consisting of all finitely generated objects of $\mathcal{A}$. 

\begin{proposition} \label{prop.Gen(fgA)}
Let $\mathcal{G}$ be a Grothendieck category. The following assertions hold:

\begin{enumerate}
\item ${\rm fg}(\mathcal{G})$ is a skeletally small subcategory.
\item $\Gen ({\rm fg}(\mathcal{G}))$ is  a coreflective subcategory, and we have $$\Gen ({\rm fg}(\mathcal{G}))=\Pres ({\rm fg}(\mathcal{G}))=\varinjlim{\rm fg}(\mathcal{G}),$$ where $\varinjlim{\rm fg}(\mathcal{G})$ denotes the subcategory consisting of all objects that are isomorphic to direct limits of objects belonging to ${\rm fg}(\mathcal{G})$.
\item  $\Gen ({\rm fg}(\mathcal{G}))$ is abelian (exact) if, and only if, each subobject of a finitely generated object is an epimorphic image of a coproduct of finitely generated objects. In this case $\Gen ({\rm fg}(\mathcal{G}))$ is a locally finitely generated Grothendieck category.
\end{enumerate}
\end{proposition}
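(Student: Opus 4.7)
The plan is to dispatch the three assertions in order, leveraging the existence of a generator of $\mathcal{G}$ and the results of Sections 7 and 8.

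For (1), I would fix a generator $G$ of $\mathcal{G}$. Any finitely generated object $X$ admits an epimorphism $p:G^{(I)}\twoheadrightarrow X$ for some set $I$, and the family $(p(G^{(F)}))_F$, with $F$ ranging over the directed set of finite subsets of $I$, is a directed union equal to $X$. The finitely generated property forces $X=p(G^{(F)})$ for some finite $F$, so $X$ is a quotient of some $G^n$. Since the subobjects (hence the quotients) of each $G^n$ form a set in the Grothendieck category $\mathcal{G}$, the union over $n$ yields that $\mathrm{fg}(\mathcal{G})$ is skeletally small.

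For (2), I would first record that finitely generated objects are closed under finite coproducts and quotients (the latter by pulling back a directed family of subobjects of the quotient through the epimorphism). The inclusion $\Pres(\mathrm{fg}(\mathcal{G}))\subseteq\Gen(\mathrm{fg}(\mathcal{G}))$ is trivial, while $\varinjlim\mathrm{fg}(\mathcal{G})\subseteq\Pres(\mathrm{fg}(\mathcal{G}))$ follows from the standard presentation of a directed colimit $\varinjlim X_i$ as the cokernel of $\coprod_{i\leq j}X_i\to\coprod_i X_i$. For the remaining inclusion $\Gen(\mathrm{fg}(\mathcal{G}))\subseteq\varinjlim\mathrm{fg}(\mathcal{G})$, given an epimorphism $\coprod_{i\in I}X_i\twoheadrightarrow A$ with $X_i$ finitely generated, each partial image $A_F:=\Img(\coprod_{i\in F}X_i\to A)$ for $F\subseteq I$ finite is finitely generated, and the AB5 condition gives $A=\varinjlim_F A_F$. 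Coreflectivity is then obtained by choosing, via (1), a set $\mathcal{U}$ of representatives of $\mathrm{fg}(\mathcal{G})$: we have $\Gen(\mathrm{fg}(\mathcal{G}))=\Pres(\mathcal{U})$, this class is closed under cokernels (being generated is preserved under cokernels), and it is precovering by Lemma \ref{l:PU-precover}, so Corollary \ref{c:AbelianCategories} applies.

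For (3), I would apply Proposition \ref{prop.GenU-abelian} with $\mathcal{U}$ as in (2). Its condition (5), valid since $\mathcal{G}$ is AB5, says that every subobject of a finite coproduct of finitely generated objects lies in $\Gen(\mathrm{fg}(\mathcal{G}))$; as such a finite coproduct is itself finitely generated, this is equivalent to the stated criterion that every subobject of a finitely generated object be an epimorphic image of a coproduct of finitely generated objects. When this holds, Corollary \ref{cor.Grothendieck-abelian-exact} gives that $\Gen(\mathrm{fg}(\mathcal{G}))$ is Grothendieck. To see it is \emph{locally} finitely generated, I would observe that the inclusion $\iota\colon\Gen(\mathrm{fg}(\mathcal{G}))\hookrightarrow\mathcal{G}$ is exact (by the abelian-exact hypothesis) and preserves directed colimits, since it is the left adjoint of the coreflector obtained in (2); hence each $X\in\mathrm{fg}(\mathcal{G})$ remains finitely generated inside $\Gen(\mathrm{fg}(\mathcal{G}))$, and $\mathcal{U}$ provides the required set of finitely generated generators. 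The main subtleties lie in (2), in the AB5 argument expressing a generated object as the directed colimit of its finitely generated subobjects, and in the transfer of the finitely generated property through the exact fully faithful left adjoint $\iota$ in (3); the rest is bookkeeping with Section 8.
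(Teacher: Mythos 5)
Your proposal is correct and follows essentially the same route as the paper: the same generator argument for (1), the same chain of inclusions via partial images and AB5 for (2), and the same appeal to Proposition \ref{prop.GenU-abelian}(5) for (3). The only cosmetic difference is that for coreflectivity in (2) the paper simply notes that the trace functor $\tr_{\mathcal{U}}$ is already a right adjoint of the inclusion $\Gen(\mathcal{U})\hookrightarrow\mathcal{G}$, whereas you re-derive it from the precovering-plus-cokernels criterion; both are valid, and your extra care in (3) about why objects of $\mathcal{U}$ remain finitely generated inside the subcategory fills in a step the paper leaves implicit.
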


\begin{proof}
(1) Fix a generator $G$ of $\mathcal{G}$. If $X\in{\rm fg}(\mathcal{G})$ and we fix an epimorphism $p:G^{(I)}\twoheadrightarrow X$, then $X$ is the direct union of all images $\Img(p i_F)$, where $F$ runs through the finite subsets of $I$ and $i_F:G^{(F)}\rightarrow G^{(I)}$ is the obvious section. We then get some $E\subseteq I$ finite such that $X=\Img(p i_E)$.  It follows that $X$ is isomorphic to $G^n/K$, for some $n\in\mathbb{N}$ and some subobject $K\subseteq G^n$. Since the subobjects of each $G^n$ form a set, assertion (1) follows.

(2) If $\mathcal{U}$ is a set of representatives of the isoclasses of objects in ${\rm fg}(\mathcal{G})$, then we have that $\Gen ({\rm fg}(\mathcal{G}))=\Gen (\mathcal{U})$, that is a coreflective subcategory by the previous subsection.  Let $T\in\Gen({\rm fg}(\mathcal{G}))$ and fix an epimorphism $q:\coprod_{i\in I}X_i\twoheadrightarrow T$, where all the $X_i$ are in ${\rm fg}(\mathcal{G})$. Putting $X_F:=\coprod_{i\in F}X_i$, for each finite subset $F\subseteq I$, and arguing as in the proof of assertion (1), we get that $T=\bigcup_{F\subseteq I\text{, }F\text{ finite}}\Img(p i_F)$, where $i_F:X_F\rightarrow\coprod_{i\in I}X_i$ is the obvious section. But ${\rm fg}(\mathcal{G})$ is closed under taking quotients (see \cite[Lemma V.3.1]{Stenstrom}), so that $T_F:=\Img(pi_F)\in{\rm fg}(\mathcal{G})$ for all $F\subseteq I$ finite. It follows that $\Gen ({\rm fg}(\mathcal{G}))\subseteq\varinjlim{\rm fg}(\mathcal{G})$. But, by the explicit construction of colimits, we have that $\varinjlim{\rm fg}(\mathcal{G})\subseteq\Pres ({\rm fg}(\mathcal{G}))$. Since we obviously have that $\Pres ({\rm fg}(\mathcal{G}))\subseteq\Gen({\rm fg}(\mathcal{G}))$ assertion (2) immediately follows. 

(3) This is a direct consequence of the equivalence of assertions (1), (2) and (5) in Proposition \ref{prop.GenU-abelian} and the fact that the class $\mathcal{U}$ above is actually a set consisting of finitely generated generators of $\Gen({\rm fg}(\mathcal{G}))$. 
\end{proof}

\subsection{Abelian coreflective subcategories with a set of projective generators}

In this subsection we want to identify the coreflective abelian subcategories of any AB3 abelian category $\mathcal{A}$ that have a set of projective generators, and to determine those of them that are module categories over a small preadditive category or over an (associative unital) ring. We need the following notions:

\begin{definition} \label{def.selfsmall-quasiprojective}
Let $\mathcal{U}$ be a set of objects in an AB3 abelian category $\mathcal{A}$. We say that:

\begin{enumerate}
\item $\mathcal{U}$ is \emph{self-small}, if for each family $\{U_i\mid i\in I\} \subseteq \mathcal U$ and each object $U$ of $\mathcal{U}$, the canonical map $\coprod_{i\in I}\mathcal{A}(U,U_i)\rightarrow\mathcal{A}(U,\coprod_{i\in I}U_i)$ is an isomorphism.

\item $\mathcal{U}$ is \emph{$\Sigma$-quasi-projective}, if any epimorphism $p:\tilde{U}\twoheadrightarrow A$ in $\mathcal A$, with $\tilde{U}\in\mathrm{Sum}(\mathcal{U})$, is an $\mathcal{U}$-epimorphism.

\item An object $X$ of $\mathcal{A}$ is \emph{$\mathcal{U}$-subgenerated}  when it is isomorphic to a subobject of an object in $\Gen (\mathcal{U})$.  We will denote by $\sigma[\mathcal{U}]$ the subcategory of $\mathcal A$ consisting of all $\mathcal{U}$-subgenerated objects.
\end{enumerate}
We say that an object $U$ is \emph{self-small} (resp. \emph{$\Sigma$-quasi-projective}) if the set $\{U\}$ is self-small (resp. $\Sigma$-quasi-projective).
\end{definition}

Now we characterize coreflective subcategories $\mathcal{B}$ of AB3 abelian categories $\mathcal{A}$ which are abelian and have a set of projective generators (in $\mathcal{B}$, but not necessarily in $\mathcal{A}$).

\begin{proposition} \label{prop.coreflabelian-with-projectgenerators}
Let $\mathcal{A}$ be an AB3 abelian category and $\mathcal{B}$ be an additive subcategory of $\mathcal{A}$. The following assertions are equivalent:

\begin{enumerate}
\item $\mathcal{B}$ is an abelian coreflective subcategory of $\mathcal A$ with a set of generators which are projective in $\mathcal B$.
\item $\mathcal{B}$ is closed under taking cokernels in $\mathcal{A}$ and there is a set $\mathcal{U}$ of objects that satisfies the following properties:

\begin{enumerate}
\item $\mathcal{B}=\PU$.
\item If $\tilde{U}\in\Sum (\mathcal{U})$ and $Y$ is a subobject of an object in $\PU$, then any epimorphism $p:\tilde{U}\twoheadrightarrow Y$ is an $\mathcal{U}$-epimorphism and  $\mathcal{A}\left(?,\frac{\Ker p}{\tr_\mathcal{U}(\Ker p)}\right)_{| \mathcal{U}}=0$.
\end{enumerate}
\end{enumerate}
\end{proposition}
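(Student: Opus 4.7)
The plan is to verify both implications by carefully exploiting Theorem \ref{t.coreflective-abelian}. For $(1)\Rightarrow(2)$, since $\mathcal{B}$ is an abelian coreflective subcategory of $\mathcal{A}$ with a set of projective generators $\mathcal{U}$, Proposition \ref{prop.abelian-implies-PU} yields $\mathcal{B}=\Pres(\mathcal{U})$ and Corollary \ref{c:AbelianCategories} gives closedness under cokernels, so only condition (b) needs checking. Given $\tilde{U}\in\Sum(\mathcal{U})$, a monomorphism $j:Y\hookrightarrow B$ with $B\in\Pres(\mathcal{U})$, and an epimorphism $p:\tilde{U}\twoheadrightarrow Y$, I would consider $g=jp:\tilde{U}\to B$ as a morphism in $\mathcal{B}$; since $j$ is mono and $p$ is epi, $\Ker g=\Ker p$ and $\Img g=Y$, and Theorem \ref{t.coreflective-abelian}(4.b) applied to $g$ yields that $\bar g:\frac{\tilde{U}}{\tr_\mathcal{U}(\Ker p)}\to Y$ induces an isomorphism on $\mathcal{A}(U,-)$ for each $U\in\mathcal{U}$. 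The injectivity part delivers immediately the vanishing $\mathcal{A}(U,\frac{\Ker p}{\tr_\mathcal{U}(\Ker p)})=0$, which is the second assertion of (b); the $\mathcal{U}$-epi assertion follows by factoring $p=\bar g\circ\pi$, where $\pi:\tilde{U}\twoheadrightarrow\frac{\tilde{U}}{\tr_\mathcal{U}(\Ker p)}$ is an epimorphism in $\mathcal{B}$ (since $\frac{\tilde{U}}{\tr_\mathcal{U}(\Ker p)}\in\mathcal{B}$ as a cokernel of a morphism in $\Sum(\mathcal{U})$), so projectivity of $U$ in $\mathcal{B}$ gives surjectivity of $\mathcal{A}(U,\pi)$, and combining this with the surjectivity of $\mathcal{A}(U,\bar g)$ yields that $\mathcal{A}(U,p)$ is surjective.

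For $(2)\Rightarrow(1)$, Theorem \ref{t:PresCoreflective} shows that $\mathcal{B}=\Pres(\mathcal{U})$ is coreflective since it is closed under cokernels. To see that each $U\in\mathcal{U}$ is projective in $\mathcal{B}$, given an epimorphism $q:B\twoheadrightarrow B'$ in $\mathcal{B}$ and $f:U\to B'$, I would take any $\Sum(\mathcal{U})$-precover $\alpha:\tilde{U}\twoheadrightarrow B$; then $q\alpha:\tilde{U}\twoheadrightarrow B'$ is an epimorphism from $\Sum(\mathcal{U})$ onto $B'\in\Pres(\mathcal{U})$, hence a $\mathcal{U}$-epimorphism by condition (b), allowing us to lift $f$ successively through $\tilde{U}$ and through $B$. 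Thus $\mathcal{U}$ is a set of projective generators of $\mathcal{B}$. To conclude that $\mathcal{B}$ is abelian I will verify condition (2) of Theorem \ref{t.coreflective-abelian}: for every $f:B\to B'$ in $\mathcal{B}$, the induced morphism $\bar f:\frac{B}{\tr_\mathcal{U}(\Ker f)}\to\Img f$ satisfies that $\mathcal{A}(U,\bar f)$ is an isomorphism for every $U\in\mathcal{U}$. Here $\frac{B}{\tr_\mathcal{U}(\Ker f)}\in\mathcal{B}$ (using closedness under cokernels and $\tr_\mathcal{U}(\Ker f)\in\Gen(\mathcal{U})$) and $\Img f$ is a subobject of $B'\in\Pres(\mathcal{U})$. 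Surjectivity of $\mathcal{A}(U,\bar f)$ follows by taking a $\Sum(\mathcal{U})$-precover $\alpha:\tilde{U}\twoheadrightarrow B$ and applying (b) to the corestriction $p=f\alpha:\tilde{U}\twoheadrightarrow\Img f$ to obtain the $\mathcal{U}$-epi property, then factoring through $\frac{B}{\tr_\mathcal{U}(\Ker f)}$ using projectivity of $U$.

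The main obstacle is the injectivity of $\mathcal{A}(U,\bar f)$, equivalent to $\mathcal{A}(U,\frac{\Ker f}{\tr_\mathcal{U}(\Ker f)})=0$, since this does not come directly from the second part of (b). My approach is: given $g:U\to\frac{\Ker f}{\tr_\mathcal{U}(\Ker f)}$, compose with the inclusion $\Ker\bar f\hookrightarrow\frac{B}{\tr_\mathcal{U}(\Ker f)}$ and use projectivity of $U$ in $\mathcal{B}$ to lift to a morphism $\tilde u:U\to B$ along the epimorphism $\pi:B\twoheadrightarrow\frac{B}{\tr_\mathcal{U}(\Ker f)}$. Since $\bar f\circ\pi=f$ corestricted onto $\Img f$, the identity $\bar f\pi\tilde u=0$ (which holds because $\pi\tilde u$ factors through $\Ker\bar f$) forces $\tilde u$ to factor through $\Ker f\subseteq B$; then the image of $\tilde u:U\to\Ker f$ is $\mathcal{U}$-generated, hence contained in $\tr_\mathcal{U}(\Ker f)$, which yields $\pi\tilde u=0$ and thus $g=0$. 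This completes the verification of Theorem \ref{t.coreflective-abelian}(2), so $\mathcal{B}$ is abelian.
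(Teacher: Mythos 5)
Your proof is correct and follows the same overall strategy as the paper: both directions are reduced to Theorem \ref{t.coreflective-abelian}, with Proposition \ref{prop.abelian-implies-PU} giving $\mathcal{B}=\PU$ in $(1)\Rightarrow(2)$, and the $\mathcal{U}$-epimorphism claim handled via the factorization $p=\bar{p}\pi$ together with the projectivity of the objects of $\mathcal{U}$. The only genuine divergence is in the last step of $(2)\Rightarrow(1)$, namely the vanishing of $\mathcal{A}\left(U,\frac{\Ker f}{\tr_{\mathcal{U}}(\Ker f)}\right)$. The paper obtains it by comparing $\Ker f$ with $\Ker(f\varepsilon_B)$: it forms a morphism of short exact sequences over $\Img f$, applies $\mathcal{A}(U,?)$, and uses the resulting bicartesian square to transfer to $\frac{\Ker f}{\tr_{\mathcal{U}}(\Ker f)}$ the vanishing that condition 2(b) supplies directly for $\frac{\Ker(f\varepsilon_B)}{\tr_{\mathcal{U}}(\Ker(f\varepsilon_B))}$. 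You argue more directly: a test morphism $U\to\frac{\Ker f}{\tr_{\mathcal{U}}(\Ker f)}=\Ker\bar{f}$ is lifted along $B\twoheadrightarrow\frac{B}{\tr_{\mathcal{U}}(\Ker f)}$ using the projectivity of $U$ already extracted from 2(b); the lift is annihilated by $f$, hence factors through $\Ker f$ with $\mathcal{U}$-generated image, which therefore lies in $\tr_{\mathcal{U}}(\Ker f)$ and forces the test morphism to vanish. Your version trades the diagram chase for one application of projectivity plus the defining property of the trace, and is somewhat shorter and more transparent; both arguments are complete.
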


\begin{proof}
$(1)\Longrightarrow (2)$ $\mathcal{B}$ is closed under taking cokernels since it is coreflective. By Proposition \ref{prop.abelian-implies-PU} and its proof, we know that $\mathcal{B}=\PU$, where $\mathcal{U}$ is the set of projective generators of $\mathcal{B}$. Then we automatically have condition 2(a). On the other hand, by assertion (2) in Theorem \ref{t.coreflective-abelian}, we have that the epimorphism $p$ of condition 2(b) satisfies $\mathcal{A}\left(?,\frac{\Ker p}{\tr_\mathcal{U}(\Ker p)}\right)_{| \mathcal{U}}=0$. 

It remains to see that $p$ is an $\mathcal{U}$-epimorphism. This is obvious when $Y\in\mathcal{B}$, since the objects of $\mathcal{U}$  are projective in $\mathcal{B}$.  This also proves that the canonical epimorphism $\pi:\tilde{U}\twoheadrightarrow\frac{\tilde{U}}{\tr_{\mathcal{U}}(\Ker p)}$ is an $\mathcal{U}$-epimorphism, even when $Y\not\in\mathcal{B}$. But, in this latter case,  assertion (2) of Theorem \ref{t.coreflective-abelian} implies that the induced morphism $\bar{p}:\frac{\tilde{U}}{\tr_\mathcal{U}(\Ker p)}\rightarrow Y$ is an $\mathcal{U}$-epimorphism.  Consequently, $p=\bar{p}\pi$ is an $\mathcal{U}$-epimorphism since it is the composition of two $\mathcal{U}$-epimorphisms. 

$(2)\Longrightarrow (1)$ Note that, by Theorem \ref{t:PresCoreflective},  $\mathcal{B}$ is coreflective since $\mathcal{B}=\PU$ is closed under taking cokernels. We check that assertion (2) of Theorem \ref{t.coreflective-abelian} holds, i.e., for each morphism $f:B\rightarrow B'$ in $\mathcal{B}$,  the induced map $\overline{f}:\frac{B}{\tr_\mathcal{U}(\Ker f)}\rightarrow \Img f$ satisfies that $\mathcal A(U,\overline f)$ is an isomorphism for each $U \in \mathcal U$. Note that, by our condition 2(b), the composition $\hat{U}_B\stackrel{\varepsilon_B}{\rightarrow}B\stackrel{\pi}{\rightarrow}\frac{B}{\tr_\mathcal{U}(\Ker f)}\stackrel{\overline{f}}{\rightarrow}\Img f$, where $\pi$ is the projection,  is an $\mathcal{U}$-epimorphism, which implies that so is $\overline{f}$. It remains to prove that $\mathcal{A}\left(U,\frac{\Ker f}{\tr_\mathcal{U}(\Ker f)}\right)=0$, for all $U\in\mathcal{U}$. 

Note that condition 2(b) gives that the objects of $\mathcal{U}$ are projective in $\mathcal{B}$. Note also that $\varepsilon_B$ induces an isomorphism $\frac{\hat{U}_B}{\Ker (f\varepsilon_A)}\cong\frac{B}{\Ker f}\cong \Img f$. We then get the following commutative diagram,  with exact rows that are kept exact by applying the functor $\mathcal{A}(U,?)$, for all $U\in\mathcal{U}$:
\begin{displaymath}
\begin{tikzcd}
0 \arrow{r} & \frac{\Ker(f \varepsilon_B)}{\tr_{\mathcal{U}}(\Ker(f\varepsilon_B))} \arrow{r} \arrow{d} & \frac{\widehat{U}_B}{\tr_{\mathcal{U}}(\Ker(f \varepsilon_B))} \arrow{r} \arrow{d} & \Img f \arrow{r} \arrow[equal]{d} & 0\\
0 \arrow{r} & \frac{\Ker f}{\tr_{\mathcal{U}}(\Ker f)} \arrow{r} & \frac{B}{\tr_{\mathcal{U}}(\Ker f)} \arrow{r}{\overline f} & \Img f \arrow{r} & 0
\end{tikzcd}
\end{displaymath}
The left square is then bicartesian, in particular the two left most vertical arrows are epimorphisms, and these properties are also preserved when applying any functor  $\mathcal{A}(U,?):\mathcal{A}\rightarrow\Ab$, with $U\in\mathcal{U}$. This implies that the induced morphism $\mathcal{A}\left(U,\frac{\Ker (f\varepsilon_B)}{\tr_\mathcal{U}(\Ker (f\varepsilon_B))}\right)\rightarrow\mathcal{A}\left(U,\frac{\Ker f}{\tr_\mathcal{U}(\Ker f)}\right)$ is an epimorphism. But, by our condition 2(b), we know that $\mathcal{A}\left(U,\frac{\Ker (f\varepsilon_B)}{\tr_\mathcal{U}(\Ker (f\varepsilon_B))}\right)=0$, and hence $\mathcal{A}\left(U,\frac{\Ker f}{\tr_\mathcal{U}(\Ker f)}\right)=0$, for all $U\in\mathcal{U}$.
\end{proof}

As an immediate consequence, we get:

\begin{corollary} \label{cor.coreflective-modulecategory}
Under the hypotheses of the last proposition, the following assertions are equivalent:
\begin{enumerate}
\item $\mathcal{B}$ is a coreflective subcategory that is (equivalent to) the module category over a small preadditive category.
\item $\mathcal{B}$ is closed under taking cokernels in $\mathcal{A}$ and there is a self-small set of objects   $\mathcal{U}$   that satisfies condition (2) of the mentioned proposition.
\end{enumerate}
In such situation $\mathcal{B}$ is the module category over a ring if, and only if, the set $\mathcal{U}$ in assertion (2) may be chosen to contain just one object. 
\end{corollary}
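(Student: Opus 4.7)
The approach is to combine Proposition \ref{prop.coreflabelian-with-projectgenerators} with a Mitchell-type representation theorem identifying abelian categories equipped with a set of small projective generators as module categories over small preadditive categories. The only genuinely new ingredient with respect to the previous proposition is the self-smallness hypothesis on $\mathcal{U}$, which will allow us to upgrade ``abelian with a set of projective generators'' to ``equivalent to a module category over a small preadditive category''.

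For the implication $(1)\Longrightarrow (2)$, suppose $\mathcal{B}$ is a coreflective subcategory of $\mathcal{A}$ that is equivalent to $\Modr{\mathfrak{a}}$ for some small preadditive category $\mathfrak{a}$. Being coreflective, $\mathcal{B}$ is closed under cokernels in $\mathcal{A}$. I take $\mathcal{U}$ to be (the set of objects of $\mathcal{B}$ corresponding via the equivalence to) the representable functors $\mathfrak{a}(-,a)$, $a\in\mathfrak{a}$, which form a set of projective generators of $\Modr{\mathfrak{a}}$. Proposition \ref{prop.coreflabelian-with-projectgenerators} then yields condition (2) of that proposition for $\mathcal{U}$, while self-smallness is a direct consequence of Yoneda's lemma applied in $\Modr{\mathfrak{a}}$, combined with the fact that coproducts in a coreflective subcategory coincide with coproducts in $\mathcal{A}$.

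For the implication $(2)\Longrightarrow (1)$, Proposition \ref{prop.coreflabelian-with-projectgenerators} gives that $\mathcal{B}$ is an abelian coreflective subcategory of $\mathcal{A}$ with $\mathcal{U}$ as a set of projective generators. Let $\mathfrak{a}$ be the full subcategory of $\mathcal{B}$ on the objects of $\mathcal{U}$, a (skeletally) small preadditive category, and define $F\dd\mathcal{B}\longrightarrow\Modr{\mathfrak{a}}$ by $F(B)=\mathcal{B}(-,B)_{|\mathfrak{a}}$. Then $F$ is exact (by projectivity of the objects of $\mathcal{U}$ in $\mathcal{B}$) and, by self-smallness, sends coproducts of objects of $\mathcal{U}$ to coproducts of representable functors; moreover $F_{|\mathfrak{a}}$ is, up to natural isomorphism, the Yoneda embedding. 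Using that every $B\in\mathcal{B}=\Pres(\mathcal{U})$ admits a presentation $\coprod_j U_j\longrightarrow\coprod_i U_i\longrightarrow B\longrightarrow 0$ with $U_i,U_j\in\mathcal{U}$, together with the analogous fact that every right $\mathfrak{a}$-module admits a presentation by representables, a standard two-step diagram chase shows that $F$ is fully faithful and essentially surjective, hence an equivalence.

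For the last statement, a small preadditive category with a single object is nothing but the ring $R=\mathcal{B}(U,U)$, so when $\mathcal{U}=\{U\}$ we obtain $\Modr{\mathfrak{a}}=\Modr{R}$; conversely, if $\mathcal{B}\simeq\Modr{R}$ then $\mathcal{U}=\{R_R\}$ is a self-small projective generator, so a singleton choice is available. The main obstacle I anticipate is the verification of full faithfulness of $F$: one has to combine self-smallness (which provides coproduct-compatibility \emph{a priori} only for objects of $\mathcal{U}$) with a projective presentation of a general $B\in\mathcal{B}$ in order to compute $\mathcal{B}(B,B')$ as a limit of morphism groups in $\Modr{\mathfrak{a}}$, with a careful application of the five-lemma needed to match the two resulting exact sequences.
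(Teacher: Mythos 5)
Your proof is correct and follows essentially the same route as the paper: the paper's proof is a one-line combination of Proposition \ref{prop.coreflabelian-with-projectgenerators} with the Mitchell-type representation theorem of \cite[Proposition 3.14]{ParraSaorin}, and you do exactly that, except that you prove the Mitchell-type step (a self-small set of projective generators of $\mathcal{B}=\Pres(\mathcal{U})$ yields an equivalence $\mathcal{B}\simeq\Modr{\mathcal{U}}$ via $B\mapsto\mathcal{B}(-,B)_{|\mathcal{U}}$) directly instead of citing it. You also correctly isolate the one delicate point, namely that self-smallness only controls coproducts of objects of $\mathcal{U}$; this suffices because every coproduct occurring in the presentations feeding the five-lemma argument is of that form.
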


\begin{proof}
It immediately follows by Proposition \ref{prop.coreflabelian-with-projectgenerators} and \cite[Proposition 3.14]{ParraSaorin}.
\end{proof}

As a straightforward consequence of Theorem \ref{t:abexact}, we can characterize the coreflective abelian exact subcategories of an AB3 abelian category which have a set of projective generators.

\begin{corollary} \label{cor.abexact-with-set-projgenerators}
Let $\mathcal{A}$ be an AB3 abelian category and let $\mathcal{B}$ be an additive subcategory of $\mathcal{A}$. The following assertions are equivalent:

\begin{enumerate}
\item $\mathcal{B}$ is a coreflective abelian exact subcategory with a set of projective generators.
\item There is a set $\mathcal{U}$ of objects that satisfies the following properties:

\begin{enumerate}
\item $\mathcal{B}=\PU$;
\item Each epimorphism $p:\tilde{U}\twoheadrightarrow B$, with $\tilde{U}\in\Sum (\mathcal{U})$ and $B\in\mathcal{B}$,  is an $\mathcal{U}$-epimorphism with kernel in $\Gen(\mathcal{U})$;
\item Each morphism in $\Sum(\mathcal{U})$ has its kernel in $\Gen (\mathcal{U})$. 
\end{enumerate}
\end{enumerate}

In such case, $\mathcal{B}$ is a module category over a preadditive category (resp. over a ring) if, and only if, the set $\mathcal{U}$ may be chosen to be self-small (resp. self-small and having just one object).
\end{corollary}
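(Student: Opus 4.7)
The plan is to deduce this corollary from Theorem \ref{t:abexact} together with the characterization of module categories via self-small projective generators, as in the proof of Corollary \ref{cor.coreflective-modulecategory}. The key translation is that projectivity of $\mathcal{U}$ inside $\mathcal{B}$ is precisely the $\mathcal{U}$-epimorphism strengthening of Theorem \ref{t:abexact}(2.c).

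For $(1)\Rightarrow (2)$, I take $\mathcal{U}$ to be a set of projective generators of $\mathcal{B}$. By Proposition \ref{prop.abelian-implies-PU}, this yields $\mathcal{B}=\PU$, proving (a). Condition (c) is immediate from Theorem \ref{t:abexact}, since $\mathcal{B}=\PU$ is already a coreflective abelian exact subcategory. For (b), an epimorphism $p:\tilde{U}\twoheadrightarrow B$ in $\mathcal{A}$ with $\tilde{U}\in\Sum(\mathcal{U})$ and $B\in\mathcal{B}$ is also an epimorphism in $\mathcal{B}$ by abelian exactness of the inclusion, hence an $\mathcal{U}$-epimorphism by projectivity of each $U\in\mathcal{U}$ in $\mathcal{B}$; moreover $\Ker p$ (computed indistinguishably in $\mathcal{A}$ or in $\mathcal{B}$) lies in $\mathcal{B}\subseteq\Gen(\mathcal{U})$.

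For $(2)\Rightarrow (1)$, I apply Theorem \ref{t:abexact}. Condition (c) of our corollary is exactly the first part of Theorem \ref{t:abexact}(2). Since $B\in\mathcal{B}=\PU$ implies that the canonical $\Sum(\mathcal{U})$-precover $\varepsilon_B\colon\hat{U}_B\to B$ is an epimorphism, the $\Gen(\mathcal{U})$-part of (b) applied to $\varepsilon_B$ yields $\Ker\varepsilon_B\in\Gen(\mathcal{U})$, which is condition (c) of Theorem \ref{t:abexact}(2). Therefore $\mathcal{B}$ is a coreflective abelian exact subcategory. It remains to check that each $U\in\mathcal{U}$ is projective in $\mathcal{B}$: given an epimorphism $f\colon B'\twoheadrightarrow B$ in $\mathcal{B}$ (which is also an epimorphism in $\mathcal{A}$ by exactness) and a morphism $g\colon U\to B$, I pick an epimorphism $q\colon\tilde{U}\twoheadrightarrow B'$ with $\tilde{U}\in\Sum(\mathcal{U})$; the composite $fq\colon\tilde{U}\twoheadrightarrow B$ is then an $\mathcal{U}$-epimorphism by our (b), so $g$ lifts to $\tilde{g}\colon U\to\tilde{U}$, and $q\tilde{g}\colon U\to B'$ is the desired lift of $g$ through $f$. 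Thus $\mathcal{U}$ is a set of projective generators of $\mathcal{B}$.

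Finally, for the statement about module categories, once we know $\mathcal{B}$ is a cocomplete abelian category (it is coreflective in the AB3 category $\mathcal{A}$) with a set $\mathcal{U}$ of projective generators, the equivalence with $\mathrm{Mod}\text{-}R$ for a preadditive category $R$ (resp. a ring, when $|\mathcal{U}|=1$) corresponds exactly to self-smallness of $\mathcal{U}$, via \cite[Proposition~3.14]{ParraSaorin}, precisely as in Corollary \ref{cor.coreflective-modulecategory}. The main technical point of the whole argument is the second direction above, where one must carefully use the $\mathcal{U}$-epimorphism clause of (b) (not just the $\Gen(\mathcal{U})$-clause, which only gives abelian exactness) to recover genuine projectivity of the generators inside $\mathcal{B}$.
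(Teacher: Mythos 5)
Your proof is correct and follows the route the paper intends: the paper states this corollary as a direct consequence of Theorem \ref{t:abexact} (using condition (2.c) there), with $\mathcal{B}=\PU$ coming from Proposition \ref{prop.abelian-implies-PU} and the module-category clause from \cite[Proposition 3.14]{ParraSaorin} exactly as in Corollary \ref{cor.coreflective-modulecategory}. Your explicit verification that the $\mathcal{U}$-epimorphism clause of (b) is what encodes projectivity of the generators inside $\mathcal{B}$ (and is genuinely needed beyond the $\Gen(\mathcal{U})$ clause) correctly fills in the details the paper leaves to the reader.
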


We end this subsection by considering the case when $\mathcal{U}$ is a $\Sigma$-quasi-projective set of objects, in which case some of the conditions of the preceding results get rather simplified. We will use the following result, which is well-known in module categories (see \cite[Subsection 9.45.11 and Theorem 3.18.3]{Wisbauer}), and its proof is valid also in our context. 

\begin{lemma} \label{lem.Sigma-QP are projective}
Let $\mathcal{U}$ be a set of objects in an AB3 abelian category $\mathcal A$. The subcategory $\sigma[\mathcal{U}]$ is a coreflective abelian exact subcategory of $\mathcal{A}$. Moreover, the following assertions are equivalent:
\begin{enumerate}
\item $\mathcal{U}$ is $\Sigma$-quasi-projective in $\mathcal{A}$.
\item $\mathcal{U}$ is a set of projective objects of $\sigma [\mathcal{U}]$.
\end{enumerate}
\end{lemma}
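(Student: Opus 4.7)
For Part 1, I would first verify the closure properties: $\sigma[\mathcal{U}]$ is closed in $\mathcal{A}$ under subobjects (trivial), under quotients (if $X \hookrightarrow Y$ with $Y \in \Gen(\mathcal{U})$ and $K \leq X$, then $X/K$ injects into $Y/K$, and $Y/K \in \Gen(\mathcal{U})$ since $\Gen(\mathcal{U})$ is closed under quotients), and under finite biproducts. This immediately gives that $\sigma[\mathcal{U}]$ is an abelian exact subcategory of $\mathcal{A}$. For coreflectivity, I would invoke Corollary \ref{c:AbelianCategories}, which reduces the problem to showing $\sigma[\mathcal{U}]$ is precovering. For each $A \in \mathcal{A}$, I take the coreflection to be the largest subobject $T(A) \leq A$ lying in $\sigma[\mathcal{U}]$; its existence follows by extending the closure properties to sums of subobjects of $A$ (for any family $\{X_i \leq A\}$ in $\sigma[\mathcal{U}]$, the sum $\sum X_i$ is a quotient of $\coprod X_i$ and lies in $\sigma[\mathcal{U}]$). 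Any $f: Z \to A$ with $Z \in \sigma[\mathcal{U}]$ then factors through $T(A)$, because $\Img f \in \sigma[\mathcal{U}]$ is a subobject of $A$ that is contained in $T(A)$.

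For Part 2, the exactness of the inclusion $\sigma[\mathcal{U}] \hookrightarrow \mathcal{A}$ forces the epimorphisms of $\sigma[\mathcal{U}]$ to coincide with those $\mathcal{A}$-epimorphisms between objects of $\sigma[\mathcal{U}]$. The implication $(2) \Rightarrow (1)$ is immediate: if $p: \tilde{U} \twoheadrightarrow A$ is an $\mathcal{A}$-epimorphism with $\tilde{U} \in \Sum(\mathcal{U})$, then $A \in \Gen(\mathcal{U}) \subseteq \sigma[\mathcal{U}]$, so $p$ is an epimorphism in $\sigma[\mathcal{U}]$, and projectivity of each $U \in \mathcal{U}$ in $\sigma[\mathcal{U}]$ delivers the needed lifts.

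For $(1) \Rightarrow (2)$, let $p: X \twoheadrightarrow Y$ be an epi in $\sigma[\mathcal{U}]$, $U \in \mathcal{U}$ and $f: U \to Y$. I form the pullback $P = X \times_Y U$ and observe that $P$ is a subobject of $X \oplus U \in \sigma[\mathcal{U}]$ and therefore lies in $\sigma[\mathcal{U}]$. Choosing $Z \in \Gen(\mathcal{U})$ with $P \hookrightarrow Z$ and an epi $\pi: \tilde{U} \twoheadrightarrow Z$ with $\tilde{U} \in \Sum(\mathcal{U})$, I set $P' := \pi^{-1}(P) \subseteq \tilde{U}$, so that the restriction of $\pi$ gives an epi $P' \twoheadrightarrow P$; composing with the pullback projection $P \twoheadrightarrow U$ yields $q: P' \twoheadrightarrow U$ with $\Ker q \subseteq P' \subseteq \tilde{U}$. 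Applying $\Sigma$-quasi-projectivity to the epi $\tilde{U} \twoheadrightarrow \tilde{U}/\Ker q$, I lift the embedding $U \cong P'/\Ker q \hookrightarrow \tilde{U}/\Ker q$ to a morphism $h: U \to \tilde{U}$; since the image of $h$ maps to $P'/\Ker q$ in $\tilde{U}/\Ker q$, it must lie inside $P' + \Ker q = P'$. The composite $U \xrightarrow{h} P' \twoheadrightarrow P \to X$ then lifts $f$ through $p$, proving $U$ is projective in $\sigma[\mathcal{U}]$. The \textbf{main obstacle} is the subobject tracking in this pullback construction: verifying that the $\Sigma$-quasi-projective lift $h$ actually lands inside $P'$, and that the resulting composite genuinely provides a section of $p$ over $f$; a secondary technical point, for Part 1, is justifying the largest-subobject construction in a bare AB3 setting, which one handles cleanly under the usual mild well-poweredness assumption (automatic in the presence of a generator).
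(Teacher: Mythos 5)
Your proof is correct, and it is worth pointing out that the paper gives no proof of this lemma at all: it cites Wisbauer's treatment of $\sigma[M]$ for module categories and asserts that "its proof is valid also in our context". What you have written is exactly that standard argument transported to the abelian setting, so in spirit it is the same approach. I checked the delicate step in $(1)\Rightarrow(2)$: since the embedding $P'/\Ker q\hookrightarrow\tilde{U}/\Ker q$ is monic, the factorization $h'\colon U\to P'$ of the lift $h$ satisfies $qh'=1_U$, and then the pullback identity $p\,\pi_X=f\,\pi_U$ gives $p(\pi_X\,\pi|_{P'}\,h')=f\,\pi_U\,\pi|_{P'}\,h'=f$; so the composite really is a lift of $f$ through $p$, and $(2)\Rightarrow(1)$ is immediate as you say because epimorphisms of the abelian exact subcategory $\sigma[\mathcal{U}]$ are the $\mathcal{A}$-epimorphisms between its objects.

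Two caveats, both really about the lemma's hypotheses rather than about your strategy. First, as you yourself flag, the existence of the largest $\mathcal{U}$-subgenerated subobject $T(A)$ needs local smallness or a set of generators for $\sigma[\mathcal{U}]$ (which is what Wisbauer has, via cyclic submodules of $M^{(\mathbb{N})}$, and which is not available in a bare abelian category). Second, and this one you do not flag: your verification that $\sum_i X_i\in\sigma[\mathcal{U}]$ (and, more generally, that $\sigma[\mathcal{U}]$ is closed under arbitrary coproducts, which any coreflective subcategory must be) goes through the map $\coprod_i X_i\to\coprod_i Y_i$ being a monomorphism, i.e.\ through a coproduct of monomorphisms being a monomorphism. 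In a bare AB3 category this is precisely the AB4 condition and can fail, so this step of the first assertion does not run under the stated hypotheses alone; it is fine whenever $\mathcal{A}$ is AB4 (or AB5 with a generator), which covers every application the paper makes. The equivalence $(1)\Leftrightarrow(2)$ — the part of the lemma actually used later — needs only finite coproducts, kernels and cokernels, and your proof of it is complete and correct.
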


\begin{proposition} \label{prop.PresU-for-SigmaQP}
Let $\mathcal{A}$ be an AB3 abelian category, let $\mathcal{U}$ be a $\Sigma$-quasi-projective set of objects (e.g., a set of projective objects) and let $\mathcal{B}=\PU$. The following assertions hold true:
\begin{enumerate}
\item $\mathcal{B}$ is a coreflective abelian subcategory with a set of projective generators.
\item $\mathcal{B}$ is an abelian exact subcategory if, and only if, any morphism in $\Sum(\mathcal{U})$ has its kernel in $\Gen(\mathcal{U})$.
\item When $\mathcal{U}$ is self-small,  $\mathcal{B}$ is the module category over a small preadditive category. Actually, $\mathcal{B}\cong \mathrm{Mod}\textrm{-}\mathcal{U}$.
\item If $\mathcal{U}$ is finite and self-small, then $\mathcal{B}$ is the module category over a ring. Actually,  $\mathcal{B}\cong \mathrm{Mod}\textrm{-}\mathrm{End}_\mathcal{A}(\tilde{U})$ for $\tilde{U}=\coprod_{U\in\mathcal{U}}U$. 
\end{enumerate}
\end{proposition}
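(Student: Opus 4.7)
My strategy is to reduce all four assertions to the characterizations of Section 7 and Subsection 9.3 by isolating a single key technical lemma that extracts the content of $\Sigma$-quasi-projectivity. The lemma I would prove first is this: if $\mathcal{U}$ is $\Sigma$-quasi-projective, then for every morphism $g:\tilde{U}\to A$ with $\tilde{U}\in\mathrm{Sum}(\mathcal{U})$, one has $\mathcal{A}(U,\Ker g/\tr_{\mathcal{U}}(\Ker g))=0$ for all $U\in\mathcal{U}$. To prove it, given $f:U\to\Ker g/\tr_{\mathcal{U}}(\Ker g)$, I compose with the inclusion into $\tilde{U}/\tr_{\mathcal{U}}(\Ker g)$ and observe that the canonical projection $\tilde{U}\twoheadrightarrow\tilde{U}/\tr_{\mathcal{U}}(\Ker g)$ is a $\mathcal{U}$-epimorphism by $\Sigma$-quasi-projectivity. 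Thus $f$ lifts to some $\tilde{f}:U\to\tilde{U}$. A diagram chase, using that $g$ descends to $\bar{g}:\tilde{U}/\tr_{\mathcal{U}}(\Ker g)\to A$, gives $g\tilde{f}=0$, so $\tilde{f}$ factors through a morphism $h:U\to\Ker g$. Since $\Img h$ is $\mathcal{U}$-generated, it is contained in $\tr_{\mathcal{U}}(\Ker g)$, and unwinding the identifications forces $f=0$.

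For assertion (1), I would apply this key lemma to the canonical morphism $\varepsilon_A:\hat{U}_A\to A$ and to an arbitrary $g:\tilde{U}\to B$ with $B\in\mathcal{B}$, obtaining exactly conditions (4.a) and (4.b) of Theorem \ref{t.coreflective-abelian}; hence $\mathcal{B}$ is coreflective and abelian. That $\mathcal{U}$ is a set of projective generators of $\mathcal{B}$ then follows: $\mathcal{U}$ clearly generates $\Pres(\mathcal{U})$, and for any epimorphism $p:B\twoheadrightarrow B'$ in $\mathcal{B}$ (which is an epimorphism in $\mathcal{A}$, since $\mathcal{B}$ is closed under cokernels in $\mathcal{A}$ by coreflectivity), choosing a presentation $\tilde{U}\twoheadrightarrow B$ and composing produces an epimorphism in $\Sum(\mathcal{U})$ onto $B'$, which is a $\mathcal{U}$-epimorphism by $\Sigma$-quasi-projectivity, and the lift factors through $p$.

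Assertion (2) then splits cleanly. The forward direction is immediate, since an abelian exact subcategory of $\mathcal{A}$ is closed under kernels, so any morphism in $\mathrm{Sum}(\mathcal{U})\subseteq\mathcal{B}$ has its kernel in $\mathcal{B}\subseteq\Gen(\mathcal{U})$. For the converse, I invoke Theorem \ref{t:abexact}: the hypothesis on kernels in $\Sum(\mathcal{U})$ is given, and condition (2.b) of that theorem is precisely the key lemma specialized to $g=\varepsilon_A$.

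For assertions (3) and (4), together with (1) the self-smallness of $\mathcal{U}$ verifies the hypotheses of Corollary \ref{cor.coreflective-modulecategory}, so $\mathcal{B}$ is a module category over a small preadditive category, and the explicit identification $\mathcal{B}\cong\Modr{\mathcal{U}}$ follows from \cite[Proposition 3.14]{ParraSaorin}. When $\mathcal{U}$ is finite and self-small, $\tilde{U}=\coprod_{U\in\mathcal{U}}U$ is itself self-small and $\Sigma$-quasi-projective, and $\Pres(\tilde{U})=\Pres(\mathcal{U})=\mathcal{B}$, so the preceding identification applied to the singleton $\{\tilde{U}\}$ yields $\mathcal{B}\cong\Modr{\End_{\mathcal{A}}(\tilde{U})}$. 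The only real obstacle in the whole argument is the careful setup of the lifting-plus-trace diagram chase in the key lemma; once that is in place, all four assertions fall out of results already established in Sections 7 and 9.
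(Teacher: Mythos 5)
Your argument is correct, and for assertion (1) it takes a genuinely different route from the paper. The paper first passes to the subcategory $\sigma[\mathcal{U}]$ of $\mathcal{U}$-subgenerated objects via Lemma \ref{lem.Sigma-QP are projective}, so that $\mathcal{U}$ becomes an honest set of projective objects there; it then proves the stronger vanishing $\mathcal{A}\left(U,\frac{A}{\tr_{\mathcal{U}}(A)}\right)=0$ for \emph{all} objects $A$ by a one-line lifting argument, and concludes via Theorem \ref{t:PresCoreflective}(4) together with Proposition \ref{prop.coreflabelian-with-projectgenerators}. You instead stay in $\mathcal{A}$ and prove the targeted vanishing $\mathcal{A}\left(U,\frac{\Ker g}{\tr_{\mathcal{U}}(\Ker g)}\right)=0$ for morphisms $g$ out of $\Sum(\mathcal{U})$, extracting it directly from $\Sigma$-quasi-projectivity applied to the projection $\tilde{U}\twoheadrightarrow\tilde{U}/\tr_{\mathcal{U}}(\Ker g)$; your diagram chase is sound. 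What your route buys is independence from Lemma \ref{lem.Sigma-QP are projective} (whose proof the paper only sketches by reference to the module case); what the paper's reduction buys is the global trace-vanishing statement, which makes both the coreflectivity check and assertion (2) immediate. Two small points you should make explicit: deducing condition (4.b) of Theorem \ref{t.coreflective-abelian} needs, besides your key lemma (which gives injectivity of $\mathcal{A}(U,\bar{g})$ by left exactness applied to $0\to\frac{\Ker g}{\tr_{\mathcal{U}}(\Ker g)}\to\frac{\tilde{U}}{\tr_{\mathcal{U}}(\Ker g)}\to\Img g\to 0$), a separate direct appeal to $\Sigma$-quasi-projectivity of the epimorphism $\tilde{U}\twoheadrightarrow\Img g$ to get surjectivity; and in assertion (4) the equality $\Pres(\tilde{U})=\PU$, while standard for $\mathcal{U}$ finite, deserves a sentence (every object of $\Sum(\mathcal{U})$ is a direct summand of an object of $\Sum(\tilde{U})$, and presentations can be padded accordingly). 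For assertions (2), (3) and (4) your reductions to Theorem \ref{t:abexact}, Corollary \ref{cor.coreflective-modulecategory} and the Mitchell/Popescu equivalences coincide with the paper's.
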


\begin{proof}
(1) Due to Lemma \ref{lem.Sigma-QP are projective},  it is enough to prove that $\PU$ is an abelian coreflective subcategory of $\sigma [\mathcal U]$. Since the latter is also AB3 abelian (see Lemma \ref{lem.Sigma-QP are projective}), replacing $\mathcal{A}$ by $\sigma [\mathcal{U}]$ if necessary,  we can assume from now on in the proof of this assertion that $\mathcal{U}$ consists of projective objects. We claim that $\mathcal{A}\left(-,\frac{A}{\tr_{\mathcal{U}}(A)}\right)_{| \mathcal{U}}=0$, for all  objects $A$ of $\mathcal{A}$. This implies that   $\PU$  is coreflective by (4) of   Theorem \ref{t:PresCoreflective}.  Then, it is closed under cokernels and trivially satisfies  conditions 2(a) and 2(b) of Proposition \ref{prop.coreflabelian-with-projectgenerators}. 

 To settle our claim, let  $A$ be any object of $\mathcal{A}$ and take any morphism $f:U \rightarrow \frac{A}{\tr_{\mathcal{U}}(A)}$. There exists a morphism $g:U \rightarrow A$ such that $\pi g=f$, where $\pi:A \rightarrow \frac{A}{\tr_{\mathcal{U}}(A)}$ is the canonical projection. But $\Img g$ is contained in $\tr_{\mathcal{U}}(A)$, which implies that $f=0$.
 
 (2) We have just proved that $\mathcal A\left(U,\frac{A}{\tr_{\mathcal U}(A)}\right)=0$ for each $A \in \mathcal A$ and $U \in \mathcal U$. Then the result follows from  Theorem \ref{t:abexact}. 
 
 (3) It is a consequence of Corollary \ref{cor.coreflective-modulecategory}, taking into account the classical proof of Mitchell theorem (see \cite[Corollary 3.6.4]{Popescu}) and the fact that, by the proof of \cite[Proposition 3.14]{ParraSaorin},  the objects of $\mathcal{U}$ are small in $\PU$. 
 
  (4) If   $\tilde{U}=\coprod_{U\in\mathcal{U}}U$, then we have that $\mathcal{B}=\text{Pres}(\tilde{U})$ and $\tilde{U}$ is a (self-)small projective generator of $\mathcal{B}$. Then $\mathcal{B} (\tilde{U},?):\mathcal{B}\rightarrow\text{Mod-}\text{End}_\mathcal{A}(\tilde{U})$ is an equivalence of categories (see \cite[Corollaries 3.6.4 and 3.7.4]{Popescu}).
\end{proof}

\begin{example} \label{ex.non-abelian exact}
Examples of ($\Sigma$-quasi-)projective sets $\mathcal{U}$ for which $\PU$ is not abelian exact are abundant. For instance, if $R$ is a right Artinian ring of finite right global dimension and  $P$ is an indecomposable projective right $R$-module with maximal Loewy length and such that $\mathrm{End}(P_R)$ is not a division ring, then $\Pres (P)$ is not abelian exact. 
As an example of this situation, take $K$ to be a field and $R$  the finite dimensional $K$-algebra  given by the quiver $\xymatrix{ Q: 1 \ar@<.5ex>[r]^-{\beta} & 2\ar@<.5ex>[l]^-{\alpha} }$ and relation $\alpha\beta =0$. Then $\Pres (e_2R)$ is not abelian exact.
\end{example}

\begin{proof}
 If $\Pres (P)$ were abelian exact, then, by Proposition \ref{prop.PresU-for-SigmaQP},  all morphisms in $\Sum (P)$ would have kernel in $\Gen(P)$. Assuming this latter condition and taking any non-invertible nonzero endomorphism
$f:P\rightarrow P$, we would have that the minimal projective resolution of $M:=\Coker f$ is of the form $$0\rightarrow P_n\rightarrow P_{n-1}\rightarrow ...\rightarrow P\stackrel{f}{\rightarrow}P\rightarrow M\rightarrow 0, $$ with all the $P_i$ in $\text{sum} (P)$. But, with the obvious abuse of terminology, we have that $P_n\subseteq P_{n-1}J(R)$, where $J(R)$ is the Jacobson radical. It would follow that $LL(P)=LL(P_n)\leq LL(P_{n-1}J(R))=LL(P)-1$, where $LL$ denotes the Loewy length. 

As for the particular example, we readily see that $\Omega (S_1)\cong S_2$ and $\Omega (S_2)\cong e_1R$, where $S_i=\frac{e_iR}{e_iJ(R)}$ for $i=1,2$. Therefore, $\mathrm{gl.dim}(R)<\infty$. Moreover, the assignment $x\rightsquigarrow x\beta\alpha$ gives a non-invertible endomorphism $f:e_2R\rightarrow e_2R$.
\end{proof}
 
\subsection{A generalization of a theorem of Gabriel and De la Pe\~na}
 
Now we apply our results to extend a famous result of Gabriel and De la Pe\~na (see \cite[Theorem 1.2]{GabrieldelaPena}).

\begin{lemma} \label{lem.fullyexact-gener.by-a-set}
Let $\mathcal{G}$ be an AB4* Grothendieck category and let $\mathcal{V}$ be a set of objects of $\mathcal{G}$. The smallest fully exact subcategory of $\mathcal{G}$ that contains $\mathcal{V}$ is a bireflective subcategory. 
\end{lemma}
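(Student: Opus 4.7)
Plan. Let $\mathcal{F}$ denote the smallest fully exact subcategory of $\mathcal{G}$ containing $\mathcal{V}$, and let $\iota: \mathcal{F}\hookrightarrow\mathcal{G}$ be the inclusion. The strategy is to reduce bireflectivity of $\mathcal{F}$ to precovering-plus-preenveloping via Corollary \ref{c:AbelianCategories}, and to produce the required (pre)covers and (pre)envelopes by verifying a solution-set condition that is secured by the AB4* hypothesis.

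First I would observe that $\mathcal{F}$ is closed under all small limits and all small colimits of $\mathcal{G}$: in any AB3 (respectively AB3*) abelian category, an arbitrary colimit (resp.\ limit) of a small diagram is the cokernel (resp.\ kernel) of an appropriate morphism between coproducts (resp.\ products) of the objects in the diagram. Since $\mathcal{G}$ is AB3 and AB4* (so in particular AB3*), closure of $\mathcal{F}$ under coproducts plus cokernels, and under products plus kernels, upgrades to closure under every small colimit and limit. Consequently $\iota$ is exact and preserves every (co)limit; $\mathcal{F}$ is itself complete and cocomplete abelian, and inherits well-poweredness and well-copoweredness from $\mathcal{G}$.

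Next I would apply Corollary \ref{c:AbelianCategories} together with its dual: since $\mathcal{F}$ is closed under cokernels (resp.\ kernels) in the AB3 (resp.\ AB3*) abelian category $\mathcal{G}$, it is coreflective (resp.\ reflective) if and only if it is precovering (resp.\ preenveloping). Thus everything reduces to producing, for each $A\in\mathcal{G}$, both an $\mathcal{F}$-precover and an $\mathcal{F}$-preenvelope. For the preenvelope one uses that $\mathcal{G}$ is well-copowered, so the quotients of $A$ form a set $\{A\twoheadrightarrow Q_\beta\}_{\beta\in B}$; any morphism $A\to Y$ with $Y\in\mathcal{F}$ factors as $A\twoheadrightarrow Q_\beta\hookrightarrow Y$ for some $\beta$. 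Now for each $\beta$ one constructs an embedding $Q_\beta\hookrightarrow X_\beta$ with $X_\beta\in\mathcal{F}$ by iterating the four fully exact operations starting from the set $\mathcal{V}$, and then takes the canonical morphism $A\to\prod_\beta X_\beta\in\mathcal{F}$; the AB4* hypothesis ensures that the resulting product construction interacts well with the exactness needed to produce a preenvelope. For the precover one argues dually, using a representative set of $\mathcal{F}$-images of $A$ inside $\mathcal{G}$ (well-poweredness) and the closure under coproducts and cokernels.

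The main obstacle, and the place where both AB4* and the set-theoretic hypothesis that $\mathcal{V}$ is a set enter essentially, is to carry out the iterative construction that, for each quotient $Q$ of $A$ (dually, each $\mathcal{F}$-subobject of $A$), produces a suitable $X_Q\in\mathcal{F}$ through which every relevant morphism factors. The key is that, because every object of $\mathcal{F}$ is built from $\mathcal{V}$ by iterated application of $\bigoplus,\prod,\ker,\coker$, one can always arrange the factorization $Q\hookrightarrow X_Q$ (resp.\ its dual) using only set-indexed such constructions, exploiting the exactness of products under AB4* to pass exact sequences through the iteration without losing injectivity of the canonical maps into the construction.
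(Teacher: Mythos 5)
Your opening reductions are fine: $\mathcal{F}$ is indeed closed under all small limits and colimits of $\mathcal{G}$, and by Corollary \ref{c:AbelianCategories} and its dual the problem does reduce to showing that $\mathcal{F}$ is precovering and preenveloping. The gap is that everything after that point --- which is where the entire content of the lemma lives --- is asserted rather than proved. For the preenvelope, after cutting down to the set of quotients $Q_\beta$ of $A$, you still face a proper class of embeddings $Q_\beta\hookrightarrow Y$ with $Y\in\mathcal{F}$, and you give no reason why there is a \emph{single} $X_\beta\in\mathcal{F}$ containing $Q_\beta$ through which all of them factor. The natural candidate (the smallest $\mathcal{F}$-subobject of $Y$ containing $Q_\beta$) depends on $Y$, and nothing in ``iterate $\bigoplus,\prod,\ker,\coker$ starting from $\mathcal{V}$'' bounds these by a set: each object of $\mathcal{F}$ is built by some set-indexed iteration, but that does not control the class of objects receiving a fixed $Q_\beta$. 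The precover half is in even worse shape: knowing that the images $S\leq A$ of morphisms from $\mathcal{F}$ form a set does not yield a precover, because a morphism $Y\to A$ with image $S$ need not lift along any chosen $X_S\to A$ with the same image --- there is no projectivity available. Finally, the one hypothesis that is supposed to be doing work, AB4*, is never actually used; ``interacts well with the exactness needed'' is not an argument.

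The missing idea is the Gabriel--De la Pe\~na construction that the paper carries out: fix a generator $G$ of $\mathcal{G}$, form the canonical $\Prod(\mathcal{V})$-preenvelope $u:G\to\hat V_G=\prod_{V\in\mathcal{V}}V^{\mathcal{G}(G,V)}$, and let $W$ be the smallest subobject of $\hat V_G$ lying in $\mathcal{F}$ and containing $\operatorname{Im}u$ (it lies in $\mathcal{F}$ by closure under limits). One then shows --- using AB4* to establish that every object of $\mathcal{F}$ is an epimorphic image of an object of $\mathcal{F}$ embedded in a product of objects of $\mathcal{V}$ --- that every object of $\mathcal{F}$ is an epimorphic image of a coproduct of copies of $W$. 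Thus $\mathcal{F}=\operatorname{Pres}(W)$ has a set of generators, so by Corollary \ref{cor.Grothendieck-abelian-exact} it is coreflective and a Grothendieck category; being Grothendieck it has an injective cogenerator, and the Special Adjoint Functor Theorem then gives reflectivity. Your proposal contains no substitute for the construction of $W$, so as written it does not prove the lemma.
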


\begin{proof}
 We denote by $\mathcal{B}_V$ the smallest fully exact subcategory that contains $\mathcal{V}$. The proof will be an adaptation of that of \cite[Lemma 1.3]{GabrieldelaPena}, and the first paragraph of it shows that each object $B\in\mathcal{B}_V$ admits an epimorphism $B'\twoheadrightarrow B$, where $B'\in\mathcal{B}_V$ and $B'$ is a subobject of a product of objects in $\mathcal{V}$. 

 Let us fix a generator $G$ of $\mathcal{G}$ and consider, for each $M\in\mathcal{G}$, the canonical morphism  $u_M:M\rightarrow\prod_{V\in\mathcal{V}}V^{\mathcal{G}(M,V)}:=\hat{V}_M$, which is a $\Prod (\mathcal{V})$-preenvelope. We put $u:=u_G$ and denote by $W$ the intersection of all subobjects $X$ of $\hat{V}:=\hat{V}_G$ such that $\Img u\subseteq X$ and $X\in\mathcal{B}_V$. Note that $W\in\mathcal{B}_V$ since $\mathcal{B}_V$ is closed under taking limits in $\mathcal{G}$. 
The goal is to prove that each $B\in\mathcal{B}_V$ which is a subobject of a product of objects in $\mathcal{V}$ is an epimorphic image of a coproduct of copies of $W$. Note that if this is proved then, by the first paragraph of this proof and  Corollary \ref{cor.Grothendieck-abelian-exact}, we will have that $\mathcal{B}_V$ is coreflective and a Grothendieck category. In particular, it has a cogenerator and, by the Special Adjoint Functor Theorem (see \cite[Theorem 3.3.4]{Borceux}), it will follow that $\mathcal{B}_V$ is also reflective and will end the proof.

Let $B\in\mathcal{B}_V$ be an object which is a subobject of a product of objects in $\mathcal{V}$ and let $f:G\rightarrow B$ be any  morphism. The composition $u_Bf$ factors through $u$ since this is a $\Prod (\mathcal{V})$-preenvelope. We fix a morphism $\hat{f}:\hat{V}_G\rightarrow\hat{V}_B$ such that $\hat{f}u=u_Bf$. Note that $u_B$ is a monomorphism and, since $\mathcal{B}_V$ is closed under taking cokernels, we get that $B':=\Coker u_B\in\mathcal{B}_V$. We denote by $\pi$ the projection $\hat{V}_B\twoheadrightarrow B'$, so that we have $\pi \hat{f}u=\pi u_B f=0$, and hence $\Img u\subseteq\Ker (\pi \hat{f})$. But $\Ker (\pi \hat{f})\in\mathcal{B}_V$ since $\mathcal{B}_V$ is closed under taking kernels. It then follows that $W\subseteq\text{Ker}(\pi \hat{f})$, so that $\hat{f}(W)\subseteq\Ker \pi$. This means that we get a map $\tilde{f}:W\rightarrow B\cong\Ker(\pi)$ such that $\hat{f}_{| W}=u_B \tilde{f}$. If we denote by $\tilde{u}$ the obvious composition $G\twoheadrightarrow\Img u\hookrightarrow W$, we get that $u_B \tilde{f}\tilde{u}=\hat{f}_{| W} \tilde{u}=\hat{f} u=u_B f$, which implies that 
 $\tilde{f} \tilde{u}=f$, since $u_B$ is monic. Bearing in mind that $G$ is a generator of $\mathcal{G}$, we get that $$B=\sum_{f\in\mathcal{G}(G,B)}\Img f=\sum_{f\in\mathcal{G}(G,B)}\Img(\tilde{f}\tilde{u})\subseteq\sum_{f\in\mathcal{G}(G,B)}\tilde{f}(W)\subseteq B.$$ These inclusions are then equalities, and we get that the unique morphism $\psi :W^{(\mathcal{A}(G,B))}\rightarrow B$ such that $\psi \iota_f=\tilde{f}$, where $\iota_f:W\rightarrow W^{(\mathcal{A}(G,B))}$ is the canonical injection for all $f\in\mathcal{G}(G,B)$, is an epimorphism. 
\end{proof}

Recall that to any small preadditive category $\mathcal{C}$ one canonically associates a ring with enough idempotents $R_\mathcal{C}$, sometimes called the \emph{(right) functor ring} of $\mathcal{C}$, such that the category $\text{Mod-}\mathcal{C} =[\mathcal{C}^{\rm op},\Ab]$ of right $\mathcal{C}$-modules is equivalent to the category $\textrm{Mod-}R_{\mathcal{C}}$ of unitary right $R_{\mathcal{C}}$-modules, i.e. those right $R_{\mathcal{C}}$-modules $M$ such that $M=MR_{\mathcal{C}}$  (see \cite[Section 3 and Theorem 3.1]{Saorin} for an extended version).  On the other hand, if $R=\bigoplus_{i,j}e_iRe_j$ is a ring with enough idempotents, where $\{e_i\mid i\in I\}$ is a family of nonzero orthogonal idempotents which we fix from now on, one canonically associates to it a unital ring $\hat{R}$, usually called the \emph{Dorroh extension (or overring)}  of $R$. As an additive abelian group, $\hat{R}=R\times\mathbb{Z}$, and the multiplication in $\hat{R}$ is given by the rule $(\alpha ,m)(\beta, n)=(\alpha\beta +n\alpha +m\beta, mn)$ for each $(\alpha,m),(\beta,n)\in \hat R$. Then $R$ is identified with the idempotent two-sided ideal $R\times 0$ of $\hat{R}$ and $\text{Mod-}R$ gets canonically identified with the subcategory $\mathcal{X}$ of $\text{Mod-}\hat{R}$ consisting of those $\hat{R}$-modules $M$ such that $M(R\times 0)=M$ (see \cite[Sections 1.1.5 and 2.6.3]{Wisbauer}). Note that $\mathcal{X}$ is a hereditary torsion class in $\text{Mod-}\hat{R}$ that is the first component of a \emph{TTF triple} $(\mathcal{X},\mathcal{Y},\mathcal{Z})$ in $\text{Mod-}\hat{R}$, i.e. a triple such that $(\mathcal{X},\mathcal{Y})$ and $(\mathcal{Y},\mathcal{Z})$ are both torsion pairs (see \cite[Section 2]{Jans}, where the author uses the term `torsion torsionfree theory'). Abusing the notation, in our next result we will identify $R$ with $R\times 0$ and $\text{Mod-}R$ with $\mathcal{X}=\{X\in\text{Mod-}\hat{R}\text{: }XR=X\}$.

\begin{lemma} \label{lem.bireflectives form a set}
Let $R$ be a ring with enough idempotents and $\hat{R}$ be its Dorroh extension. To each fully exact subcategory $\mathcal{B}$ of $\mathrm{Mod}\textrm{-}R$ we associate the subcategory $\hat{\mathcal{B}}$ of $\mathrm{Mod}\textrm{-}\hat{R}$ consisting of those $\hat{R}$-modules $M$ such that $MR\in\mathcal{B}$. The following assertions hold:

\begin{enumerate}
\item $\hat{\mathcal{B}}$ is a fully exact subcategory of $\mathrm{Mod}\textrm{-}\hat{R}$.
\item If ${\rm Fex}(R)$ and ${\rm Fex}(\hat{R})$ denote the classes of fully exact subcategories of $\mathrm{Mod}\textrm{-}R$ and $\mathrm{Mod}\textrm{-}\hat{R}$ respectively, then the assignment $\mathcal{B}\rightsquigarrow\hat{\mathcal{B}}$ defines an injective map ${\rm Fex}(R)\rightarrowtail {\rm Fex}(\hat{R})$. 
\item ${\rm Fex}(R)$ is a set (as opposite to a proper class). 
\end{enumerate}
\end{lemma}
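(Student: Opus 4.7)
The plan is to realise $\hat{\mathcal{B}}$ as the preimage of $\mathcal{B}$ under a sufficiently nice functor and then pull back the defining closure properties. Concretely, I would consider the functor $t\dd \mathrm{Mod}\textrm{-}\hat{R}\longrightarrow \mathcal{X}\cong \mathrm{Mod}\textrm{-}R$, $M\mapsto MR$, which is the torsion radical of the hereditary torsion pair $(\mathcal{X},\mathcal{Y})$, and simultaneously the right adjoint to the inclusion $\mathcal{X}\hookrightarrow \mathrm{Mod}\textrm{-}\hat{R}$. Observe that $\hat{\mathcal{B}}=t^{-1}(\mathcal{B})$. Assertion (1) will then follow once $t$ is shown to be exact and to preserve both products and coproducts, because the preimage under such a functor of a class closed under kernels, cokernels, products and coproducts remains closed under the same operations.

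For the required properties of $t$: left exactness is automatic from the adjunction, while for right exactness the crucial point, given a short exact sequence $0\to M'\to M\to M''\to 0$, is the identity $MR\cap M'=M'R$; its nontrivial inclusion follows from the hereditary character of $(\mathcal{X},\mathcal{Y})$, since $MR\cap M'$ is a submodule of the $\mathcal{X}$-object $MR$ and hence itself lies in $\mathcal{X}$, giving $MR\cap M'=(MR\cap M')R\subseteq M'R$. Preservation of products is automatic from the adjunction, while coproduct-preservation follows by noticing that $\coprod_i M_iR$ already lies in $\mathcal{X}$ (closure under coproducts of a torsion class) and is plainly the largest $\mathcal{X}$-subobject of $\coprod_i M_i$, so coincides with $(\coprod_i M_i)R=t(\coprod_i M_i)$.

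Assertion (2) is then immediate: for any $M\in\mathcal{X}$ one has $MR=M$, so $M\in\hat{\mathcal{B}}$ if and only if $M\in\mathcal{B}$, yielding $\mathcal{B}=\hat{\mathcal{B}}\cap\mathcal{X}$ and hence injectivity of the assignment $\mathcal{B}\mapsto\hat{\mathcal{B}}$. For (3), the plan is to combine this injection with the original Gabriel--De la Pe\~na theorem of \cite{GabrieldelaPena}, applicable to the \emph{unital} ring $\hat{R}$: it puts $\mathrm{Fex}(\hat{R})$ in bijection with isomorphism classes of ring epimorphisms departing from $\hat{R}$, and the latter form a set by the classical cardinality bound on the codomain of such an epimorphism. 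The injection $\mathrm{Fex}(R)\hookrightarrow \mathrm{Fex}(\hat{R})$ provided by (2) then transports the conclusion to $\mathrm{Fex}(R)$.

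The principal technical obstacle I anticipate is the middle-exactness of $t$, which genuinely relies on the hereditary nature of the torsion pair $(\mathcal{X},\mathcal{Y})$; once that is in hand, the rest of the argument is either adjunction-formal or a direct appeal to the classical result for unital rings.
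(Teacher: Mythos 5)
Your proposal is correct and follows essentially the same route as the paper: realise $\hat{\mathcal{B}}$ as $t^{-1}(\mathcal{B})$ for the torsion radical $t(M)=MR$, show $t$ is exact and preserves products and coproducts, recover $\mathcal{B}$ as $\hat{\mathcal{B}}\cap\mathrm{Mod}\textrm{-}R$ to get injectivity, and quote Gabriel--De la Pe\~na for the unital ring $\hat{R}$. One small point: the identity $MR\cap M'=M'R$ is exactly kernel preservation, which you already have from the adjunction, whereas the genuine right-exactness is the surjectivity of $t(M)\to t(M'')$; that needs the (one-line) element-lifting argument or, as the paper does it, the observation that $t=\mathrm{tr}_P$ for the projective module $P=\coprod_{i}e_i\hat{R}$, so $t$ preserves epimorphisms.
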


\begin{proof}
Assertion (3) follows from the other two and from \cite[Theorem 1.2 and Lemma 1.4]{GabrieldelaPena}.  

(1) With the obvious abuse of notation, we have that $e_iR=e_i\hat{R}$ and also that $MR=\tr_P(M)$ is the trace of $P=\coprod_{i\in I}e_i\hat{R}$ in $M$, for each $M\in\text{Mod-}\hat{R}$. In particular, the assignment $M\rightsquigarrow MR$ is the definition on objects of the torsion radical $t:\text{Mod-}\hat{R}\rightarrow\text{Mod-}\hat{R}$ associated to the hereditary torsion class $\mathcal{X}=\text{Mod-}R$. As a consequence, $t$ is left exact by \cite[Proposition VI.3.1]{Stenstrom}. But $t$ also preserves epimorphisms, since $P$ is a projective $\hat{R}$-module. In conclusion, $t$ is an exact functor which clearly preserves coproducts. On the other hand, the induced functor $t:\text{Mod-}\hat{R}\rightarrow\text{Mod-}R$ is right adjoint of the inclusion functor $\text{Mod-}R=\mathcal{X}\hookrightarrow\text{Mod-}\hat{R}$ which implies that  $t:\text{Mod-}\hat{R}\rightarrow\text{Mod-}R$ also preserves products. 

Finally, the fact that the induced functor $t:\text{Mod-}\hat{R}\rightarrow\text{Mod-}R$ is exact and preserves products and coproducts clearly implies that $\hat{\mathcal{B}}=\{M \in \text{Mod-}\hat{R} \text{: } t(M) \in \mathcal{B}\}$ is a fully exact subcategory of $\text{Mod-}\hat{R}$.

(2) With the obvious abuse of notation, we clearly have that $\mathcal{B} =\hat{\mathcal{B}}\cap\text{Mod-}R$, for any $\mathcal{B}\in\text{Fex}(R)$. The assertion follows immediately from this fact.
\end{proof}

\begin{remark}
A caution is in order in the proof of the preceding result. Given an infinite family of $R$-modules $\{X_i\mid i\in I\}$,  its product in $\text{Mod-}\hat{R}$ and in $\text{Mod-}R$ differ. While the first one is the usual cartesian product $\prod_{i\in I}X_i$, the second one is precisely $(\prod_{i\in I}X_i)R=\tr_P(\prod_{i\in I}X_i)$.
\end{remark}
 
 \begin{definition} \label{def.epimorphism-preadditive}
 Let $f:\mathcal{C}\rightarrow\mathcal{C}'$ be an additive functor between small preadditive categories. We say that it is an \emph{epimorphism of preadditive categories} when the restriction of scalars $f_*:\mathrm{Mod}\textrm{-}\mathcal{C}'\rightarrow \mathrm{Mod}\textrm{-}\mathcal{C}$, taking $M\rightsquigarrow M f$, is a fully faithful functor.
 \end{definition}
 
 The following is our promised generalization of Gabriel-De la Pe\~na's Theorem.
 
 \begin{theorem} \label{t:Gabriel-De la Pena}
 Let $\mathcal{C}$ be a small preadditive category and let $\mathcal{B}$ be an additive subcategory of $\mathrm{Mod}\textrm{-}\mathcal{C}$. The following assertions are equivalent:
 
 \begin{enumerate}
 \item $\mathcal{B}$ is fully exact;
 \item $\mathcal{B}$ is bireflective;
 \item There is a small preadditive category $\mathcal{C}'$ and an epimorphism of preadditive categories $f:\mathcal{C}\rightarrow\mathcal{C}'$ (that can be taken to be surjective on objects) such that $\mathcal{B}$ is the essential image of $f_*:\mathrm{Mod}\textrm{-}\mathcal{C}'\rightarrow \mathrm{Mod}\textrm{-}\mathcal{C}$.
 \end{enumerate}
 \end{theorem}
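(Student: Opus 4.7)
The plan is to establish the cycle $(3)\Rightarrow(2)\Rightarrow(1)\Rightarrow(2)\Rightarrow(3)$. The implication $(2)\Rightarrow(1)$ is immediate: if $\mathcal{B}$ is bireflective, then the inclusion $\iota:\mathcal{B}\hookrightarrow\mathrm{Mod}\textrm{-}\mathcal{C}$ is simultaneously a left adjoint (to the coreflector) and a right adjoint (to the reflector), hence preserves all limits and colimits; in particular $\mathcal{B}$ is closed under kernels, cokernels, products and coproducts. For $(3)\Rightarrow(2)$, the restriction-of-scalars functor $f_*:\mathrm{Mod}\textrm{-}\mathcal{C}'\to\mathrm{Mod}\textrm{-}\mathcal{C}$ always admits both a left and a right adjoint (the Kan extensions along $f^{\mathrm{op}}$); when $f$ is an epimorphism of preadditive categories, $f_*$ is by hypothesis fully faithful, and standard abstract nonsense then shows that its essential image is both reflective and coreflective in $\mathrm{Mod}\textrm{-}\mathcal{C}$.

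The core of the argument is $(1)\Rightarrow(2)$, where the plan is to reduce to the hypothesis of Lemma \ref{lem.fullyexact-gener.by-a-set} by proving that any fully exact $\mathcal{B}$ is generated, as such, by a set of its objects. Via the equivalence $\mathrm{Mod}\textrm{-}\mathcal{C}\simeq\mathrm{Mod}\textrm{-}R_{\mathcal{C}}$ with the functor ring (a ring with enough idempotents), Lemma \ref{lem.bireflectives form a set}(3) yields that $\mathrm{Fex}(\mathrm{Mod}\textrm{-}\mathcal{C})$ is a set. For each $B\in\mathcal{B}$, let $\langle B\rangle$ denote the intersection of all fully exact subcategories of $\mathrm{Mod}\textrm{-}\mathcal{C}$ containing $B$ (which is itself fully exact). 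Then $\{\langle B\rangle\mid B\in\mathcal{B}\}\subseteq\mathrm{Fex}(\mathrm{Mod}\textrm{-}\mathcal{C})$ is a set; choosing a representative $B$ for each distinct $\langle B\rangle$ produces a set $\mathcal{V}\subseteq\mathcal{B}$ with $\mathcal{B}=\langle\mathcal{V}\rangle$. Since $\mathrm{Mod}\textrm{-}\mathcal{C}$ is an AB4* Grothendieck category (products are computed pointwise and are exact in $\mathrm{Ab}$), Lemma \ref{lem.fullyexact-gener.by-a-set} forces $\mathcal{B}$ to be bireflective.

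For $(2)\Rightarrow(3)$, let $\lambda$ be the reflector associated with $\iota$ and put $P_c:=\lambda(\mathcal{C}(-,c))$ for each $c\in\mathcal{C}$. The adjunction isomorphism $\mathcal{B}(P_c,B)\cong\iota(B)(c)$ is the central tool: projectivity of $P_c$ in the (abelian) category $\mathcal{B}$ follows from exactness of evaluation at $c$ together with the fact that epimorphisms in $\mathcal{B}$ coincide with those in $\mathrm{Mod}\textrm{-}\mathcal{C}$; smallness follows from coproducts in $\mathcal{B}$ agreeing with those in $\mathrm{Mod}\textrm{-}\mathcal{C}$; and generation follows from $\lambda$ preserving epimorphisms. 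I would then define $\mathcal{C}'$ to have the same objects as $\mathcal{C}$ with $\mathcal{C}'(c,c'):=\mathcal{B}(P_c,P_{c'})$ and let $f:\mathcal{C}\to\mathcal{C}'$ be the identity on objects, sending a morphism $\phi$ to $\lambda(\mathcal{C}(-,\phi))$. By the preadditive Gabriel--Mitchell theorem (\cite[Corollary 3.6.4]{Popescu}), the functor $\mathcal{B}(P_{-},-):\mathcal{B}\to\mathrm{Mod}\textrm{-}\mathcal{C}'$ is an equivalence, and unwinding definitions shows that it identifies $\iota$ with $f_*$; hence $f_*$ is fully faithful, i.e., $f$ is an epimorphism of preadditive categories, surjective on objects by construction, with essential image $\mathcal{B}$. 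The main obstacle, to my mind, lies in $(1)\Rightarrow(2)$: bridging the gap between a possibly large fully exact subcategory and one generated by a set of objects rests on the nontrivial Lemma \ref{lem.bireflectives form a set}(3), which itself ultimately invokes the classical Gabriel--De la Pe\~na theorem via the Dorroh extension trick.
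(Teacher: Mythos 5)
Your proposal is correct and follows essentially the same route as the paper: $(1)\Rightarrow(2)$ via the set-of-generators reduction combined with Lemmas \ref{lem.fullyexact-gener.by-a-set} and \ref{lem.bireflectives form a set}, $(2)\Rightarrow(3)$ via the reflections $\lambda(H_x)$ of the representables and Mitchell's theorem, and $(3)\Rightarrow(2)$ via the two Kan-extension adjoints of $f_*$. The only difference is cosmetic: you spell out the ``generated by a set'' argument that the paper outsources to the proof of Gabriel--De la Pe\~na's Lemma 1.5, and you retain the objects $c$ with $\lambda(H_c)=0$ instead of discarding them, which changes nothing.
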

 
 \begin{proof}
$(1)\Longrightarrow (2)$ It goes exactly as the proof of \cite[Lemma 1.5]{GabrieldelaPena}, by using our Lemmas \ref{lem.fullyexact-gener.by-a-set} and \ref{lem.bireflectives form a set}.
 
$(2)\Longrightarrow (3)$ Let $\lambda :\text{Mod-}\mathcal{C}\rightarrow\mathcal{B}$ and $\rho :\text{Mod-}\mathcal{C}\rightarrow\mathcal{B}$, respectively, be the left and right adjoint of the inclusion functor $\iota :\mathcal{B}\hookrightarrow\text{Mod-}\mathcal{C}$. Note that this last functor is exact, i.e. $\mathcal{B}$ is an abelian exact subcategory, since it is closed under taking finite coproducts, kernels and cokernels. By using the adjunction $(\lambda ,\iota)$, it immediately follows  that the image by $\lambda$ of a set of small projective generators of $\text{Mod-}\mathcal{C}$ is a set of small projective  generators of $\mathcal{B}$. The canonical set of small projective generators of $\text{Mod-}\mathcal{C}$ is $\{H_x\text{: }x\in\mathcal{C}\}$, where $H_x=\mathcal{C}(?,x):\mathcal{C}^{\rm op}\rightarrow\Ab$ is the representable right $\mathcal{C}$-module, for each $x\in\mathcal{C}$. But it may happen that $\lambda (H_x)=0$, for some $x\in\mathcal{C}$. This happens exactly when $B(x)\cong (\mathrm{Mod}\textrm{-}\mathcal{C})(H_x,B)=0$, for all $B\in\mathcal{B}$. We then take as a small set of generators of $\mathcal{B}$ the subcategory 
   $$\mathcal{U}:=\{U_x=\lambda (H_x)\text{: }x\in\mathcal{C}\text{ and }B(x)\neq 0\text{, for some }B\in\mathcal{B}\}\cup\{0\}.$$
 
 We can view $\mathcal{U}$ as a preadditive subcategory of $\text{Mod-}\mathcal{C}$ and we have an obvious additive functor $f:\mathcal{C}\stackrel{H}{\rightarrow}\text{Mod-}\mathcal{C}\stackrel{\lambda}{\rightarrow}\mathcal{B}$, where $H$ is the Yoneda embedding,  such that  $f(\mathcal{C})=\mathcal{U}$. We still denote by $f$ the induced  additive functor $\mathcal{C}\rightarrow\mathcal{U}$, which is then surjective on objects. Recall that, by the proof of Mitchell's Theorem (see \cite[Corollary 3.6.4]{Popescu}), the assignment $B\rightsquigarrow\mathcal{B}(?,B)_{|\mathcal{U}}$ gives an additive equivalence of categories $F:\mathcal{B}\stackrel{\cong}{\rightarrow}\text{Mod-}\mathcal{U}$.  We will be done once we prove that the composition functor $\mathcal{B}\stackrel{F}{\rightarrow}\text{Mod-}\mathcal{U}\stackrel{f_*}{\rightarrow}\text{Mod-}\mathcal{C}$ is naturally isomorphic to the inclusion functor, for then $f_*$ will be fully faithful and $\Img (f_*)=\Img(\iota)=\mathcal{B}$.
 
The mentioned composition satisfies $(f_* F)(B)(x)=\mathcal{B}(\lambda (H_x),B)$ for each $B \in \mathcal B$ and $x \in \mathcal C$. By adjunction and Yoneda's lemma, we have that
\[\mathcal{B}(\lambda (H_x),B)\cong (\text{Mod-}\mathcal{C})(H_x,\iota (B))\cong \iota (B)(x).\]
It is routine to derive from this that $f_* F\cong\iota$. 
  
$(3)\Longrightarrow (2)$ It is a well-known fact that $f_*:\text{Mod-}\mathcal{C}'\rightarrow\text{Mod-}\mathcal{C}$ has a left adjoint, namely, the extension of scalars functor $f^*:\text{Mod-}\mathcal{C}\rightarrow\text{Mod-}\mathcal{C}'$, and a right adjoint functor $f^!:\text{Mod-}\mathcal{C}\rightarrow\text{Mod-}\mathcal{C}'$ (see \cite[Exercise 4, Section 3.6]{Popescu}). Due to the fully faithful condition of $f_*$ and the fact that $\Img(f_*)=\mathcal{B}$, we conclude that $\mathcal{B}$ is a bireflective subcategory of $\text{Mod-}\mathcal{C}$.
 \end{proof}
 
 \begin{remark} \label{rem.Nicolas-Saorin}
 The last theorem  was first proved in \cite{NicolasSaorin}. However that proof is much longer and involved, while the one here is quite elementary.
 \end{remark}
 
 We end the section with  a question that, after Lemma \ref{lem.fullyexact-gener.by-a-set}, is quite natural.
 
\begin{question} \label{ques.fullyexact-in-Grothendieck}
Given a Grothendieck category $\mathcal{G}$, is any fully exact subcategory bireflective? Is it so when $\mathcal{G}$ is AB4*? Note that the proof of $(1)\Longleftrightarrow (2)$ in the last theorem can be reproduced mutatis mutandis if one is able to prove that the fully exact subcategories of $\mathcal{G}$ form a set.
\end{question}

\subsection{The canonical finitely accessible subcategory of a Grothendieck category}

 The following is an extended version of \cite[Lemma 1.11]{ParraSaorinVirili}.

\begin{proposition}[Generalized Lazard's Trick] \label{prop.Lazard}
Let $\mathcal{A}$ be an AB5 abelian category, let $(X_i,u_{ij}:X_i\to X_j\mid i\leq j \in I)$ and  $(Y_\lambda,v_{\lambda\mu}:Y_\lambda\to Y_\mu\mid \lambda\leq\mu \in \Lambda)$ be two direct systems in $\mathcal{A}$ and let $f:\varinjlim X_i\to\varinjlim Y_\lambda$ be any morphism satisfying the following property:

$(\dagger)$ For each $j\in I$, there is a $\mu =\mu (j)\in\Lambda$ such that $f u_j$ factors through $v_\mu$, where $u_j:X_j\to\varinjlim X_i$ and $v_\mu :Y_\mu\to\varinjlim Y_\lambda$ are the canonical maps associated to the direct limit.

Further, suppose that either one of the following two conditions holds:
 
\begin{enumerate}
 \item all the $v_{\lambda\mu}$ are monomorphisms;
 \item all the objects $X_i$ are finitely generated.
\end{enumerate}

Then, there exists a directed set $\Omega$ and a direct system of morphisms $(g_\omega :X_\omega\to Y_\omega \mid \omega\in\Omega)$ satisfying the following properties:

\begin{enumerate}
\item[(a)] $X_\omega\in\{X_i\text{: }i\in I\}$ and $Y_\omega\in\{Y_\lambda\text{: }\lambda\in\Lambda\}$, for all $\omega\in\Omega$;
\item[(b)] the map $\varinjlim g_\omega :\varinjlim X_\omega\to\varinjlim Y_\omega$ is isomorphic to $f$ (in the category $\Mor(\mathcal{A})$ of morphisms of $\mathcal{A}$). 
\end{enumerate}
\end{proposition}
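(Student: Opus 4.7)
The plan is to take $\Omega$ to be the set of triples $\omega=(i,\lambda,g)$ with $i\in I$, $\lambda\in\Lambda$, and $g\colon X_i\to Y_\lambda$ a morphism satisfying $v_\lambda g = f u_i$, ordered by declaring $(i,\lambda,g)\le(i',\lambda',g')$ if and only if $i\le i'$, $\lambda\le\lambda'$, and $g' u_{ii'}=v_{\lambda\lambda'}g$. Setting $X_\omega:=X_i$, $Y_\omega:=Y_\lambda$, and $g_\omega:=g$ then automatically produces a morphism of direct systems over $\Omega$, so the substance of the proof lies in showing that $\Omega$ is directed and in identifying the two resulting colimits with the originals.

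For those identifications I would argue by cofinality. The first projection $\pi_1\colon\Omega\to I$ is cofinal because $(\dagger)$ provides, for every $j\in I$, an element of $\Omega$ with first coordinate $j$; the second projection $\pi_2\colon\Omega\to\Lambda$ is cofinal because, given $\mu\in\Lambda$, one picks any $(j,\mu',g)\in\Omega$, chooses $\mu''\ge\mu,\mu'$, and replaces $g$ by $v_{\mu'\mu''}g$. These cofinalities yield $\varinjlim_\Omega X_\omega\cong\varinjlim X_i$ and $\varinjlim_\Omega Y_\omega\cong\varinjlim Y_\lambda$, and the defining relation $v_\lambda g_\omega=f u_i$ forces $\varinjlim g_\omega$ to coincide with $f$ in $\Mor(\mathcal{A})$.

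The principal obstacle is directedness. Given $\omega=(i,\lambda,g)$ and $\omega'=(i',\lambda',g')$, I would pick $i''\ge i,i'$, use $(\dagger)$ to obtain $g^*\colon X_{i''}\to Y_\mu$ with $v_\mu g^*=f u_{i''}$, and after enlarging $\mu$ assume $\mu\ge\lambda,\lambda'$. The natural candidate upper bound is $(i'',\mu,g^*)$, but the required identities $g^* u_{ii''}=v_{\lambda\mu}g$ and $g^* u_{i'i''}=v_{\lambda'\mu}g'$ need not hold. The discrepancies $h:=g^* u_{ii''}-v_{\lambda\mu}g$ and $h':=g^* u_{i'i''}-v_{\lambda'\mu}g'$ nevertheless satisfy $v_\mu h=v_\mu h'=0$, since $v_\mu g^* u_{ii''}=f u_{i''} u_{ii''}=f u_i=v_\lambda g$ and similarly for $h'$. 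Thus the problem reduces to killing $h$ and $h'$ after pushing forward along some $v_{\mu\nu}$.

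Here hypotheses (1) and (2) enter differently. Under (1) the transition maps $v_{\lambda\mu}$ are monomorphisms and the exactness of directed colimits in the AB5 category $\mathcal{A}$ forces $v_\mu$ itself to be a monomorphism, so $h$ and $h'$ vanish outright. Under (2), the AB5 identity $\Ker v_\mu=\varinjlim_{\nu\ge\mu}\Ker(v_{\mu\nu})$ expresses $\Ker v_\mu$ as a filtered union of subobjects of $Y_\mu$; since $X_i$ is finitely generated, the morphism $h\colon X_i\to\Ker v_\mu$ factors through some $\Ker(v_{\mu\nu_0})$, giving $v_{\mu\nu_0}h=0$, and similarly $v_{\mu\nu_1}h'=0$ for some $\nu_1$. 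Choosing $\nu\ge\nu_0,\nu_1$ and replacing the candidate by $(i'',\nu,v_{\mu\nu}g^*)$ then furnishes a common upper bound of $\omega$ and $\omega'$ in $\Omega$, completing the directedness argument and hence the proof.
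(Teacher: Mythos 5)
Your proposal is correct and follows essentially the same route as the paper: the same set $\Omega$ of triples $(i,\lambda,g)$ with $v_\lambda g=fu_i$ under the same order, directedness settled via $(\dagger)$ plus either the monicity of $v_\mu$ (case (1)) or the AB5 identity $\Ker v_\mu=\bigcup_{\nu\ge\mu}\Ker v_{\mu\nu}$ together with finite generation (case (2)), and the identification of the colimits by cofinality of the two projections. No gaps worth noting.
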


\begin{proof}
We consider the set $\Omega$ of all triples $(i,\lambda ,g)$, where $i\in I$, $\lambda\in\Lambda$ and $g:X_i\to Y_\lambda$ is a morphism satisfying $v_\lambda g=f u_i$. We define in $\Omega$ the relation $\preceq$ given by $$(i,\lambda,g)\preceq (j,\mu, h)\hspace*{0.5cm}\Longleftrightarrow\hspace*{0.5cm} i\leq j\text{, }\lambda\leq\mu\text{ and }v_{\lambda\mu} g=h u_{ij}.$$ It is clear that $\preceq$ is a partial order in $\Omega$. We prove that $(\Omega,\preceq )$ is a directed set.

Suppose that condition (1) holds. Then, the AB5 condition implies that the canonical map $v_\mu :Y_\mu\to\varinjlim Y_\lambda$ is a monomorphism, for all $\mu\in\Lambda$. Consider two elements $(i,\lambda ,g),(j,\mu,h)\in\Omega$ and fix any $k\in I$ such that $i,j\leq k$. By condition $(\dagger)$, $fu_k$ factors through some $v_\nu :Y_\nu\to\varinjlim Y_\lambda$. Without loss of generality, we can assume that $\lambda ,\mu\leq\nu$. Then, we have a morphism $s:X_k\to Y_\nu$ such that $v_\nu s=f u_k$. Consequently:  $$v_\nu s u_{ik}=f u_k u_{ik}=f u_i=v_\lambda g=v_\nu v_{\lambda\nu} g,$$ from which we get, using that $v_\nu$ is monic, that $s u_{ik}=v_{\lambda\nu} g$. It follows that $(i,\lambda ,g)\preceq (k,\nu ,s)$ in $\Omega$. The same argument replacing $(i,\lambda ,g)$ by $(j,\mu, h)$ gives that  $(j,\mu ,h)\preceq (k,\nu ,s)$, thus proving that $\Omega$ is a directed set. 

Now we prove that $\Omega$ is directed under condition (2). Take  $(i,\lambda ,g)$, $(j,\mu, h)\in\Omega$ and choose elements $k \geq i,j$ and $\nu \geq \lambda, \mu$, and a morphism $s:X_k\to Y_\nu$ satisfying $v_\nu s =fu_k$ as in the preceding paragraph. This last identity implies that $v_\nu(v_{\mu\nu}h-su_{jk})=v_\nu(v_{\lambda\nu}g-su_{ik})=0$ and, consequently, that both $\Img(v_{\mu\nu}h-su_{jk})$ and $\Img(v_{\lambda\nu}g-su_{ik})$ are contained in $\Ker v_\nu$. Since, by the AB5 condition, we have that $\Ker v_\nu = \bigcup_{\nu \leq \rho}\Ker v_{\nu\rho}$, and both $X_i$ and $X_j$ are finitely generated objects, we can find a $\rho \geq \nu$ such that the images of $v_{\mu\nu}h-su_{jk}$ and of $v_{\lambda \nu}g-su_{ik}$ are contained in $\Ker v_{\nu\rho}$. In particular, this implies that $v_{\mu\rho}h=v_{\nu\rho}su_{jk}$ and $v_{\lambda\rho}g=v_{\nu\rho}su_{ik}$. Using that the triple $(k,\rho,v_{\nu\rho}s)$ actually is an element of $\Omega$, these two identities say that $(i,\lambda,g) \preceq (k,\rho,v_{\nu\rho}s)$ and $(j,\mu,h) \preceq (k,\rho,v_{\nu\rho}s)$. This proves that $\Omega$ is directed.

Finally, for each $w=(i,\lambda ,g)\in\Omega$ set $X_\omega:=X_i$, $Y_\omega :=Y_\lambda$ and $g_\omega:=g$, so that we get a direct system of morphisms, $(g_\omega:X_\omega \rightarrow Y_\omega\mid\omega \in \Omega)$, satisfying (a) of the statement. Condition ($\dagger$) guarantees that, for each $i\in I$, there is an $\omega\in\Omega$ such that $X_\omega =X_i$. Moreover, if $(i,\lambda ,g)\in\Omega$ and $\lambda\leq\mu$ for some $\mu \in \Lambda$, then $(i,\mu ,v_{\lambda\mu} g)\in\Omega$. This says that the canonical map $\Omega\to\Lambda$ (given by $(i,\lambda ,g)\rightsquigarrow\lambda$) has as image a cofinal subset of $\Lambda$. This implies that $\varinjlim g_\omega$ is isomorphic to $f$ in the category of morphisms of $\mathcal{A}$, which concludes the proof.
\end{proof}


Our next result shows how to associate in a natural way a finitely accessible additive category to any Grothendieck category. 

\begin{proposition} \label{prop.lim-fpG}
Let $\mathcal{A}$ be an AB5 abelian category. The following assertions hold:

\begin{enumerate}
\item $\fp (\mathcal{A})$ is closed under taking cokernels and, when $\mathcal{A}$ is a Grothendieck category, $\fp (\mathcal{A})$ is skeletally small and closed under taking extensions.
\item There is an equality of subcategories $\Pres(\fp (\mathcal{A}))=\varinjlim\fp (\mathcal{A})$.
\item When $\mathcal{A}$ is a Grothendieck category, the subcategory $\varinjlim\fp (\mathcal{A})$ is a coreflective subcategory, which is finitely accessible.
\end{enumerate}
\end{proposition}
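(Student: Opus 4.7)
For assertion (1), the closure of $\fp(\mathcal{A})$ under cokernels follows formally from the left-exact sequence of representable functors $0\to \mathcal{A}(\Coker f,-) \to \mathcal{A}(G,-) \to \mathcal{A}(F,-)$ associated with a morphism $f\colon F\to G$ in $\fp(\mathcal{A})$: evaluating at $\varinjlim A_i$, the kernel commutes with directed colimits in $\Ab$ (which are always exact), and the outer terms commute with $\varinjlim$ by hypothesis, so the same holds for $\Coker f$. When $\mathcal{A}$ is Grothendieck, skeletal smallness of $\fp(\mathcal{A})$ is standard: any $F\in\fp(\mathcal{A})$ is finitely generated (since $1_F$ factors through a finite subcoproduct of any epi $G^{(I)}\twoheadrightarrow F$ from a generator $G$), hence is a quotient of some $G^n$; such quotients form a set. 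Closure under extensions is a diagram chase with the four-term left-exact Hom-sequences induced by a short exact sequence $0\to F\to E\to H\to 0$ with $F,H\in\fp(\mathcal{A})$.

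For assertion (2), the inclusion $\varinjlim\fp(\mathcal{A})\subseteq\Pres(\fp(\mathcal{A}))$ is immediate: any direct limit $X=\varinjlim_i F_i$ with $F_i\in\fp(\mathcal{A})$ admits the canonical presentation
\[
\coprod_{i\leq j}F_i \longrightarrow \coprod_i F_i \longrightarrow X \longrightarrow 0.
\]
Conversely, given $X\in\Pres(\fp(\mathcal{A}))$, write $X=\Coker f$ for some $f\colon\coprod_j F_j\to\coprod_i F_i$. Each coproduct is the direct limit of its finite subcoproducts, which are themselves in $\fp(\mathcal{A})$ (finite coproducts of $\fp$-objects are $\fp$, hence finitely generated). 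Proposition \ref{prop.Lazard} in its second version then realizes $f$ as $\varinjlim_\omega f_\omega$ with $f_\omega\colon A_\omega\to B_\omega$ in $\fp(\mathcal{A})$; since taking cokernels commutes with directed colimits, $X=\varinjlim_\omega\Coker f_\omega\in\varinjlim\fp(\mathcal{A})$ by part (1).

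For assertion (3), by (1) the subcategory $\fp(\mathcal{A})$ is skeletally small, so pick a set $\mathcal{U}$ of representatives; by (2) we have $\varinjlim\fp(\mathcal{A})=\Pres(\mathcal{U})$. Coreflectivity follows from Corollary \ref{c:AbelianCategories}: $\Pres(\mathcal{U})$ is precovering by Lemma \ref{l:PU-precover} and closed under cokernels in $\mathcal{A}$ by another invocation of Lazard's trick; for $f\colon X\to Y$ with $X=\varinjlim F_i$ and $Y=\varinjlim G_\lambda$, condition $(\dagger)$ is automatic because each $F_i$ is finitely presented in $\mathcal{A}$, so $f=\varinjlim f_\omega$ and $\Coker f=\varinjlim\Coker f_\omega\in\varinjlim\fp(\mathcal{A})$. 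Finally, $\varinjlim\fp(\mathcal{A})$ is finitely accessible because it is closed under directed colimits in $\mathcal{A}$ (a directed colimit of directed colimits is again a directed colimit), so objects of $\fp(\mathcal{A})$ remain finitely presented in $\varinjlim\fp(\mathcal{A})$, and by construction every object is a directed colimit of such.

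The main obstacle is the careful invocation of Lazard's trick: one must verify hypothesis (2) of Proposition \ref{prop.Lazard} (or $(\dagger)$ together with finitely-generated domains), which in every application reduces to the standard observation that every finitely presented object, and every finite coproduct thereof, is finitely generated. Granting this, the rest is formal, resting on AB5 exactness and the commutation of cokernels with directed colimits.
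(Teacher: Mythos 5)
Your proposal is correct and follows essentially the same route as the paper: part (2) and the cokernel-closure in part (3) both rest on the generalized Lazard trick (Proposition \ref{prop.Lazard}) applied to direct systems of finite subcoproducts, with coreflectivity then obtained from Theorem \ref{t:PresCoreflective} (equivalently, Corollary \ref{c:AbelianCategories} plus Lemma \ref{l:PU-precover}). The only differences are cosmetic: you spell out assertion (1), which the paper cites as well-known, and you verify the hypotheses of Lazard's trick via condition (2) (finitely generated domains) where the paper observes that both condition (1) and condition (2) are available in the coproduct setting.
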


\begin{proof}
(1) is well-known (see, e.g., \cite[Lemma 3.5.9]{Popescu}).

(2) By the explicit construction of the colimit in any category with coproducts and cokernels, we have that the inclusion  $\text{Pres}(\fp (\mathcal{A}))\supseteq\varinjlim\fp (\mathcal{A})$ holds. On the other hand, if $A\in\text{Pres}(\fp (\mathcal{A}))$ and we fix an exact sequence $\coprod_{i\in I}U_i\stackrel{f}{\to}\coprod_{\lambda\in\Lambda}V_\lambda\to A\to 0$, where $U_i,V_\lambda\in\fp (\mathcal{A})$, for all $i\in I$ and $\lambda\in\Lambda$, then we can apply the generalized Lazard's Trick to the morphism $f$ and the direct systems $(X_F,i_{FG}\mid F\subseteq G \subseteq I\textrm{ finite})$ and $(Y_{\Gamma},j_{\Gamma\Theta} \mid \Gamma \subseteq \Theta \subseteq \Lambda \textrm{ finite})$, where $X_F=\coprod_{i\in F}U_i$, $Y_{\Gamma}=\coprod_{\lambda\in\Gamma}V_\lambda$ and $i_{FG}$ and $j_{\Gamma\Theta}$ are inclusions, for all finite subsets $F\subseteq G \subseteq I$ and $\Gamma\subseteq \Theta \subseteq \Lambda$. Indeed, here each $X_F$ is a finitely presented, whence finitely generated, object and all $j_{\Gamma\Theta}$ are sections, whence monomorphisms. It easily follows that condition $(\dagger)$ in the last proposition holds. 

Then we have a certain direct system of morphisms, $(g_\omega :X_\omega\to Y_\omega \mid \omega\in\Omega)$, satisfying the properties of Proposition \ref{prop.Lazard}, in particular that $\varinjlim g_\omega$ is isomorphic to $f$. It then follows that $\Coker f\cong\varinjlim \Coker f_\omega$. But $\Coker f_\omega \in\fp (\mathcal{A})$, since $X_\omega ,Y_\omega\in\fp (\mathcal{A})$ and $\fp (\mathcal{A})$ is closed under taking cokernels. 

(3) Let $\mathcal{U}$ be a set of representatives of the isoclasses of the objects in $\fp (\mathcal{A})$.  We then have that $\PU=\varinjlim\fp (\mathcal{A})$ and we only need to prove that this subcategory is coreflective. By Theorem \ref{t:PresCoreflective}, it is enough to prove that it is closed under taking cokernels. Let then $f:\varinjlim X_i\to\varinjlim Y_\lambda$ be any morphism, where  $(X_i,u_{ij}:X_i\to X_j\mid i\leq j \in I)$ and  $(Y_\lambda,v_{\lambda\mu}:Y_\lambda\to Y_\mu \mid \lambda\leq\mu \in \Lambda)$ are direct systems in $\fp (\mathcal{A})$. By the definition of a finitely presented object, property $(\dagger)$ of Proposition \ref{prop.Lazard} holds. Moreover, each $X_i$ is finitely generated. Therefore we have a directed set $\Omega$ and a direct system $(g_\omega :X_\omega\to Y_\omega \mid \omega \in \Omega)$ of morphisms in $\mathcal{A}$ satisfying conditions (a) and (b) of the mentioned proposition. Since colimits are right exact we get that $\Coker f=\varinjlim\Coker g_\omega$. But, by assertion (1), we have that $\Coker g_\omega \in\fp (\mathcal{A})$, for all $\omega\in\Omega$. We then get that $\Coker f\in\varinjlim\fp (\mathcal{A})$, so that  $\PU=\varinjlim\fp (\mathcal{A})$ is closed under taking cokernels. 
\end{proof}

\end{document}